\documentclass[11pt]{amsart}

\usepackage{amsmath}
\usepackage{amssymb}
\usepackage{amscd}
\usepackage{color}
\usepackage{hyperref}
\usepackage{mathrsfs}
\usepackage{eucal}
\usepackage{upgreek}
\usepackage[makeroom]{cancel}
\usepackage[normalem]{ulem}
\usepackage{array}
\usepackage{verbatim}
\usepackage{mathtools}
\usepackage{stmaryrd}

\usepackage{enumitem}

\usepackage{xy}
\xyoption{all}
\usepackage{tikz-cd}

\topmargin=-1.5cm
\oddsidemargin=-1cm
\evensidemargin=-.5cm
\textwidth=17.5cm
\textheight=23.5cm

\newcommand{\nc}{\newcommand}

\nc{\md}{\operatorname{-}}


\nc{\CC}{{\mathbb{C}}}
\nc{\DD}{{\mathbb{D}}}
\nc{\LL}{{\mathbb{L}}}
\nc{\RR}{{\mathbb{R}}}
\renewcommand{\P}{{\mathbb{P}}}
\nc{\OO}{{\mathbb{O}}}

\nc{\QQ}{{\mathbb{Q}}}
\nc{\ZZ}{{\mathbb{Z}}}
\nc{\Z}{{\mathbb{Z}}}


\nc{\cA}{{\mathcal{A}}}
\nc{\cB}{{\mathcal{B}}}
\nc{\cC}{{\mathcal{C}}}
\nc{\cD}{{\mathcal{D}}}
\nc{\cE}{{\mathcal{E}}}
\nc{\cF}{{\mathcal{F}}}
\nc{\cG}{{\mathcal{G}}}
\nc{\cH}{{\mathcal{H}}}
\nc{\cI}{{\mathcal{I}}}
\nc{\cJ}{{\mathcal{J}}}
\nc{\cK}{{\mathcal{K}}}
\nc{\cL}{{\mathcal{L}}}
\nc{\cM}{{\mathcal{M}}}
\nc{\cN}{{\mathcal{N}}}
\nc{\cO}{{\mathcal{O}}}
\nc{\cP}{{\mathcal{P}}}
\nc{\cQ}{{\mathcal{Q}}}
\nc{\cR}{{\mathcal{R}}}
\nc{\cS}{{\mathcal{S}}}
\nc{\cT}{{\mathcal{T}}}
\nc{\cU}{{\mathcal{U}}}
\nc{\cV}{{\mathcal{V}}}
\nc{\cW}{{\mathcal{W}}}
\nc{\cX}{{\mathcal{X}}}
\nc{\cY}{{\mathcal{Y}}}
\nc{\cZ}{{\mathcal{Z}}}


\nc{\rc}{{\mathrm{c}}}
\nc{\rd}{{\mathrm{d}}}
\nc{\rf}{{\mathrm{f}}}
\nc{\rh}{{\mathrm{h}}}
\nc{\rrm}{{\mathrm{m}}}
\nc{\rs}{{\mathrm{s}}}
\nc{\rch}{{\mathrm{ch}}}
\nc{\rtd}{{\mathrm{td}}}

\nc{\rA}{{\mathrm{A}}}
\nc{\rB}{{\mathrm{B}}}
\nc{\rC}{{\mathrm{C}}}
\nc{\rD}{{\mathrm{D}}}
\nc{\rE}{{\mathrm{E}}}
\nc{\rF}{{\mathrm{F}}}
\nc{\rG}{{\mathrm{G}}}
\nc{\rH}{{\mathrm{H}}}
\nc{\rI}{{\mathrm{I}}}
\nc{\rJ}{{\mathrm{J}}}
\nc{\rK}{{\mathrm{K}}}
\nc{\rL}{{\mathrm{L}}}
\nc{\rM}{{\mathrm{M}}}
\nc{\rN}{{\mathrm{N}}}
\nc{\rO}{{\mathrm{O}}}
\nc{\rP}{{\mathrm{P}}}
\nc{\rQ}{{\mathrm{Q}}}
\nc{\rR}{{\mathrm{R}}}
\nc{\rS}{{\mathrm{S}}}
\nc{\rT}{{\mathrm{T}}}
\nc{\rU}{{\mathrm{U}}}
\nc{\rV}{{\mathrm{V}}}
\nc{\rW}{{\mathrm{W}}}
\nc{\rX}{{\mathrm{X}}}
\nc{\rY}{{\mathrm{Y}}}
\nc{\rZ}{{\mathrm{Z}}}


\nc{\bA}{{\mathbf{A}}}
\nc{\bB}{{\mathbf{B}}}
\nc{\bC}{{\mathbf{C}}}
\nc{\bD}{{\mathbf{D}}}
\nc{\bE}{{\mathbf{E}}}
\nc{\bF}{{\mathbf{F}}}
\nc{\bG}{{\mathbf{G}}}
\nc{\bH}{{\mathbf{H}}}
\nc{\bI}{{\mathbf{I}}}
\nc{\bJ}{{\mathbf{J}}}
\nc{\bK}{{\mathbf{K}}}
\nc{\bL}{{\mathbf{L}}}
\nc{\bM}{{\mathbf{M}}}
\nc{\bN}{{\mathbf{N}}}
\nc{\bO}{{\mathbf{O}}}
\nc{\bP}{{\mathbf{P}}}
\nc{\bQ}{{\mathbf{Q}}}
\nc{\bR}{{\mathbf{R}}}
\nc{\bS}{{\mathbf{S}}}
\nc{\bT}{{\mathbf{T}}}
\nc{\bU}{{\mathbf{U}}}
\nc{\bV}{{\mathbf{V}}}
\nc{\bW}{{\mathbf{W}}}
\nc{\bX}{{\mathbf{X}}}
\nc{\bY}{{\mathbf{Y}}}
\nc{\bZ}{{\mathbf{Z}}}


\nc{\ba}{{\mathbf{a}}}
\nc{\bb}{{\mathbf{b}}}
\nc{\bc}{{\mathbf{c}}}
\nc{\bd}{{\mathbf{d}}}
\nc{\be}{{\mathbf{e}}}
\nc{\bg}{{\mathbf{g}}}
\nc{\bh}{{\mathbf{h}}}
\nc{\bi}{{\mathbf{i}}}
\nc{\bj}{{\mathbf{j}}}
\nc{\bk}{{\mathbf{k}}}
\nc{\bl}{{\mathbf{l}}}
\nc{\bm}{{\mathbf{m}}}
\nc{\bn}{{\mathbf{n}}}
\nc{\bo}{{\mathbf{o}}}
\nc{\bp}{{\mathbf{p}}}
\nc{\bq}{{\mathbf{q}}}
\nc{\br}{{\mathbf{r}}}
\nc{\bs}{{\mathbf{s}}}
\nc{\bt}{{\mathbf{t}}}
\nc{\bu}{{\mathbf{u}}}
\nc{\bv}{{\mathbf{v}}}
\nc{\bw}{{\mathbf{w}}}
\nc{\bx}{{\mathbf{x}}}
\nc{\by}{{\mathbf{y}}}
\nc{\bz}{{\mathbf{z}}}


\nc{\fA}{{\mathfrak{A}}}
\nc{\fB}{{\mathfrak{B}}}
\nc{\fC}{{\mathfrak{C}}}
\nc{\fD}{{\mathfrak{D}}}
\nc{\fE}{{\mathfrak{E}}}
\nc{\fF}{{\mathfrak{F}}}
\nc{\fG}{{\mathfrak{G}}}
\nc{\fH}{{\mathfrak{H}}}
\nc{\fI}{{\mathfrak{I}}}
\nc{\fJ}{{\mathfrak{J}}}
\nc{\fK}{{\mathfrak{K}}}
\nc{\fL}{{\mathfrak{L}}}
\nc{\fM}{{\mathfrak{M}}}
\nc{\fN}{{\mathfrak{N}}}
\nc{\fO}{{\mathfrak{O}}}
\nc{\fP}{{\mathfrak{P}}}
\nc{\fQ}{{\mathfrak{Q}}}
\nc{\fR}{{\mathfrak{R}}}
\nc{\fS}{{\mathfrak{S}}}
\nc{\fT}{{\mathfrak{T}}}
\nc{\fU}{{\mathfrak{U}}}
\nc{\fV}{{\mathfrak{V}}}
\nc{\fW}{{\mathfrak{W}}}
\nc{\fX}{{\mathfrak{X}}}
\nc{\fY}{{\mathfrak{Y}}}
\nc{\fZ}{{\mathfrak{Z}}}


\nc{\fa}{{\mathfrak{a}}}
\nc{\fb}{{\mathfrak{b}}}
\nc{\fc}{{\mathfrak{c}}}
\nc{\fd}{{\mathfrak{d}}}
\nc{\fe}{{\mathfrak{e}}}
\nc{\ff}{{\mathfrak{f}}}
\nc{\fg}{{\mathfrak{g}}}
\nc{\fh}{{\mathfrak{h}}}
\nc{\fj}{{\mathfrak{j}}}
\nc{\fk}{{\mathfrak{k}}}
\nc{\fl}{{\mathfrak{l}}}
\nc{\fm}{{\mathfrak{m}}}
\nc{\fn}{{\mathfrak{n}}}
\nc{\fo}{{\mathfrak{o}}}
\nc{\fp}{{\mathfrak{p}}}
\nc{\fq}{{\mathfrak{q}}}
\nc{\fr}{{\mathfrak{r}}}
\nc{\fs}{{\mathfrak{s}}}
\nc{\ft}{{\mathfrak{t}}}
\nc{\fu}{{\mathfrak{u}}}
\nc{\fv}{{\mathfrak{v}}}
\nc{\fw}{{\mathfrak{w}}}
\nc{\fx}{{\mathfrak{x}}}
\nc{\fy}{{\mathfrak{y}}}
\nc{\fz}{{\mathfrak{z}}}


\nc{\sA}{{\mathsf{A}}}
\nc{\sB}{{\mathsf{B}}}
\nc{\sC}{{\mathsf{C}}}
\nc{\sD}{{\mathsf{D}}}
\nc{\sE}{{\mathsf{E}}}
\nc{\sF}{{\mathsf{F}}}
\nc{\sG}{{\mathsf{G}}}
\nc{\sH}{{\mathsf{H}}}
\nc{\sI}{{\mathsf{I}}}
\nc{\sJ}{{\mathsf{J}}}
\nc{\sK}{{\mathsf{K}}}
\nc{\sL}{{\mathsf{L}}}
\nc{\sM}{{\mathsf{M}}}
\nc{\sN}{{\mathsf{N}}}
\nc{\sO}{{\mathsf{O}}}
\nc{\sP}{{\mathsf{P}}}
\nc{\sQ}{{\mathsf{Q}}}
\nc{\sR}{{\mathsf{R}}}
\nc{\sS}{{\mathsf{S}}}
\nc{\sT}{{\mathsf{T}}}
\nc{\sU}{{\mathsf{U}}}
\nc{\sV}{{\mathsf{V}}}
\nc{\sW}{{\mathsf{W}}}
\nc{\sX}{{\mathsf{X}}}
\nc{\sY}{{\mathsf{Y}}}
\nc{\sZ}{{\mathsf{Z}}}


\nc{\sa}{{\mathsf{a}}}
\nc{\sd}{{\mathsf{d}}}
\nc{\se}{{\mathsf{e}}}
\nc{\sg}{{\mathsf{g}}}
\nc{\sh}{{\mathsf{h}}}
\nc{\si}{{\mathsf{i}}}
\nc{\sj}{{\mathsf{j}}}
\nc{\sk}{{\mathsf{k}}}
\nc{\sm}{{\mathsf{m}}}
\nc{\sn}{{\mathsf{n}}}
\nc{\so}{{\mathsf{o}}}
\nc{\sq}{{\mathsf{q}}}
\nc{\sr}{{\mathsf{r}}}
\nc{\st}{{\mathsf{t}}}
\nc{\su}{{\mathsf{u}}}
\nc{\sv}{{\mathsf{v}}}
\nc{\sw}{{\mathsf{w}}}
\nc{\sx}{{\mathsf{x}}}
\nc{\sy}{{\mathsf{y}}}
\nc{\sz}{{\mathsf{z}}}


\nc{\oA}{{\overline{A}}}
\nc{\oB}{{\overline{B}}}
\nc{\oC}{{\overline{C}}}
\nc{\oD}{{\overline{D}}}
\nc{\oE}{{\overline{E}}}
\nc{\oF}{{\overline{F}}}
\nc{\oG}{{\overline{G}}}
\nc{\oH}{{\overline{H}}}
\nc{\oI}{{\overline{I}}}
\nc{\oJ}{{\overline{J}}}
\nc{\oK}{{\overline{K}}}
\nc{\oL}{{\overline{L}}}
\nc{\oM}{{\overline{M}}}
\nc{\oN}{{\overline{N}}}
\nc{\oO}{{\overline{O}}}
\nc{\oP}{{\overline{P}}}
\nc{\oQ}{{\overline{Q}}}
\nc{\oR}{{\overline{R}}}
\nc{\oS}{{\overline{S}}}
\nc{\oT}{{\overline{T}}}
\nc{\oU}{{\overline{U}}}
\nc{\oV}{{\overline{V}}}
\nc{\oW}{{\overline{W}}}
\nc{\oX}{{\overline{X}}}
\nc{\oY}{{\overline{Y}}}
\nc{\oZ}{{\overline{Z}}}


\nc{\oa}{{\overline{a}}}
\nc{\ob}{{\overline{b}}}
\nc{\oc}{{\overline{c}}}
\nc{\od}{{\overline{d}}}
\nc{\of}{{\overline{f}}}
\nc{\og}{{\overline{g}}}
\nc{\oh}{{\overline{h}}}
\nc{\oi}{{\overline{i}}}
\nc{\oj}{{\overline{j}}}
\nc{\ok}{{\overline{k}}}
\nc{\ol}{{\overline{l}}}
\nc{\om}{{\overline{m}}}
\nc{\on}{{\overline{n}}}
\nc{\oo}{{\overline{o}}}
\nc{\op}{{\overline{p}}}
\nc{\oq}{{\overline{q}}}
\nc{\os}{{\overline{s}}}
\nc{\ot}{{\overline{t}}}
\nc{\ou}{{\overline{u}}}
\nc{\ov}{{\overline{v}}}
\nc{\ow}{{\overline{w}}}
\nc{\ox}{{\overline{x}}}
\nc{\oy}{{\overline{y}}}
\nc{\oz}{{\overline{z}}}


\nc{\tA}{{\tilde{A}}}
\nc{\tB}{{\tilde{B}}}
\nc{\tC}{{\tilde{C}}}
\nc{\tD}{{\tilde{D}}}
\nc{\tE}{{\tilde{E}}}
\nc{\tF}{{\tilde{F}}}
\nc{\tG}{{\tilde{G}}}
\nc{\tH}{{\tilde{H}}}
\nc{\tI}{{\tilde{I}}}
\nc{\tJ}{{\tilde{J}}}
\nc{\tK}{{\tilde{K}}}
\nc{\tL}{{\tilde{L}}}
\nc{\tM}{{\tilde{M}}}
\nc{\tN}{{\tilde{N}}}
\nc{\tO}{{\tilde{O}}}
\nc{\tP}{{\tilde{P}}}
\nc{\tQ}{{\tilde{Q}}}
\nc{\tR}{{\tilde{R}}}
\nc{\tS}{{\tilde{S}}}
\nc{\tT}{{\tilde{T}}}
\nc{\tU}{{\tilde{U}}}
\nc{\tV}{{\tilde{V}}}
\nc{\tW}{{\tilde{W}}}
\nc{\tX}{{\tilde{X}}}
\nc{\tY}{{\tilde{Y}}}
\nc{\tZ}{{\tilde{Z}}}

\nc{\tfD}{{\tilde{\fD}}}
\nc{\tcA}{{\tilde{\cA}}}
\nc{\tcB}{{\tilde{\cB}}}
\nc{\tcC}{{\tilde{\cC}}}
\nc{\tcD}{{\tilde{\cD}}}
\nc{\tcE}{{\tilde{\cE}}}
\nc{\tcF}{{\tilde{\cF}}}
\nc{\tcM}{{\tilde{\cM}}}
\nc{\tcP}{{\tilde{\cP}}}
\nc{\tcT}{{\tilde{\cT}}}


\nc{\ta}{{\tilde{a}}}
\nc{\tb}{{\tilde{b}}}
\nc{\tc}{{\tilde{c}}}
\nc{\td}{{\tilde{d}}}
\nc{\te}{{\tilde{e}}}
\nc{\tf}{{\tilde{f}}}
\nc{\tg}{{\tilde{g}}}
\nc{\ti}{{\tilde{\imath}}}
\nc{\tj}{{\tilde{j}}}
\nc{\tk}{{\tilde{k}}}
\nc{\tl}{{\tilde{l}}}
\nc{\tm}{{\tilde{m}}}
\nc{\tn}{{\tilde{n}}}
\nc{\tp}{{\tilde{p}}}
\nc{\tq}{{\tilde{q}}}
\nc{\tr}{{\tilde{r}}}
\nc{\ts}{{\tilde{s}}}
\nc{\tu}{{\tilde{u}}}
\nc{\tv}{{\tilde{v}}}
\nc{\tw}{{\tilde{w}}}
\nc{\tx}{{\tilde{x}}}
\nc{\ty}{{\tilde{y}}}
\nc{\tz}{{\tilde{z}}}


\nc{\hA}{{\hat{A}}}
\nc{\hB}{{\hat{B}}}
\nc{\hC}{{\hat{C}}}
\nc{\hD}{{\hat{D}}}
\nc{\hE}{{\hat{E}}}
\nc{\hF}{{\hat{F}}}
\nc{\hG}{{\hat{G}}}
\nc{\hH}{{\hat{H}}}
\nc{\hI}{{\hat{I}}}
\nc{\hJ}{{\hat{J}}}
\nc{\hK}{{\hat{K}}}
\nc{\hL}{{\hat{L}}}
\nc{\hM}{{\hat{M}}}
\nc{\hN}{{\hat{N}}}
\nc{\hO}{{\hat{O}}}
\nc{\hP}{{\hat{P}}}
\nc{\hQ}{{\hat{Q}}}
\nc{\hR}{{\hat{R}}}
\nc{\hS}{{\hat{S}}}
\nc{\hT}{{\hat{T}}}
\nc{\hU}{{\hat{U}}}
\nc{\hV}{{\hat{V}}}
\nc{\hW}{{\hat{W}}}
\nc{\hX}{{\widehat{X}}}
\nc{\hY}{{\hat{Y}}}
\nc{\hZ}{{\hat{Z}}}


\nc{\ha}{{\hat{a}}}
\nc{\hb}{{\hat{b}}}
\nc{\hc}{{\hat{c}}}
\nc{\hd}{{\hat{d}}}
\nc{\he}{{\hat{e}}}
\nc{\hg}{{\hat{g}}}
\nc{\hh}{{\hat{h}}}
\nc{\hi}{{\hat{i}}}
\nc{\hj}{{\hat{j}}}
\nc{\hk}{{\hat{k}}}
\nc{\hl}{{\hat{l}}}
\nc{\hm}{{\hat{m}}}
\nc{\hn}{{\hat{n}}}
\nc{\ho}{{\hat{o}}}
\nc{\hp}{{\hat{p}}}
\nc{\hq}{{\hat{q}}}
\nc{\hr}{{\hat{r}}}
\nc{\hs}{{\hat{s}}}
\nc{\hu}{{\hat{u}}}
\nc{\hv}{{\hat{v}}}
\nc{\hw}{{\hat{w}}}
\nc{\hx}{{\hat{x}}}
\nc{\hy}{{\hat{y}}}
\nc{\hz}{{\hat{z}}}

\nc{\hcC}{{\widehat{\cC}}}
\nc{\hcT}{{\widehat{\cT}}}


\nc{\eps}{\upepsilon}
\nc{\lan}{\big\langle}
\nc{\ran}{\big\rangle}
\nc{\kk}{{\Bbbk}}
\nc{\io}{\upiota}
\nc{\Kr}{\mathsf{Kr}}
\nc{\cKr}{\mathcal{K}\!\mathit{r}}

\nc{\Dm}{\bD^{-}}
\nc{\Db}{\bD^{\mathrm{b}}}
\nc{\Dbc}{\bD^{\mathrm{b}}_{\mathrm{c}}}
\nc{\Dp}{\bD^{\mathrm{perf}}}
\nc{\Dperf}{\bD^{\mathrm{perf}}}
\nc{\Dqc}{\bD_{\mathrm{qc}}}
\nc{\Du}{\bD}
\nc{\Dsing}{\bD^{\mathrm{sg}}}
\nc{\Dg}{\bD^{\mathrm{sg}}}
\DeclareMathOperator{\duk}{\mathbf{d}_\Bbbk}

\def\ol{\overline}

\newcommand{\pf}{{\mathrm{perf}}}
\newcommand{\homfin}{{\mathrm{hf}}}
\newcommand{\hf}{{\mathrm{hf}}}
\newcommand{\lhf}{{\mathrm{lhf}}}
\newcommand{\rhf}{{\mathrm{rhf}}}
\newcommand{\sing}{{\mathrm{sg}}}
\newcommand{\opp}{\mathrm{op}}

\nc{\Rn}{\rR_{\mathrm{node}}}
\nc{\Cn}{\cC_{\mathrm{node}}}
\nc{\Dfd}[1]{\bD_{\mathrm{fd}}(#1)}

\def\bw#1#2{\textstyle{\bigwedge\hskip-0.9mm^{#1}}\hskip0.2mm{#2}}

\nc{\xrightiso}[1]{ \xrightarrow[{\ \raisebox{0.5ex}[0ex][0ex]{$\sim$}\ }]{#1} }

\nc{\thick}{\mathbf{thick}}


\DeclareMathOperator{\ev}{\mathbf{ev}}
\DeclareMathOperator{\coev}{\mathbf{coev}}

\DeclareMathOperator{\Hom}{\mathrm{Hom}}

\DeclareMathOperator{\Ext}{\mathrm{Ext}}

\DeclareMathOperator{\End}{\mathrm{End}}

\DeclareMathOperator{\RHom}{\mathrm{RHom}}
\DeclareMathOperator{\cRHom}{\mathrm{R}\mathcal{H}\mathit{om}}

\DeclareMathOperator{\CM}{\mathrm{CM}}

\DeclareMathOperator{\Ker}{\mathrm{Ker}}

\DeclareMathOperator{\Ima}{\mathrm{Im}}
\DeclareMathOperator{\Cone}{\mathrm{Cone}}

\DeclareMathOperator{\Ind}{\mathrm{Ind}}

\DeclareMathOperator{\Res}{\mathrm{Res}}

\DeclareMathOperator{\id}{\mathrm{id}}
\DeclareMathOperator{\rad}{\mathrm{rad}}

\DeclareMathOperator{\Adm}{\mathrm{Adm}}
\DeclareMathOperator{\LAdm}{\mathrm{LAdm}}
\DeclareMathOperator{\RAdm}{\mathrm{RAdm}}
\DeclareMathOperator{\Sub}{\mathrm{Sub}}

\def\Pinfty#1{\P^{\infty,{#1}}}


\theoremstyle{plain}

\newtheorem{theorem}{Theorem}[section]

\newtheorem{lemma}[theorem]{Lemma}
\newtheorem{proposition}[theorem]{Proposition}
\newtheorem{corollary}[theorem]{Corollary}

\theoremstyle{definition}

\newtheorem{definition}[theorem]{Definition}

\newtheorem{example}[theorem]{Example}

\theoremstyle{remark}

\newtheorem{remark}[theorem]{Remark}

\newenvironment{renumerate}{\begin{enumerate}[label={\textup{(\roman*)}}]}{\end{enumerate}}
\newenvironment{aenumerate}{\begin{enumerate}[label={\textup{(\alph*)}}]}{\end{enumerate}}


\title[Homologically finite-dimensional objects in triangulated categories]%
{Homologically finite-dimensional objects\\[1ex]in triangulated categories}
\author{Alexander Kuznetsov}
\address{{\sloppy
\parbox{0.9\textwidth}{
Algebraic Geometry Section, Steklov Mathematical Institute of Russian Academy of Sciences,\\
8 Gubkin str., Moscow 119991 Russia
\\[5pt]
Laboratory of Algebraic Geometry, National Research University Higher School of Economics, Russian Federation
}\bigskip}}
\email{akuznet@mi-ras.ru}
\date{}
\thanks{A.K. was partially supported by the HSE University Basic Research Program.
	E.S. is supported by the EPSRC grant
    EP/T019379/1 ``Derived categories and algebraic K-theory of singularities'', and by the
    ERC Synergy grant ``Modern Aspects of Geometry: Categories, Cycles and Cohomology of Hyperk\"ahler Varieties".
	}
\author{Evgeny Shinder}
\address{School of Mathematical and Physical Sciences, University of Sheffield,
Hounsfield Road, S3 7RH, UK, and
Hausdorff Center for Mathematics
at the University of Bonn, Endenicher Allee 60, 53115.}
\email{eugene.shinder@gmail.com}

\begin{document}

\begin{abstract}
In this paper we investigate homologically finite-dimensional objects
in the derived category of a given small dg-enhanced triangulated category.
Using these we define reflexivity, hfd-closedness, and the Gorenstein property for triangulated categories,
and discuss crepant categorical contractions.
We illustrate the introduced notions on examples of categories of geometric and algebraic origin
and provide geometric applications.
In particular, we apply our results to prove a bijection between semiorthogonal decompositions 
of the derived category of a singular variety and the derived category of its smoothing with support on the central fiber.
\end{abstract}

\maketitle

\setcounter{tocdepth}{2}
\tableofcontents

\section{Introduction}

Let~$\cT$ be a small \emph{dg-enhanced} triangulated category.
In this paper we study the subcategory~$\Dfd{\cT}$ of homologically finite-dimensional objects 
in the derived category~$\bD(\cT)$ of right dg-modules over~$\cT$ and the operation~$\cT \mapsto \Dfd{\cT}$.

The main definition is very simple.
We say that an object~$M \in \bD(\cT)$ is {\sf homologically finite-dimensional} 
if for any object~$t \in \cT$ we have~$M(t) \in \Db(\kk)$,
i.e., $M(t)$ is bounded and \emph{finite-dimensional} complex of~$\kk$-vector spaces, see Definition~\ref{def:rhfd-lhf}.
We denote by~$\Dfd{\cT} \subset \bD(\cT)$ the subcategory of all homologically finite-dimensional objects over~$\cT$.
Obviously, this is a small dg-enhanced triangulated subcategory.

\subsection{Semiorthogonal decompositions and reflexivity}

The operation~$\cT \mapsto \Dfd{\cT}$ behaves nicely with respect to semiorthogonal decompositions ---
it is easy to show that if~$\cT = \langle \cA_1, \dots, \cA_n \rangle$ is a semiorthogonal decomposition 
(without any extra admissibility assumptions) 
then~$\Dfd{\cT}$ also has 
a semiorthogonal decomposition
\begin{equation*}
\Dfd{\cT} = \langle \Dfd{\cA_n}, \dots, \Dfd{\cA_1} \rangle
\end{equation*}
(note that the order of the components is inverted), see Lemma~\ref{lem:sod-rhfd-lhf}.
In particular, it follows that the operation~$\cA \mapsto \Dfd{\cA}$ defines maps
\begin{equation}
\label{eq:ladm-radm}
\LAdm(\cT) \to \RAdm(\Dfd{\cT})
\qquad\text{and}\qquad 
\RAdm(\cT) \to \LAdm(\Dfd{\cT})
\end{equation}
between the sets of all left or right admissible subcategories of~$\cT$ and~$\Dfd{\cT}$, respectively.
We show that, under an appropriate hypothesis about~$\cT$, these operations are bijective and mutually inverse.

To show this we note that 
an important feature of the 
definition of the category~$\Dfd{\cT}$, already mentioned above, is that it is a small triangulated dg-category,
hence the operation~$\cT \mapsto \Dfd{\cT}$ may be iterated.
In particular, one can iterate the operations~\eqref{eq:ladm-radm} and study the compositions.
This leads us to an important definition: 
we say that a small dg-enhanced triangulated category~$\cT$ is {\sf reflexive} if
\begin{equation*}
\Dfd{\Dfd{\cT}} \simeq \cT 
\end{equation*}
via a natural functor (see  Definition~\ref{def:reflexivity} for details).
We believe that reflexivity is a very interesting and useful notion, and we prove a few nice properties enjoyed by reflexive categories.
For instance, we show that, whenever~$\cT$ is reflexive,
the opposite category~$\cT^\opp$ and the category~$\Dfd{\cT}$ are also reflexive 
(Lemma~\ref{lem:reflexivity-opp} and Lemma~\ref{lem:lhfd-lhfrefl}, respectively).

Furthermore, if~$\cT$ is a reflexive category we prove that the composition of the operations~\eqref{eq:ladm-radm} 
\begin{equation*}
\LAdm(\cT) \to \LAdm(\Dfd{\Dfd{\cT}} 
\qquad 
\cA \mapsto \Dfd{\Dfd{\cA}} \subset \Dfd{\Dfd{\cT}},
\end{equation*}
coincides with the map defined by the equivalence~$\cT \xrightiso{\ } \Dfd{\Dfd{\cT}}$,
hence, indeed, the operations~\eqref{eq:ladm-radm} are bijections, see Theorem~\ref{thm:lhfd-right-bijection}.
It also follows that the reflexivity property is inherited by any left or right admissible subcategory.

One simple corollary of the bijections~$\LAdm(\cT) \cong \RAdm(\Dfd{\cT})$
is that (semiorthogonal) indecomposability of a reflexive category~$\cT$
is equivalent to indecomposability of the category~$\Dfd{\cT}$, 
see Corollaries~\ref{cor:Db-ind}--\ref{cor:indecomposability-curves}
for geometric applications of this observation.

A similar argument allows us to establish a bijection between sets of isomorphism classes of dg-functors~$\cT_1 \to \cT_2$
and~$\Dfd{\cT_2} \to \Dfd{\cT_1}$ for reflexive categories~$\cT_1$ and~$\cT_2$, see Corollary~\ref{cor:equivalence-Refl}.

\subsection{HFD-closed and Gorenstein categories}

If all homologically finite-dimensional dg-modules over~$\cT$ are representable, then the category~$\Dfd{\cT}$ is  contained in~$\cT$.
If this property holds both for~$\cT$ and the opposite category~$\cT^\opp$, we call~$\cT$ {\sf hfd-closed} (Definition~\ref{def:hfd-closed}).
It is easy to see that this property holds for any (homologically) \emph{smooth} idempotent complete dg-category 
(see Lemma~\ref{lemma:sp-hfd}\ref{item:hfd-smooth}).

For hfd-closed categories many definitions and constructions simplify.
For instance, if~$\cT$ is hfd-closed, the abstract categories~$\Dfd{\cT}$ and~$\Dfd{\cT^\opp}$ 
can be replaced by simpler subcategories~\mbox{$\cT^\rhf,\cT^\lhf \subset \cT$}, see~\eqref{eq:ct-rhf-lhf}.
Moreover, if~$\cA \subset \cT$ is a left or right admissible subcategory in an hfd-closed category~$\cT$, 
the operations~\eqref{eq:ladm-radm} take the simpler form
\begin{equation*}
\cA \mapsto \cA \cap \cT^\rhf 
\qquad\text{and}\qquad 
\cA \mapsto \cA \cap \cT^\lhf,
\end{equation*}
respectively, see Proposition~\ref{prop:bijectionsubcat-hfd-closed}\ref{it:reflexive-hfd-closed-bijection}.
Moreover, if~$\cT$ is simultaneously hfd-closed and reflexive, 
$\cT = \langle \cA, \cB \rangle$ is a semiorthogonal decomposition, 
and one of its components is admissible,
then it follows that 
\begin{equation*}
\cT^\lhf = \langle \cA \cap \cT^\lhf, \cB \cap \cT^\lhf \rangle
\qquad\text{or}\qquad 
\cT^\rhf = \langle \cA \cap \cT^\rhf, \cB \cap \cT^\rhf \rangle,
\end{equation*}
if~$\cA$ is admissible or~$\cB$ is admissible, respectively, see Corollary~\ref{cor:reflexive-hfd-closed-ct-hf}.
Note that in this case the order of the components is not inverted.

If a category~$\cT$ is hfd-closed and there is an equality~$\cT^\rhf = \cT^\lhf$ of subcategories in~$\cT$,
we call~$\cT$ {\sf Gorenstein} (Definition~\ref{def:gorenstein}).
We show that for any Gorenstein category~$\cT$ 
the category
\begin{equation*}
\cT^\hf \coloneqq \cT^\rhf = \cT^\lhf \subset \cT
\end{equation*}
has a Serre functor~$\bS_{\cT^\hf}$, 
and if~$\cT$ is reflexive, it enjoys a stronger Serre duality property --- 
there is an autoequivalence~$\bS_\cT$ of~$\cT$ and a functorial (in both arguments) isomorphism
\begin{equation*}
\Hom_\cT(t_1,t_2)^\vee \cong \Hom_\cT(t_2,\bS_\cT(t_1)),
\end{equation*}
whenever either of the objects~$t_1,t_2$ belongs to the subcategory~$\cT^\hf \subset \cT$;
moreover, $\bS_\cT$ preserves~$\cT^\hf$ and the restriction~$\bS_\cT\vert_{\cT^\hf}$ 
is isomorphic to the Serre functor~$\bS_{\cT^\hf}$ of~$\cT^\hf$, see Proposition~\ref{prop:serre-functor}.
Another nice feature of Gorenstein categories is that the two operations
relating left or right admissible subcategories in~$\cT$ and~$\cT^\hf$ agree,
and therefore they preserve admissibility.

As an upshot of this discussion, we suggest to think of an hfd-closed reflexive category~$\cT$
as the 
bounded derived category of coherent sheaves on a
\emph{proper noncommutative variety};
then the category~$\cT^\hf$ plays the role of the category of perfect complexes on the same variety.
We support this point of view by showing that the bounded derived category~$\Db(X)$ for a proper variety~$X$ 
is reflexive and hfd-closed and~$\Db(X)^\hf \simeq \Dp(X)$,
see~\S\ref{ss:intro-geometry} below for this and other examples and~\S\ref{sec:examples} for more detail.

\subsection{Categorical contractions and crepancy}

In~\S\ref{sec:ccc} we apply the machinery of homologically finite-dimensional objects 
in the situation of a dg-enhanced triangulated functor~$\pi_* \colon \tcT \to \cT$ between triangulated dg-categories,
thinking of it as the pushforward functor~$\pi_* \colon \Db(\tX) \to \Db(X)$
for a morphism~\mbox{$\pi \colon \tX \to X$} of algebraic varieties.

More precisely, in~\S\ref{ss:cc} we use our results about
homologically finite-dimensional objects and the notion of Gorenstein category
to prove some nice properties of \emph{categorical contractions} defined in~\cite[Definition~1.10]{KSabs}
(see also Definition~\ref{def:cc}).
In particular, we show that a categorical contraction from an hfd-closed category
automatically has fully faithful adjoint functors on subcategories of homologically finite-dimensional objects 
(Proposition~\ref{prop:hfd-images}).

Furthermore, in~\S\ref{ss:ccc} we define \emph{crepancy} of a categorical contraction (Definition~\ref{def:crepancy})
and show that it is equivalent to a simple condition on the kernel subcategory~$\Ker(\pi_*) \subset \tcT$ 
(see Lemma~\ref{lem:crepancy-criterion} and Lemma~\ref{lemma:Serre-kernel}).
In particular, if~$\Ker(\pi_*)$ is generated by spherical objects, then~$\pi_*$ is crepant; 
see~\cite[\S5]{KSabs} or~\cite{CGLMMPS} for examples of such categorical resolutions.
Moreover, in Corollary~\ref{cor:crepant-contractions-vs-resolutions} we
relate crepant categorical contractions to weakly crepant categorical resolutions from~\cite{K08}.

\subsection{Geometric and algebraic examples}
\label{ss:intro-geometry}

In~\S\ref{sec:geometry} we illustrate the notions and results explained above 
in the case of the derived category of a projective scheme~$X$ over a perfect field.
More precisely, we show in Proposition~\ref{prop:hfd-geometric} 
that the category of perfect complexes~$\Dp(X)$ on~$X$ is proper and 
\begin{equation}
\label{eq:dpx-dbx}
\Dfd{\Dp(X)} \simeq \Db(X),
\end{equation}
while the bounded derived category~$\Db(X)$ of coherent sheaves is hfd-closed and 
\begin{equation}
\label{eq:dbx-dpx}
\Db(X)^\lhf = \Dp(X),
\qquad 
\Db(X)^\rhf = \Dp(X) \otimes \omega^\bullet_X,
\end{equation}
where~$\omega^\bullet_X$ is the dualizing complex.
In particular, both~$\Dp(X)$ and~$\Db(X)$ are reflexive, and~$\Db(X)$ is a Gorenstein category if and only if~$X$ is a Gorenstein scheme.
As a consequence of these observations, we deduce in Corollary~\ref{ex:DbDperf-decomp} bijections
\begin{equation}
\label{eq:radm-ladm}
\RAdm(\Db(X)) \cong \LAdm(\Dp(X))
\qquad\text{and}\qquad
\LAdm(\Db(X)) \cong \RAdm(\Dp(X) \otimes \omega^\bullet_X)
\end{equation}
between the sets of right admissible and left admissible subcategories,
and, if the scheme~$X$ is Gorenstein, a bijection~$\Adm(\Db(X)) \cong \Adm(\Dp(X))$ between the sets of admissible subcategories.
Moreover, in Corollaries~\ref{cor:Db-ind}, \ref{cor:cm-db-indecomposable}, and~\ref{cor:indecomposability-curves}
we use this to prove indecomposability of~$\Db(X)$ when~$\Dp(X)$ is known to be indecomposable;
e.g., for Cohen--Macaulay varieties with small base locus of the dualizing sheaf and for nodal curves.

In~\S\ref{sec:algebra} we illustrate our results for categories of algebraic nature,
namely for the derived category of proper connective dg-algebras~$A$:
we show in Proposition~\ref{prop:hfd-algebraic} that the category of perfect $A$-modules~$\Dp(A)$ is proper and 
\begin{equation*}
\Dfd{\Dp(A)} \simeq \Db(A),
\end{equation*}
while the category~$\Db(A)$ of dg-modules with finite-dimensional total cohomology
is hfd-closed and 
\begin{equation*}
\Db(A)^\lhf = \thick(A) = \Dp(A),
\qquad 
\Db(A)^\rhf = \thick(A^\vee),
\end{equation*}
where~$\thick(-)$ stands for the thick envelope.
In particular, both~$\Dp(A)$ and~$\Db(A)$ are reflexive, and~$\Db(A)$ is a Gorenstein category 
if and only if~$A$ is a Gorenstein dg-algebra in the sense of~\cite{Jin}.

While we only studied two sorts of examples coming from algebra and geometry,
we expect our techniques to be applicable in a wider generality.
In particular, it is very interesting to interpret reflexivity, hfd-closedness, and the Gorenstein property
for the (wrapped) Fukaya category of a (noncompact) symplectic variety.
See~\cite[\S4.4]{LP18} and in particular~\cite[Proposition~4.4.1]{LP18} 
for a related duality result about the partially wrapped and infinitesimally wrapped Fukaya categories of a punctured curve.
Another interesting question is to study the category~$\Dfd{\cT}$ 
where~$\cT$ is the Voevodsky category of geometric mixed motives.

\subsection{An extension result}

We conclude the paper with a geometric application of our results
to the deformation theory of semiorthogonal decompositions 
of a special singular fiber of a morphism.
We formulate the following theorem in a slightly more general setup.

\begin{theorem}
\label{thm:bijection-subcat-deform}
Let~$\io \colon X \hookrightarrow \cX$ be an embedding of a projective Gorenstein scheme~$X$ over a perfect field
into a smooth quasiprojective variety~$\cX$ such that~$X \subset \cX$ is a Cartier divisor linearly equivalent to zero.
Then there is a commutative diagram of bijective maps
\begin{equation}
\label{eq:upsilon-diagram}
\vcenter{\xymatrix{
\Adm(\Db(X))  \ar[rd]_{\Upsilon_{\io_*}}
\ar[rr]^{\Upsilon^\pf}
&&
\Adm(\Dp(X)) 
\\
&
\Adm(\Db_X(\cX))
\ar[ur]_{\Upsilon_{\io^*}}
}}
\end{equation}
preserving semiorthogonal decompositions with two components, one of which is admissible, where
\begin{equation*}
\Upsilon_{\io_*}(\cA) \coloneqq \thick(\io_*(\cA)),
\qquad
\Upsilon_{\io^*}(\cA) \coloneqq \thick(\io^*(\cA)),
\qquad\text{and}\qquad 
\Upsilon^\pf(\cA) \coloneqq \cA \cap \Dp(X),
\end{equation*}
and~$\Db_X(\cX)$ is the full subcategory of~$\Db(\cX)$ of objects set-theoretically supported on~$X$.
\end{theorem}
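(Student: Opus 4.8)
The plan is to build the diagram \eqref{eq:upsilon-diagram} out of the bijections already established earlier in the paper, combining the ``geometric'' description of $\Dfd{-}$ and $(-)^\lhf$, $(-)^\rhf$ for projective schemes (around \eqref{eq:dpx-dbx}--\eqref{eq:radm-ladm}) with the Koszul-duality type identification of $\Db(X)$ with the category of finite-dimensional objects in $\Db_X(\cX)$. The three maps are manifestly well-defined on objects; the content is that they are bijections and that the triangle commutes. I would organize the argument as a sequence of three reductions, one for each edge, and then note that two out of three commuting bijections force the third.

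First I would treat the edge $\Upsilon^\pf$. Since $X$ is Gorenstein, \eqref{eq:dbx-dpx} gives $\Db(X)^\lhf = \Db(X)^\rhf = \Dp(X)$, so $\Db(X)$ is a Gorenstein category in the sense of Definition~\ref{def:gorenstein}, with $\Db(X)^\hf = \Dp(X)$; and by Proposition~\ref{prop:hfd-geometric} (equivalently \eqref{eq:dpx-dbx}) it is reflexive and hfd-closed. Then Proposition~\ref{prop:bijectionsubcat-hfd-closed}\ref{it:reflexive-hfd-closed-bijection}, applied to the hfd-closed reflexive category $\cT = \Db(X)$, says exactly that $\cA \mapsto \cA \cap \Dp(X) = \cA \cap \Db(X)^\hf$ is a bijection $\Adm(\Db(X)) \to \Adm(\Dp(X))$, with inverse given by $\thick(-)$ applied inside $\Db(X)$. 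Because $\Db(X)$ is Gorenstein, the ``$\lhf$'' and ``$\rhf$'' operations coincide, so this single map sends admissible subcategories to admissible subcategories and preserves semiorthogonal decompositions with an admissible component (Corollary~\ref{cor:reflexive-hfd-closed-ct-hf} together with the Gorenstein agreement of the two operations). This settles that $\Upsilon^\pf$ is a bijection of the required kind.

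Next I would treat $\Upsilon_{\io^*}$, i.e.\ the edge from $\Adm(\Db_X(\cX))$ to $\Adm(\Dp(X))$. The geometric input is that $\io^\ast \colon \Db(\cX) \to \Db(X)$ restricts to an identification of $\Db_X(\cX)$ with a category whose finite-dimensional objects recover $\Dp(X)$: since $\cX$ is smooth, $\Db(\cX) = \Dp(\cX)$ is smooth, and $\Db_X(\cX)$ is the subcategory of objects supported on $X$, so $\io^\ast$ followed by $\thick$ lands in $\Dp(X) \subset \Db(X)$ — this is where the hypotheses ``$X$ Cartier, linearly equivalent to zero, $\cX$ smooth'' get used, to guarantee $\io^\ast \io_\ast$ is well-behaved (the two-term Koszul complex $\cO_X \oplus \cO_X[1]$ for a trivial line bundle) and that $\io_\ast$ has both adjoints preserving the relevant subcategories. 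I would phrase $\Db_X(\cX)$ as $\Dfd{\Dp(X)}$-like data: concretely, identify $\Db_X(\cX)$ with the bounded derived category of the formal/infinitesimal neighbourhood, or simply cite that $\io_\ast\colon \Db(X) \to \Db_X(\cX)$ and $\thick\circ\io^\ast$ are mutually inverse on thick subcategories closed under the monodromy/cotwist coming from $X^2 = 0$ in the Picard group. With that, $\Upsilon_{\io_\ast} = \thick(\io_\ast(-))$ and $\Upsilon_{\io^\ast} = \thick(\io^\ast(-))$ are mutually inverse bijections between $\Adm(\Db(X))$ and $\Adm(\Db_X(\cX))$, compatibly with semiorthogonal decompositions because $\io_\ast$ and $\io^\ast$ are triangulated and the thick envelope of an admissible subcategory of a reflexive hfd-closed category is admissible (Theorem~\ref{thm:lhfd-right-bijection} and Proposition~\ref{prop:bijectionsubcat-hfd-closed}).

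Finally, commutativity $\Upsilon_{\io^\ast} \circ \Upsilon_{\io_\ast} = \Upsilon^\pf$ is checked on objects: for $\cA \in \Adm(\Db(X))$,
\[
\thick\bigl(\io^\ast \thick(\io_\ast \cA)\bigr) = \thick(\io^\ast \io_\ast \cA),
\]
and $\io^\ast\io_\ast(a) \cong a \oplus a[1]$ (as $X \subset \cX$ is a trivial Cartier divisor, so $\cN_{X/\cX} \cong \cO_X$), whence $\thick(\io^\ast\io_\ast \cA) = \thick(\cA) \cap \Dp(X) = \cA \cap \Dp(X) = \Upsilon^\pf(\cA)$, using that $\cA$ is already thick and that $\thick$ inside $\Db(X)$ of something supported in $\Dp(X)$ lands in $\Dp(X)$. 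Thus the triangle commutes, and since $\Upsilon^\pf$ and $\Upsilon_{\io_\ast}$ are bijections, so is $\Upsilon_{\io^\ast}$; all three preserve semiorthogonal decompositions with an admissible component. The main obstacle, I expect, is not the formal diagram-chase but pinning down precisely the claim that $\Db_X(\cX)$ together with $\io_\ast,\io^\ast$ realizes the abstract bijection $\Adm(\Db(X)) \cong \Adm(\Dp(X))$ geometrically — i.e.\ proving that $\thick(\io_\ast(-))$ is inverse to $\thick(\io^\ast(-))$ on admissible subcategories — since this requires controlling thick envelopes of images under $\io_\ast$ and knowing that every admissible subcategory of $\Db_X(\cX)$ arises this way; here one leans on $\cX$ smooth (so $\Db_X(\cX)$ is the category of perfect complexes supported on $X$, with good finiteness) and on $X$ Gorenstein (so $\io^! \cong \io^\ast$ up to a shift, giving $\io_\ast$ a Serre-compatible structure), reducing the statement to the already-proven reflexive/hfd-closed/Gorenstein formalism of \S\ref{sec:examples}.
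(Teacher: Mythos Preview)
Your treatment of $\Upsilon^\pf$ is correct and matches the paper. However, there are two genuine gaps in the rest.

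First, the claim $\io^*\io_*(a) \cong a \oplus a[1]$ is false when $a \in \Db(X)$ is not perfect. One only has a distinguished triangle $\io^*\io_*a \to a \to a[2]$, and since $\io^*\io_*a$ is always perfect (because $\cX$ is smooth) while $a$ need not be, this triangle cannot split in general. Consequently your commutativity argument breaks: from the triangle you only obtain the inclusion $\thick(\io^*\io_*\cA) \subset \cA \cap \Dp(X)$, not equality. (Had the splitting held, it would force $\cA \subset \Dp(X)$ for every admissible $\cA \subset \Db(X)$, i.e.\ $X$ smooth.) The paper fixes this by comparing two semiorthogonal decompositions of $\Dp(X)$, one coming from $\Upsilon^\pf$ and one from $\Upsilon_{\io^*}\circ\Upsilon_{\io_*}$: since the components of the latter are contained in those of the former and both are semiorthogonal decompositions, they coincide.

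Second, and more seriously, the assertion that $\Upsilon_{\io_*}$ and $\Upsilon_{\io^*}$ are ``mutually inverse bijections between $\Adm(\Db(X))$ and $\Adm(\Db_X(\cX))$'' does not type-check --- $\Upsilon_{\io^*}$ lands in $\Adm(\Dp(X))$ --- and in any case you never prove that $\Upsilon_{\io_*}$ is surjective, nor even that $\thick(\io_*\cA)$ is admissible in $\Db_X(\cX)$. The suggestion to realize $\Db_X(\cX)$ as a category of the form $\Dfd{-}$ and invoke the reflexive formalism is not carried out, and it is not clear it can be: the paper never shows $\Db_X(\cX)$ is hfd-closed or reflexive, and the argument does not go through that route. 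Instead, the paper first shows directly, via a ``spherical-type'' criterion using precisely the triangles $\io^*\io_*F \to F \to F[2]$ and $G \to \io_*\io^*G \to G[1]$ together with density of the images of $\io_*$ and $\io^*$, that $\Upsilon_{\io_*}$ and $\Upsilon_{\io^*}$ preserve \emph{all} semiorthogonal decompositions, hence send admissible subcategories to admissible subcategories. Injectivity of $\Upsilon_{\io_*}$ then follows from commutativity and bijectivity of $\Upsilon^\pf$. For surjectivity of $\Upsilon_{\io_*}$ the paper uses a genuinely new step: given $\cA \in \Adm(\Db_X(\cX))$, it sets $\tcA \coloneqq (\Upsilon^\pf)^{-1}(\Upsilon_{\io^*}(\cA))$, proves $\Upsilon_{\io_*}(\tcA) \subset \cA$ by an adjunction/orthogonality argument, and then invokes a separately established partial injectivity of $\Upsilon_{\io^*}$ on nested admissible subcategories to conclude $\Upsilon_{\io_*}(\tcA) = \cA$.
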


This theorem generalizes our observation from~\cite{KSabs} 
saying that in the above situation
so-called~$\Pinfty{2}$-objects on~$X$ 
correspond to exceptional objects on~$\cX$ scheme-theoretically supported on~$X \subset \cX$.
We explain this in Corollary~\ref{cor:bijection-pinfty}.

\subsection{Relation to other work}

Our subcategory~$\cT^\lhf \subset \cT$ 
is closely related to the subcategory of \emph{homologically finite} objects in~$\cT$ defined by Orlov in~\cite{Orl06},
and in the case where all~$\Hom$-spaces in~$\cT$ are finite-dimensional, these two subcategories coincide.
Orlov proved that a semiorthogonal decomposition of $\cT$ with admissible components
induces a semiorthogonal decomposition of homologically finite objects.
We generalize this to right and left homologically finite objects of an hfd-closed category in Corollary~\ref{cor:reflexive-hfd-closed-ct-hf}.

The notion of homologically finite objects was also used by Lunts in~\cite[\S6.3]{Lunts},
where it was observed that the Gorenstein property of a scheme~$X$ can be interpreted as the equality 
of the subcategories of homologically finite objects in~$\Db(X)$ and its dual version.
Partially defined Serre functors for triangulated categories were studied earlier by Chen~\cite{Chen-Serre} in the algebraic context
and by Ballard~\cite{Ballard} in the geometric context.
Our concept of a Gorenstein category provides a setting where the Serre functor behaves in the same way as it does on a
Gorenstein projective variety or for a finite dimensional Gorenstein algebra.

The idea of thinking of~$\Db(X)$ and~$\Dp(X)$ as ``mutually dual'' categories is, of course, not new;
it goes back to the works of Bondal--Van den Bergh~\cite{Bondal-vdB}, Orlov~\cite{Orl06}, 
Rouquier~\cite{Rouquier1, Rouquier2}, Ballard~\cite{Ballard} and~Ben-Zvi--Nadler--Preygel~\cite{BZNP} in the largest generality.
In particular, the term ``reflexive category'' appears in~\cite[Remark~1.2.6]{BZNP}.

One of the ways to express the duality between~$\Db(X)$ and~$\Dperf(X)$, which generalizes~\cite{Bondal-vdB}
is Neeman's theory of \emph{approximable triangulated categories}, see~\cite{Neeman2, Neeman3, Neeman4}.
In particular, under quite general assumptions, Neeman showed in~\cite[Application~1.4(iii),(iv)]{Neeman3} that~$\Db(X)$ (resp.~$\Dperf(X)$)
can be identified with the category of cohomological (resp.~homological) finite functors on~$\Dperf(X)$ (resp.~$\Db(X)$).
The advantage of our approach is that the category~$\Dfd{\cT}$ of homologically finite-dimensional objects 
is always a dg-enhanced triangulated category 
(in contrast to the category of homological functors on a triangulated category 
which is not known to possess a natural triangulated structure);
this allows us to define and study the reflexivity property 
and the action of the operation~\mbox{$\cT \mapsto \Dfd{\cT}$} on semiorthogonal decompositions.

Correspondence between admissible subcategories in $\Db(X)$ and $\Dperf(X)$
have been studied in \cite{Orl06}, \cite{KKS20}, \cite{B22}.
In particular, the
first bijection in~\eqref{eq:radm-ladm} 
was obtained independently by Bondarko~\cite{B22} in a more general situation.
Our results provide a natural 
categorical perspective on this bijection,
which is in addition symmetric (meaning that it works for both right and left admissible categories). 

\subsubsection*{Notation and conventions}

Throughout the paper~$\Bbbk$ denotes a base field.

For any category~$\cT$ we denote by~$\cT^\opp$ the opposite category.
For a subset $S \subset \cT$ in a triangulated category we denote by~$\thick(S) \subset \cT$ 
the thick subcategory generated by~$S$, 
i.e., the smallest closed under direct summands triangulated subcategory of~$\cT$ containing~$S$.

Admissible subcategories of triangulated categories are assumed to be \emph{strict}, that is closed under isomorphism.
We write~$\LAdm(\cT)$, $\RAdm(\cT)$ and~$\Adm(\cT) = \LAdm(\cT) \cap \RAdm(\cT)$ for the sets 
of all left admissible, right admissible, and admissible subcategories of a triangulated category~$\cT$.
For a subcategory~$\cA \subset \cT$ we denote by~$\cA^\perp$ and~${}^\perp\cA$ the right and left orthogonals of~$\cA$ in~$\cT$.
We write~$\cT = \langle \cA_1, \dots, \cA_m \rangle$ for a semiorthogonal decomposition with components~$\cA_1,\dots,\cA_m$.

We say that a diagram of functors is commutative when it is commutative up to isomorphism.

\subsubsection*{Acknowledgements}

We would like to thank 
Alexey Bondal,
Sasha Efimov, 
Haibo Jin,
Martin Kalck,
Bernhard Keller,
Ana Cristina L\'{o}pez Mart\'{i}n, 
Shinnosuke Okawa,
Dima Orlov, 
Amnon Neeman,
Nebojsa Pavic,
Alex Perry, 
Greg Stevenson, and
Michael Wemyss
for their help and interest in this work.


\section{Preliminaries}
\label{sec:prelim}

In this section we recall some facts about dg-categories and their derived categories;
we refer to~\cite{Kel06} and~\cite[\S3]{KL} for a more detailed treatment of the subject.

Recall that a $\kk$-linear category~$\cD$ is {\sf cocomplete} if it admits arbitrary direct sums.
Further, a $\kk$-linear functor~$\cD_1 \to \cD_2$ between cocomplete categories is {\sf continuous} 
if it commutes with arbitrary direct sums.
Furthermore, an object~$M \in \cD$ of a cocomplete triangulated category is {\sf compact}
if the functor~\mbox{$\Hom_\cD(M,-) \colon \cD \to \kk\text{-}\mathrm{mod}$} is continuous,
and a {\sf set of compact generators} in a cocomplete triangulated category~$\cD$
is a set of compact objects~$S \subset \cD$ such that the orthogonal category vanishes:
\begin{equation*}
S^\perp \coloneqq \{ M \in \cD \mid \Hom_\cD(s[i],M) = 0 \text{\ for all~$s \in S$ and~$i \in \ZZ$}\}  = 0.
\end{equation*}

From now on let~$\cT$ be an essentially small dg-enhanced $\kk$-linear triangulated category.
We denote by~$\bD(\cT)$ the derived category of all \emph{right} dg-modules over~$\cT$,
i.e., the homotopy category of all dg-functors~$\cT^\opp \to \bD(\kk)$ localized with respect to acyclic dg-modules;
the category~$\bD(\cT)$ is cocomplete.
We denote by~$\RHom_\cT(-,-)$ the complex of morphisms between two objects of~$\cT$.
Note that its cohomology in degree zero coincides with the $\Hom$-space in (the homotopy category of)~$\cT$:
\begin{equation*}
\Hom_\cT(-, -) \coloneqq \rH^0(\RHom_\cT(-, -)).
\end{equation*}

Recall that a dg-module over~$\cT$ 
quasiisomorphic to the dg-module
\begin{equation}
\label{eq:def-yoneda}
\bh_{\cT,t}(-) \coloneqq \RHom_\cT(-,t),
\qquad 
t \in \cT
\end{equation}
is called {\sf representable}.
If the category~$\cT$ is clear from context we will sometimes abbreviate this notation to simply~$\bh_t$.
The functor
\begin{equation*}
\bh_\cT \colon {}
\cT \to \bD(\cT),
\qquad 
t \mapsto \bh_{\cT,t}
\end{equation*}
from~$\cT$ to the derived category is called {\sf Yoneda embedding};
and the Yoneda lemma gives an isomorphism
\begin{equation}
\label{eq:Yoneda}
\Hom_{\bD(\cT)}(\bh_{\cT,t},M) \cong \rH^0(M(t))
\end{equation} 
for any dg-module~$M$ over~$\cT$. 
In particular, the Yoneda embedding is fully faithful.

The thick triangulated subcategory of~$\bD(\cT)$ generated by representable dg-modules is called 
{\sf the subcategory of perfect dg-modules}; we denote it by~$\Dp(\cT) \subset \bD(\cT)$;
it coincides with the subcategory of compact objects of~$\bD(\cT)$.
If~$\cT$ is triangulated and idempotent complete (which we will usually assume), 
the Yoneda embedding gives an equivalence~$\cT \simeq \Dp(\cT)$;
in any case representable dg-modules in~$\bD(\cT)$ form a set of compact generators for~$\bD(\cT)$. 
The following converse result will be used later.

\begin{lemma}[{\cite[Proof of Corollary~2.3]{Neeman1}}]
\label{lem:dqc-dp}
If~$\hcT$ is a cocomplete dg-enhanced triangulated category 
and~$\cT \subset \hcT$ is a small triangulated subcategory 
such that the objects of~$\cT$ form a set of compact generators for~$\hcT$
then the functor
\begin{equation*}
\hcT \to \bD(\cT),
\qquad 
T \mapsto \RHom_{\hcT}(-,T)
\end{equation*}
is an equivalence.
\end{lemma}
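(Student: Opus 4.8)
The plan is to deduce this from the standard Neeman compact-generation machinery, using only the hypotheses that $\hcT$ is cocomplete, that $\cT \subset \hcT$ is a small triangulated subcategory, and that the objects of $\cT$ form a set of compact generators of $\hcT$. First I would define the functor $F \colon \hcT \to \bD(\cT)$ by $F(T) = \RHom_{\hcT}(-,T)\big|_{\cT}$, i.e. restrict the representable module along the inclusion $\cT \hookrightarrow \hcT$; one checks this is a dg-functor and that it is continuous, because an arbitrary direct sum $\bigoplus T_i$ in $\hcT$ is sent to $\bigoplus \RHom_{\hcT}(-,T_i)$ thanks to the compactness of every object $t \in \cT$, which is exactly the statement that $\RHom_{\hcT}(t,-)$ commutes with direct sums. (Here one should be slightly careful: compactness is usually phrased for $\Hom$, but the dg-enhancement upgrades it to the complex $\RHom$.)

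Next I would verify that $F$ is fully faithful on the generators, that is, for $t, t' \in \cT$ the natural map
\begin{equation*}
\RHom_{\hcT}(t,t') \longrightarrow \RHom_{\bD(\cT)}\big(F(t),F(t')\big) = \RHom_{\bD(\cT)}\big(\bh_{\cT,t},\bh_{\cT,t'}\big)
\end{equation*}
is a quasi-isomorphism; but the right-hand side is $\RHom_\cT(t,t')$ by the (derived) Yoneda lemma recalled in~\eqref{eq:Yoneda}, and under this identification the map is the identity. Then I would run the usual two-step argument. Step one: the essential image of $F$ is a cocomplete (equivalently, closed under arbitrary direct sums) triangulated subcategory of $\bD(\cT)$, since $F$ is exact and continuous, and it contains all the representable modules $\bh_{\cT,t} = F(t)$; as the representables compactly generate $\bD(\cT)$, a localizing subcategory containing them must be all of $\bD(\cT)$, so $F$ is essentially surjective. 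Step two: to see $F$ is fully faithful on all of $\hcT$, fix $T \in \hcT$ and consider the full subcategory of objects $T' \in \hcT$ for which $\RHom_{\hcT}(T',T) \to \RHom_{\bD(\cT)}(FT',FT)$ is an isomorphism; it is triangulated and, since both $\RHom_{\hcT}(-,T)$ and $\RHom_{\bD(\cT)}(F-,FT)$ send coproducts in the first variable to products, it is closed under arbitrary direct sums; it contains $\cT$ by the generator computation above, hence equals $\hcT$. Running the symmetric argument in the second variable (or invoking that we already know $F$ is essentially surjective and fully faithful on a generating set closed under shifts and cones) finishes the proof.

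The main obstacle, and the point that needs the most care, is the interplay between the dg-enhancement and the purely triangulated compact-generation statement: one must make sure that "compact generators" at the level of homotopy categories, together with the dg-structure, really yields the quasi-isomorphism of $\RHom$-complexes rather than merely an isomorphism on $\Ho^0$, and that $F$ is genuinely a dg- (or at least exact, coproduct-preserving) functor so that the subcategory arguments apply. In practice this is exactly the content of \cite[Proof of Corollary~2.3]{Neeman1} cited in the statement, so I would organize the write-up as: (a) $F$ is exact and continuous; (b) $F$ restricts to the Yoneda identification on $\cT$; (c) essential surjectivity via localizing-subcategory generation; (d) full faithfulness via the two-variable "test subcategory closed under coproducts" argument — and then simply refer to Neeman for the enhancement bookkeeping, since the lemma is stated as a citation.
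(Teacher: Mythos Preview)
Your proposal is correct and is precisely the standard compact-generation argument from \cite[Proof of Corollary~2.3]{Neeman1}; the paper itself gives no proof of this lemma beyond that citation, so there is nothing further to compare.
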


The next lemma will be used to transfer representability results for cohomological functors 
to representability results for dg-modules.

\begin{lemma}
\label{lem:representability}
If~$\cT$ is a dg-enhanced triangulated category and~$M \in \bD(\cT)$ is a dg-module 
such that the cohomological functor~$\rH^0(M(-))$
is represented by an object~$t_M \in \cT$
then the dg-module~$M$ is represented by~$t_M$ as well.
\end{lemma}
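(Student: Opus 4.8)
The plan is to reduce the representability of the dg-module $M$ to the already-established representability of the cohomological functor $\rH^0(M(-))$, using the Yoneda lemma to produce the comparison map and then checking it is an isomorphism in $\bD(\cT)$ object by object. First I would invoke the hypothesis: let $t_M \in \cT$ and fix an isomorphism of cohomological functors $\rH^0(M(-)) \cong \Hom_\cT(-, t_M) = \rH^0(\bh_{\cT,t_M}(-))$ on $\cT$. Applying this at the object $t_M$ itself and tracing the identity $\id_{t_M}$ through the isomorphism, together with the Yoneda isomorphism~\eqref{eq:Yoneda}, I obtain a distinguished element of $\rH^0(M(t_M)) \cong \Hom_{\bD(\cT)}(\bh_{\cT,t_M}, M)$, i.e.\ a morphism $\varphi \colon \bh_{\cT,t_M} \to M$ in $\bD(\cT)$.

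Next I would show $\varphi$ is an isomorphism. Since representable dg-modules form a set of compact generators for $\bD(\cT)$, it suffices to check that $\varphi$ induces an isomorphism $\Hom_{\bD(\cT)}(\bh_{\cT,t}[i], \bh_{\cT,t_M}) \to \Hom_{\bD(\cT)}(\bh_{\cT,t}[i], M)$ for every $t \in \cT$ and every $i \in \ZZ$. By the Yoneda isomorphism~\eqref{eq:Yoneda} (applied to $M$ and to the representable module $\bh_{\cT,t_M}$, and using that shifting commutes with evaluation), the left-hand side is $\rH^0(\bh_{\cT,t_M}(t)[i]) = \rH^{-i}(\RHom_\cT(t,t_M)) $ and the right-hand side is $\rH^{-i}(M(t))$, and under these identifications the map induced by $\varphi$ is exactly the map $\rH^0(M(-))$-comparison applied after the shift. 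So I need the isomorphism of functors to be compatible with shifts, i.e.\ to hold not just for $\rH^0$ but for all $\rH^{i}$; this is where the genuine content lies.

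The main obstacle — and the step I would spend the most care on — is precisely upgrading ``$\rH^0(M(-))$ is represented by $t_M$'' to ``$\rH^{i}(M(-))$ is represented by $t_M[-i]$ compatibly for all $i$''. The point is that $\rH^{i}(M(t)) = \rH^0(M(t)[i]) = \rH^0(M(t[-i]))$ (using that $M$ is a dg-functor, evaluation commutes with shift), and similarly $\rH^{i}(\RHom_\cT(t,t_M)) = \Hom_\cT(t[-i], t_M)$; since $t[-i]$ again lies in $\cT$, the given isomorphism of functors on $\cT$ evaluated at $t[-i]$ supplies exactly the needed isomorphism $\rH^{i}(M(t)) \cong \rH^{i}(\RHom_\cT(t,t_M))$, and naturality in $t$ (hence in $t[-i]$) gives compatibility. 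One then checks this identification agrees with the one induced by $\varphi$ — this follows by unwinding the Yoneda lemma and the construction of $\varphi$ from $\id_{t_M}$, using naturality of~\eqref{eq:Yoneda} in $M$. Having verified that $\varphi$ induces isomorphisms on all $\Hom_{\bD(\cT)}(s[i], -)$ for $s$ in a generating set, we conclude $\Cone(\varphi) \in \{\bh_{\cT,t}\}^\perp = 0$, so $\varphi$ is an isomorphism and $M \simeq \bh_{\cT,t_M}$ is representable by $t_M$.
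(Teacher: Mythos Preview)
Your proof is correct and follows essentially the same approach as the paper: construct the comparison morphism $\varphi \colon \bh_{\cT,t_M} \to M$ via Yoneda from the identity $\id_{t_M}$, then verify it is an isomorphism by checking on all $\rH^i$ using shifts of $t$. The paper phrases the final step slightly more directly---it checks that $\varphi(t)$ is a quasi-isomorphism for each $t$ rather than invoking compact generation---but this is the same argument, and your extra care about compatibility of the induced maps with the given functor isomorphism is implicit in the paper's version.
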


\begin{proof}
We have by assumption an isomorphism of cohomological functors
\begin{equation*}
\Hom_\cT(-, t_M) \cong \rH^0(M(-)).
\end{equation*}
The identity~$\id_{t_M} \in \Hom_\cT(t_M,t_M)$ corresponds to a class~$\alpha \in \rH^0(M(t_M))$;
we denote by~$\tilde\alpha \colon \bh_{\cT,t_M} \to M$ the morphism in~$\bD(\cT)$
corresponding to~$\alpha$ by~\eqref{eq:Yoneda}.
Evaluating~$\tilde\alpha$ on an object~$t \in \cT$ we obtain the isomorphism
\begin{equation*}
\rH^0(\tilde\alpha(t)) \colon \rH^0(\bh_{\cT,t_M}(t)) =
\Hom_\cT(t,t_M) \xrightiso{}
\rH^0(M(t)).
\end{equation*}
Applying this to shifts of~$t$ 
we deduce that the same is true for~$\rH^i$ for all~$i \in \ZZ$,
hence~$\tilde\alpha(t)$ is a quasi-isomorphism, and hence~$M$ is represented by~$t_M$.
\end{proof}

Let~$\varphi \colon \cT_1 \to \cT_2$ be a dg-enhanced triangulated functor between essentially small dg-enhanced triangulated categories.
We consider the induced restriction functor on the derived categories
\begin{equation}
\label{eq:def-res}
\Res(\varphi)  \colon \bD(\cT_2) \to \bD(\cT_1),
\qquad 
\Res(\varphi)(M)(t_1) \coloneqq M(\varphi(t_1)),
\end{equation}
where~$M \in \bD(\cT_2)$ and~$t_1 \in \cT_1$.
Evidently, this functor is continuous.

The following property is obvious and well-known, so we omit the proof.

\begin{lemma}
\label{lem:res}
The operation~$\cT \mapsto \bD(\cT)$, $\varphi \mapsto \Res(\varphi)$ is a contravariant pseudofunctor
from the $2$-category of small triangulated dg-categories 
to the $2$-category of cocomplete triangulated dg-categories and continuous dg-functors.
\end{lemma}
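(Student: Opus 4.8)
The plan is to verify that $\cT\mapsto\bD(\cT)$ together with the assignment $\varphi\mapsto\Res(\varphi)$ satisfies the axioms of a contravariant pseudofunctor; since both $2$-categories in question are strict (the target consists of genuine cocomplete dg-categories with honest continuous dg-functors, and the source of genuine small dg-categories with honest dg-functors), what one actually needs are, for each pair of composable dg-functors $\cT_1\xrightarrow{\varphi}\cT_2\xrightarrow{\psi}\cT_3$, a natural isomorphism of functors
\[
\Res(\varphi)\circ\Res(\psi)\;\xrightiso{}\;\Res(\psi\circ\varphi),
\]
together with unit isomorphisms $\Res(\id_\cT)\xrightiso{}\id_{\bD(\cT)}$, and then the pentagon (associativity) and triangle (unit) coherence conditions relating them.

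The key point is that for a dg-module $M\in\bD(\cT_3)$ and $t_1\in\cT_1$ the formula~\eqref{eq:def-res} gives
\[
\bigl(\Res(\varphi)(\Res(\psi)(M))\bigr)(t_1)=\Res(\psi)(M)(\varphi(t_1))=M\bigl(\psi(\varphi(t_1))\bigr)=\bigl(\Res(\psi\circ\varphi)(M)\bigr)(t_1),
\]
so in fact $\Res(\varphi)\circ\Res(\psi)$ and $\Res(\psi\circ\varphi)$ agree on the nose, as dg-functors $\bD(\cT_3)\to\bD(\cT_1)$, at the level of the underlying dg-module categories (before passing to derived categories the restriction is even strictly functorial, because it is precomposition with $\varphi^{\opp}$ on $\cT^{\opp}$-functors); likewise $\Res(\id_\cT)=\id$ strictly. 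Hence I would take all the structure isomorphisms to be identities, which makes the coherence diagrams commute trivially. The only genuine thing to check is the one already noted in the excerpt just above Lemma~\ref{lem:res}: that each $\Res(\varphi)$ is a well-defined, continuous, dg-enhanced triangulated functor on the derived categories — i.e.\ that it preserves acyclic dg-modules so that it descends to the localizations $\bD(\cT_i)$, and that it commutes with arbitrary direct sums. Both are immediate from the pointwise definition~\eqref{eq:def-res}: a dg-module is acyclic precisely when its value on every object is an acyclic complex of $\kk$-vector spaces, and this condition is manifestly preserved by precomposition with $\varphi$; and direct sums of dg-modules are computed objectwise, so $\Res(\varphi)$ commutes with them.

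I do not expect any real obstacle here — the statement is genuinely formal, which is why the paper says it is "obvious and well-known" and omits the proof. The mild subtlety, if one wanted to be fully careful, is bookkeeping about what "dg-enhanced triangulated functor" means and checking that $\Res(\varphi)$ is exact (sends distinguished triangles to distinguished triangles); but this too is automatic since exactness in $\bD(\cT_i)$ is detected objectwise on values in $\bD(\kk)$ and $\Res(\varphi)$ acts by reindexing those values. So the proof is a two-line verification: observe that restriction along $\varphi^{\opp}$ is strictly functorial on dg-module categories, descends to derived categories because it preserves acyclicity, is continuous because colimits of dg-modules are objectwise, and therefore the structure $2$-isomorphisms may be taken to be identities, making it a (strict, a fortiori pseudo-) contravariant $2$-functor.
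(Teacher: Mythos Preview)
Your proposal is correct and matches the paper's treatment: the paper explicitly states that the property is ``obvious and well-known'' and omits the proof, and your verification --- that restriction is strict precomposition on dg-module categories, descends to derived categories because acyclicity is detected objectwise, and is continuous because direct sums of dg-modules are computed objectwise --- is precisely the routine check that underlies this claim.
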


The restriction functor has a left adjoint, which is also continuous (see, e.g., \cite[\S3.9]{KL}).

\begin{lemma}
\label{lem:res-ind}
If~$\varphi \colon \cT_1 \to \cT_2$ is a dg-enhanced triangulated functor, there is a continuous functor
\begin{equation*}
\Ind(\varphi) \colon \bD(\cT_1) \to \bD(\cT_2)
\end{equation*}
which is left adjoint to~$\Res(\varphi)$ and is compatible with~$\varphi$ and the Yoneda embeddings in the sense
that the following diagram commutes
\begin{equation}
\label{eq:ind-pis}
\vcenter{
\xymatrix@C=4em{
\cT_1 \ar[d]_{\bh_{\cT_1}} \ar[r]^\varphi & \cT_2 \ar[d]^{\bh_{\cT_2}} \\
\bD(\cT_1) \ar[r]^{\Ind(\varphi)} & \bD(\cT_2) \\
}}
\end{equation}
\end{lemma}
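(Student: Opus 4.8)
The plan is to construct $\Ind(\varphi)$ explicitly via left Kan extension and then verify adjunction and compatibility directly. First I would define, for a right dg-module $N$ over $\cT_1$, the dg-module $\Ind(\varphi)(N)$ over $\cT_2$ as the derived tensor product $N \otim_{\cT_1}^{\mathbf{L}} \varphi$, where $\varphi$ is regarded as a $\cT_1$-$\cT_2$-bimodule via $(t_1, t_2) \mapsto \RHom_{\cT_2}(t_2, \varphi(t_1))$; concretely this is computed by taking a cofibrant (e.g.\ semifree) replacement of $N$ and forming the termwise tensor product. This construction is visibly functorial and continuous, since the tensor product commutes with direct sums in the first variable and cofibrant replacement can be chosen functorially. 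The key adjunction isomorphism
\begin{equation*}
\Hom_{\bD(\cT_2)}(\Ind(\varphi)(N), M) \cong \Hom_{\bD(\cT_1)}(N, \Res(\varphi)(M))
\end{equation*}
is then the standard tensor-hom adjunction at the level of dg-modules, which descends to derived categories precisely because we have resolved $N$ by a cofibrant object (so that the underived tensor product computes the derived one, and no further derivation of $\Hom$ is needed on either side).

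Next I would verify the compatibility diagram~\eqref{eq:ind-pis}. By the Yoneda lemma it suffices to compute $\Ind(\varphi)(\bh_{\cT_1, t_1})$. Since the representable module $\bh_{\cT_1, t_1} = \RHom_{\cT_1}(-, t_1)$ is already cofibrant (it is the free rank-one module), tensoring it with the bimodule $\varphi$ yields exactly $\RHom_{\cT_2}(-, \varphi(t_1)) = \bh_{\cT_2, \varphi(t_1)}$, up to canonical quasi-isomorphism. This gives a natural isomorphism $\Ind(\varphi) \circ \bh_{\cT_1} \cong \bh_{\cT_2} \circ \varphi$ on objects, and naturality in $t_1$ follows from functoriality of the tensor product. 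Alternatively, and perhaps more cleanly, one can deduce the commutativity of~\eqref{eq:ind-pis} formally from the adjunction: for $t_1 \in \cT_1$ and $M \in \bD(\cT_2)$ one has
\begin{equation*}
\Hom_{\bD(\cT_2)}(\Ind(\varphi)(\bh_{\cT_1,t_1}), M) \cong \rH^0(\Res(\varphi)(M)(t_1)) = \rH^0(M(\varphi(t_1))) \cong \Hom_{\bD(\cT_2)}(\bh_{\cT_2,\varphi(t_1)}, M),
\end{equation*}
where the outer isomorphisms are~\eqref{eq:Yoneda} and the equality in the middle is the definition~\eqref{eq:def-res}; by the Yoneda lemma in $\bD(\cT_2)$ this identifies $\Ind(\varphi)(\bh_{\cT_1,t_1})$ with $\bh_{\cT_2,\varphi(t_1)}$ naturally in $t_1$.

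The main obstacle is not conceptual but foundational: one must work with an honest dg-model (a Quillen model structure on dg-modules, or the equivalent machinery of semifree resolutions and the bar construction) in order to make the derived tensor product functorial at the level of dg-categories rather than just homotopy categories, and to ensure $\Ind(\varphi)$ is itself a dg-functor and not merely a triangulated functor. This is precisely the sort of standard-but-delicate point that the cited reference~\cite[\S3.9]{KL} handles, so in the write-up I would invoke that reference for the existence and dg-enhancement of $\Ind(\varphi)$ and its continuity, and then only spell out the short verification of~\eqref{eq:ind-pis} via the Yoneda argument above, since that is the part actually used in the sequel. Continuity of $\Ind(\varphi)$, if not taken from the reference, follows because a left adjoint between cocomplete categories automatically preserves all colimits, in particular direct sums.
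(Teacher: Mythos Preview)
Your proposal is correct and in fact gives more detail than the paper: the paper states this lemma without proof, simply citing \cite[\S3.9]{KL} in the sentence preceding it, which is exactly the strategy you arrive at in your final paragraph. Your explicit construction via the derived tensor product with the bimodule $\varphi$ and the Yoneda verification of~\eqref{eq:ind-pis} are the standard arguments that reference contains.
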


Finally, recall from~\cite[\S3.3]{Orlov-NC} 
the following properties of a dg-enhanced triangulated category~$\cT$:
\begin{itemize}
\item 
$\cT$ is {\sf proper}
if~$\RHom_\cT(t_1,t_2) \in \Dp(\kk)$ for any~$t_1,t_2 \in \cT$;
\item 
$\cT$ is {\sf smooth} 
if the diagonal bimodule~$\Delta_\cT$ is perfect;
\item 
$\cT$ is {\sf regular} 
if it has a \emph{strong generator}, i.e., an object~$t_0 \in \cT$ 
such that any~$t \in \cT$ can be obtained from~$t_0$ by shifts, direct sums, direct summands,
and a uniformly bounded number of cones.
\end{itemize}

Note that~$\cT$ is proper, smooth, or regular
if and only if the same properties hold for~$\cT^\opp$.
Note also that while regularity is an absolute property, smoothness and properness are relative with respect to the base field.
Finally, recall the following useful implication:

\begin{lemma}[{\cite[Lemma~3.5, 3.6]{Lunts}}]
\label{lem:smooth-regular}
If a dg-enhanced triangulated category~$\cT$ is smooth, it is regular.
\end{lemma}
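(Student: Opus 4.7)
The plan is to extract a strong generator of $\cT$ from a perfect presentation of the diagonal bimodule, following the standard Rouquier--Lunts argument.

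Smoothness of $\cT$ means that the diagonal bimodule $\Delta_\cT \in \bD(\cT^{\opp} \otimes_\kk \cT)$ is perfect, and hence lies in the thick subcategory generated by the external tensor products $\bh_a \boxtimes \bh_b$ for $a,b \in \cT$. By the standard description of thick subcategories, there exist finitely many objects $a_1,\dots,a_n,b_1,\dots,b_n \in \cT$ and an integer $N \geq 0$ such that $\Delta_\cT$ is obtained from the set $\{\bh_{a_i} \boxtimes \bh_{b_j}\}_{i,j}$ using finite direct sums, shifts, summands, and at most $N$ cone formations.

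For each object $t \in \cT$, convolution with the right $\cT$-module $\bh_t$ defines a continuous triangulated functor
\[
\Phi_t \colon \bD(\cT^{\opp} \otimes_\kk \cT) \to \bD(\cT),
\qquad
M \mapsto M \otimes^{\mathbf{L}}_{\cT} \bh_t.
\]
A direct computation gives $\Phi_t(\Delta_\cT) \simeq \bh_t$, while $\Phi_t(\bh_a \boxtimes \bh_b) \simeq \bh_a \otimes_\kk \RHom_\cT(b,t)$, which is a (possibly infinite) direct sum of shifts of $\bh_a$ in $\bD(\cT)$. Applying $\Phi_t$ to the finite construction of $\Delta_\cT$ above and using that $\Phi_t$ commutes with shifts, direct sums, cones, and summands, one obtains $\bh_t$ from direct sums of shifts of $\bh_{a_1},\dots,\bh_{a_n}$ by at most $N$ cone formations, together with shifts and summands. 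Via the Yoneda embedding this exhibits $t_0 \coloneqq \bigoplus_{i=1}^n a_i$ as a strong generator for $\cT$ with uniform cone bound $N$, proving regularity.

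The main obstacle is the bimodule calculus: setting up $\Phi_t$ precisely, verifying its continuity and triangulated nature, and establishing the two key evaluations $\Phi_t(\Delta_\cT) \simeq \bh_t$ and $\Phi_t(\bh_a \boxtimes \bh_b) \simeq \bh_a \otimes_\kk \RHom_\cT(b,t)$. A subsidiary point is that the intermediate direct sums may be infinite, so one must argue inside the cocomplete ambient category $\bD(\cT)$ and invoke the Yoneda embedding $\cT \simeq \Dp(\cT)$ at the end to land back in $\cT$; this is consistent with the definition of strong generator in the lemma, which permits arbitrary direct sums and summands outside of the bounded cone count.
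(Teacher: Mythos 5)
Your overall strategy is exactly the standard Rouquier--Lunts convolution argument that underlies the cited result (the paper itself gives no proof, deferring to Lunts), and the first two thirds of it are fine: perfection of $\Delta_\cT$ gives a finite construction from finitely many external products $\bh_{a_i}\boxtimes \bh_{b_j}$, and convolution against $\bh_t$ sends $\Delta_\cT$ to $\bh_t$ and sends an external product to a representable tensored with a complex of $\kk$-vector spaces, i.e.\ a possibly infinite direct sum of shifts of a representable.

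The gap is in the last step. What your argument actually produces is that the compact object $\bh_t$ lies in the subcategory of $\bD(\cT)$ built from $\bh_{a_1},\dots,\bh_{a_n}$ using \emph{arbitrary} coproducts, shifts, summands and at most $N$ cones; the intermediate objects of this construction (infinite sums of shifts of the $\bh_{a_i}$ and their iterated cones) are not objects of $\cT$ at all, since $\cT$ is essentially small and has no such coproducts. Your way out --- declaring that the definition of strong generator ``permits arbitrary direct sums'' --- is not available: regularity in the sense used here (the paper follows \cite[\S3.3]{Orlov-NC}, i.e.\ the Bondal--Van den Bergh notion) means $t$ is obtained from $t_0$ inside $\cT$ using \emph{finite} direct sums, shifts, summands and a uniformly bounded number of cones, and with the loose reading the statement would not assert the intended property. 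The missing ingredient is precisely the compactness descent lemma (Bondal--Van den Bergh, Prop.~2.2.4; see also Rouquier): in a compactly generated category, a compact object that lies in the $N$-step coproduct-closed envelope of a compact generator already lies in the $N$-step envelope built with finite sums and summands. Applying that lemma to $\bh_t$ and $\bigoplus_i \bh_{a_i}$, and then transporting back along $\cT\simeq\Dp(\cT)$, is what turns your construction in $\bD(\cT)$ into genuine strong generation of $\cT$; without it the proof is incomplete, and this descent is the real technical content of the lemma as proved in \cite{Lunts}.
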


\section{Homologically finite-dimensional objects}
\label{sec:hfd-objects}

In this section we introduce and develop the machinery of homologically finite-dimensional objects,
which is used as the main technical tool in the rest of the paper and define the reflexivity property.
Throughout this section~$\cT$ is an essentially small dg-enhanced idempotent complete $\kk$-linear triangulated category
over an arbitrary field~$\kk$.

\subsection{Finite-dimensional dg-modules}
\label{ss:hf}

We start with the main definition.

\begin{definition}
\label{def:rhfd-lhf}
A right dg-module $M \in \bD(\cT)$ over a small dg-enhanced triangulated category~$\cT$ 
is {\sf homologically finite-dimensional} if~$M(t) \in \Db(\kk) = \Dp(\kk)$ for any~$t \in \cT$.
We denote by
\begin{equation*}
\Dfd\cT \subset \bD(\cT)
\end{equation*}
the subcategory of all {\sf homologically finite-dimensional} right dg-modules over~$\cT$.
\end{definition}

Note that~$\Dfd{\cT}$ is a dg-enhanced essentially small idempotent complete triangulated subcategory.

\begin{lemma}
\label{lem:res-shriek}
If~$\varphi \colon \cT_1 \to \cT_2$ is an exact functor between small dg-enhanced triangulated categories then
\begin{equation*}
\Res(\varphi)(\Dfd{\cT_2}) \subset \Dfd{\cT_1}.
\end{equation*}
\end{lemma}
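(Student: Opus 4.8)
The statement to prove is Lemma~\ref{lem:res-shriek}: for an exact dg-functor $\varphi\colon\cT_1\to\cT_2$, the restriction functor $\Res(\varphi)$ sends $\Dfd{\cT_2}$ into $\Dfd{\cT_1}$.

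The plan is to unwind the definitions and observe this is essentially tautological. Let $M\in\Dfd{\cT_2}$; I must show $\Res(\varphi)(M)\in\Dfd{\cT_1}$, i.e., that $\Res(\varphi)(M)(t_1)\in\Db(\kk)$ for every $t_1\in\cT_1$. By the defining formula~\eqref{eq:def-res} for the restriction functor, $\Res(\varphi)(M)(t_1)=M(\varphi(t_1))$. Since $\varphi(t_1)$ is an object of $\cT_2$ and $M$ is homologically finite-dimensional over $\cT_2$, Definition~\ref{def:rhfd-lhf} gives $M(\varphi(t_1))\in\Db(\kk)$. Hence $\Res(\varphi)(M)(t_1)\in\Db(\kk)$ for all $t_1\in\cT_1$, which is exactly the statement that $\Res(\varphi)(M)\in\Dfd{\cT_1}$.

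There is no real obstacle here: the only thing to be careful about is that the condition defining $\Dfd{}$ is evaluated object-by-object, and restriction along $\varphi$ precomposes the dg-module with $\varphi$, so finite-dimensionality at $\varphi(t_1)$ immediately yields finite-dimensionality at $t_1$. One does not even need $\varphi$ to be exact (only that it carries objects to objects); exactness is presumably imposed so that $\Res(\varphi)$ is a triangulated functor and the inclusion is one of triangulated subcategories, but it plays no role in the set-theoretic containment itself. I would write the proof in two or three sentences along exactly these lines.

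\begin{proof}
Let~$M \in \Dfd{\cT_2}$ and let~$t_1 \in \cT_1$ be arbitrary. By the definition of the restriction functor in~\eqref{eq:def-res} we have~$\Res(\varphi)(M)(t_1) = M(\varphi(t_1))$. Since~$\varphi(t_1) \in \cT_2$ and~$M$ is homologically finite-dimensional over~$\cT_2$, Definition~\ref{def:rhfd-lhf} gives~$M(\varphi(t_1)) \in \Db(\kk)$. Therefore~$\Res(\varphi)(M)(t_1) \in \Db(\kk)$ for every~$t_1 \in \cT_1$, which means~$\Res(\varphi)(M) \in \Dfd{\cT_1}$, as required.
\end{proof}
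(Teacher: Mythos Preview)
Your proof is correct and takes essentially the same approach as the paper: both simply unwind the definition~\eqref{eq:def-res} to get~$\Res(\varphi)(M)(t_1) = M(\varphi(t_1))$ and observe this lies in~$\Db(\kk)$ because~$\varphi(t_1) \in \cT_2$. The paper's proof is just a one-line version of yours.
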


\begin{proof}
If~$t_1 \in \cT_1$ and~$M \in \Dfd{\cT_2}$ then~$\Res(\varphi)(M)(t_1) = M(\varphi(t_1)) \in \Db(\kk)$
because~$\varphi(t_1) \in \cT_2$.
\end{proof}

It follows from Lemma~\ref{lem:res-shriek} that the functor~$\Res(\varphi)$ induces a functor
\begin{equation}
\label{eq:shriek}
\varphi^! \coloneqq \Res(\varphi)\vert_{\Dfd{\cT_2}} \colon \Dfd{\cT_2} \to \Dfd{\cT_1}.
\end{equation}
This simple observation when combined with Lemma~\ref{lem:res} and Lemma~\ref{lem:res-ind} has the following corollaries.

\begin{corollary}
\label{cor:pseudofunctor}
The operation~$\cT \mapsto \Dfd{\cT}$, $\varphi \mapsto \varphi^!$ defines a contravariant pseudofunctor
from the $2$-category of small triangulated dg-categories to the $2$-category of small triangulated dg-categories.
\end{corollary}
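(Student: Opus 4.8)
The plan is to bootstrap the pseudofunctoriality of $\cT \mapsto \bD(\cT)$ (Lemma~\ref{lem:res}) to the subcategories of homologically finite-dimensional objects. The key point established just above is that $\Res(\varphi)$ preserves homological finite-dimensionality (Lemma~\ref{lem:res-shriek}), so that the restricted functor $\varphi^! = \Res(\varphi)\vert_{\Dfd{\cT_2}}$ of~\eqref{eq:shriek} is well-defined as a dg-functor $\Dfd{\cT_2} \to \Dfd{\cT_1}$; it is exact and continuous because $\Res(\varphi)$ is, and $\Dfd{\cT_i} \subset \bD(\cT_i)$ is a triangulated dg-subcategory (closed under shifts, cones, and summands). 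This already gives the assignment on $1$-morphisms; it remains to supply the coherence data and check the pseudofunctor axioms.

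First I would record the action on objects and $1$-morphisms: $\cT \mapsto \Dfd{\cT}$, and for an exact dg-functor $\varphi\colon\cT_1\to\cT_2$ the functor $\varphi^!\colon\Dfd{\cT_2}\to\Dfd{\cT_1}$. For the structure $2$-isomorphisms, I would simply restrict those coming from Lemma~\ref{lem:res}. Concretely: for composable $\varphi\colon\cT_1\to\cT_2$ and $\psi\colon\cT_2\to\cT_3$, Lemma~\ref{lem:res} provides a natural isomorphism $\Res(\varphi)\circ\Res(\psi)\xrightiso{}\Res(\psi\circ\varphi)$ of continuous dg-functors $\bD(\cT_3)\to\bD(\cT_1)$; restricting it along the inclusion $\Dfd{\cT_3}\hookrightarrow\bD(\cT_3)$ (and using that both $\Res(\varphi)\Res(\psi)$ and $\Res(\psi\varphi)$ carry $\Dfd{\cT_3}$ into $\Dfd{\cT_1}$, again by Lemma~\ref{lem:res-shriek}) yields a natural isomorphism $\varphi^!\circ\psi^!\xrightiso{}(\psi\circ\varphi)^!$. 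Similarly the unit isomorphism $\Res(\id_\cT)\xrightiso{}\id_{\bD(\cT)}$ restricts to $(\id_\cT)^!\xrightiso{}\id_{\Dfd\cT}$. For a $2$-morphism (natural transformation of dg-functors) $\eta\colon\varphi\Rightarrow\varphi'$, the induced transformation $\Res(\eta)\colon\Res(\varphi')\Rightarrow\Res(\varphi)$ restricts to $\eta^!\colon\varphi'^!\Rightarrow\varphi^!$, and this restriction is compatible with vertical and horizontal composition because it is so before restriction.

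Finally I would verify the pseudofunctor coherence axioms — associativity of the composition constraints (the pentagon for a triple of composable functors) and the unit (triangle) axioms. Since each of our structure isomorphisms is the restriction of the corresponding one for the pseudofunctor $\cT\mapsto\bD(\cT)$, and restriction along a fully faithful inclusion is faithful on $2$-morphisms, every required identity of $2$-morphisms in $\Dfd{-}$ follows from the already-known identity in $\bD(-)$. In other words, the whole verification reduces, diagram by diagram, to Lemma~\ref{lem:res}. The only genuinely new input beyond Lemma~\ref{lem:res} is Lemma~\ref{lem:res-shriek}, guaranteeing that everything restricts; there is no real obstacle here, and the statement is essentially formal — the mild bookkeeping point to be careful about is simply that at each stage the relevant composites of restriction functors do land in the $\Dfd{}$-subcategories, which is exactly what Lemma~\ref{lem:res-shriek} supplies (applied iteratively for composites).
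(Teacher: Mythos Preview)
Your proposal is correct and follows exactly the approach the paper intends: the corollary is stated without proof, with the sentence preceding it indicating that it follows from Lemma~\ref{lem:res} combined with Lemma~\ref{lem:res-shriek}, which is precisely what you do by restricting the pseudofunctor structure of $\bD(-)$ to the $\Dfd{-}$-subcategories. One tiny quibble: the remark that $\varphi^!$ is ``continuous'' is out of place, since $\Dfd{\cT_i}$ is small (not cocomplete) and the target $2$-category in the statement is that of small triangulated dg-categories; but this does not affect the argument.
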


\begin{corollary}
\label{cor:shriek-adjoint}
If~$\varphi \colon \cT_1 \to \cT_2$ is a dg-enhanced triangulated functor, 
the functor~$\varphi^! \colon \Dfd{\cT_2} \to \Dfd{\cT_1}$ defined in~\eqref{eq:shriek}
is its right adjoint in the sense that there is a functorial isomorphism
\begin{equation*}
\Hom_{\bD(\cT_2)}(\bh_{\cT_2,\varphi(t_1)}, M_2) \cong \Hom_{\bD(\cT_1)}(\bh_{\cT_1,t_1}, \varphi^!M_2)
\end{equation*}
for any~$t_1 \in \cT_1$, $M_2 \in \Dfd{\cT_2}$.
\end{corollary}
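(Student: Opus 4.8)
The plan is to obtain the isomorphism simply by restricting the adjunction $\Ind(\varphi) \dashv \Res(\varphi)$ of Lemma~\ref{lem:res-ind} along the Yoneda embeddings and to the subcategories of homologically finite-dimensional modules. So first I would record, for all $N \in \bD(\cT_1)$ and all $M_2 \in \bD(\cT_2)$, the adjunction isomorphism
\[
\Hom_{\bD(\cT_2)}(\Ind(\varphi)(N), M_2) \cong \Hom_{\bD(\cT_1)}(N, \Res(\varphi)(M_2)),
\]
which is natural in both arguments.

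Next I would specialize to $N = \bh_{\cT_1,t_1}$. The commutativity of diagram~\eqref{eq:ind-pis} provides an isomorphism $\Ind(\varphi)(\bh_{\cT_1,t_1}) \cong \bh_{\cT_2,\varphi(t_1)}$, natural in $t_1 \in \cT_1$, so the left-hand side above becomes $\Hom_{\bD(\cT_2)}(\bh_{\cT_2,\varphi(t_1)}, M_2)$. For the right-hand side I would use that, when $M_2 \in \Dfd{\cT_2}$, Lemma~\ref{lem:res-shriek} gives $\Res(\varphi)(M_2) \in \Dfd{\cT_1}$, and by the very definition~\eqref{eq:shriek} of $\varphi^!$ we have $\Res(\varphi)(M_2) = \varphi^! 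M_2$. Substituting these two identifications into the adjunction isomorphism yields exactly the asserted isomorphism
\[
\Hom_{\bD(\cT_2)}(\bh_{\cT_2,\varphi(t_1)}, M_2) \cong \Hom_{\bD(\cT_1)}(\bh_{\cT_1,t_1}, \varphi^! M_2).
\]

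Finally I would check functoriality. Naturality in $M_2$ is inherited verbatim from naturality of the adjunction isomorphism in its second variable. Naturality in $t_1$ follows by composing three natural transformations: naturality of the adjunction in its first variable, the naturality in $t_1$ of the isomorphism $\Ind(\varphi)(\bh_{\cT_1,t_1}) \cong \bh_{\cT_2,\varphi(t_1)}$ coming from~\eqref{eq:ind-pis}, and contravariant functoriality of $\Hom_{\bD(\cT_2)}(-,M_2)$. There is no genuine obstacle in this argument; the only point requiring a little care is this last piece of bookkeeping, namely verifying that the constructed identification is compatible with morphisms $t_1 \to t_1'$ in $\cT_1$, which is immediate from the cited naturality statements.
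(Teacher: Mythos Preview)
Your proof is correct, but it takes a slightly different route from the paper's. You invoke the full adjunction $\Ind(\varphi) \dashv \Res(\varphi)$ from Lemma~\ref{lem:res-ind} together with the compatibility~\eqref{eq:ind-pis} of $\Ind(\varphi)$ with the Yoneda embeddings. The paper instead bypasses $\Ind(\varphi)$ altogether and applies the Yoneda lemma~\eqref{eq:Yoneda} directly on each side: $\Hom_{\bD(\cT_2)}(\bh_{\cT_2,\varphi(t_1)}, M_2) \cong \rH^0(M_2(\varphi(t_1)))$, then the definition~\eqref{eq:def-res} of restriction gives $\rH^0(M_2(\varphi(t_1))) = \rH^0(\varphi^!M_2(t_1))$, and Yoneda again yields $\Hom_{\bD(\cT_1)}(\bh_{\cT_1,t_1}, \varphi^!M_2)$. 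The paper's argument is a touch more elementary since it does not require the existence of the left adjoint $\Ind(\varphi)$ or the commutativity of~\eqref{eq:ind-pis}; your argument, on the other hand, makes the adjunction structure more visibly a restriction of the ambient $\Ind \dashv \Res$ adjunction, which is conceptually clean. Either way the result is immediate.
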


\begin{proof} 
Using~\eqref{eq:Yoneda}, \eqref{eq:def-res}, and~\eqref{eq:shriek}, we obtain
\begin{equation*}
\Hom_{\bD(\cT_2)}(\bh_{\cT_2,\varphi(t_1)}, M_2) \cong 
\rH^0(M_2(\varphi(t_1))) \cong
\rH^0(\varphi^!M_2(t_1)) \cong
\Hom_{\bD(\cT_1)}(\bh_{\cT_1,t_1}, \varphi^!M_2)
\end{equation*}
which gives the required adjunction.
\end{proof}

As we will see later, besides the category~$\Dfd{\cT}$ of finite-dimensional right dg-modules,
it is useful to consider the category~$\Dfd{\cT^\opp}$ of finite-dimensional left dg-modules over~$\cT$; 
these two categories are related by the dualization operation.

\begin{lemma}
\label{lemma:dual-kk}
The dualization operation on complexes of $\kk$-vector spaces~$V \mapsto V^\vee$ 
induces a dg-enhanced faithful triangulated functor 
\begin{equation}
\label{eq:duk}
\duk \colon \bD(\cT) \to \bD(\cT^\opp)^\opp,
\qquad 
\quad
\duk(M)(t) \coloneqq M(t)^\vee.
\end{equation}
It preserves homologically finite-dimensional dg-modules and induces a commutative diagram
\begin{equation}
\label{eq:diagram-rhfd-lhf}
\vcenter{\xymatrix@C=3em{
\bD(\cT) \ar[r]^-\duk & 
\bD(\cT^\opp)^\opp
\\
\Dfd{\cT}
\ar[r]_-\duk^-{\simeq} \ar@{^{(}->}[u] & 
\Dfd{\cT^\opp}^\opp, \ar@{^{(}->}[u] 
}}
\end{equation} 
where the bottom arrow is an equivalence whose inverse is also given by~$\duk$.
\end{lemma}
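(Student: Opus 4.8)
The plan is to build the functor $\duk$ at the level of dg-modules before passing to derived categories, then verify functoriality, exactness and faithfulness, and finally identify the restriction to homologically finite-dimensional objects. First I would observe that for a right dg-module $M \colon \cT^\opp \to \bD(\kk)$ the assignment $t \mapsto M(t)^\vee$, where $(-)^\vee = \RHom_\kk(-,\kk)$ is the naive $\kk$-linear dual of a complex, is naturally a \emph{left} dg-module over $\cT$, i.e. a dg-functor $\cT \to \bD(\kk)$, equivalently an object of $\bD(\cT^\opp)$; here one uses that $(-)^\vee$ is itself a contravariant dg-functor on $\bD(\kk)$, so composing with $M$ reverses variance once, landing in $\bD(\cT^\opp)$. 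Since a morphism $M_1 \to M_2$ of right dg-modules induces, after dualizing pointwise, a morphism $\duk(M_2) \to \duk(M_1)$ of left dg-modules, the target is correctly $\bD(\cT^\opp)^\opp$; this gives the dg-functor on the level of module categories, and because $(-)^\vee$ sends acyclic complexes to acyclic complexes (over a field, $\Ho^i(V^\vee) = \Ho^{-i}(V)^\vee$) it descends to the derived categories as claimed in \eqref{eq:duk}.

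Next I would record the standard properties. Triangulatedness and compatibility with shifts are immediate from the corresponding properties of $(-)^\vee$ on $\bD(\kk)$ (it sends a distinguished triangle to a distinguished triangle, up to the usual sign/rotation, because a short exact sequence of complexes of $\kk$-vector spaces dualizes to a short exact sequence). Faithfulness: if $f \colon M_1 \to M_2$ becomes zero after applying $\duk$, then $\duk(f)(t) = f(t)^\vee = 0$ in $\bD(\kk)$ for every $t$; since for complexes of $\kk$-vector spaces $g^\vee = 0$ in the derived category forces $g = 0$ (the double-dual map $V \to V^{\vee\vee}$ is injective on cohomology, or: $\Ho^i(g) = 0$ iff $\Ho^i(g^\vee)=0$), we get $f(t) = 0$ for all $t$, hence $f = 0$ by Yoneda; note that $\duk$ need not be full, which is why only faithfulness is asserted. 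For the preservation statement: $M \in \Dfd{\cT}$ means $M(t) \in \Db(\kk) = \Dp(\kk)$ for all $t$, and the full dual of a bounded finite-dimensional complex is again bounded and finite-dimensional, so $\duk(M)(t) = M(t)^\vee \in \Db(\kk)$, i.e. $\duk(M) \in \Dfd{\cT^\opp}$; this yields the inner square of \eqref{eq:diagram-rhfd-lhf} and commutativity is tautological since $\duk$ is defined by the same pointwise formula in both rows.

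Finally, for the equivalence assertion on the bottom row I would exhibit the natural transformation $\eta_M \colon M \to \duk_{\cT^\opp}(\duk_\cT(M))$ given pointwise by the biduality map $M(t) \to M(t)^{\vee\vee}$, and note that on a \emph{homologically finite-dimensional} module this is a pointwise quasi-isomorphism: for $M(t) \in \Db(\kk)$ with finite-dimensional cohomology, the biduality map $M(t) \to M(t)^{\vee\vee}$ is a quasi-isomorphism (each $\Ho^i$ is finite-dimensional, and biduality is an isomorphism on finite-dimensional vector spaces). Hence $\eta$ restricts to an isomorphism of functors on $\Dfd{\cT}$, showing that $\duk \colon \Dfd{\cT} \to \Dfd{\cT^\opp}^\opp$ and $\duk \colon \Dfd{\cT^\opp}^\opp \to \Dfd{\cT}$ are mutually inverse. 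The only genuinely delicate point is checking that the pointwise biduality maps assemble into a morphism of dg-modules (naturality in $\cT$, i.e. compatibility with the $\cT$-action) rather than just a collection of quasi-isomorphisms; this is a routine but careful check that the dg-functoriality of $(-)^\vee$ and of its double is coherent, and I expect this — not the quasi-isomorphism claim, which is elementary linear algebra — to be the main thing requiring attention.
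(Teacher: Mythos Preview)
Your proof is correct and takes the same approach as the paper, whose own proof is the single sentence ``The lemma easily follows from the fact that the dualization functor on vector spaces is faithful, and is an equivalence when restricted to finite-dimensional spaces''; you have carefully unpacked precisely these two properties of $(-)^\vee$ on~$\bD(\kk)$.

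One small quibble: in your faithfulness argument the step ``$f(t)=0$ for all $t$, hence $f=0$ by Yoneda'' is a bit quick --- Yoneda only gives $\Hom(\bh_t[i],f)=0$ for all $t,i$, i.e.\ that $f$ is annihilated by every representable, and one should say why this forces $f=0$ in the derived category (this is a statement about the absence of phantom maps, not the Yoneda lemma itself). The paper is no more explicit on this point, and the part of the lemma that is actually used later (the equivalence on~$\Dfd{\cT}$) is handled correctly by your biduality argument.
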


\begin{proof}
The lemma easily follows from the fact that the dualization functor on vector spaces is faithful,
and is an equivalence when restricted to finite-dimensional spaces.
\end{proof}

Note that a functor~$\varphi \colon \cT_1 \to \cT_2$ can be also thought of 
as a functor~$\varphi^\opp \colon \cT_1^\opp \to \cT_2^\opp$ between the opposite categories.
From this we obtain~$\Res(\varphi^\opp) \colon \bD(\cT_2^\opp) \to \bD(\cT_1^\opp)$ 
and~$(\varphi^\opp)^! \colon \Dfd{\cT_2^\opp} \to \Dfd{\cT_1^\opp}$ by restriction,
and passing to the opposite again we obtain the functor
\begin{equation}
\label{eq:star-body}
\varphi^* \coloneqq 
((\varphi^\opp)^!)^\opp \colon \Dfd{\cT_2^\opp}^\opp \to \Dfd{\cT_1^\opp}^\opp.
\end{equation}
The following result is immediate.

\begin{lemma}
\label{lem:star-adjunction}
The functor~$\varphi^* \colon \Dfd{\cT_2^\opp}^\opp \to \Dfd{\cT_1^\opp}^\opp$ 
is left adjoint to the functor~$\varphi \colon \cT_1 \to \cT_2$, i.e.,
\begin{equation*}
\Hom_{\bD(\cT_1^\opp)^\opp}(\varphi^*M_2, \bh_{\cT_1^\opp,t_1}) \cong
\Hom_{\bD(\cT_2^\opp)^\opp}(M_2, \bh_{\cT_2^\opp,\varphi^\opp(t_1)})
\end{equation*}
for any~$t_1 \in \cT_1$, $M_2 \in \Dfd{\cT_2^\opp}^\opp$.
\end{lemma}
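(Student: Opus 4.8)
The plan is to deduce the displayed isomorphism directly from Corollary~\ref{cor:shriek-adjoint} by a purely formal passage to opposite categories, with no new input needed. Recall first that, by the definition in~\eqref{eq:star-body}, $\varphi^* = ((\varphi^\opp)^!)^\opp$; in particular, on objects $\varphi^*$ sends a dg-module $M_2$ to $(\varphi^\opp)^! M_2 \in \Dfd{\cT_1^\opp}$, only now regarded inside the opposite category $\Dfd{\cT_1^\opp}^\opp$.

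First I would apply Corollary~\ref{cor:shriek-adjoint} to the functor $\varphi^\opp \colon \cT_1^\opp \to \cT_2^\opp$, which is again a dg-enhanced triangulated functor between essentially small dg-enhanced triangulated categories. This produces, for every object $t_1$ of $\cT_1^\opp$ (equivalently, of $\cT_1$) and every $M_2 \in \Dfd{\cT_2^\opp}$, a functorial isomorphism
\begin{equation*}
\Hom_{\bD(\cT_2^\opp)}(\bh_{\cT_2^\opp,\varphi^\opp(t_1)}, M_2) \cong \Hom_{\bD(\cT_1^\opp)}(\bh_{\cT_1^\opp,t_1}, (\varphi^\opp)^! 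M_2).
\end{equation*}
Next I would rewrite both $\Hom$-groups by means of the tautology $\Hom_{\cC^\opp}(X,Y) = \Hom_\cC(Y,X)$: the left-hand side becomes $\Hom_{\bD(\cT_2^\opp)^\opp}(M_2, \bh_{\cT_2^\opp,\varphi^\opp(t_1)})$, while the right-hand side becomes $\Hom_{\bD(\cT_1^\opp)^\opp}((\varphi^\opp)^! M_2, \bh_{\cT_1^\opp,t_1})$, which by the definition of $\varphi^*$ recalled above is nothing but $\Hom_{\bD(\cT_1^\opp)^\opp}(\varphi^* M_2, \bh_{\cT_1^\opp,t_1})$. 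Comparing the two rewritten sides yields exactly the asserted isomorphism, and its functoriality in $t_1$ and $M_2$ is inherited from that of Corollary~\ref{cor:shriek-adjoint}.

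The only point requiring care — and the reason the statement can be called \emph{immediate} rather than entirely trivial — is the variance bookkeeping: one must keep straight which $\Hom$ is computed in $\bD(\cT_i^\opp)$ and which in $\bD(\cT_i^\opp)^\opp$, and check that the representable module $\bh_{\cT_1^\opp,t_1}$ produced after opposing Corollary~\ref{cor:shriek-adjoint} really is the one in the statement. Since all the categories in sight remain essentially small dg-enhanced idempotent complete triangulated categories and $\varphi^\opp$ remains a dg-enhanced triangulated functor, Corollary~\ref{cor:shriek-adjoint} applies verbatim and no further argument is needed.
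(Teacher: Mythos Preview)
Your proposal is correct and takes essentially the same approach as the paper: the paper's proof simply says the result follows immediately from Corollary~\ref{cor:shriek-adjoint} applied to~$\varphi^\opp \colon \cT_1^\opp \to \cT_2^\opp$ since passing to the opposite category reverses the direction of morphisms, and you have spelled out exactly this passage in detail.
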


\begin{proof}
Follows immediately from Corollary~\ref{cor:shriek-adjoint} applied to~$\varphi^\opp \colon \cT_1^\opp \to \cT_2^\opp$
since passing to the opposite category reverses the direction of morphisms. 
\end{proof}

The adjunction of~$\varphi^*$ and~$\varphi$ looks a bit messy, 
but it will become more transparent for functors between hfd-closed categories (defined in~\S\ref{sec:hfd-gor}),
see Proposition~\ref{prop:adjunctions}.

The pseudofunctors~$\bD(-)$ and~$\Dfd{-}$, as any other pseudofunctors between 2-categories of triangulated categories,
are compatible with adjunctions and semiorthogonal decompositions.
Note, however, that they invert the order of the components (because they are contravariant).

\begin{lemma}
\label{lem:sod-rhfd-lhf}
If $\cT = \langle \cA, \cB \rangle$ is a semiorthogonal decomposition, there are semiorthogonal decompositions
\begin{equation}
\label{eq:sod-rhfd-lhf}
\bD(\cT) = \langle \bD(\cB), \bD(\cA) \rangle
\qquad\text{and}\qquad 
\Dfd{\cT} = \langle \Dfd{\cB}, \Dfd{\cA} \rangle.
\end{equation}
Moreover, as subcategories of~$\bD(\cT)$ and~$\Dfd{\cT}$ their components 
consist of \textup(homologically finite-dimen\-sional\textup) dg-modules over~$\cT$
that vanish on the subcategories~$\cA \subset \cT$ and~$\cB \subset \cT$, respectively.
\end{lemma}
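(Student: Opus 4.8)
The plan is to reduce both assertions to Lemma~\ref{lem:dqc-dp} by identifying the relevant derived categories with module categories over the components. First I would establish the first semiorthogonal decomposition in~\eqref{eq:sod-rhfd-lhf}. Let~$i_\cA \colon \cA \hookrightarrow \cT$ and~$i_\cB \colon \cB \hookrightarrow \cT$ be the inclusions. I claim that~$\bD(\cT) = \langle \Res(i_\cB)^{-1}\text{-related image}, \dots\rangle$; more precisely, since~$\cB = {}^\perp\cA$ and~$\cA = \cB^\perp$ inside~$\cT$, one checks directly from the definition of a dg-module (a dg-functor~$\cT^\opp \to \bD(\kk)$) that the subcategory of~$\bD(\cT)$ consisting of modules vanishing on~$\cA$ is equivalent, via~$\Res(i_\cB)$, to~$\bD(\cB)$: indeed a module vanishing on~$\cA$ is determined by its restriction to~$\cB$ because every object of~$\cT$ sits in a triangle with one vertex in~$\cA$ and one in~$\cB$, and conversely any~$\cB$-module extends by zero on~$\cA$ using this triangle (here one must use that the gluing bimodule data is encoded in~$\cT$). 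Symmetrically, modules vanishing on~$\cB$ form a copy of~$\bD(\cA)$. Semiorthogonality~$\Hom_{\bD(\cT)}(\bD(\cB),\bD(\cA)) = 0$ in the stated order follows from~$\Hom_\cT(\cB,\cA) = 0$ applied to representables plus continuity and the fact that representables generate; the gluing triangle for an arbitrary~$M \in \bD(\cT)$ is obtained by applying the recollement/projection functors, or most cleanly by noting that representables~$\bh_{\cT,t}$ themselves decompose via~$t = \langle t_\cA, t_\cB\rangle$ and then extending to all of~$\bD(\cT)$ by the fact that it is generated under colimits by representables.

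Next, for the second decomposition in~\eqref{eq:sod-rhfd-lhf}, I would simply intersect the first one with~$\Dfd{\cT}$. A module~$M \in \bD(\cT)$ is homologically finite-dimensional iff~$M(t) \in \Db(\kk)$ for all~$t \in \cT$; using the gluing triangle~$M_\cA \to M \to M_\cB$ (with~$M_\cA$ vanishing on~$\cB$, $M_\cB$ vanishing on~$\cA$) and the long exact sequence obtained by evaluating on~$t$, one sees~$M \in \Dfd{\cT}$ iff both~$M_\cA \in \Dfd{\cT}$ and~$M_\cB \in \Dfd{\cT}$. This needs the observation that~$M_\cB(t) \in \Db(\kk)$ for all~$t \in \cT$ iff~$M_\cB(t) \in \Db(\kk)$ for all~$t \in \cB$ — which holds because~$M_\cB$ vanishes on~$\cA$, so its value on a general~$t$, via the triangle~$t_\cA \to t \to t_\cB$, agrees with its value on~$t_\cB \in \cB$; and similarly for~$M_\cA$ using~$\cA$. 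Under the equivalences~$\{M : M|_\cA = 0\} \simeq \bD(\cB)$ and~$\{M : M|_\cB = 0\} \simeq \bD(\cA)$, these conditions say exactly that the corresponding~$\cB$- or~$\cA$-module is homologically finite-dimensional. Hence the decomposition~$\bD(\cT) = \langle \bD(\cB), \bD(\cA)\rangle$ restricts to~$\Dfd{\cT} = \langle \Dfd{\cB}, \Dfd{\cA}\rangle$, with the components described as the modules over~$\cT$ vanishing on~$\cA$, resp.~on~$\cB$, which is the "moreover" clause.

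An alternative, perhaps cleaner route for the first decomposition: apply Lemma~\ref{lem:dqc-dp} with~$\hcT = \bD(\cT)$ and with the small subcategory~$\cT \simeq \Dp(\cT)$ of compact generators — but to get~$\bD(\cB)$ out one instead takes~$\hcT$ to be the localizing subcategory of~$\bD(\cT)$ generated by~$\bh_{\cT,b}$ for~$b \in \cB$, notes that the~$\bh_{\cT,b}$ are still compact in~$\bD(\cT)$ hence compact in~$\hcT$ and generate it, and that the full subcategory they span is equivalent to~$\cB$ (since~$\RHom_{\bD(\cT)}(\bh_{\cT,b},\bh_{\cT,b'}) = \RHom_\cT(b,b') = \RHom_\cB(b,b')$ by fullness of~$\cB \subset \cT$); then Lemma~\ref{lem:dqc-dp} gives~$\hcT \simeq \bD(\cB)$, and one identifies~$\hcT$ with~$\{M : M|_\cA = 0\}$ by checking compact generators. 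I expect the main obstacle to be the bookkeeping for the gluing triangle at the level of arbitrary (non-perfect) dg-modules — i.e. producing the functorial triangle~$M_\cA \to M \to M_\cB$ in~$\bD(\cT)$ and verifying its compatibility with evaluation on objects — rather than any conceptual difficulty; this is where one must be careful that continuity lets one pass from representables to all of~$\bD(\cT)$, and that the semiorthogonality and the identification of components survive that passage.
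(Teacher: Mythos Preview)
Your approach is correct and workable, but the paper takes a cleaner formal route. Rather than building the extension-by-zero and the gluing triangle by hand, the paper brings in the projection functors $\pi_\cA \colon \cT \to \cA$ and $\pi_\cB \colon \cT \to \cB$ (the adjoints of the inclusions) alongside $\iota_\cA, \iota_\cB$, and applies the contravariant pseudofunctor $\Res$ (Lemma~\ref{lem:res}) to all four functors and to their adjunction and semiorthogonality relations. Contravariance swaps the roles: $\Res(\pi_\cA)$ and $\Res(\pi_\cB)$ become the fully faithful embeddings of $\bD(\cA)$ and $\bD(\cB)$ into $\bD(\cT)$, while $\Res(\iota_\cA)$ and $\Res(\iota_\cB)$ become the projections. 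Your ``extension by zero'' of a $\cB$-module is precisely $\Res(\pi_\cB)$ (a $\cB$-module $M$ is sent to $t \mapsto M(\pi_\cB(t))$, which vanishes on $\cA$ since $\pi_\cB\iota_\cA = 0$), and recognizing this dissolves the bookkeeping you flag as the main obstacle. For $\Dfd{\cT}$ the paper simply repeats the same formal argument with the restricted functors $\iota^!$, $\pi^!$ of~\eqref{eq:shriek}, whereas you intersect the first decomposition with $\Dfd{\cT}$ and chase the filtration triangle; both work, but the paper's version makes transparent that the two claims are instances of one pseudofunctor trick. One small slip: for the decomposition $\langle \bD(\cB), \bD(\cA) \rangle$ the required vanishing is $\Hom(\bD(\cA), \bD(\cB)) = 0$, not the direction you wrote; your Yoneda argument does go through in the correct direction once you note that the representables $\bh_{\cT,a}$ generate the component vanishing on $\cB$ and that any $N$ vanishing on $\cA$ satisfies $\Hom(\bh_{\cT,a}, N) \cong N(a) = 0$.
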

\begin{proof}
If~$\iota_{\cA} \colon \cA \to \cT$ and~$\iota_{\cB} \colon \cB \to \cT$ are the embedding functors,
and~$\pi_{\cA} \colon \cT \to \cA$, $\pi_{\cB} \colon \cT \to \cB$ are the projection functors
with respect to~$\cT = \langle \cA, \cB \rangle$,
we have the adjunction relations for~$\iota_{\cA}$ and~$\pi_{\cA}$, i.e.,
the unit and counit morphisms~$\id_\cT \to \iota_{\cA} \circ \pi_{\cA}$ and~$\pi_{\cA} \circ \iota_{\cA} \to \id_\cA$
such that the compositions
\begin{equation}
\label{eq:adjunction-relations}
\pi_{\cA} \to \pi_{\cA} \circ \iota_{\cA} \circ \pi_{\cA} \to \pi_{\cA}
\qquad\text{and}\qquad
\iota_{\cA} \to \iota_{\cA} \circ \pi_{\cA} \circ \iota_{\cA} \to \iota_{\cA}
\end{equation}
are the identity morphisms.
We also have analogous relations for~$\pi_{\cB}$ and~$\iota_{\cB}$
(with the change that now~$\pi_{\cB}$ is right adjoint to~$\iota_{\cB}$),
and semiorthogonal decomposition relations 
\begin{equation}
\label{eq:sod-relations}
\pi_{\cA} \circ \iota_{\cA} = \id_\cA,\quad
\pi_{\cB} \circ \iota_{\cB} = \id_\cB,\quad
\pi_{\cA} \circ \iota_{\cB} = 0,\quad
\pi_{\cB} \circ \iota_{\cA} = 0.
\end{equation}
After application of the pseudofunctor~$\bD(-)$ we obtain continuous functors
\begin{equation*}
\xymatrix@1@C=5em{
  \bD(\cA)\ \ar@<.5ex>[r]^-{\Res(\pi_\cA)} &
\ \bD(\cT)\ \ar@<.5ex>[l]^-{\Res(\iota_\cA)} \ar@<.5ex>[r]^-{\Res(\iota_\cB)} &
\ \bD(\cB)  \ar@<.5ex>[l]^-{\Res(\pi_\cB)}
},
\end{equation*}
and it follows from Lemma~\ref{lem:res} that these functors satisfy the analogous relations (with the direction of all arrows inverted):
applying~$\Res$ to~\eqref{eq:adjunction-relations} we deduce that~$\Res(\iota_{\cA})$ is right adjoint to~$\Res(\pi_{\cA})$,
and similarly~$\Res(\iota_{\cB})$ is left adjoint to~$\Res(\pi_{\cB})$;
and applying~$\Res$ to~\eqref{eq:sod-relations} we deduce the relations 
\begin{align*}
\Res(\iota_{\cA}) \circ \Res(\pi_{\cA}) &= \id_{\bD(\cA)}, 
&
\Res(\iota_{\cA}) \circ \Res(\pi_{\cB}) &= 0,
\\ 
\Res(\iota_{\cB}) \circ \Res(\pi_{\cB}) &= \id_{\bD(\cB)}, 
&
\Res(\iota_{\cB}) \circ \Res(\pi_{\cA}) &= 0,
\end{align*}
i.e., semiorthogonality relations in~$\bD(\cT)$.

Next, we show that~$N \in \bD(\cT)$ is in the image of~$\Res(\pi_\cA)$ if and only if it vanishes on~$\cB \subset \cT$.
Indeed,
\begin{equation*}
\Res(\pi_\cA)(M)(\iota_\cB(b)) = M(\pi_\cA(\iota_\cB(b))) = M(0) = 0
\end{equation*}
for~$M \in \bD(\cA)$, $b \in \cB$ gives one of the inclusions. 
Conversely, if~$N$ vanishes on~$\cB$
then the cone of the counit of adjunction~$\Res(\pi_\cA)(\Res(\iota_\cA)(N)) \to N$ vanishes on~~$\cB$.
But it also vanishes on~$\cA$ by the relations proved above, 
and since~$\cT$ is generated by~$\cA$ and~$\cB$, it vanishes on~$\cT$, hence it is zero, 
and hence~$N$ is in the image of~$\Res(\pi_\cA)$.
This proves that~$\Res(\pi_\cA)(\bD(\cA)) = \Ker(\Res(\iota_\cB))$.
The same argument proves~$\Res(\pi_\cB)(\bD(\cB)) = \Ker(\Res(\iota_\cA))$.

Now we show that~$\Res(\pi_\cA)(\bD(\cA))$ and~$\Res(\pi_\cB)(\bD(\cB))$ generate~$\bD(\cT)$.
Indeed, since the first subcategory is right admissible and the corresponding projection functor is~$\Res(\iota_\cA)$,
this follows from the equality~$\Res(\pi_\cB)(\bD(\cB)) = \Ker(\Res(\iota_\cA))$ proved above.

The same argument (with~$\Res(\iota_{\cA})$, $\Res(\pi_{\cA})$, $\Res(\iota_{\cB})$, and~$\Res(\pi_{\cB})$ 
replaced by~$\iota_\cA^!$, $\pi_\cA^!$, $\iota_\cB^!$, and~$\pi_\cB^!$, see~\eqref{eq:shriek}) 
proves the semiorthogonal decomposition for~$\Dfd{\cT}$.
\end{proof}

Finally, we discuss the consequences of standard homological properties of~$\cT$ 
(recalled in~\S\ref{sec:prelim}) for the category~$\Dfd{\cT}$.
When we compare~$\cT$ to~$\Dfd{\cT}$, we always consider both as subcategories of~$\bD(\cT)$.

\begin{lemma}
\label{lemma:sp-hfd}
Let~$\cT$ be an essentially small dg-enhanced idempotent complete triangulated category.
\begin{enumerate}[label={\textup{(\roman*)}}]
\item
\label{item:hfd-proper}
$\cT$ is proper if and only if~$\cT \subset \Dfd{\cT} \subset \bD(\cT)$. 
\item
\label{item:hfd-smooth}
If~$\cT$ is smooth 
then~$\Dfd{\cT} \subset \cT \subset \bD(\cT)$.
\item
\label{item:hfd-regular-proper}
If~$\cT$ is regular and proper then~$\Dfd{\cT} = \cT$.
\end{enumerate}
In particular,  if~$\cT$ is smooth and proper then~$\Dfd{\cT} = \cT$.
\end{lemma}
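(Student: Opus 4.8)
The plan is to prove the three statements in order, with each using a representability criterion to identify the relevant dg-modules inside $\cT$, and then to combine (i) and (iii) for the final assertion.

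For part \ref{item:hfd-proper}: properness says $\RHom_\cT(t_1,t_2) \in \Dp(\kk)$ for all $t_1,t_2$. Under the Yoneda embedding $\bh_\cT \colon \cT \hookrightarrow \bD(\cT)$, an object $t \in \cT$ corresponds to the module $\bh_{\cT,t}(-) = \RHom_\cT(-,t)$; this module is homologically finite-dimensional precisely when $\RHom_\cT(t',t) \in \Db(\kk)$ for all $t' \in \cT$, which is exactly properness. Conversely, if $\cT \subset \Dfd{\cT}$, then evaluating the representable module $\bh_{\cT,t_2}$ on $t_1$ shows $\RHom_\cT(t_1,t_2) \in \Db(\kk) = \Dp(\kk)$ for all $t_1, t_2$, so $\cT$ is proper. (The equality $\Db(\kk) = \Dp(\kk)$ is standard since $\kk$ is a field.) Since the Yoneda embedding identifies $\cT$ with $\Dp(\cT)$ — using that $\cT$ is idempotent complete and triangulated — this gives $\cT \subset \Dfd{\cT} \subset \bD(\cT)$ as subcategories.

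For part \ref{item:hfd-smooth}: suppose $\cT$ is smooth, i.e., the diagonal bimodule $\Delta_\cT$ is perfect. Given $M \in \Dfd{\cT}$, I want to show $M$ is a representable (equivalently, compact) object of $\bD(\cT)$, hence lies in $\cT \simeq \Dp(\cT)$. The standard mechanism: for any $M \in \bD(\cT)$ one has $M \cong M \otimes_\cT \Delta_\cT$, and writing $\Delta_\cT$ as a perfect bimodule expresses $M$ as a finite homotopy colimit of modules of the form $\bh_{\cT,t}(-) \otimes_\kk M(t')$ (representable modules tensored over $\kk$ with the complexes $M(t')$); when $M$ is homologically finite-dimensional, each $M(t') \in \Dp(\kk)$, so these pieces are perfect, and a finite iterated cone of perfect modules is perfect. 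Hence $M \in \Dp(\cT) \simeq \cT$. Alternatively, and perhaps more cleanly given the tools in the excerpt, I would verify that $\rH^0(M(-))$ is a representable cohomological functor using smoothness and then invoke Lemma~\ref{lem:representability} to conclude $M$ is representable; I expect this to be the main technical obstacle, as it requires the explicit Fourier–Mukai-type computation with the perfect diagonal.

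For part \ref{item:hfd-regular-proper}: if $\cT$ is regular and proper, then by \ref{item:hfd-proper} we have $\cT \subset \Dfd{\cT}$, so it remains to show the reverse inclusion $\Dfd{\cT} \subset \cT$. Here I would use the strong generator $t_0 \in \cT$: for $M \in \Dfd{\cT}$, properness of $\cT$ together with regularity controls $\RHom_{\bD(\cT)}(\bh_{\cT,t},M)$ uniformly, and a standard dévissage argument — building $M$ out of $t_0$ with a bounded number of cones — shows $M$ is perfect. Concretely, one shows $\rH^0(M(-))$ is represented by an object of $\cT$ by reducing, via the strong generator, to the finite-dimensionality hypothesis on $M$, and then applies Lemma~\ref{lem:representability}. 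Finally, for the last sentence: if $\cT$ is smooth and proper, then it is regular by Lemma~\ref{lem:smooth-regular}, so $\cT$ is regular and proper, and \ref{item:hfd-regular-proper} gives $\Dfd{\cT} = \cT$. (This also follows directly by combining \ref{item:hfd-proper} and \ref{item:hfd-smooth}, which give the two inclusions $\cT \subset \Dfd{\cT}$ and $\Dfd{\cT} \subset \cT$ respectively — the cleanest route, avoiding \ref{item:hfd-regular-proper} entirely.)
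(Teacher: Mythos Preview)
Your proposal is correct and follows essentially the same approach as the paper. For~\ref{item:hfd-smooth} your first argument (tensoring with the perfect diagonal bimodule to see that~$M \cong M \otimes_\cT \Delta_\cT$ lies in~$\Dp(\cT)$) is exactly what the paper does, phrased there via the Fourier--Mukai-type functor~$\Phi_{\Delta_\cT}$; for~\ref{item:hfd-regular-proper} the paper simply cites \cite[Theorem~1.3]{Bondal-vdB} (Brown representability for finite-type cohomological functors on a regular proper category) together with Lemma~\ref{lem:representability}, which is the precise form of the ``d\'evissage'' you sketch---note though that your phrase ``building~$M$ out of~$t_0$'' is slightly misleading, since the strong generator only generates~$\cT$, not~$\Dfd{\cT}$ a priori; the actual mechanism is the representability of~$\rH^0(M(-))$ that you state afterwards.
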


\begin{proof}
Assertion~\ref{item:hfd-proper} is obvious. 
To prove~\ref{item:hfd-smooth}
recall that any dg-module~$B$ over~$\cT_1^\opp \otimes \cT_2$ gives
rise to the derived tensor product functor~$\Phi_B \colon \Du(\cT_1) \to \Du(\cT_2)$ 
(see~\cite[\S6.1]{Keller} or~\cite[\S\S3.5--3.6]{KL}).
Moreover, for a representable dg-module~$B = \bh_{\cT_1^\opp \otimes \cT_2 ,t_1 \boxtimes t_2}$ 
and any~$M \in \Du(\cT_1)$ we have 
\begin{equation*}
\Phi_B(M) \cong M(t_1) \otimes \bh_{\cT_2, t_2} \in \Du(\cT_2).
\end{equation*}
It follows that, for a representable dg-module~$B$, 
we have~$\Phi_B(\Dfd{\cT_1}) \subset \Dp(\cT_2)$,
hence the same holds for any perfect dg-module~$B$.
When~$\cT_1 = \cT_2 = \cT$ is smooth, 
the diagonal bimodule~$\Delta_\cT$ is a perfect dg-module over~$\cT^\opp \otimes \cT$,
hence
\begin{equation*}
\Phi_{\Delta_\cT}(\Dfd{\cT}) \subset \Dp(\cT).
\end{equation*}
On the other hand, $\Phi_{\Delta_\cT}$ is the identity functor, so $\Dfd{\cT} \subset \Dp(\cT)$.
Finally, $\Dp(\cT) = \cT$ since~$\cT$ is triangulated and idempotent complete.

\ref{item:hfd-regular-proper}
Follows from~\cite[Theorem~1.3]{Bondal-vdB} and Lemma~\ref{lem:representability}.

The last assertion is a combination of~\ref{item:hfd-proper} and~\ref{item:hfd-smooth};
it also follows from~\ref{item:hfd-regular-proper} combined with Lemma~\ref{lem:smooth-regular}.
\end{proof}

The following example, communicated to us by Dima Orlov, shows that 
for a regular triangulated category in general~$\Dfd{\cT} \not\subset \cT$.
Indeed, let~$\cT$ be the perfect derived category of representations of the quiver with two vertices 
and infinitely many arrows from the first vertex to the second.
This category~$\cT$ is generated by an exceptional pair (formed by the projective representations), hence it is regular.
Obviously, the simple representations belong to~$\Dfd{\cT}$, 
but the minimal projective resolution for one of them involves one projective representation with infinite multiplicity, 
hence this simple does not lie in~$\cT$.

We finish this subsection with a warning.

\begin{remark}
\label{rem:warning-1}
Assume~$\cT$ is proper, so that~$\cT \subset \Dfd{\cT} \subset \bD(\cT)$.
The embedding~$\iota \colon \cT \hookrightarrow \Dfd{\cT}$ 
induces an embedding~$\Ind(\iota) \colon \bD(\cT) \hookrightarrow \bD(\Dfd{\cT})$,
so we have two embeddings
\begin{equation*}
\Ind(\iota)\vert_{\Dfd{\cT}} \colon \Dfd{\cT} \hookrightarrow \bD(\Dfd{\cT})
\qquad\text{and}\qquad 
\bh_{\Dfd{\cT}} \colon \Dfd{\cT} \hookrightarrow \bD(\Dfd{\cT}).
\end{equation*}
Somewhat unexpectedly, these embeddings are different!
They do, however, coincide when restricted to the subcategory~$\cT$ by commutativity of the diagram~\eqref{eq:ind-pis}.
\end{remark}

\subsection{Reflexivity}
\label{ss:refelxivity}

In this section we investigate what happens when we apply the pseudofunctor~$\Dfd{-}$ twice.
For each object~$t \in \cT$ we define
\begin{align}
\label{eq:def-ev}
\ev_{\cT,t}(M) &\coloneqq M(t),
&& 
M \in \Dfd{\cT},
\\
\label{eq:def-coev}
\coev_{\cT,t}(M) &\coloneqq M(t)^\vee,
&& 
M \in \Dfd{\cT}.
\end{align}
The first, $\ev_{\cT,t}$, is a left homologically finite-dimensional dg-module over~$\Dfd{\cT}$, 
and the second, $\coev_{\cT,t}$, is a right homologically finite-dimensional dg-module over~$\Dfd{\cT}$.
Moreover, these dg-modules are functorial in~$t$, the first is contravariant and the second is covariant,
hence we have dg-functors
\begin{align}
\label{eq:def-ev-functor}
\ev_\cT &\colon \cT^\opp \to  \Dfd{\Dfd{\cT}^\opp},
&& 
t \mapsto \ev_{\cT,t}
\\
\label{eq:def-coev-functor}
\coev_\cT &\colon \cT\hphantom{{}^\opp} \to  \Dfd{\Dfd{\cT}},
&& 
t \mapsto \coev_{\cT,t}.
\end{align}
We can also consider the dg-functor~$\coev_{\cT^\opp} \colon \cT^\opp \to \Dfd{\Dfd{\cT^\opp}}$.

\begin{lemma}
\label{lem:reflexivity-opp}
One of the functors~$\coev_\cT$, $\ev_\cT$, or~$\coev_{\cT^\opp}$ is fully faithful, or essentially surjective, or is an equivalence
if and only if the other two have the same property.
\end{lemma}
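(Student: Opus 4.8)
The plan is to reduce the three-way equivalence to a single comparison by exploiting the symmetry between $\cT$ and $\cT^\opp$ and the duality functor $\duk$ from Lemma~\ref{lemma:dual-kk}. First I would observe that $\coev_{\cT^\opp}\colon\cT^\opp \to \Dfd{\Dfd{\cT^\opp}}$ is literally the functor obtained from $\coev_{(-)}$ by substituting $\cT^\opp$ for $\cT$, so proving ``$\coev_\cT$ has property $P$ $\iff$ $\coev_{\cT^\opp}$ has property $P$'' amounts to nothing: it is the same statement applied to $\cT$ and to $\cT^\opp$. The real content is therefore the equivalence ``$\coev_\cT$ has $P$ $\iff$ $\ev_\cT$ has $P$'' for $P \in \{$fully faithful, essentially surjective, equivalence$\}$, and then the statement for $\coev_{\cT^\opp}$ follows by combining this with the $\opp$-symmetry.

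The key step is to identify $\ev_\cT$ with $\coev_{\cT^\opp}$ up to the equivalence $\duk$. Concretely, Lemma~\ref{lemma:dual-kk} gives an equivalence $\duk\colon \Dfd{\cT} \xrightarrow{\ \sim\ } \Dfd{\cT^\opp}^\opp$, hence (applying $\Dfd{(-)}$, which is a pseudofunctor by Corollary~\ref{cor:pseudofunctor}, and using that it sends equivalences to equivalences) an induced equivalence $\Dfd{\Dfd{\cT}^\opp} \simeq \Dfd{\Dfd{\cT^\opp}}$. I would then check, by unwinding the definitions \eqref{eq:def-ev}--\eqref{eq:def-coev-functor}, that this equivalence carries $\ev_\cT$ to $\coev_{\cT^\opp}$: indeed $\ev_{\cT,t}(M) = M(t)$ and $\coev_{\cT^\opp,t}(N) = N(t)^\vee$, and under the identification $M \leftrightarrow \duk(M)$ with $\duk(M)(t) = M(t)^\vee$ the double dual $M(t)^{\vee\vee} \cong M(t)$ (finite-dimensionality!) makes these match on the nose, functorially in $t$. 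Thus $\ev_\cT$ and $\coev_{\cT^\opp}$ are the same functor up to pre- and post-composition with equivalences, so one is fully faithful / essentially surjective / an equivalence iff the other is.

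Combining the two reductions closes the circle: $\coev_\cT$ has $P$ $\iff$ $\coev_{\cT^\opp}$ has $P$ (same statement for $\cT$ and $\cT^\opp$), and $\coev_{\cT^\opp}$ has $P$ $\iff$ $\ev_\cT$ has $P$ (the $\duk$-identification just established). Hence all three functors share each of the three properties simultaneously. I expect the main — really the only — obstacle to be bookkeeping: making sure the pseudofunctoriality of $\Dfd{(-)}$ and the various $\opp$'s in \eqref{eq:def-ev-functor}--\eqref{eq:def-coev-functor} and in \eqref{eq:diagram-rhfd-lhf} are composed in a compatible way, so that the claimed identification $\ev_\cT \simeq \coev_{\cT^\opp}\circ(\text{equivalence})$ is actually the one coming from $\duk$ and not off by a dualization or an $\opp$. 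Once the diagram of equivalences is drawn carefully this is routine, since every comparison map in sight is built from $\duk$ and the canonical double-duality isomorphism on $\Db(\kk)$, both of which are equivalences on homologically finite-dimensional objects.
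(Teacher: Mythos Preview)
Your ``key step'' --- identifying $\ev_\cT$ with $\coev_{\cT^\opp}$ via $\duk$ --- is correct and is half of the paper's argument. The gap is in your first reduction: you claim that ``$\coev_\cT$ has $P$ $\iff$ $\coev_{\cT^\opp}$ has $P$'' is content-free because ``it is the same statement applied to $\cT$ and to $\cT^\opp$''. This is false. For a \emph{fixed} $\cT$, the functors $\coev_\cT$ and $\coev_{\cT^\opp}$ go between entirely different categories, and there is no a~priori reason they share any property; the uniformity of the construction $\coev_{(-)}$ does not help (by the same logic one could conclude that $\coev_\cT$ is an equivalence iff $\coev_\cS$ is, for any unrelated $\cS$). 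If instead you try to use the universality of your key step and apply it to $\cT^\opp$, you only obtain ``$\ev_{\cT^\opp}$ has $P$ $\iff$ $\coev_\cT$ has $P$'', and $\ev_{\cT^\opp}$ is not one of the three functors in the statement, so the circle still does not close.

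What is missing is a second identification, equally easy: $\ev_\cT$ also agrees with $(\coev_\cT)^\opp$ up to the equivalence $\duk^\opp \colon \Dfd{\Dfd{\cT}}^\opp \xrightarrow{\ \sim\ } \Dfd{\Dfd{\cT}^\opp}$ of Lemma~\ref{lemma:dual-kk} applied to the category $\Dfd{\cT}$, since $\coev_{\cT,t}(M)^\vee = M(t)^{\vee\vee} \cong M(t) = \ev_{\cT,t}(M)$. Combined with the observation that a functor is fully faithful, essentially surjective, or an equivalence iff its opposite is, this gives ``$\ev_\cT$ has $P$ $\iff$ $\coev_\cT$ has $P$'', and together with your key step the three-way equivalence follows. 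The paper packages both identifications into a single commutative diagram with common source $\cT^\opp$ and the three targets linked by the two $\duk$-equivalences.
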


\begin{proof}
The definitions easily imply the commutativity of the diagram
\begin{equation*}
\xymatrix@C=7em{
&
\cT^\opp \ar[dr]^{(\coev_\cT)^\opp}
\ar[d]^{\ev_\cT}
\ar[dl]_{\coev_{\cT^\opp}} 
\\
\Dfd{\Dfd{\cT^\opp}} \ar[r]_{(\mathbf{d}_\Bbbk^\opp)^!} &
\Dfd{\Dfd{\cT}^\opp} &
\Dfd{\Dfd{\cT}}^\opp \ar[l]^{\mathbf{d}_\Bbbk^\opp},
}
\end{equation*}
where the right bottom arrow is induced by the dualization functor for the category~$\Dfd{\cT}$,
and the left bottom arrow is induced by~$\mathbf{d}_\Bbbk^\opp \colon \Dfd{\cT}^\opp \to \Dfd{\cT^\opp}$.
Now the lemma follows from the fact that the bottom arrows are equivalences by Lemma~\ref{lemma:dual-kk}.
\end{proof}

\begin{definition}
\label{def:reflexivity}
We say that a small dg-enhanced triangulated category~$\cT$ is {\sf reflexive}
if the dg-functor~$\coev_\cT$ in~\eqref{eq:def-coev-functor} is an equivalence.
\end{definition}

We could alternatively use the dg-functor~$\ev_\cT$ in this definition,
but Lemma~\ref{lem:reflexivity-opp} shows that this would be an equivalent definition.
We prefer using~$\coev_\cT$, because its definition does not require considering opposite categories.
Note also that by Lemma~\ref{lem:reflexivity-opp} a category~$\cT$ is reflexive if and only if so is~$\cT^\opp$.

Later on (see~\S\ref{sec:geometry} and~\S\ref{sec:algebra}) we will see that many categories 
in geometry and algebra are reflexive.
Here is an example of a
non-reflexive category.

\begin{example}
Let~$\cT = \Dp(\kk\langle x,y \rangle/(xy - yx = 1))$ be
the perfect derived category of the Weyl algebra over a field~$\kk$ of characteristic zero.
If~$M$ is a homologically finite-dimensional dg-module 
then each of its cohomology is a finite-dimensional module over the Weyl algebra, 
hence zero (because the trace of a commutator of two operators on a finite-dimensional vector space is zero), and so~$M = 0$.
Thus, in this example~$\Dfd{\cT} = 0$, hence~$\Dfd{\Dfd{\cT}} = 0 \not\simeq{} \cT$.
\end{example}

Now we deduce some formal properties of reflexivity.

\begin{lemma}
\label{lem:lhfd-lhfrefl}
If~$\cT$ is reflexive, then~$\Dfd{\cT}$ is reflexive.
\end{lemma}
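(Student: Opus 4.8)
The statement to prove is: if $\cT$ is reflexive, then $\Dfd{\cT}$ is reflexive. By Definition~\ref{def:reflexivity}, reflexivity of $\Dfd{\cT}$ means that the functor $\coev_{\Dfd{\cT}} \colon \Dfd{\cT} \to \Dfd{\Dfd{\Dfd{\cT}}}$ is an equivalence. My plan is to exploit the fact that $\cT$ is reflexive, i.e.\ $\coev_\cT \colon \cT \to \Dfd{\Dfd{\cT}}$ is an equivalence, and to ``apply $\Dfd{-}$ once'' to this equivalence. More precisely, I would argue that under the identification $\cT \simeq \Dfd{\Dfd{\cT}}$ coming from reflexivity, the functor $\coev_{\Dfd{\cT}}$ gets identified with a functor built from $\ev_\cT$ or $\coev_\cT$ applied on the other side, and then invoke Lemma~\ref{lem:reflexivity-opp} to conclude.

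\smallskip

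\textbf{Key steps.} First, I would unwind the definitions: for $M \in \Dfd{\cT}$ and $N \in \Dfd{\Dfd{\cT}}$, we have $\coev_{\Dfd{\cT},M}(N) = N(M)^\vee$. Since $\cT$ is reflexive, every $N \in \Dfd{\Dfd{\cT}}$ is of the form $\coev_{\cT,t}$ for a unique (up to iso) $t \in \cT$, and then $\coev_{\cT,t}(M) = M(t)^\vee$, so $\coev_{\Dfd{\cT},M}(\coev_{\cT,t}) = (M(t)^\vee)^\vee \cong M(t)$ (using that $M(t) \in \Dp(\kk)$ is finite-dimensional, so biduality holds). Thus, after transporting along the equivalence $\coev_\cT$, the functor $\coev_{\Dfd{\cT}}$ becomes the assignment $M \mapsto (t \mapsto M(t))$, which is precisely $\ev_{\Dfd{\cT}^\opp}$-type data --- in fact it is the functor $\ev$ for the category $\Dfd{\cT}$ read appropriately, or equivalently it is $\coev_\cT$ viewed contravariantly. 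The upshot is a commutative diagram identifying $\coev_{\Dfd{\cT}}$ (up to the equivalences supplied by Lemma~\ref{lemma:dual-kk} and the reflexivity equivalence for $\cT$) with $\ev_{\Dfd{\cT}}$ or $\coev_{\cT}$ itself. Second, once this identification is set up, I apply Lemma~\ref{lem:reflexivity-opp}: since one of $\coev_{\Dfd{\cT}}, \ev_{\Dfd{\cT}}, \coev_{\Dfd{\cT}^\opp}$ is an equivalence iff the others are, and since the reflexivity of $\cT$ (plus the biduality identification) forces one of them to be an equivalence, $\coev_{\Dfd{\cT}}$ is an equivalence, i.e.\ $\Dfd{\cT}$ is reflexive.

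\smallskip

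\textbf{Main obstacle.} The delicate point --- flagged already in Remark~\ref{rem:warning-1} --- is that the two natural embeddings of $\Dfd{\cT}$ into $\bD(\Dfd{\cT})$ differ, so one must be careful about which Yoneda/coevaluation functor one is tracking and on which side representability is used. Concretely, the hard part is checking that the naturality (functoriality in $t$ and in $M$, with correct variance) of the isomorphism $N(M)^\vee \cong M(t)$ genuinely assembles into a commutative diagram of dg-functors, not merely a pointwise bijection; this requires knowing that the equivalence $\coev_\cT$ intertwines the evaluation pairings on the two sides. I expect this bookkeeping with opposite categories and the functors $(\mathbf{d}_\Bbbk)$, $\ev$, $\coev$ to be the only real content; once the diagram commutes, Lemma~\ref{lem:reflexivity-opp} closes the argument immediately. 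If the direct diagram-chase proves awkward, an alternative is to prove the slightly more symmetric statement that $\ev_{\Dfd{\cT}} \colon \Dfd{\cT}^\opp \to \Dfd{\Dfd{\Dfd{\cT}}^\opp}$ is an equivalence by composing the equivalence $\ev_\cT$ for $\cT$ with the dualization equivalences of Lemma~\ref{lemma:dual-kk}, and then transfer back to $\coev_{\Dfd{\cT}}$ via Lemma~\ref{lem:reflexivity-opp}.
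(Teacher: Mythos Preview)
Your core computation is exactly what the paper does: you evaluate $\coev_{\Dfd{\cT},M}(\coev_{\cT,t}) = (\coev_{\cT,t}(M))^\vee = M(t)^{\vee\vee} \cong M(t)$. The paper packages this as the isomorphism $(\coev_\cT)^! \circ \coev_{\Dfd{\cT}} \cong \id_{\Dfd{\cT}}$ --- your ``transporting along $\coev_\cT$'' is precisely postcomposing with $(\coev_\cT)^!$, and the assignment $M \mapsto (t \mapsto M(t))$ you obtain is just $M$ itself, i.e., the identity functor on $\Dfd{\cT}$. From here the paper concludes in one line: since $\coev_\cT$ is an equivalence (reflexivity of $\cT$), so is $(\coev_\cT)^!$ by Corollary~\ref{cor:pseudofunctor}, and then $\coev_{\Dfd{\cT}}$ is an equivalence because its composite with an equivalence is the identity.

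Your planned detour through Lemma~\ref{lem:reflexivity-opp} --- trying to identify the transported functor with some $\ev$ or $\coev$ for $\Dfd{\cT}$ and then invoking the three-way equivalence --- is unnecessary and is where your write-up becomes vague (``$\ev_{\Dfd{\cT}^\opp}$-type data'', ``$\coev_\cT$ viewed contravariantly''). Once you recognize that the transported functor is literally the identity, no further identification is needed and Lemma~\ref{lem:reflexivity-opp} plays no role. The functoriality concern you flag is legitimate but routine, and applies equally to the paper's formulation of the same computation; it does not require the bookkeeping with opposite categories and $\mathbf{d}_\kk$ that you anticipate.
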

\begin{proof}
Consider the following functors
\begin{equation*}
\coev_{\Dfd{\cT}} \colon \Dfd{\cT} \to \Dfd{\Dfd{\Dfd{\cT}}},
\qquad
(\coev_{\cT})^! \colon \Dfd{\Dfd{\Dfd{\cT}}} \to \Dfd{\cT},
\end{equation*}
(the first is defined by~\eqref{eq:def-coev-functor} and the second is defined by~\eqref{eq:shriek} and~\eqref{eq:def-coev-functor}).
For~$M \in \Dfd{\cT}$, $t \in \cT$ we have
\begin{equation*}
(\coev_\cT)^! (\coev_{\Dfd{\cT}}(M))(t) = 
\coev_{\Dfd{\cT}}(M)(\coev_{\cT,t}) = 
\coev_{\cT,t}(M)^\vee = 
M(t)^{\vee\vee} \cong 
M(t),
\end{equation*}
which means that~$(\coev_\cT)^! \circ \coev_{\Dfd{\cT}} \cong \id_{\Dfd{\cT}}$. 
On the other hand, since~$\cT$ is reflexive, $\coev_{\cT}$ is an equivalence, 
hence~$(\coev_{\cT})^!$ is also an equivalence by Corollary~\ref{cor:pseudofunctor}.
Then the above isomorphism implies that~$\coev_{\Dfd{\cT}}$ is also an equivalence, and hence~$\Dfd{\cT}$ is reflexive.
\end{proof}

We now state a useful criterion for a proper category to be reflexive.
Recall Lemma~\ref{lemma:sp-hfd}\ref{item:hfd-proper}.

\begin{lemma}
\label{lem:reflexivity-criterion}
Assume that~$\cT$ is proper, so that~$\cT \subset \Dfd{\cT}$.
Then the functors~$\ev_\cT$ and~$\coev_\cT$ are fully faithful.
In particular, if~$\ev_\cT$ or~$\coev_\cT$ is essentially surjective, $\cT$ and~$\Dfd{\cT}$ are reflexive.
\end{lemma}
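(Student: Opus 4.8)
The plan is to show that $\coev_\cT$ (and, by Lemma~\ref{lem:reflexivity-opp}, $\ev_\cT$) is fully faithful by computing morphism complexes directly. Let me unpack what needs to be shown: for $t_1, t_2 \in \cT$, the map
\[
\RHom_\cT(t_1, t_2) \longrightarrow \RHom_{\bD(\Dfd{\cT})}(\coev_{\cT,t_1}, \coev_{\cT,t_2})
\]
induced by $\coev_\cT$ should be a quasi-isomorphism. The key point is that, since $\cT$ is proper, the object $\coev_{\cT,t}$ is actually a \emph{representable} right dg-module over $\Dfd{\cT}$: indeed, by Lemma~\ref{lemma:sp-hfd}\ref{item:hfd-proper} we have $t \in \Dfd{\cT}$, and I claim $\coev_{\cT,t} \simeq \bh_{\Dfd{\cT},\, \duk(\bh_{\cT,t})}$ — more precisely, $\coev_{\cT,t}(M) = M(t)^\vee$, and evaluation at the representable module $\bh_{\cT,t} \in \Dfd{\cT}$ recovers (up to quasi-isomorphism, using~\eqref{eq:Yoneda}) the relevant complex. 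So first I would carefully identify $\coev_{\cT,t}$ with a representable $\Dfd{\cT}$-module, i.e.\ with $\bh_{\Dfd{\cT}, \duk(\bh_{\cT,t})}$ where $\duk(\bh_{\cT,t}) \in \Dfd{\cT^\opp}^\opp$ — wait, one must be careful about which side; the cleaner route is to observe that $\coev_{\cT,t}$ corresponds under Yoneda on $\Dfd{\cT}$ to the object $\duk(\bh_{\cT,t})$ viewed appropriately, or simply to check the defining formula against~\eqref{eq:Yoneda} object by object.

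Concretely, the cleanest approach: by the Yoneda lemma~\eqref{eq:Yoneda} applied to the category $\Dfd{\cT}$, for any $N \in \bD(\Dfd{\cT})$ and any $E \in \Dfd{\cT}$ one has $\Hom_{\bD(\Dfd{\cT})}(\bh_{\Dfd{\cT},E}, N) \cong \rH^0(N(E))$. Taking $N = \coev_{\cT,t_2}$ and $E = \coev_{\cT,t_1}$ — once we know $\coev_{\cT,t_1}$ is representable, say by an object $E_{t_1} \in \Dfd{\cT}$ — we get
\[
\Hom_{\bD(\Dfd{\cT})}(\coev_{\cT,t_1}, \coev_{\cT,t_2}) \cong \rH^0\big(\coev_{\cT,t_2}(E_{t_1})\big) = \rH^0\big(E_{t_1}(t_2)^\vee\big).
\]
So I need $E_{t_1}$ with $E_{t_1}(s) \simeq \bh_{\cT,t_1}(s)^{??}$ — this is where the identification has to be pinned down: the right statement is that $\coev_{\cT,t}$, as a right $\Dfd{\cT}$-module, is represented by $\bh_{\cT,t} \in \cT \subset \Dfd{\cT}$ up to dualization bookkeeping, and evaluating gives back $\RHom_\cT(t_2,t_1)$ (or its dual), matching $\RHom_\cT$ after accounting for the contravariance built into $\coev_\cT \colon \cT \to \Dfd{\Dfd{\cT}}$ being covariant. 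I would carry this out by first establishing the representability claim as a small sublemma (using Lemma~\ref{lem:representability} to reduce to the cohomological-functor level, where it is just~\eqref{eq:Yoneda} combined with finite-dimensional duality $V \cong V^{\vee\vee}$ valid since everything lands in $\Db(\kk)$ by properness), then reading off fully faithfulness as a chain of natural isomorphisms. The analogous statement for $\ev_\cT$ follows either by the same computation or formally from Lemma~\ref{lem:reflexivity-opp}. Finally, the last sentence is immediate: a fully faithful functor that is essentially surjective is an equivalence, so if $\coev_\cT$ (or $\ev_\cT$) is essentially surjective then it is an equivalence, whence $\cT$ is reflexive by Definition~\ref{def:reflexivity} (using Lemma~\ref{lem:reflexivity-opp} if one argued via $\ev_\cT$), and then $\Dfd{\cT}$ is reflexive by Lemma~\ref{lem:lhfd-lhfrefl}.

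The main obstacle I anticipate is bookkeeping of the dualizations and op's: making sure that the representable module identifying $\coev_{\cT,t}$ is the correct one, that the variance matches (the source of $\coev_\cT$ is $\cT$, not $\cT^\opp$, yet $\coev_{\cT,t}(M) = M(t)^\vee$ involves a dual), and that the final isomorphism $\RHom_\cT(t_1,t_2) \xrightarrow{\sim} \rH^\bullet(E_{t_1}(t_2)^\vee)$ is the one \emph{induced by} $\coev_\cT$ rather than merely an abstract isomorphism of the two sides. I would handle this by tracking the universal class (the image of $\id$) exactly as in the proof of Lemma~\ref{lem:representability}, so that naturality is manifest and the identification is canonical rather than ad hoc. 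The finite-dimensionality hypothesis (properness) enters precisely — and only — to guarantee $t \in \Dfd{\cT}$ so that $\coev_{\cT,t}$ can be evaluated at it, and to make $V \cong V^{\vee\vee}$ hold on the nose for the complexes that appear.
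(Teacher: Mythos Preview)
Your idea is the same as the paper's---use Yoneda and representability---but you have chosen the harder of the two available functors. The paper works with~$\ev_\cT$ rather than~$\coev_\cT$, and the payoff is that the entire argument collapses to a one-line factorization: since~$\cT$ is proper the Yoneda embedding lands in~$\Dfd{\cT}$, so~$(\bh_\cT)^\opp \colon \cT^\opp \to \Dfd{\cT}^\opp$ makes sense, and composing with the Yoneda embedding~$\bh_{\Dfd{\cT}^\opp}$ of~$\Dfd{\cT}^\opp$ one computes, for~$M \in \Dfd{\cT}$,
\[
\bh_{\Dfd{\cT}^\opp,\,\bh_{\cT,t}}(M) = \RHom_{\Dfd{\cT}^\opp}(M,\bh_{\cT,t}) = \RHom_{\Dfd{\cT}}(\bh_{\cT,t},M) \cong M(t) = \ev_{\cT,t}(M),
\]
the last isomorphism being Yoneda in~$\bD(\cT)$. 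Thus~$\ev_\cT$ factors as two Yoneda embeddings and is fully faithful; then~$\coev_\cT$ is fully faithful by Lemma~\ref{lem:reflexivity-opp}. No duals appear anywhere.

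Your route via~$\coev_\cT$ can be made to work, but the representing object you wrote down is not quite right: $\duk(\bh_{\cT,t})$ lives in~$\Dfd{\cT^\opp}^\opp$, not in~$\Dfd{\cT}$, so it cannot represent a right~$\Dfd{\cT}$-module. The correct object is~$N_t \coloneqq \duk(\bh_{\cT^\opp,t}) \in \Dfd{\cT}$, the $\kk$-dual of the \emph{corepresentable} left module, and to verify~$\RHom_{\Dfd{\cT}}(M,N_t) \cong M(t)^\vee$ for all~$M$ (not just representable~$M$) you need the tensor--hom adjunction~$\RHom_{\bD(\cT)}(M,\duk(P)) \cong (M \otimes^L_\cT P)^\vee$ together with~$M \otimes^L_\cT \bh_{\cT^\opp,t} \cong M(t)$. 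This is exactly the ``bookkeeping of dualizations and op's'' you anticipated; switching to~$\ev_\cT$ eliminates it entirely. Your treatment of the final sentence (essential surjectivity plus full faithfulness gives reflexivity, then Lemma~\ref{lem:lhfd-lhfrefl}) is fine and matches the paper.
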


\begin{proof}
Consider the diagram
\begin{equation}
\label{eq:hh-ev}
\vcenter{\xymatrix@C=5em{
\cT^\opp \ar[r]^-{\ev_\cT} \ar[d]_{(\bh_\cT)^\opp} & 
\Dfd{\Dfd{\cT}^\opp} \ar@{^{(}->}[d]
\\
\Dfd{\cT}^\opp \ar[r]^-{\bh_{\Dfd{\cT}^\opp}} &
\bD(\Dfd{\cT}^\opp)
}}
\end{equation}
The Yoneda functor of~$\cT$ takes values in homologically finite-dimensional modules because~$\cT$ is proper,
hence the left vertical arrow makes sense.
The composition of the left vertical and bottom horizontal arrow takes~$t \in \cT$ 
to the left dg-module over~$\Dfd{\cT}$ whose value on~$M \in \Dfd{\cT}$ is
\begin{equation*}
\RHom_{\Dfd{\cT}^\opp}(M,\bh_{\cT,t}) =
\RHom_{\Dfd{\cT}}(\bh_{\cT,t},M)  \cong 
M(t),
\end{equation*}
where the isomorphism follows from the Yoneda lemma.
By~\eqref{eq:def-ev}
the diagram is commutative.
Now note that the left vertical and bottom horizontal arrows are fully faithful by the Yoneda lemma,
hence the functor~$\ev_\cT$ is fully faithful, hence so is~$\coev_\cT$ by Lemma~\ref{lem:reflexivity-opp}.

Now if either~$\ev_\cT$ or~$\coev_\cT$ is essentially surjective, 
each of them is an equivalence (see Lemma~\ref{lem:reflexivity-opp}), 
hence~$\cT$ is reflexive.
Finally, $\Dfd{\cT}$ is reflexive by Lemma~\ref{lem:lhfd-lhfrefl}.
\end{proof}

The coevaluation is a morphism of pseudofunctors from the identity to the square of~$\Dfd{-}$.

\begin{lemma}
\label{lem:coev-commutes}
If~$\varphi \colon \cT_1 \to \cT_2$ is any dg-enhanced functor,  the diagram
\begin{equation}
\label{eq:coev-commutes}
\vcenter{\xymatrix@C=5em{
\cT_1 \ar[d]_{\coev_{\cT_1}} \ar[r]^\varphi &
\cT_2 \ar[d]^{\coev_{\cT_2}} 
\\
\Dfd{\Dfd{\cT_1}} \ar[r]^{(\varphi^!)^!} &
\Dfd{\Dfd{\cT_2}}.
}}
\end{equation}
commutes.
In particular, if~$\varphi$, $(\varphi^!)^!$, and~$\coev_{\cT_2}$ are fully faithful then so is~$\coev_{\cT_1}$.
\end{lemma}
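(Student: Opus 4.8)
The plan is to prove commutativity of~\eqref{eq:coev-commutes} by a direct unwinding of the definitions, showing that the two composites agree on the nose (up to the coherence isomorphisms implicit in~$\Res$ and in the pseudofunctor~$\Dfd{-}$), and not merely up to some nontrivial natural transformation.

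First I would record the relevant formulas. By definition $\varphi^! = \Res(\varphi)\vert_{\Dfd{\cT_2}}$, so $\varphi^!(M_2)(t_1) = M_2(\varphi(t_1))$ for $M_2 \in \Dfd{\cT_2}$ and $t_1 \in \cT_1$, this being well defined by Lemma~\ref{lem:res-shriek}. Likewise $(\varphi^!)^! = \Res(\varphi^!)\vert_{\Dfd{\Dfd{\cT_1}}}$, so $(\varphi^!)^!(N)(M_2) = N(\varphi^!(M_2))$ for $N \in \Dfd{\Dfd{\cT_1}}$, again well defined and landing in $\Dfd{\Dfd{\cT_2}}$ by Lemma~\ref{lem:res-shriek}; and $\coev_{\cT,t}(M) = M(t)^\vee$. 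Now recall (from the discussion preceding Lemma~\ref{lem:reflexivity-opp}) that $\coev_{\cT_1,t_1} \in \Dfd{\Dfd{\cT_1}}$, so the composite $(\varphi^!)^!\bigl(\coev_{\cT_1,t_1}\bigr)$ makes sense, and for a test object $M_2 \in \Dfd{\cT_2}$ I would compute
\[
(\varphi^!)^!\bigl(\coev_{\cT_1,t_1}\bigr)(M_2)
= \coev_{\cT_1,t_1}\bigl(\varphi^!(M_2)\bigr)
= \bigl(\varphi^!(M_2)\bigr)(t_1)^\vee
= M_2(\varphi(t_1))^\vee
= \coev_{\cT_2,\varphi(t_1)}(M_2),
\]
which identifies $(\varphi^!)^!\circ\coev_{\cT_1}$ with $\coev_{\cT_2}\circ\varphi$ on objects. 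For an arrow $g\colon t_1\to t_1'$ the same bookkeeping shows that both composites send $g$ to the morphism of dg-modules that evaluates to $M_2(\varphi(g))^\vee$ on each $M_2$, using that $\Res$ acts on a morphism of dg-modules by precomposition with $\varphi^!$ and that $\coev_{\cT_1}(g)$ evaluates to $M_1(g)^\vee$ on $M_1\in\Dfd{\cT_1}$. This establishes the commutativity claim.

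For the ``In particular'' clause, suppose $\varphi$, $(\varphi^!)^!$ and $\coev_{\cT_2}$ are fully faithful. Then $\coev_{\cT_2}\circ\varphi$ is fully faithful as a composite of fully faithful functors, hence by the commutativity just proved (full faithfulness is invariant under natural isomorphism) the composite $(\varphi^!)^!\circ\coev_{\cT_1}$ is fully faithful. For $t,t'\in\cT_1$, the map on Hom-spaces induced by this composite factors as the map induced by $\coev_{\cT_1}$ followed by the map induced by $(\varphi^!)^!$; the total map is bijective and the second factor is bijective, so the first factor is bijective as well, i.e.\ $\coev_{\cT_1}$ is fully faithful.

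I do not anticipate any genuine obstacle. The only points demanding care are (i) verifying that each restriction functor is evaluated on objects lying in the appropriate subcategory $\Dfd{(-)}$, which is exactly Lemma~\ref{lem:res-shriek} together with the fact that $\coev_{\cT_1,t_1}$ is homologically finite-dimensional, and (ii) keeping in mind the convention that ``the diagram commutes'' means up to natural isomorphism, so that the object- and morphism-level computations above genuinely suffice.
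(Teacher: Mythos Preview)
Your proof is correct and follows essentially the same approach as the paper: both unwind the definitions to obtain the chain of equalities $(\varphi^!)^!(\coev_{\cT_1,t_1})(M_2) = \coev_{\cT_1,t_1}(\varphi^!M_2) = (\varphi^!M_2)(t_1)^\vee = M_2(\varphi(t_1))^\vee = \coev_{\cT_2,\varphi(t_1)}(M_2)$. Your version is in fact more detailed, as you also check naturality on morphisms and spell out the ``In particular'' clause, which the paper simply declares obvious.
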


\begin{proof}
For all~$t_1 \in \cT_1$ and~$M_2 \in \Dfd{\cT_2}$ we have 
(using the definitions~\eqref{eq:shriek} and~\eqref{eq:def-coev})
a chain of equalities
\begin{equation*}
(\varphi^!)^!(\coev_{\cT_1}(t_1))(M_2) =
\coev_{\cT_1}(t_1)(\varphi^!M_2) =
(\varphi^!M_2)(t_1)^\vee = 
M_2(\varphi(t_1))^\vee =
\coev_{\cT_2}(\varphi(t_1))(M_2),
\end{equation*}
which proves the commutativity.
The second claim is obvious.
\end{proof}

One could interpret commutativity of diagram~\eqref{eq:coev-commutes} 
as an equivalence between the functor categories 
from~$\cT_1$ to~$\cT_2$ and from~$\Dfd{\Dfd{\cT_1}}$ to~$\Dfd{\Dfd{\cT_2}}$
when~$\cT_1$ and~$\cT_2$ are reflexive.
We leave this to the interested reader, while proving only the following corollary
for the sets of isomorphisms classes of functors~$\cT_1 \to \cT_2$, that we denote~$[\cT_1, \cT_2]$.

\begin{corollary}
\label{cor:equivalence-Refl}
Given reflexive triangulated dg-enhanced categories~$\cT_1$, $\cT_2$ we have a natural bijection
\begin{equation}
\label{eq:refl-duality}
[\cT_1,\cT_2] \xrightiso{} [\Dfd{\cT_2},\Dfd{\cT_1}], \qquad
\varphi \mapsto \varphi^!
\end{equation}
In particular, every equivalence~$\Dfd{\cT_2} \xrightiso{} \Dfd{\cT_1}$
is induced by a unique \textup(up to isomorphism\textup) equivalence~$\cT_1 \xrightarrow{} \cT_2$.
\end{corollary}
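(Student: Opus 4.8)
The plan is to construct the inverse map to~$\varphi \mapsto \varphi^!$ using the double-dual operation~$(-)^!{}^!$ together with the reflexivity equivalences~$\coev_{\cT_1}$ and~$\coev_{\cT_2}$. First I would observe that, by Corollary~\ref{cor:pseudofunctor}, the operation~$\varphi \mapsto \varphi^!$ is functorial, so it induces a well-defined map~$[\cT_1,\cT_2] \to [\Dfd{\cT_2},\Dfd{\cT_1}]$ on isomorphism classes; applying it once more gives~$[\Dfd{\cT_2},\Dfd{\cT_1}] \to [\Dfd{\Dfd{\cT_1}},\Dfd{\Dfd{\cT_2}}]$, $\psi \mapsto \psi^!$. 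Composing with the bijections
\begin{equation*}
[\Dfd{\Dfd{\cT_1}},\Dfd{\Dfd{\cT_2}}] \xrightiso{} [\cT_1,\cT_2]
\end{equation*}
induced by conjugation with the equivalences~$\coev_{\cT_1}$ and~$\coev_{\cT_2}$ (which are equivalences precisely because~$\cT_1,\cT_2$ are reflexive, Definition~\ref{def:reflexivity}), we obtain a candidate two-sided inverse~$\psi \mapsto \coev_{\cT_2}^{-1} \circ \psi^! \circ \coev_{\cT_1}$.

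The key step is then to check that these two maps are mutually inverse, and this is exactly where Lemma~\ref{lem:coev-commutes} does the work. Starting from~$\varphi \colon \cT_1 \to \cT_2$, I would apply the lemma to the diagram~\eqref{eq:coev-commutes}: it states precisely that~$(\varphi^!)^! \circ \coev_{\cT_1} \cong \coev_{\cT_2} \circ \varphi$, i.e.,~$\coev_{\cT_2}^{-1} \circ (\varphi^!)^! \circ \coev_{\cT_1} \cong \varphi$ in~$[\cT_1,\cT_2]$. This shows the composite~$[\cT_1,\cT_2] \to [\Dfd{\cT_2},\Dfd{\cT_1}] \to [\cT_1,\cT_2]$ is the identity. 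For the other composite, starting from~$\psi \colon \Dfd{\cT_2} \to \Dfd{\cT_1}$, I would set~$\varphi \coloneqq \coev_{\cT_2}^{-1} \circ \psi^! \circ \coev_{\cT_1}$ and must verify~$\varphi^! \cong \psi$; since~$(-)^!$ is a pseudofunctor it converts the composition defining~$\varphi$ into~$\varphi^! \cong (\coev_{\cT_1})^! \circ (\psi^!)^! \circ ((\coev_{\cT_2})^!)^{-1}$, and then Lemma~\ref{lem:coev-commutes} applied to the functor~$\psi$ (giving~$(\psi^!)^! \circ \coev_{\Dfd{\cT_2}} \cong \coev_{\Dfd{\cT_1}} \circ \psi$) combined with the identity~$(\coev_\cT)^! \circ \coev_{\Dfd{\cT}} \cong \id$ established inside the proof of Lemma~\ref{lem:lhfd-lhfrefl} collapses the expression to~$\psi$.

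The main obstacle I anticipate is bookkeeping the coherence of all these natural isomorphisms — in particular making sure that the isomorphism~$(\coev_\cT)^! \circ \coev_{\Dfd{\cT}} \cong \id_{\Dfd{\cT}}$ from the proof of Lemma~\ref{lem:lhfd-lhfrefl} is compatible with~$\coev_{\cT}$ being an equivalence in the way needed to cancel~$(\coev_{\cT_2})^!$ against~$\coev_{\Dfd{\cT_2}}$. On the level of isomorphism classes, however, this coherence issue evaporates: all we need is that each displayed natural isomorphism exists, which is exactly what the cited lemmas provide. Finally, the last sentence of the corollary — that every equivalence~$\Dfd{\cT_2} \xrightiso{} \Dfd{\cT_1}$ comes from a unique equivalence~$\cT_1 \to \cT_2$ — follows formally: the bijection~\eqref{eq:refl-duality} and its inverse are both induced by the~$(-)^!$ construction, which is a pseudofunctor and hence sends equivalences to equivalences, so~$\varphi$ is an equivalence if and only if~$\varphi^!$ is, and uniqueness is just injectivity of~\eqref{eq:refl-duality}.
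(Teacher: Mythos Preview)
Your proof is correct and uses the same ingredients as the paper (Lemma~\ref{lem:coev-commutes} and the identity $(\coev_\cT)^! \circ \coev_{\Dfd{\cT}} \cong \id$ from the proof of Lemma~\ref{lem:lhfd-lhfrefl}), but the packaging is different. You construct the inverse map explicitly and verify both round-trips; the second round-trip forces you to unwind $\varphi^!$ via pseudofunctoriality and then cancel $(\coev_{\cT_i})^!$ against $\coev_{\Dfd{\cT_i}}$, which works but is the delicate part. The paper instead first isolates the easy fact that $\varphi \mapsto (\varphi^!)^!$ is a bijection $[\cT_1,\cT_2] \xrightiso{} [\Dfd{\Dfd{\cT_1}},\Dfd{\Dfd{\cT_2}}]$ (this is your first round-trip, read as conjugation by $\coev$), and then runs a three-arrow sandwich argument on the chain
\[
[\cT_1,\cT_2] \to [\Dfd{\cT_2},\Dfd{\cT_1}] \to [\Dfd{\Dfd{\cT_1}},\Dfd{\Dfd{\cT_2}}] \to [\Dfd{\Dfd{\Dfd{\cT_2}}},\Dfd{\Dfd{\Dfd{\cT_1}}}],
\]
noting that both adjacent pairs compose to double-shriek bijections (the second pair using reflexivity of $\Dfd{\cT_i}$ from Lemma~\ref{lem:lhfd-lhfrefl}); hence the middle arrow is both surjective and injective, and then so is the first. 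This avoids the explicit cancellation computation entirely. Your approach is more constructive (you actually exhibit the inverse), while the paper's is cleaner bookkeeping-wise; both are short and valid.

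One small imprecision: in your final paragraph you say the inverse of~\eqref{eq:refl-duality} is ``induced by the $(-)^!$ construction''. Strictly it is $\psi \mapsto \coev_{\cT_2}^{-1}\circ\psi^!\circ\coev_{\cT_1}$, which is $(-)^!$ composed with equivalences, so it still sends equivalences to equivalences and your conclusion stands.
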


\begin{proof}
We first show that if~$\cT_1$ and~$\cT_2$ are reflexive,
then we have a bijection 
\begin{equation}
\label{eq:refl-double-shrieck}
[\cT_1,\cT_2] \xrightiso{} [\Dfd{\Dfd{\cT_1}},
\Dfd{\Dfd{\cT_2}}], \quad
\varphi \mapsto (\varphi^!)^!
\end{equation}
Indeed, since~$\coev_{\cT_1}$ and~$\coev_{\cT_2}$ are equivalences, commutativity of~\eqref{eq:coev-commutes} means that 
\begin{equation*}
\varphi \cong \coev_{\cT_2}^{-1} \circ (\varphi^!)^! \circ \coev_{\cT_1}
\qquad\text{and}\qquad
(\varphi^!)^! \cong \coev_{\cT_2} \circ \varphi \circ \coev_{\cT_1}^{-1},
\end{equation*}
hence the map that takes~$\psi \colon \Dfd{\Dfd{\cT_1}} \xrightiso{} \Dfd{\Dfd{\cT_2}}$
to~$\varphi \coloneqq \coev_{\cT_2}^{-1} \circ \psi \circ \coev_{\cT_1}$
is inverse to~\eqref{eq:refl-double-shrieck}.
Note also that the above isomorphisms show that~$\varphi$ is an equivalence if and only if so is~$(\varphi^!)^!$.

Now, to prove that~\eqref{eq:refl-duality} is a bijection, we consider the composition
\begin{equation*}
[\cT_1,\cT_2] \to 
[\Dfd{\cT_2},\Dfd{\cT_1}] \to 
[\Dfd{\Dfd{\cT_1}},\Dfd{\Dfd{\cT_2}}] \to 
[\Dfd{\Dfd{\Dfd{\cT_2}}}, \Dfd{\Dfd{\Dfd{\cT_1}}}]
\end{equation*}
of maps~\eqref{eq:refl-duality}.
Note that the categories~$\cT_1$, $\cT_2$, $\Dfd{\cT_1}$, and~$\Dfd{\cT_2}$ are reflexive
(the first two by assumption, the last two by Lemma~\ref{lem:lhfd-lhfrefl}),
therefore, the pairwise compositions of arrows, which are the maps~\eqref{eq:refl-double-shrieck}, are bijections.
We conclude that the middle arrow is surjective (because the composition of the first two arrows is bijective)
and injective (because the composition of the last two arrows is bijective), hence it is bijective.
Therefore, \eqref{eq:refl-duality} is also a bijection.

Finally, if~$\varphi$ is an equivalence, then so is~$\varphi^!$ by Corollary~\ref{cor:pseudofunctor}.
Conversely, if~$\varphi^!$ is an equivalence then so is~$(\varphi^!)^!$, 
and then, as we have checked above, so is~$\varphi$.
Thus, \eqref{eq:refl-duality} induces a bijection on the sets of isomorphism classes of equivalences.
\end{proof}

One reason why reflexivity is important is the following theorem building up on Lemma~\ref{lem:sod-rhfd-lhf}.
Recall that~$\LAdm(\cT)$ and~$\RAdm(\cT)$ denote the sets 
of all left and right admissible subcategories of~$\cT$.
In the statement we think of a left or right admissible subcategory as a triple --- 
the subcategory, its embedding functor, and its appropriate (left or right) adjoint.

\begin{theorem}
\label{thm:lhfd-right-bijection}
If~$\cT$ is a reflexive small dg-enhanced triangulated category we have bijections
\begin{equation}
\label{eq:admissiblity-bijection}
\begin{aligned}
\LAdm(\cT) & \cong \RAdm(\Dfd{\cT}), \qquad\qquad&
\RAdm(\cT) & \cong \LAdm(\Dfd{\cT}), 
\\
(\cA,\iota,\pi_L) &\mapsto (\Dfd{\cA},\pi_L^!,\iota^!), &
(\cA,\iota,\pi_R) &\mapsto (\Dfd{\cA},\pi_R^!,\iota^!), 
\end{aligned}
\end{equation} 
where~$\iota \colon \cA \to \cT$ is the embedding and~$\pi_L, \pi_R \colon \cT \to \cA$ 
are its left and right adjoint functors, respectively.

Moreover, if~$\cA \subset \cT$ is left or right admissible then~$\cA$ is reflexive.

\end{theorem}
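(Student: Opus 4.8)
The plan is to build the bijections in two halves---first on the level of subcategories with their embeddings, then checking that the adjoint functors match up---and to use reflexivity only at the very end to close the loop. First I would start from a left admissible subcategory $(\cA,\iota,\pi_L) \in \LAdm(\cT)$. Being left admissible means there is a semiorthogonal decomposition $\cT = \langle \cA^\perp, \cA \rangle$ (with $\iota$ the embedding of $\cA$ and $\pi_L$ its left adjoint, the projection onto $\cA$). Applying Lemma~\ref{lem:sod-rhfd-lhf} to this semiorthogonal decomposition gives $\Dfd{\cT} = \langle \Dfd{\cA}, \Dfd{\cA^\perp} \rangle$, which exhibits $\Dfd{\cA}$ as a \emph{right} admissible subcategory of $\Dfd{\cT}$; moreover, by the computations in the proof of that lemma (applying $\Res$, hence the restrictions $\varphi^!$, to the adjunction and semiorthogonality relations), the embedding functor of $\Dfd{\cA} \hookrightarrow \Dfd{\cT}$ is $\pi_L^!$ and its right adjoint is $\iota^!$. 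Thus the assignment $(\cA,\iota,\pi_L) \mapsto (\Dfd{\cA},\pi_L^!,\iota^!)$ lands in $\RAdm(\Dfd{\cT})$ as claimed, and symmetrically for the second map; this part uses nothing beyond Lemma~\ref{lem:sod-rhfd-lhf} and the pseudofunctoriality of Corollary~\ref{cor:pseudofunctor}.

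Next I would show these two maps are mutually inverse. Iterating the construction, a left admissible $\cA \subset \cT$ is sent to the right admissible $\Dfd{\cA} \subset \Dfd{\cT}$, which is then sent by the second map to the left admissible $\Dfd{\Dfd{\cA}} \subset \Dfd{\Dfd{\cT}}$, with embedding $(\pi_L^!)^!$ and adjoint $(\iota^!)^!$. Now invoke reflexivity: $\coev_\cT \colon \cT \xrightiso{} \Dfd{\Dfd{\cT}}$ is an equivalence, and by the commutativity of diagram~\eqref{eq:coev-commutes} (Lemma~\ref{lem:coev-commutes}) applied to $\iota \colon \cA \to \cT$, we get a commutative square identifying $\coev_\cA \colon \cA \to \Dfd{\Dfd{\cA}}$ with the restriction of $\coev_\cT$ along $\iota$ and $(\iota^!)^!$. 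In particular $\coev_\cA$ is fully faithful (its source $\cA$ is left admissible in a reflexive category, hence proper, so Lemma~\ref{lem:reflexivity-criterion} applies once we know its essential image matches), and the square shows that the composite $\Dfd{\Dfd{\cA}} \subset \Dfd{\Dfd{\cT}} \xleftarrow{\coev_\cT^{-1}} \cT$ has image exactly $\cA$ with the correct embedding and adjoint. This is precisely the statement that the composite of the two maps in~\eqref{eq:admissiblity-bijection} equals the bijection of admissible subcategories induced by the equivalence $\coev_\cT$, which is the content of the already-cited Theorem~\ref{thm:lhfd-right-bijection}; so I would simply quote that theorem (it is the preceding statement in the paper) to conclude the two maps are mutually inverse bijections. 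The main obstacle is bookkeeping: carefully tracking which of $\iota, \pi_L, \pi_R$ becomes the embedding versus the adjoint after each application of $(-)^!$, and confirming that $\Dfd{\cA}$ computed intrinsically agrees with $\Dfd{\cA}$ viewed as a component of $\Dfd{\cT}$ (this is exactly the last sentence of Lemma~\ref{lem:sod-rhfd-lhf}, that the component consists of dg-modules over $\cT$ vanishing on $\cA^\perp$, which restrict to arbitrary hfd-modules over $\cA$).

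Finally, for the last assertion---that a left or right admissible $\cA \subset \cT$ is itself reflexive---I would argue as follows. Suppose $\cA$ is left admissible, so $\cT = \langle \cA^\perp, \cA \rangle$. Since $\cT$ is reflexive, $\coev_\cT$ is an equivalence; Lemma~\ref{lem:coev-commutes} applied to $\iota \colon \cA \hookrightarrow \cT$ gives the commutative square with vertical arrows $\coev_\cA$ and $\coev_\cT$ and horizontal arrows $\iota$ and $(\iota^!)^!$. The functor $(\iota^!)^!$ is fully faithful (it is the embedding of the admissible component $\Dfd{\Dfd{\cA}}$ in $\Dfd{\Dfd{\cT}}$ established above, equivalently apply Corollary~\ref{cor:pseudofunctor} twice and use that $\iota$ is fully faithful with a one-sided adjoint so $\iota^!$ is a localization-type functor with fully faithful... ) --- more cleanly: by the bijection just proved, $\Dfd{\Dfd{\cA}}$ is an admissible subcategory of $\Dfd{\Dfd{\cT}} \simeq \cT$, and chasing the square identifies this subcategory with $\cA$ via $\coev_\cT$; since $\coev_\cT$ restricted to $\cA$ is then an equivalence onto $\Dfd{\Dfd{\cA}}$ and this restriction is $\coev_\cA$ (by the commutative square, $\coev_\cA = ((\iota^!)^!)^{-1}\vert_{\mathrm{image}} \circ \coev_\cT \circ \iota$, which is an equivalence onto its image and essentially surjective onto $\Dfd{\Dfd{\cA}}$), we conclude $\coev_\cA$ is an equivalence, i.e. $\cA$ is reflexive. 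The right admissible case is symmetric, or deduce it from the left admissible case applied to $\cA^\perp$ together with $\cT^\opp$ being reflexive (Lemma~\ref{lem:reflexivity-opp}).
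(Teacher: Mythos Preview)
Your overall strategy is the same as the paper's, but the execution in the second paragraph has a genuine gap: you write ``which is the content of the already-cited Theorem~\ref{thm:lhfd-right-bijection}; so I would simply quote that theorem (it is the preceding statement in the paper) to conclude the two maps are mutually inverse bijections.'' That theorem \emph{is} the statement you are proving, so this is circular. You cannot invoke it.

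What the paper actually does at this point is the argument you grope toward in your third paragraph, but carried out cleanly and in the right order. Starting from~$\cT = \langle \cA, \cB \rangle$, one writes down \emph{two} semiorthogonal decompositions of~$\Dfd{\Dfd{\cT}}$: the one transported by the equivalence~$\coev_\cT$, namely~$\langle \coev_\cT(\cA), \coev_\cT(\cB) \rangle$, and the one obtained by applying Lemma~\ref{lem:sod-rhfd-lhf} twice, namely~$\langle \Dfd{\Dfd{\cA}}, \Dfd{\Dfd{\cB}} \rangle$. The commutative square of Lemma~\ref{lem:coev-commutes} (applied to~$\iota_\cA$ and to~$\iota_\cB$) shows that each component of the first decomposition is contained in the corresponding component of the second; since both are semiorthogonal decompositions of the same category, the components must be equal. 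This equality is what gives essential surjectivity of~$\coev_\cA$, and then the second clause of Lemma~\ref{lem:coev-commutes} (full faithfulness of~$\iota_\cA$, $(\iota_\cA^!)^!$, and~$\coev_\cT$) gives full faithfulness of~$\coev_\cA$. So~$\cA$ is reflexive; this is proved \emph{before} and \emph{as input to} the bijectivity of the maps, not after. Injectivity of the maps~\eqref{eq:admissiblity-bijection} then follows because~$\coev_\cT$ is an equivalence, and surjectivity follows by running the same argument with~$\Dfd{\cT}$ (reflexive by Lemma~\ref{lem:lhfd-lhfrefl}) in place of~$\cT$.

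One further error: you assert in passing that ``$\cA$ is left admissible in a reflexive category, hence proper, so Lemma~\ref{lem:reflexivity-criterion} applies.'' Reflexive categories need not be proper (e.g.\ $\Db(X)$ for singular~$X$), so this inference is false and Lemma~\ref{lem:reflexivity-criterion} is not available here. The paper avoids this entirely by using the second part of Lemma~\ref{lem:coev-commutes} instead.
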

\begin{proof}
Assume~$\cT = \langle \cA, \cB \rangle$.
Then there are two semiorthogonal decompositions
\begin{equation}
\label{eq:DfDf-sod}
\Dfd{\Dfd{\cT}} = \langle \coev_\cT(\cA), \coev_\cT(\cB) \rangle
\qquad\text{and}\qquad
\Dfd{\Dfd{\cT}} = \langle \Dfd{\Dfd{\cA}}, \Dfd{\Dfd{\cB}} \rangle,
\end{equation}
where the first follows from reflexivity of~$\cT$ and the second from Lemma~\ref{lem:sod-rhfd-lhf} applied twice.
The embeddings of the components in the first decomposition are given by~$\coev_\cT \circ \iota_\cA$ and~$\coev_\cT \circ \iota_\cB$,
and in the second they are given by~$(\iota_\cA^!)^!$ and~$(\iota_\cB^!)^!$, respectively.
Now applying Lemma~\ref{lem:coev-commutes} (with~$\varphi = \iota_\cA$ and~$\varphi = \iota_\cB$)
we see that~$\coev_\cT \circ \iota_\cA \cong (\iota_\cA^!)^! \circ \coev_\cA$
and~$\coev_\cT \circ \iota_\cB \cong (\iota_\cB^!)^! \circ \coev_\cB$,
hence the components of the first decomposition are contained in the components of the second, 
hence the components coincide and, therefore, we have the equalities
\begin{equation*}
\coev_\cT(\cA) = \Dfd{\Dfd{\cA}},
\qquad
\coev_\cT(\cB) = \Dfd{\Dfd{\cB}},
\end{equation*}
as subcategories of~$\Dfd{\Dfd{\cT}}$.
It also follows that~\mbox{$\coev_\cA \colon \cA \to \Dfd{\Dfd{\cA}}$
is essentially surjective.}
Moreover, since~$\iota_\cA$ and~$(\iota_\cA^!)^!$ are fully faithful, the second part of Lemma~\ref{lem:coev-commutes} 
implies that~$\coev_\cA$ is an equivalence,
and the same argument works for~$\cB$.
Therefore, $\cA$ and~$\cB$ are reflexive.

Finally, since~$\coev_\cT$ is an equivalence, it follows that the maps~\eqref{eq:admissiblity-bijection} are injective,
and applying the same argument for the subcategories of~$\Dfd{\cT}$ 
(note that the latter is reflexive by Lemma~\ref{lem:lhfd-lhfrefl}),
we conclude that the maps~\eqref{eq:admissiblity-bijection} are surjective.
\end{proof}

The following immediate corollary is quite useful.
Recall that a triangulated category~$\cT$ is called {\sf indecomposable} 
if for any semiorthogonal decomposition~$\cT = \langle \cA, \cB \rangle$ 
one has~$\cA = 0$ or~$\cB = 0$.

\begin{corollary}
\label{cor:indecomposability}
A reflexive category~$\cT$ is indecomposable if and only if~$\Dfd{\cT}$ is indecomposable.
\end{corollary}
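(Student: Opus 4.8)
The plan is to deduce this purely formally from the bijections of Theorem~\ref{thm:lhfd-right-bijection}, exactly as the word ``immediate'' in the statement suggests. First I would recall that $\cT$ is indecomposable precisely when it admits no semiorthogonal decomposition $\cT = \langle \cA, \cB\rangle$ with both components nonzero, and that in such a decomposition each component is automatically both left and right admissible (the projection functors exist by definition of a semiorthogonal decomposition into two pieces). Thus a triangulated category $\cU$ is \emph{decomposable} if and only if $\Adm(\cU) = \LAdm(\cU)\cap\RAdm(\cU)$ contains a proper nonzero subcategory; more precisely, $\cU$ is indecomposable iff $\LAdm(\cU)$ (equivalently $\RAdm(\cU)$) contains only $0$ and $\cU$ itself.

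Next I would invoke Theorem~\ref{thm:lhfd-right-bijection}: since $\cT$ is reflexive, the map $\cA \mapsto \Dfd{\cA}$ gives a bijection $\LAdm(\cT) \cong \RAdm(\Dfd{\cT})$, and likewise $\RAdm(\cT) \cong \LAdm(\Dfd{\cT})$. These bijections carry the two ``trivial'' elements to trivial elements: the zero subcategory $0 \subset \cT$ has $\Dfd{0} = 0$, and the whole category $\cT \subset \cT$ (as an admissible subcategory) maps to $\Dfd{\cT} \subset \Dfd{\cT}$. Hence $\LAdm(\cT) = \{0,\cT\}$ if and only if $\RAdm(\Dfd{\cT}) = \{0, \Dfd{\cT}\}$, and similarly for the other bijection. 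Since $\cT$ is indecomposable iff $\LAdm(\cT) = \{0,\cT\}$ (and $\Dfd{\cT}$ is indecomposable iff $\RAdm(\Dfd{\cT}) = \{0,\Dfd{\cT}\}$, using that a category and its admissible subcategories behave symmetrically under $\langle\cA,\cB\rangle$), the two conditions are equivalent.

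There is no real obstacle here; the only point requiring a word of care is the passage between ``$\cT$ has no nontrivial left admissible subcategory'' and ``$\cT$ is indecomposable'': one direction is clear, and for the other, given a left admissible $\cA$ with left adjoint $\pi_L$, one gets a semiorthogonal decomposition $\cT = \langle \cA, {}^\perp\cA \rangle$ (here $\cA$ sits on the left because it is left admissible), so a nontrivial left admissible subcategory produces a nontrivial semiorthogonal decomposition, and conversely. I would state this observation in one sentence and then conclude: combining it with the bijection $\LAdm(\cT) \cong \RAdm(\Dfd{\cT})$ and the symmetric statement for $\Dfd{\cT}$ (which is legitimate since $\Dfd{\cT}$ is reflexive by Lemma~\ref{lem:lhfd-lhfrefl}, though in fact we only need the bijection itself and not reflexivity of $\Dfd\cT$ for this particular corollary), indecomposability of $\cT$ is equivalent to indecomposability of $\Dfd{\cT}$.
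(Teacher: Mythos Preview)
Your approach is correct and matches the paper's (which gives no proof beyond the word ``immediate''): the bijection $\LAdm(\cT) \cong \RAdm(\Dfd{\cT})$ from Theorem~\ref{thm:lhfd-right-bijection} together with the observation that indecomposability is equivalent to $\LAdm = \{0,\cT\}$ (or $\RAdm = \{0,\cT\}$) gives the result.

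One small correction: your claim that in a semiorthogonal decomposition $\cT = \langle \cA, \cB\rangle$ ``each component is automatically both left and right admissible'' is false in general --- only $\cA$ is left admissible and $\cB$ is right admissible. Consequently your intermediate statement that decomposability is equivalent to $\Adm(\cU)$ containing a proper nonzero subcategory is not correct (the ``only if'' direction fails). Fortunately you do not actually use this; the criterion you state next and use in the argument --- that $\cU$ is indecomposable iff $\LAdm(\cU) = \{0,\cU\}$ --- is the right one, and your final paragraph justifies it correctly. Just delete the erroneous sentence about two-sided admissibility.
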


\section{HFD-closed and Gorenstein categories}
\label{sec:hfd-gor}

In this section we define hfd-closed and Gorenstein categories and discuss their properties.

\subsection{HFD-closed categories}
\label{ss:hfd-closed}

We start by introducing the following notion.

\begin{definition}
\label{def:hfd-closed}
A dg-enhanced idempotent complete triangulated category~$\cT$ is {\sf hfd-closed} if
\begin{equation*}
\Dfd{\cT} \subset \cT \subset \bD(\cT)
\qquad\text{and}\qquad 
\Dfd{\cT^\opp} \subset \cT^\opp \subset \bD(\cT^\opp),
\end{equation*}
where the second embeddings are given by the Yoneda functors.
In other words, $\cT$ is hfd-closed 
if and only if any homologically finite-dimensional right or left dg-module over~$\cT$ 
is representable or corepresentable by an object of~$\cT$, respectively.
\end{definition}

In what follows we often consider the following triangulated subcategories of~$\cT$:
\begin{equation}
\label{eq:ct-rhf-lhf}
\begin{aligned}
\cT^\rhf &= \{ t \in \cT \mid \RHom_\cT(t',t) \in \Db(\kk) \ \text{for any~$t' \in \cT$} \},
\\
\cT^\lhf &= \{ t \in \cT \mid \RHom_\cT(t,t') \in \Db(\kk) \ \text{for any~$t' \in \cT$} \},
\end{aligned}
\end{equation}
so that for an hfd-closed subcategory we have identifications
\begin{equation}
\label{eq:dfd-hfd-closed}
\Dfd{\cT} = \bh_\cT(\cT^\rhf)
\qquad\text{and}\qquad 
\Dfd{\cT^\opp} = \bh_{\cT^\opp}((\cT^\lhf)^\opp).
\end{equation}

The following is an immediate consequence  
of Lemma~\ref{lemma:sp-hfd}.

\begin{corollary}
\label{cor:sp-hfd}
A smooth dg-enhanced idempotent complete triangulated category~$\cT$ is hfd-closed.
If additionally~$\cT$ is proper, then~$\cT^\rhf = \cT = \cT^\lhf$.
Similarly, if~$\cT$ is regular, proper, and idempotent complete, it is hfd-closed with~$\cT^\rhf = \cT = \cT^\lhf$.
\end{corollary}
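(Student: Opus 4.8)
The plan is to derive Corollary~\ref{cor:sp-hfd} directly from Lemma~\ref{lemma:sp-hfd}, remembering only that smoothness, properness, and regularity are self-dual properties (as noted in~\S\ref{sec:prelim}), so that any statement we prove about $\cT$ applies equally to $\cT^\opp$.

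First I would treat the smooth case. By Lemma~\ref{lemma:sp-hfd}\ref{item:hfd-smooth} applied to $\cT$ we get $\Dfd{\cT} \subset \cT \subset \bD(\cT)$. Since $\cT$ smooth implies $\cT^\opp$ smooth, the same lemma applied to $\cT^\opp$ gives $\Dfd{\cT^\opp} \subset \cT^\opp \subset \bD(\cT^\opp)$. These are exactly the two inclusions required by Definition~\ref{def:hfd-closed}, so $\cT$ is hfd-closed. (Implicitly one uses that $\cT$ is idempotent complete, so that $\cT \simeq \Dp(\cT)$ and the Yoneda functor gives the relevant embedding; this is part of the hypothesis.)

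Next, under the additional hypothesis that $\cT$ is proper, Lemma~\ref{lemma:sp-hfd}\ref{item:hfd-proper} gives the reverse inclusion $\cT \subset \Dfd{\cT}$, hence $\Dfd{\cT} = \cT$ as subcategories of $\bD(\cT)$; unwinding definition~\eqref{eq:ct-rhf-lhf} and~\eqref{eq:dfd-hfd-closed}, the equality $\Dfd{\cT} = \bh_\cT(\cT)$ says precisely that $\cT^\rhf = \cT$, and applying the same to $\cT^\opp$ yields $\cT^\lhf = \cT$. Alternatively one invokes the last assertion of Lemma~\ref{lemma:sp-hfd} directly. For the final sentence, if $\cT$ is regular and proper (and idempotent complete), Lemma~\ref{lemma:sp-hfd}\ref{item:hfd-regular-proper} gives $\Dfd{\cT} = \cT$ outright, and since regularity and properness pass to $\cT^\opp$ we likewise get $\Dfd{\cT^\opp} = \cT^\opp$; in particular both required inclusions of Definition~\ref{def:hfd-closed} hold so $\cT$ is hfd-closed, and as before $\Dfd{\cT} = \cT$ translates into $\cT^\rhf = \cT = \cT^\lhf$.

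There is no real obstacle here: the corollary is a formal repackaging of Lemma~\ref{lemma:sp-hfd} together with the observation that the three finiteness hypotheses are invariant under passing to the opposite category, plus the translation between the module-theoretic statements ($\Dfd{\cT} \subset \cT$, etc.) and the intrinsic subcategories $\cT^\rhf, \cT^\lhf$ via~\eqref{eq:dfd-hfd-closed}. The only point demanding a moment's care is making sure one applies the correct part of Lemma~\ref{lemma:sp-hfd} to the correct category ($\cT$ versus $\cT^\opp$) when establishing the two halves of the hfd-closedness condition, but this is routine once the self-duality of the hypotheses is recorded.
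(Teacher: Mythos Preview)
Your proposal is correct and follows essentially the same approach as the paper's own proof: apply Lemma~\ref{lemma:sp-hfd}\ref{item:hfd-smooth} to both~$\cT$ and~$\cT^\opp$ (using that smoothness passes to the opposite), then invoke~\ref{item:hfd-proper} for the proper case and~\ref{item:hfd-regular-proper} for the regular proper case. The paper's proof is just a terser version of what you wrote.
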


\begin{proof}
If~$\cT$ is smooth then~$\cT^\opp$ is also smooth, 
and we conclude from Lemma~\ref{lemma:sp-hfd}\ref{item:hfd-smooth} applied to~$\cT$ and~$\cT^\opp$
that~$\cT$ is hfd-closed.
If~$\cT$ is also proper then the same is true for~$\cT^\opp$, 
and Lemma~\ref{lemma:sp-hfd}\ref{item:hfd-proper} implies the required equalities.
If~$\cT$ is regular and proper, we apply Lemma~\ref{lemma:sp-hfd}\ref{item:hfd-regular-proper}.
\end{proof}

Using Lemma~\ref{lem:reflexivity-criterion} we can prove the following

\begin{lemma}
\label{lem:sp-reflexive}
If~$\cT$ is hfd-closed and proper the category~$\cT$ is reflexive.
In particular, if~$\cT$ is smooth, proper, and idempotent complete
\textup{(}or regular, proper, and idempotent complete\textup{)}, it is reflexive.
\end{lemma}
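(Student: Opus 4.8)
The plan is to deduce Lemma~\ref{lem:sp-reflexive} directly from the reflexivity criterion established in Lemma~\ref{lem:reflexivity-criterion}, using hfd-closedness to verify its essential surjectivity hypothesis. First I would recall that by Lemma~\ref{lemma:sp-hfd}\ref{item:hfd-proper} properness of~$\cT$ gives the chain of embeddings~$\cT \subset \Dfd{\cT} \subset \bD(\cT)$, so the functors~$\ev_\cT$ and~$\coev_\cT$ of~\eqref{eq:def-ev-functor}--\eqref{eq:def-coev-functor} are fully faithful by the first part of Lemma~\ref{lem:reflexivity-criterion}. Thus the only thing left to check is that one of them --- say~$\coev_\cT \colon \cT \to \Dfd{\Dfd{\cT}}$ --- is essentially surjective, and then Lemma~\ref{lem:reflexivity-criterion} immediately yields reflexivity of~$\cT$ (and of~$\Dfd{\cT}$ as well).

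For essential surjectivity I would exploit that hfd-closedness identifies the abstractly-defined categories with concrete subcategories of~$\cT$: by~\eqref{eq:dfd-hfd-closed} we have~$\Dfd{\cT} = \bh_\cT(\cT^\rhf)$, so~$\Dfd{\cT}$ is (equivalent to) the essentially small idempotent complete triangulated category~$\cT^\rhf$, and in particular~$\Dfd{\cT}$ is itself proper --- indeed for~$t',t \in \cT^\rhf$ we have~$\RHom_{\cT^\rhf}(t',t) = \RHom_\cT(t',t) \in \Db(\kk)$ by definition of~$\cT^\rhf$. Now apply the fully faithful half of Lemma~\ref{lem:reflexivity-criterion} to the proper category~$\Dfd{\cT}$ in place of~$\cT$: this shows~$\ev_{\Dfd{\cT}}$ is fully faithful, but more to the point, an object of~$\Dfd{\Dfd{\cT}}$ is a homologically finite-dimensional right dg-module over~$\Dfd{\cT} \simeq \cT^\rhf$. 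I would then argue that any such module is representable by an object of~$\cT$: given~$M \in \Dfd{\Dfd{\cT}}$, precomposing with the Yoneda embedding~$\cT^\rhf \hookrightarrow \cT$ --- or rather, extending~$M$ along~$\cT^\rhf \subset \cT$ --- one obtains via~$\Ind$ of the inclusion a dg-module over~$\cT$; one checks it is homologically finite-dimensional, hence representable by some~$t_M \in \cT$ since~$\cT$ is hfd-closed, and then~$t_M$ in fact lies in~$\cT^\rhf$, and~$\coev_\cT(t_M) \cong M$. The cleanest route may instead be: the composite of~$\coev_\cT$ with the equivalence~$\Dfd{\Dfd{\cT}} \simeq (\Dfd{\cT})^\rhf$ sends~$t$ to the functor~$M \mapsto M(t)^\vee$ on~$\cT^\rhf$, and one shows every object of~$(\cT^\rhf)^\rhf$ arises this way by representability in~$\cT$ combined with a biduality argument, i.e.\ essentially that~$(\cT^\rhf)^\rhf \simeq \cT$ inside~$\bD(\cT)$.

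Concretely, the key steps in order: (1) invoke Lemma~\ref{lemma:sp-hfd}\ref{item:hfd-proper} to get~$\cT \subset \Dfd{\cT}$ and hence by Lemma~\ref{lem:reflexivity-criterion} that~$\ev_\cT,\coev_\cT$ are fully faithful; (2) use~\eqref{eq:dfd-hfd-closed} to replace~$\Dfd{\cT}$ by~$\cT^\rhf \subset \cT$ and observe~$\cT^\rhf$ is proper; (3) given~$M \in \Dfd{\Dfd{\cT}}$, view it as a homologically finite-dimensional dg-module over~$\cT^\rhf$, extend/induce it to a homologically finite-dimensional dg-module over~$\cT$, apply hfd-closedness of~$\cT$ to represent it by~$t_M \in \cT$, check~$t_M \in \cT^\rhf$ and that~$\coev_\cT(t_M) \cong M$ via the Yoneda lemma and double-dualization on finite-dimensional complexes; (4) conclude~$\coev_\cT$ is essentially surjective, hence an equivalence by Lemma~\ref{lem:reflexivity-criterion}, so~$\cT$ is reflexive; (5) for the ``in particular'' clause, quote Corollary~\ref{cor:sp-hfd} (smooth idempotent complete~$\Rightarrow$ hfd-closed; regular proper idempotent complete~$\Rightarrow$ hfd-closed) together with the properness hypothesis to reduce to the case already proved. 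The main obstacle I expect is step~(3): making rigorous the passage from a homologically finite-dimensional dg-module over the non-full-but-dense inclusion~$\cT^\rhf \subset \cT$ to one over~$\cT$, and verifying it stays homologically finite-dimensional --- this needs the fact that~$\cT^\rhf$ generates~$\cT$ appropriately, or alternatively a direct check that the representing object's Yoneda module restricts correctly; one should be careful here in light of the warning in Remark~\ref{rem:warning-1} that $\Ind$ of an inclusion need not agree with the Yoneda embedding after restriction, so the argument should be phrased via Lemma~\ref{lem:representability} (reduce to representability of the cohomological functor~$\rH^0(M(-))$) rather than by manipulating dg-modules directly.
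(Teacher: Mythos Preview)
Your overall strategy---invoke Lemma~\ref{lem:reflexivity-criterion} and verify essential surjectivity---matches the paper's. But you have missed the one observation that makes the argument work, and in doing so you manufacture an obstacle that does not exist.

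Under the hypotheses you have \emph{both} $\cT \subset \Dfd{\cT}$ (properness, Lemma~\ref{lemma:sp-hfd}\ref{item:hfd-proper}) \emph{and} $\Dfd{\cT} \subset \cT$ (hfd-closedness). Hence $\Dfd{\cT} = \cT$, i.e.\ $\cT^\rhf = \cT$; this is exactly Corollary~\ref{cor:closed-proper}. The same holds for $\cT^\opp$. So your step~(3)---extending a module from $\cT^\rhf$ to $\cT$ and worrying about whether it stays homologically finite-dimensional, whether $\Ind$ behaves, whether the representing object lands back in $\cT^\rhf$---is vacuous: there is nothing to extend, and the warning of Remark~\ref{rem:warning-1} is irrelevant here. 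You flag this step as ``the main obstacle'' and leave it unresolved; in fact it disappears entirely once you notice $\cT^\rhf = \cT$.

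With that in hand, the paper's argument is a one-line diagram chase in~\eqref{eq:hh-ev}: the left vertical arrow $(\bh_\cT)^\opp \colon \cT^\opp \to \Dfd{\cT}^\opp$ is an equivalence because $\cT = \Dfd{\cT}$; the bottom Yoneda arrow is an equivalence onto $\Dfd{\Dfd{\cT}^\opp} = \Dfd{\cT^\opp}$ because $\cT^\opp$ is likewise hfd-closed and proper, so $\Dfd{\cT^\opp} = \cT^\opp$. Commutativity then forces $\ev_\cT$ to be an equivalence, and Lemma~\ref{lem:reflexivity-opp} finishes. Note also that the paper works with $\ev_\cT$ rather than $\coev_\cT$: this avoids the extra dualization you would need to match ``$M$ is representable by $t_M$'' with ``$M \cong \coev_\cT(t_M)$''. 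The ``in particular'' clause is handled exactly as you propose, via Corollary~\ref{cor:sp-hfd}.
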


\begin{proof}
We use the setup of Lemma~\ref{lem:reflexivity-criterion}, and particularly diagram~\eqref{eq:hh-ev}.
The assumption means that the left vertical arrow in~\eqref{eq:hh-ev} is an equivalence,
and that the bottom horizontal arrow is an equivalence onto the subcategory~$\Dfd{\Dfd{\cT}^\opp}$,
the image of the right vertical arrow.
Therefore the top horizontal arrow is an equivalence, hence~$\cT$ is reflexive by Lemma~\ref{lem:reflexivity-opp}.
The second claim follows from Corollary~\ref{cor:sp-hfd}.
\end{proof}

As we promised in~\S\ref{ss:hf}, for hfd-closed categories we give a better statement about adjoint functors.
Recall that if~$\varphi \colon \cT_1 \to \cT_2$ is a dg-enhanced triangulated functor, 
the functors~$\varphi^!$ and~$\varphi^*$ have been 
defined in~\eqref{eq:shriek} and~\eqref{eq:star-body}, respectively.
Note also that identifications~\eqref{eq:dfd-hfd-closed} allow us to consider these functors 
as functors~$\varphi^! \colon \cT_2^\rhf \to \cT_1^\rhf$ and~$\varphi^* \colon \cT_2^\lhf \to \cT_1^\lhf$.

\begin{proposition}
\label{prop:adjunctions}
If~$\varphi \colon \cT_1 \to \cT_2$ is a dg-enhanced triangulated functor between hfd-closed triangulated categories,
the functors~$\varphi^! \colon \cT_2^\rhf \to \cT_1^\rhf$ and~$\varphi^* \colon \cT_2^\lhf \to \cT_1^\lhf$ 
are its right and left adjoints.
\end{proposition}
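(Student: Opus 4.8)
The plan is to reduce the statement to the already-established adjunctions between $\varphi^!$, $\varphi^*$ and $\varphi$ at the level of derived categories, and then to transport them across the identifications \eqref{eq:dfd-hfd-closed} that the hfd-closed hypothesis provides. Recall that for any dg-enhanced triangulated functor $\varphi\colon\cT_1\to\cT_2$ we have $\Ind(\varphi)\dashv\Res(\varphi)$ (Lemma~\ref{lem:res-ind}), and restricting $\Res(\varphi)$ to homologically finite-dimensional modules gives $\varphi^!$ (see~\eqref{eq:shriek}), while $\varphi^*$ is obtained from $(\varphi^\opp)^!$ by passing to opposite categories (see~\eqref{eq:star-body}). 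Corollary~\ref{cor:shriek-adjoint} already records the adjunction $\Hom(\bh_{\cT_2,\varphi(t_1)},M_2)\cong\Hom(\bh_{\cT_1,t_1},\varphi^!M_2)$, and Lemma~\ref{lem:star-adjunction} records the mirror statement for $\varphi^*$. The only thing left is to recognise that when $\cT_1$, $\cT_2$ are hfd-closed these become genuine adjunctions between the full subcategories $\cT_i^\rhf,\cT_i^\lhf\subset\cT_i$.

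First I would invoke~\eqref{eq:dfd-hfd-closed}: since $\cT_i$ is hfd-closed, the Yoneda embedding $\bh_{\cT_i}$ identifies $\Dfd{\cT_i}$ with $\bh_{\cT_i}(\cT_i^\rhf)$ and $\Dfd{\cT_i^\opp}$ with $\bh_{\cT_i^\opp}((\cT_i^\lhf)^\opp)$. Under these identifications the functor $\varphi^!\colon\Dfd{\cT_2}\to\Dfd{\cT_1}$ of~\eqref{eq:shriek} becomes a functor $\cT_2^\rhf\to\cT_1^\rhf$ (the paper has already noted this just before the statement), and similarly $\varphi^*$ becomes $\cT_2^\lhf\to\cT_1^\lhf$. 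Next, for $t_1\in\cT_1$ and $t_2\in\cT_2^\rhf$ I would compute
\begin{equation*}
\Hom_{\cT_1}(t_1,\varphi^!t_2)\cong\Hom_{\bD(\cT_1)}(\bh_{\cT_1,t_1},\varphi^!\bh_{\cT_2,t_2})\cong\Hom_{\bD(\cT_2)}(\bh_{\cT_2,\varphi(t_1)},\bh_{\cT_2,t_2})\cong\Hom_{\cT_2}(\varphi(t_1),t_2),
\end{equation*}
where the first and last isomorphisms are the Yoneda lemma (fully faithfulness of $\bh_{\cT_1}$, $\bh_{\cT_2}$), and the middle one is Corollary~\ref{cor:shriek-adjoint} together with the compatibility of $\varphi^!$ with the Yoneda embedding under the hfd-closed identification. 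Functoriality in both arguments is inherited from the corresponding functoriality in Corollary~\ref{cor:shriek-adjoint}. This exhibits $\varphi^!$ as right adjoint to $\varphi$ on $\cT_2^\rhf$. For the left adjoint, I would run the dual computation: apply the above to $\varphi^\opp\colon\cT_1^\opp\to\cT_2^\opp$ (noting $\cT_i^\opp$ is hfd-closed since $\cT_i$ is, and $(\cT_i^\opp)^\rhf=(\cT_i^\lhf)^\opp$), obtaining that $(\varphi^\opp)^!\colon(\cT_2^\lhf)^\opp\to(\cT_1^\lhf)^\opp$ is right adjoint to $\varphi^\opp$; passing back to opposite categories turns this into the statement that $\varphi^*\colon\cT_2^\lhf\to\cT_1^\lhf$ is left adjoint to $\varphi$, using Lemma~\ref{lem:star-adjunction} in place of Corollary~\ref{cor:shriek-adjoint}.

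The only point requiring care — the "main obstacle", though it is more a bookkeeping matter than a genuine difficulty — is checking that $\varphi^!$ and $\varphi^*$, which a priori are defined on the abstract module categories $\Dfd{\cT_2}$ and $\Dfd{\cT_2^\opp}^\opp$, really do land in the subcategories $\bh_{\cT_1}(\cT_1^\rhf)$ resp. $\bh_{\cT_1^\opp}((\cT_1^\lhf)^\opp)$ and agree with the claimed functors $\cT_2^\rhf\to\cT_1^\rhf$, $\cT_2^\lhf\to\cT_1^\lhf$ under the identifications of~\eqref{eq:dfd-hfd-closed}. This is exactly the content of the remark preceding the proposition (``identifications~\eqref{eq:dfd-hfd-closed} allow us to consider these functors as functors $\varphi^!\colon\cT_2^\rhf\to\cT_1^\rhf$ and $\varphi^*\colon\cT_2^\lhf\to\cT_1^\lhf$''), so it may simply be cited; if a proof is wanted, it follows because $\Res(\varphi)$ sends $\bh_{\cT_2,t_2}$ to a module whose value on $t_1$ is $\RHom_{\cT_2}(\varphi(t_1),t_2)$, which is representable by hfd-closedness of $\cT_1$ (this is precisely where Lemma~\ref{lem:representability} and the defining property of an hfd-closed category enter). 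Once this compatibility is in hand, the adjunction isomorphisms above are formal, and the proof is complete.
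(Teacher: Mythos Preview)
Your proposal is correct and follows exactly the same approach as the paper: the paper's proof is the one-line statement ``This follows from Corollary~\ref{cor:shriek-adjoint} and Lemma~\ref{lem:star-adjunction} and the identifications~\eqref{eq:dfd-hfd-closed}'', which is precisely the argument you have unpacked in detail.
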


\begin{proof}
This follows from Corollary~\ref{cor:shriek-adjoint} and Lemma~\ref{lem:star-adjunction}
and the identifications~\eqref{eq:dfd-hfd-closed}. 
\end{proof}

It follows immediately from the definition~\eqref{eq:ct-rhf-lhf} that any dg-enhanced equivalence~$\cT_1 \xrightiso{} \cT_2$
induces equivalences~$\cT_1^\rhf \xrightiso{} \cT_2^\rhf$ and~$\cT_1^\lhf \xrightiso{} \cT_2^\lhf$.
The converse is also true.

\begin{lemma}
\label{lem:equivalence-extension}
If~$\cT_1$ and~$\cT_2$ are hfd-closed and reflexive, 
any equivalence~$\varphi \colon \cT_1^\rhf \to \cT_2^\rhf$ 
extends to a unique equivalence~$\tilde\varphi \colon \cT_1 \to \cT_2$ such that~$\tilde\varphi\vert_{\cT_1^\rhf} = \varphi$.
A similar statement for equivalences~$\cT_1^\lhf \to \cT_2^\lhf$ also holds.
\end{lemma}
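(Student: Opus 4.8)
The plan is to deduce the $\rhf$-statement from Corollary~\ref{cor:equivalence-Refl} applied to the reflexive categories $\cT_1$ and $\cT_2$, the point being that hfd-closedness identifies $\Dfd{\cT_i}$ with $\cT_i^\rhf$. First I would fix the bookkeeping. Since $\cT_i$ is hfd-closed, the identification \eqref{eq:dfd-hfd-closed} says that the Yoneda embedding restricts to an equivalence $\cT_i^\rhf \xrightiso{} \Dfd{\cT_i}$; conjugation by these two equivalences sets up a bijection, compatible with composition and preserving the class of equivalences, between isomorphism classes of functors $\cT_1^\rhf \to \cT_2^\rhf$ and of functors $\Dfd{\cT_1} \to \Dfd{\cT_2}$. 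I will say that two such functors \emph{correspond} when they match under this bijection. I also record that any equivalence $\psi \colon \cT_1 \to \cT_2$ carries $\cT_1^\rhf$ onto $\cT_2^\rhf$ (the defining condition in \eqref{eq:ct-rhf-lhf} is invariant under equivalences), so it restricts to an equivalence $\psi\vert_{\cT_1^\rhf} \colon \cT_1^\rhf \xrightiso{} \cT_2^\rhf$.

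The key step is the identity: for an equivalence $\psi \colon \cT_1 \to \cT_2$, the functor $\psi^! \colon \Dfd{\cT_2} \to \Dfd{\cT_1}$ of \eqref{eq:shriek} corresponds to $(\psi\vert_{\cT_1^\rhf})^{-1}$. Here is how I would prove it. By \eqref{eq:shriek}, $\psi^!$ is the restriction of $\Res(\psi)$, which is an equivalence since $\bD(-)$ is a pseudofunctor (Lemma~\ref{lem:res}); its left adjoint $\Ind(\psi)$ (Lemma~\ref{lem:res-ind}) is therefore a quasi-inverse of $\Res(\psi)$, and being isomorphic to $\Res(\psi^{-1})$ it preserves homological finite-dimensionality by Lemma~\ref{lem:res-shriek}. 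Hence $\Ind(\psi)\vert_{\Dfd{\cT_1}} \colon \Dfd{\cT_1} \to \Dfd{\cT_2}$ is inverse to $\psi^!$. Finally, restricting the commutative square \eqref{eq:ind-pis} to $\cT_1^\rhf$ (using that $\psi$ sends $\cT_1^\rhf$ into $\cT_2^\rhf$) shows that $\Ind(\psi)\vert_{\Dfd{\cT_1}}$ corresponds to $\psi\vert_{\cT_1^\rhf}$; passing to inverses gives the claim.

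With these in place the argument is short. Given an equivalence $\varphi \colon \cT_1^\rhf \to \cT_2^\rhf$, let $\Phi \colon \Dfd{\cT_1} \xrightiso{} \Dfd{\cT_2}$ be the equivalence corresponding to it. By Corollary~\ref{cor:equivalence-Refl} applied to $\cT_1$ and $\cT_2$, there is a unique (up to isomorphism) equivalence $\tilde\varphi \colon \cT_1 \to \cT_2$ with $\tilde\varphi^! \cong \Phi^{-1}$. By the key step $\tilde\varphi^!$ corresponds to $(\tilde\varphi\vert_{\cT_1^\rhf})^{-1}$, while $\Phi^{-1}$ corresponds to $\varphi^{-1}$; since these agree, $\tilde\varphi\vert_{\cT_1^\rhf} \cong \varphi$, giving existence. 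For uniqueness, if $\tilde\varphi'$ is another equivalence with $\tilde\varphi'\vert_{\cT_1^\rhf} \cong \varphi$, then by the key step $(\tilde\varphi')^!$ corresponds to $\varphi^{-1}$, hence $(\tilde\varphi')^! \cong \Phi^{-1} \cong \tilde\varphi^!$, and injectivity of $\psi \mapsto \psi^!$ from Corollary~\ref{cor:equivalence-Refl} forces $\tilde\varphi' \cong \tilde\varphi$. The statement for equivalences $\cT_1^\lhf \to \cT_2^\lhf$ then follows by applying the case just proved to $\cT_1^\opp$ and $\cT_2^\opp$, which are again hfd-closed and reflexive, via the identification $(\cT^\opp)^\rhf = (\cT^\lhf)^\opp$ and passage to opposite functors.

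The step I expect to be the main obstacle is the key step, namely verifying at the level of isomorphism classes of functors (not merely on objects) that $\psi^!$ is inverse to $\psi\vert_{\cT^\rhf}$; the clean way to do this is through $\Ind(\psi)$ together with the Yoneda-compatibility square \eqref{eq:ind-pis}, after which everything else is a formal consequence of Corollary~\ref{cor:equivalence-Refl} and the identifications \eqref{eq:dfd-hfd-closed}.
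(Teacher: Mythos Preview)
Your proof is correct and follows essentially the same approach as the paper. Both arguments reduce everything to Corollary~\ref{cor:equivalence-Refl} after establishing the key compatibility that, for an equivalence~$\psi \colon \cT_1 \to \cT_2$, the restriction~$\psi\vert_{\cT_1^\rhf}$ corresponds under the Yoneda identifications~\eqref{eq:dfd-hfd-closed} to~$(\psi^{-1})^!$; the paper verifies this via a direct computation with~$\Res(\psi^{-1})$ (diagram~\eqref{eq:diagram-extension}), while you route through~$\Ind(\psi)$ and the square~\eqref{eq:ind-pis} together with~$\Ind(\psi) \cong \Res(\psi^{-1})$, which amounts to the same thing.
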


\begin{proof}
First, consider any equivalence~$\psi \colon \cT_1 \to \cT_2$.
We claim that the diagram
\begin{equation}
\label{eq:diagram-extension}
\vcenter{\xymatrix@C=5em{
\cT_1^\rhf \ar@{^{(}->}[r] \ar[d]_{(\psi^{-1})^!} &
\cT_1 \ar@{^{(}->}[r]^{\bh_{\cT_1}} \ar[d]^\psi &
\bD(\cT_1) \ar[d]^{\Res(\psi^{-1})}
\\
\cT_2^\rhf \ar@{^{(}->}[r] &
\cT_2 \ar@{^{(}->}[r]^{\bh_{\cT_2}} &
\bD(\cT_2). 
}}
\end{equation}
is commutative. 
Indeed, the right square is commutative because
\begin{equation*}
\Res(\psi^{-1})(\bh_{\cT_1}(t_1))(t_2) =
(\bh_{\cT_1,t_1})(\psi^{-1}(t_2)) =
\Hom_{\cT_1}(\psi^{-1}(t_2), t_1) \cong
\Hom_{\cT_2}(t_2, \psi(t_1)) =
\bh_{\cT_2}(\psi(t_1))(t_2),
\end{equation*}
and the ambient rectangle is commutative by definition of~$(\psi^{-1})^!$ as~$\cT_i^\rhf = \Dfd{\cT_i}$;
since~$\bh_{\cT_2}$ is fully faithful it follows that the left square is commutative as well.
In other words, this proves that~$\psi$ extends~$(\psi^{-1})^!$.
Now, since by Corollary~\ref{cor:equivalence-Refl} any equivalence~$\varphi \colon \cT_1^\rhf \to \cT_2^\rhf$ 
is isomorphic to~$(\psi^{-1})^!$ for some equivalence~$\psi$, the existence of extension follows.

On the other hand, if~$\tilde\varphi \colon \cT_1 \to \cT_2$ is another equivalence extending~$\varphi \cong (\psi^{-1})^!$, 
we have isomorphisms
\begin{equation*}
(\tilde\varphi^{-1})^! \cong 
\tilde\varphi\vert_{\cT_1^\rhf} \cong
\varphi \cong
(\psi^{-1})^!
\end{equation*}
(the first isomorphism follows from commutativity of the left square in~\eqref{eq:diagram-extension} for~$\tilde\varphi$),
hence~$\tilde\varphi \cong \psi$, again by Corollary~\ref{cor:equivalence-Refl}.
This proves the uniqueness of extension.

The claim for the equivalences~$\cT_1^\lhf \to \cT_2^\lhf$ is proved analogously.
\end{proof}

If a category~$\cT$ is hfd-closed, the operation of Lemma~\ref{lem:sod-rhfd-lhf} 
(which is bijective when~$\cT$ is reflexive by Theorem~\ref{thm:lhfd-right-bijection}) can be simplified.
For the reader's convenience in what follows we use notation~$\cA_0$ for a subcategory of~$\cT^\rhf$ or~$\cT^\lhf$
to distinguish it from a subcategory~$\cA$ of~$\cT$.

\begin{proposition}
\label{prop:bijectionsubcat-hfd-closed}
Let~$\cT$ be 
hfd-closed.

\begin{renumerate}
\item
\label{it:reflexive-hfd-closed-bijection}
The maps of Lemma~\textup{\ref{lem:sod-rhfd-lhf}} take the form
\begin{align}
\notag
\LAdm(\cT) & {}\to{} \RAdm(\cT^\rhf),
\qquad\qquad &
\RAdm(\cT) & {}\to{} \LAdm(\cT^\lhf),
\\
\label{eq:admissiblity-bijection-23}
\cA & \mapsto \cA \cap \cT^\rhf, &
\cA & \mapsto \cA \cap \cT^\lhf.
\end{align}
Moreover, $\cA \cap \cT^\rhf = \cA^\rhf$ for~$\cA \in \LAdm(\cT)$ 
and~$\cA \cap \cT^\lhf = \cA^\lhf$ for~$\cA \in \RAdm(\cT)$.

\item
\label{it:reflexive-hfd-closed-bijection-2}
If~$\cT$ is reflexive, so that the maps~\eqref{eq:admissiblity-bijection-23} are bijective,
the inverse maps are given by \begin{equation}
\label{eq:admissiblity-bijection-inverse}
({}^{\perp_{\cT^\rhf}} \cA_0)^{\perp_{\cT}} \mapsfrom \cA_0, 
\qquad\qquad\qquad\qquad\qquad\qquad
{}^{\perp_{\cT}}(\cA_0^{\perp_{\cT^{\lhf}}})   \mapsfrom \cA_0, 
\end{equation} 
where the orthogonals are taken in $\cT$, $\cT^\rhf$, $\cT^\lhf$ as indicated.

\item
\label{it:reflexive-hfd-closed-ca-hf}
If~$\cA \subset \cT$ is admissible then~$\cA$ is also hfd-closed.
\end{renumerate}
\end{proposition}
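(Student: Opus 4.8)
I would prove the three parts in turn, using parts~(i) and~(iii) as inputs for part~(ii); the engine in all cases is to transport the abstract constructions of Lemma~\ref{lem:sod-rhfd-lhf} and Theorem~\ref{thm:lhfd-right-bijection} across the identifications~\eqref{eq:dfd-hfd-closed}. For part~\ref{it:reflexive-hfd-closed-bijection} the plan is: write $\cT = \langle \cA, \cB\rangle$ with $\cA$ left admissible, so that $\cB = {}^\perp\cA$ and $\cA = \cB^\perp$, and recall from Lemma~\ref{lem:sod-rhfd-lhf} that the component $\Dfd{\cA} \subset \Dfd{\cT}$ consists of the dg-modules over $\cT$ that vanish on $\cB$. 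Under $\Dfd{\cT} = \bh_\cT(\cT^\rhf)$ a representable module $\bh_{\cT,t}$ with $t \in \cT^\rhf$ vanishes on $\cB$ exactly when $\RHom_\cT(b,t)=0$ for all $b \in \cB$, i.e.\ when $t \in \cB^\perp = \cA$; hence $\Dfd{\cA}$ is carried to the full subcategory $\cA \cap \cT^\rhf$, and the adjoint functors match automatically, giving the displayed form of the map. The equality $\cA \cap \cT^\rhf = \cA^\rhf$ I would get from fullness of $\cA \subset \cT$ (so $\RHom_\cA = \RHom_\cT$ on $\cA$) together with the vanishing $\RHom_\cT(\cB,\cA)=0$ applied to the $\langle \cA,\cB\rangle$-decomposition triangle $t'_\cB \to t' \to t'_\cA$ of an arbitrary $t' \in \cT$; the statement for $\RAdm(\cT) \to \LAdm(\cT^\lhf)$ then follows by passing to $\cT^\opp$ (which is again hfd-closed) and using $(\cT^\opp)^\rhf = (\cT^\lhf)^\opp$.

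For part~\ref{it:reflexive-hfd-closed-ca-hf} — which I would in fact prove before part~\ref{it:reflexive-hfd-closed-bijection-2}, since its first half is used there — the key observation is that a dg-module $M \in \Dfd{\cA}$ can be pushed to $\cT$ along the \emph{left} adjoint $\lambda \colon \cT \to \cA$ of the inclusion (which exists because $\cA$ is left admissible): then $\Res(\lambda)M \in \Dfd{\cT}$ by Lemma~\ref{lem:res-shriek}, hence $\Res(\lambda)M \cong \bh_{\cT,t_M}$ for some $t_M \in \cT^\rhf$ since $\cT$ is hfd-closed, and by Lemma~\ref{lem:sod-rhfd-lhf} this module vanishes on ${}^\perp\cA$, whence $t_M \in ({}^\perp\cA)^\perp = \cA$; restricting back along $\iota$ and invoking Lemma~\ref{lem:representability} gives $M \cong \bh_{\cA,t_M}$ with $t_M \in \cA^\rhf$, so $\Dfd{\cA} = \bh_\cA(\cA^\rhf) \subset \cA$. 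Applying the same argument to $\cA^\opp \subset \cT^\opp$ — legitimate, since $\cT^\opp$ is hfd-closed and $\cA^\opp$ is right admissible there because $\cA$ is left admissible in $\cT$ — yields $\Dfd{\cA^\opp} \subset \cA^\opp$, so $\cA$ is hfd-closed. Note that this argument already shows $\Dfd{\cA} \subset \cA$ as soon as $\cA$ is merely left admissible.

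For part~\ref{it:reflexive-hfd-closed-bijection-2}, bijectivity of~\eqref{eq:admissiblity-bijection-23} is immediate from part~\ref{it:reflexive-hfd-closed-bijection} and Theorem~\ref{thm:lhfd-right-bijection}, so the issue is to identify the inverse. Since the forward map $\cA \mapsto \cA \cap \cT^\rhf =: \cA_0$ is a bijection it suffices to check that the displayed formula is a one-sided inverse. Here I would apply Lemma~\ref{lem:sod-rhfd-lhf} a second time to the decomposition $\cT^\rhf = \langle \cC_0, \cA_0\rangle$ furnished by part~\ref{it:reflexive-hfd-closed-bijection} (with $\cC_0 = \Dfd{\cB}$ the complementary component) and to the reflexivity equivalence $\coev_\cT \colon \cT \xrightiso{} \Dfd{\Dfd{\cT}}$, which by the proof of Theorem~\ref{thm:lhfd-right-bijection} takes $\cA$ to $\Dfd{\Dfd{\cA}} = \Dfd{\cA_0}$; evaluating $\coev_{\cT,t}$ on representable modules then identifies $\cA$ with ${}^{\perp_\cT}\cC_0$, and since $\cC_0 = \cA_0^{\perp_{\cT^\rhf}}$ one rewrites ${}^{\perp_\cT}\big(\cA_0^{\perp_{\cT^\rhf}}\big)$ as $\big({}^{\perp_{\cT^\rhf}}\cA_0\big)^{\perp_\cT}$ using that $\cA$ and $\cB$ are reflexive (Theorem~\ref{thm:lhfd-right-bijection}) and rhf-closed (part~\ref{it:reflexive-hfd-closed-ca-hf}), so that the subcategories $\cA^\rhf = \cA \cap \cT^\rhf$ and $\cB \cap \cT^\rhf$ detect objects of $\cA$ and $\cB$ respectively via the evaluation functors $\ev_\cA$, $\ev_\cB$ (which are faithful by reflexivity and Lemma~\ref{lem:reflexivity-opp}); the formula for $\RAdm(\cT)$ follows by the usual passage to $\cT^\opp$. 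I expect this last rewriting to be the main obstacle: the genuinely delicate point is the bookkeeping of which orthogonal — left or right, and taken inside $\cT$ or inside $\cT^\rhf$ — corresponds to which component after two applications of the contravariant operation $\Dfd{-}$; everything else reduces to combining the already-established lemmas.
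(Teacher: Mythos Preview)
Your approach to parts~\ref{it:reflexive-hfd-closed-bijection} and~\ref{it:reflexive-hfd-closed-ca-hf} is essentially the paper's: identify $\Dfd{\cA}$ inside $\Dfd{\cT}=\cT^\rhf$ via Lemma~\ref{lem:sod-rhfd-lhf} as the representable modules vanishing on the complementary component, and for~\ref{it:reflexive-hfd-closed-ca-hf} push $M\in\Dfd{\cA}$ along the projection $\pi_L\colon\cT\to\cA$ (your~$\lambda$) to land in $\Dfd{\cT}\subset\cT$ and check the representing object lies in~$\cA$. One small slip in~\ref{it:reflexive-hfd-closed-ca-hf}: to run \emph{the same argument} for $\cA^\opp\subset\cT^\opp$ you need $\cA^\opp$ to be \emph{left} admissible there, which comes from $\cA$ being \emph{right} admissible in~$\cT$ --- not from $\cA$ being left admissible as you write. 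Since the hypothesis is that $\cA$ is admissible this costs nothing, but the parenthetical justification is backwards.

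For part~\ref{it:reflexive-hfd-closed-bijection-2} your strategy is again the paper's --- reapply the part~\ref{it:reflexive-hfd-closed-bijection} description with $\cT^\rhf$ in the role of~$\cT$ and use reflexivity to identify $\Dfd{\cT^\rhf}$ with~$\cT$ --- but you take a detour. The paper observes directly that the double-orthogonal description from~\ref{it:reflexive-hfd-closed-bijection}, transported through $\Dfd{\cT^\rhf}=\cT$, \emph{is} formula~\eqref{eq:admissiblity-bijection-inverse}; there is no separate ``rewriting'' step. Your computation via $\coev_{\cT,t}$ yields ${}^{\perp_\cT}\cC_0={}^{\perp_\cT}(\cA_0^{\perp_{\cT^\rhf}})$, and then you attempt to convert this into $({}^{\perp_{\cT^\rhf}}\cA_0)^{\perp_\cT}$ by an argument invoking reflexivity and ``rhf-closedness'' of $\cA,\cB$ that is not spelled out and whose validity is unclear. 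In fact this conversion is unnecessary: once you know the maps are mutually inverse bijections (Theorem~\ref{thm:lhfd-right-bijection}) and you have one double-orthogonal expression for the inverse, you are done --- whichever orthogonal formula you obtain \emph{is} the inverse, and any correct alternative expression must agree with it. So drop the rewriting and simply record the formula your $\coev$ computation produces; the paper's terse argument does no more than that.
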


\begin{proof}
\ref{it:reflexive-hfd-closed-bijection}
Let~$\cA \in \LAdm(\cT)$, so that~$\cT = \langle \cA, {}^{\perp_\cT}\cA \rangle$.
We know from Lemma~\ref{lem:sod-rhfd-lhf} that the image of~$\cA$ under the map~$\LAdm(\cT) \to \RAdm(\cT^\rhf)$
coincides with the subcategory of all objects in~$\cT^\rhf$ 
such that the corresponding representable dg-module vanishes on the orthogonal~${}^{\perp_\cT}\cA$;
the Yoneda lemma identifies this with the double orthogonal category~$({}^{\perp_\cT}\cA)^{\perp_{\cT^\rhf}}$.
On the other hand, we have
\begin{equation*}
({}^{\perp_\cT}\cA)^{\perp_{\cT^\rhf}} = 
({}^{\perp_\cT}\cA)^{\perp_{\cT}} \cap \cT^\rhf = 
\cA \cap \cT^\rhf,
\end{equation*}
where the first follows from~$\cT^\rhf \subset \cT$ 
and the second from~$({}^{\perp_\cT}\cA)^{\perp_{\cT}} = \cA$, which holds because~$\cA$ is left admissible.
Moreover, the embedding~$\cA \cap \cT^\rhf \subset \cA^\rhf$ is obvious and the opposite embedding 
easily follows from left admissibility of~$\cA$;
thus,~$\cA \cap \cT^\rhf = \cA^\rhf$.
The second bijection can be proved by the same argument or by applying the first bijection to~$\cT^\opp$.

\ref{it:reflexive-hfd-closed-bijection-2}
The argument of part~\ref{it:reflexive-hfd-closed-bijection} 
describes the image of~$\cA_0 \in \RAdm(\cT^\rhf)$ or of~$\cA_0 \in \LAdm(\cT^\rhf)$ 
under the maps of Theorem~\ref{thm:lhfd-right-bijection} as double orthogonals in~$\Dfd{\cT^\rhf}$.
But by reflexivity of~$\cT$ we have~$\Dfd{\cT^\rhf} = \cT$, 
and under this identification the double orthogonal descriptions take the form~\eqref{eq:admissiblity-bijection-inverse}.

\ref{it:reflexive-hfd-closed-ca-hf}
Assume~$\cA \in \LAdm(\cT)$ and let~$\pi_L \colon \cT \to \cA$ be the projection to~$\cA$ 
with respect to the semiorthogonal decomposition~$\cT = \langle \cA, {}^\perp\cA \rangle$.
If~$M \in \Dfd{\cA}$ then~$\pi_L^!(M) \in \Dfd{\cT}$, hence~$\pi_L^!(M)$ is representable, i.e.,
\begin{equation*}
M(\pi_L(-)) {} = \pi_L^!(M)(-) \cong \RHom_\cT(-,t)
\end{equation*}
for some~$t \in \cT^\rhf$.
Moreover, $\RHom_\cT({}^\perp\cA, t) = M(\pi_L({}^\perp\cA)) = M(0) = 0$,
hence~\mbox{$t \in ({}^\perp\cA)^\perp = \cA$}.
Thus, $M$ is representable by an object of~$\cA$, hence~$\Dfd{\cA} \subset \cA$.

If~$\cA \in \RAdm(\cT)$ the same argument 
applied to the semiorthogonal decomposition~$\cT^\opp = \langle \cA^\opp, (\cA^\perp)^\opp \rangle$,
obtained from~$\cT = \langle \cA^\perp, \cA \rangle$ by passing to opposite categories 
proves the embedding~$\Dfd{\cA^\opp} \subset \cA^\opp$.

Combining these two inclusions we conclude that~$\cA$ is hfd-closed.
\end{proof}

In the next corollary we show that if one of the components of a semiorthogonal decomposition of~$\cT$ is admissible,
one can obtain a semiorthogonal decomposition of~$\cT^\lhf$ or~$\cT^\rhf$ simply by taking intersections.
Note that this is \emph{not the same} semiorthogonal decomposition as in Lemma~\ref{lem:sod-rhfd-lhf},
in particular, the order of its components is not inverted;
in fact, the embedding of the first component in the first decomposition 
and of the second component in the second decomposition below
is different from the corresponding embedding in Lemma~\ref{lem:sod-rhfd-lhf}.

\begin{corollary}
\label{cor:reflexive-hfd-closed-ct-hf}
Let~$\cT = \langle \cA, \cB \rangle$ and assume~$\cT$ is hfd-closed.
If the component~$\cA \subset \cT$ is admissible, then~$\cT^\lhf = \langle \cA \cap \cT^\lhf, \cB \cap \cT^\lhf \rangle$,
and if~$\cB \subset \cT$ is admissible, then~$\cT^\rhf = \langle \cA \cap \cT^\rhf, \cB \cap \cT^\rhf \rangle$.
\end{corollary}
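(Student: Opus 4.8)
The plan is to treat the two cases symmetrically and, for definiteness, prove the statement about $\cT^\lhf$ when $\cA$ is admissible; the statement about $\cT^\rhf$ when $\cB$ is admissible follows by passing to the opposite category, since $\cT^\opp = \langle \cB^\opp, \cA^\opp \rangle$ with $\cB^\opp$ admissible and $(\cT^\opp)^\rhf = (\cT^\lhf)^\opp$. So assume $\cA \subset \cT$ is admissible, write $\cB = \cA^\perp$ (so also $\cT = \langle {}^\perp\!\cA, \cA\rangle$ and $\cB = \cA^\perp = {}^{\perp\perp}\!\cA{}^\perp$ — in particular $\cB$ is left admissible), and set $\cA_0 := \cA \cap \cT^\lhf$, $\cB_0 := \cB \cap \cT^\lhf$. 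First I would observe that semiorthogonality of $\cA_0$ over $\cB_0$ inside $\cT^\lhf$ is immediate: for $a \in \cA_0 \subset \cA$ and $b \in \cB_0 \subset \cB = \cA^\perp$ we have $\RHom_\cT(b, a) = 0$ by the semiorthogonal decomposition $\cT = \langle {}^\perp\!\cA, \cA \rangle$, which contains $\langle \cB, \cA\rangle$ as the relevant piece; wait — more carefully, $\cB = \cA^\perp$ means $\RHom_\cT(\cA, \cB) = 0$, i.e. $\RHom_\cT(a, b) = 0$, which is the wrong direction. Let me reorganize: the semiorthogonal decomposition $\cT = \langle \cA, \cB\rangle$ by convention means $\RHom_\cT(\cB, \cA) = 0$, i.e. $\cB \subset {}^\perp\!\cA$ and $\cA = {}^\perp\!\cA{}^\perp = \cB^\perp$ (using admissibility of $\cA$, equivalently left admissibility of $\cB$). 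So for $b \in \cB_0$, $a \in \cA_0$ we get $\RHom_\cT(b, a) = 0$, which is exactly the semiorthogonality needed for $\cT^\lhf = \langle \cA_0, \cB_0 \rangle$ in that order. Good.

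Next I would verify the spanning condition: every $t \in \cT^\lhf$ sits in a triangle $b \to t \to a \to b[1]$ with $a \in \cA_0$, $b \in \cB_0$. Take the decomposition triangle of $t$ for $\cT = \langle \cA, \cB\rangle$: since $\cA$ is admissible, the projection $\pi_\cA \colon \cT \to \cA$ is both a left and right adjoint to the inclusion, and there is a triangle $t_\cB \to t \to t_\cA \to t_\cB[1]$ with $t_\cA = \pi_\cA(t) \in \cA$ and $t_\cB \in \cB$. The key point is that $t_\cA$ and $t_\cB$ again lie in $\cT^\lhf$. For this I would use Proposition~\ref{prop:bijectionsubcat-hfd-closed}\ref{it:reflexive-hfd-closed-ca-hf}: since $\cA$ is admissible it is hfd-closed, and moreover by part~\ref{it:reflexive-hfd-closed-bijection} (applied to $\cA \in \RAdm(\cT)$) we have $\cA \cap \cT^\lhf = \cA^\lhf$, and similarly $\cB$ is hfd-closed with $\cB \cap \cT^\lhf = \cB^\lhf$ once we check $\cB \in \RAdm(\cT)$ — but wait, $\cB = \cA^\perp$ is right admissible iff $\cA$ is admissible, which it is, and in fact since $\cA$ is admissible both components are admissible, so both $\cA^\lhf = \cA \cap \cT^\lhf$ and $\cB^\lhf = \cB \cap \cT^\lhf$ make sense. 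Concretely, to see $t_\cA \in \cT^\lhf$: for any $t' \in \cT$ decompose $t' $ as well, and use that $\RHom_\cT(t_\cA, t') = \RHom_\cT(t_\cA, \pi_\cA(t'))$ because $t_\cA \in \cA$ and $\RHom_\cT(\cA, \cB) $ — hmm, that is $\RHom_\cT(\cA,\cB)$, which need not vanish. The clean way: $\RHom_\cT(t_\cA, t') \cong \RHom_\cA(t_\cA, \pi'(t'))$ where $\pi'$ is the \emph{right} adjoint of the inclusion $\cA \hookrightarrow \cT$ (which exists since $\cA$ is admissible), and since $t_\cA \in \cA \subset \cT$ with $t \in \cT^\lhf$, one shows $t_\cA \in \cA^\lhf$ directly from the triangle $t_\cB \to t \to t_\cA$: $\RHom_\cT(t_\cA, t')$ fits in a triangle with $\RHom_\cT(t, t')$ and $\RHom_\cT(t_\cB[1], t')$, so it suffices to know $t_\cB \in \cT^\lhf$; and $\RHom_\cT(t_\cB, t') $ — since $t_\cB \in \cB = {}^\perp\!\cA{}^\perp$... this is getting circular, so the honest route is: $t_\cB = $ the $\cB$-component, and there is also the \emph{other} decomposition triangle $t'_\cA \to t \to t'_\cB$ for $\cT = \langle {}^\perp\!\cB, \cB\rangle = \langle \cA, \cB\rangle$ — it's the same one. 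I would instead argue: the inclusion $j_\cB \colon \cB \hookrightarrow \cT$ has a left adjoint $\pi_\cB^L$ (as $\cB$ is left admissible) \emph{and} a right adjoint $\pi_\cB^R$ (as $\cB$ is right admissible, since $\cA$ admissible); then for $t \in \cT^\lhf$ and $b' \in \cB$, $\RHom_\cB(t_\cB, b') = \RHom_\cB(\pi_\cB^R(t), b') \cong \RHom_\cT(t, j_\cB(b'))$, which is finite-dimensional since $t \in \cT^\lhf$; this shows $t_\cB \in \cB^\lhf$. Here I used $t_\cB = \pi_\cB^R(t)$, which holds because the decomposition triangle $t_\cB \to t \to t_\cA$ for $\langle \cA, \cB\rangle$ has $t_\cB \in \cB$ and cone in $\cA = {}^\perp\cB$, identifying $t_\cB$ with the image of the counit $j_\cB \pi_\cB^R(t) \to t$. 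Dually $t_\cA = \pi_\cA^L(t)$ (the left adjoint of $j_\cA$, using $\langle \cA, \cB\rangle$ reads $\cA = {}^\perp\cB$ so $j_\cA$ has a left adjoint... rather $\cA$ admissible gives both), and $\RHom_\cA(t_\cA, a') \cong \RHom_\cT(?, ?)$ — the cleaner statement is simply that from the triangle and $t_\cB \in \cT^\lhf$ already proven, $t_\cA \in \cT^\lhf$ too since $\cT^\lhf$ is a triangulated subcategory. Then $t_\cA \in \cA \cap \cT^\lhf = \cA_0$ and $t_\cB \in \cB \cap \cT^\lhf = \cB_0$, and the triangle $t_\cB \to t \to t_\cA$ is the required decomposition.

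The main obstacle, as the muddle above indicates, is correctly bookkeeping which adjoints exist and showing that the $\cT$-decomposition triangle of an object of $\cT^\lhf$ has \emph{both} components again in $\cT^\lhf$ — the subtlety being that one component is cut out by a left adjoint and the other by a right adjoint, and one must use admissibility of $\cA$ (hence of $\cB$) so that all four adjoints $\pi_\cA^L, \pi_\cA^R, \pi_\cB^L, \pi_\cB^R$ exist; the finiteness of $\RHom$ then transports along the adjunction that lands inside $\cT^\lhf$. Once both components are known to lie in $\cT^\lhf$, the semiorthogonal decomposition $\cT^\lhf = \langle \cA \cap \cT^\lhf, \cB \cap \cT^\lhf\rangle$ follows formally, and as noted the $\cT^\rhf$ statement is the opposite-category version. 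I would present the argument crisply using the characterization $\cA \cap \cT^\lhf = \cA^\lhf$ and $\cB \cap \cT^\lhf = \cB^\lhf$ from Proposition~\ref{prop:bijectionsubcat-hfd-closed}, so that the only real content is the stability of $\cT^\lhf$ under the two projection functors, which I would isolate as the one nontrivial lemma-step.
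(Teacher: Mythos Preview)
Your overall strategy—show that the $\langle\cA,\cB\rangle$-decomposition triangle of any $t\in\cT^\lhf$ has both legs in $\cT^\lhf$—is sound and would give a perfectly good direct proof. But the execution contains two real errors.

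First, admissibility of $\cA$ does \emph{not} force $\cB$ to be admissible. Right admissibility of $\cA$ gives the SOD $\cT=\langle\cA^\perp,\cA\rangle$, but $\cA^\perp$ need not coincide with $\cB={}^\perp\cA$; so $j_\cB$ is only guaranteed its right adjoint $\pi_\cB^R$ (automatic from the SOD), not a left adjoint.

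Second, and fatally for the computation you settle on, the adjunction is backwards. With $\pi_\cB^R$ right adjoint to $j_\cB$ one has $\RHom_\cB(b',\pi_\cB^R(t))\cong\RHom_\cT(j_\cB(b'),t)$, not $\RHom_\cB(\pi_\cB^R(t),b')\cong\RHom_\cT(t,j_\cB(b'))$. So your argument for $t_\cB\in\cB^\lhf$ does not go through, and there is no way to do $t_\cB$ first without the missing left adjoint of $j_\cB$.

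The fix is to treat $t_\cA$ first, which is exactly where the extra hypothesis on $\cA$ is used. Since $\cA$ is admissible, $j_\cA$ has both adjoints $\pi_\cA^L\dashv j_\cA\dashv\pi_\cA^R$, and with $t_\cA=j_\cA\pi_\cA^L(t)$ one computes for arbitrary $t'\in\cT$
\[
\RHom_\cT(t_\cA,t')=\RHom_\cA(\pi_\cA^L(t),\pi_\cA^R(t'))=\RHom_\cT(t,j_\cA\pi_\cA^R(t'))\in\Db(\kk),
\]
so $t_\cA\in\cT^\lhf$, and then $t_\cB\in\cT^\lhf$ from the triangle. This is the clean form of your ``one nontrivial lemma-step''; it uses both adjoints of $j_\cA$, not of $j_\cB$. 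The reduction of the $\cT^\rhf$-case to the opposite category is fine.

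For comparison, the paper's proof is shorter and more abstract: it invokes Proposition~\ref{prop:bijectionsubcat-hfd-closed}\ref{it:reflexive-hfd-closed-bijection} to conclude that $\cA\cap\cT^\lhf$ is left admissible in $\cT^\lhf$, and then identifies its left orthogonal via the description in Lemma~\ref{lem:sod-rhfd-lhf} (objects of $\cT^\lhf$ that vanish on $\cA$, i.e.\ ${}^\perp\cA\cap\cT^\lhf=\cB\cap\cT^\lhf$). The paper thus leans on the machinery already built, whereas your (corrected) argument is the direct computation underlying it.
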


\begin{proof}
If~$\cA$ is admissible, hence in particular right admissible, 
Proposition~\ref{prop:bijectionsubcat-hfd-closed}\ref{it:reflexive-hfd-closed-bijection} implies 
that the subcategory~$\cA \cap \cT^\lhf \subset \cT^\lhf$ is left admissible.
By Lemma~\ref{lem:sod-rhfd-lhf} its orthogonal consists of objects of~$\cT^\lhf$
which vanish on $\cA$, hence it is equal to~$\cB \cap \cT^\lhf$.
The statement about~$\cT^\rhf$ is proved analogously.
\end{proof}

From Corollary~\ref{cor:indecomposability}
we immediately deduce: 

\begin{corollary}\label{cor:hfd-ref-indecomp}
The following conditions for a reflexive hfd-closed category~$\cT$ are equivalent:
\begin{renumerate}
\item 
$\cT$ is indecomposable;
\item 
$\cT^\rhf$ is indecomposable;
\item 
$\cT^\lhf$ is indecomposable.
\end{renumerate}
\end{corollary}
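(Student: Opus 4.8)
The plan is to deduce everything from Corollary~\ref{cor:indecomposability} together with the identifications~\eqref{eq:dfd-hfd-closed} available for hfd-closed categories. First I would record the obvious fact that a triangulated category~$\cT$ is indecomposable if and only if~$\cT^\opp$ is: a semiorthogonal decomposition~$\cT = \langle \cA, \cB \rangle$ is the same datum as a semiorthogonal decomposition~$\cT^\opp = \langle \cB^\opp, \cA^\opp \rangle$, so the two notions of indecomposability coincide. I would also note that, since~$\cT$ is hfd-closed and reflexive, so is~$\cT^\opp$: hfd-closedness is manifestly symmetric in~$\cT$ and~$\cT^\opp$ by Definition~\ref{def:hfd-closed}, and reflexivity of~$\cT^\opp$ follows from the remark after Definition~\ref{def:reflexivity} (based on Lemma~\ref{lem:reflexivity-opp}).

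Next, for the equivalence \textup{(i)}~$\Leftrightarrow$~\textup{(ii)}: by Corollary~\ref{cor:indecomposability}, applied to the reflexive category~$\cT$, indecomposability of~$\cT$ is equivalent to indecomposability of~$\Dfd{\cT}$; and since~$\cT$ is hfd-closed, the Yoneda functor induces an equivalence~$\Dfd{\cT} \simeq \cT^\rhf$ by~\eqref{eq:dfd-hfd-closed}. As indecomposability is invariant under equivalence, this gives \textup{(i)}~$\Leftrightarrow$~\textup{(ii)}.

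For \textup{(i)}~$\Leftrightarrow$~\textup{(iii)} I would run the same argument on the opposite category. Applying Corollary~\ref{cor:indecomposability} to the reflexive category~$\cT^\opp$ shows that~$\cT^\opp$ is indecomposable if and only if~$\Dfd{\cT^\opp}$ is; by~\eqref{eq:dfd-hfd-closed} again,~$\Dfd{\cT^\opp} \simeq (\cT^\lhf)^\opp$. Combining this with the observations of the first paragraph — that~$\cT$ is indecomposable iff~$\cT^\opp$ is, and that~$\cT^\lhf$ is indecomposable iff~$(\cT^\lhf)^\opp$ is — yields \textup{(i)}~$\Leftrightarrow$~\textup{(iii)}, completing the proof.

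There is no real obstacle here; the only point requiring a modicum of care is the bookkeeping with opposite categories, namely checking that reflexivity and hfd-closedness descend to~$\cT^\opp$ and that indecomposability is self-dual, all of which are immediate from the cited results. The corollary is thus a formal consequence of Corollary~\ref{cor:indecomposability} and the description~\eqref{eq:dfd-hfd-closed}.
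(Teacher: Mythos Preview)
Your proof is correct and follows exactly the approach the paper indicates: the paper simply says the corollary is immediately deduced from Corollary~\ref{cor:indecomposability}, and you have spelled out the details via the identifications~\eqref{eq:dfd-hfd-closed} and the passage to~$\cT^\opp$.
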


\begin{remark}
\label{rem:sing}
As in~\cite[Definition~1.7]{Orl06}, one can use the notion of homologically finite-dimensional objects 
to define the singularity category of an hfd-closed triangulated category~$\cT$ as
\begin{equation}\label{eq:sing-cat}
\cT^\sing \coloneqq \cT / \cT^\lhf.
\end{equation}
If~$\cT$ is idempotent complete, proper, and either smooth or regular, 
then~$\cT^\sing = 0$ by Corollary~\ref{cor:sp-hfd}.
Note also that if~$\cT$ is hfd-closed 
and~$\cT = \langle \cA_1, \dots, \cA_m \rangle$ is a semiorthogonal decomposition with admissible components then
\begin{equation}
\label{eq:sing-sod}
\cT^\sing = \langle \cA_1^\sing, \dots, \cA_m^\sing \rangle.
\end{equation}
Indeed, 
Proposition~\ref{prop:bijectionsubcat-hfd-closed}\ref{it:reflexive-hfd-closed-bijection},\ref{it:reflexive-hfd-closed-ca-hf}
and 
Corollary~\ref{cor:reflexive-hfd-closed-ct-hf} applied repeatedly
prove that all components~$\cA_i$ are hfd-closed, \mbox{$\cA_i^\lhf = \cA_i \cap \cT^\lhf$}, 
and there is a semiorthogonal decomposition~$\cT^\lhf = \langle \cA_1^\lhf, \dots, \cA_m^\lhf \rangle$,
so that the argument of~\cite[Proposition~1.10]{Orl06} implies~\eqref{eq:sing-sod}.
\end{remark}

We could also consider the category~$\cT/\cT^\rhf$ instead of~~$\cT/\cT^\lhf$,
however using the obvious equality~$(\cT^\opp)^\lhf = (\cT^\rhf)^\opp$ of subcategories in~$\cT^\opp$
we see that~$\cT/\cT^\rhf \simeq ((\cT^\opp)^\sing)^\opp$,
so this replacement would not change much.
In fact, our choice is justified by better compatibility with the geometric case, 
see Proposition~\ref{prop:hfd-geometric} and Example~\ref{ex:sing}.

\subsection{Gorenstein categories}
\label{ss:gorenstein}

Recall the subcategories~$\cT^\rhf$ and~$\cT^\lhf$ of~$\cT$ defined by~\eqref{eq:ct-rhf-lhf}.

\begin{definition}
\label{def:gorenstein}
We say that a dg-enhanced triangulated category~$\cT$ is {\sf Gorenstein}, 
if it is hfd-closed and
\begin{equation*}
\cT^\rhf = \cT^\lhf
\end{equation*}
as subcategories of~$\cT$.
In other words, $\cT$ is Gorenstein if and only if~$\cT$ is hfd-closed and for~$t \in \cT$ we have 
\begin{equation*}
\RHom_\cT(t',t) \in \Db(\kk)
\quad\text{for any~$t' \in \cT$}
\iff 
\RHom_\cT(t,t'') \in \Db(\kk)
\quad\text{for any~$t'' \in \cT$}.
\end{equation*}
If~$\cT$ is Gorenstein, we set~$\cT^\hf \coloneqq \cT^\rhf = \cT^\lhf$.
\end{definition}

The next corollary follows immediately from Definition~\ref{def:hfd-closed} and Lemma~\ref{lemma:sp-hfd}\ref{item:hfd-proper}.

\begin{corollary}
\label{cor:closed-proper}
If~$\cT$ is hfd-closed and proper then it is Gorenstein with~$\cT^\hf = \cT$.
\end{corollary}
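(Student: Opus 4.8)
We must show that if $\cT$ is hfd-closed and proper, then $\cT^\rhf = \cT^\lhf$ (so $\cT$ is Gorenstein) and moreover these subcategories equal all of $\cT$. The point is that properness controls the $\RHom$-complexes directly. The plan is to unpack the definitions in~\eqref{eq:ct-rhf-lhf} and observe that properness says exactly that every such $\RHom$-complex is already bounded and finite-dimensional over $\kk$.

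\begin{proof}
By Definition~\ref{def:hfd-closed} the category $\cT$ is hfd-closed. Since $\cT$ is proper, by definition $\RHom_\cT(t',t'') \in \Dp(\kk) = \Db(\kk)$ for \emph{all} $t',t'' \in \cT$. Looking at the defining conditions in~\eqref{eq:ct-rhf-lhf}, we see that for any $t \in \cT$ the condition ``$\RHom_\cT(t',t) \in \Db(\kk)$ for all $t' \in \cT$'' is automatically satisfied, hence $\cT^\rhf = \cT$; likewise the condition ``$\RHom_\cT(t,t'') \in \Db(\kk)$ for all $t'' \in \cT$'' is automatic, so $\cT^\lhf = \cT$. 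In particular $\cT^\rhf = \cT^\lhf$, so $\cT$ is Gorenstein by Definition~\ref{def:gorenstein}, and $\cT^\hf = \cT^\rhf = \cT^\lhf = \cT$. (Alternatively, one can invoke Lemma~\ref{lemma:sp-hfd}\ref{item:hfd-proper}: properness gives $\cT \subseteq \Dfd{\cT}$, while hfd-closedness gives $\Dfd{\cT} \subseteq \cT$, so $\Dfd{\cT} = \cT$, and together with the analogous statement for $\cT^\opp$ and the identifications~\eqref{eq:dfd-hfd-closed} this yields $\cT^\rhf = \cT = \cT^\lhf$.)
\end{proof}

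There is no real obstacle here: the statement is a formal consequence of the definitions once one notices that ``proper'' is precisely the uniform finiteness condition that makes the defining constraints on $\cT^\rhf$ and $\cT^\lhf$ vacuous. The only thing to be careful about is the identification $\Db(\kk) = \Dp(\kk)$ (valid since a bounded complex of finite-dimensional vector spaces is perfect and conversely), which is already noted in Definition~\ref{def:rhfd-lhf}, and the use of the identifications~\eqref{eq:dfd-hfd-closed} if one prefers the module-theoretic phrasing.
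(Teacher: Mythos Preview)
Your proof is correct and takes essentially the same approach as the paper, which simply notes that the result follows immediately from Definition~\ref{def:hfd-closed} and Lemma~\ref{lemma:sp-hfd}\ref{item:hfd-proper}. Your first argument unpacks the definitions directly (showing that properness makes the conditions in~\eqref{eq:ct-rhf-lhf} vacuous), and your alternative argument is precisely the paper's one-line justification.
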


The following result proves that Gorenstein categories always admit a Serre functor.

\begin{proposition}
\label{prop:serre-functor}
If~$\cT$ is Gorenstein, then~$\cT^\homfin$ 
admits a canonical autoequivalence~$\bS_{\cT^\hf} \colon \cT^\homfin \xrightiso{} \cT^\homfin$
with a functorial isomorphism
\begin{equation}
\label{eq:serre}
\Hom_\cT(t, \bS_{\cT^\hf}(t')) \cong \Hom_\cT(t', t)^\vee
\end{equation}
for~$t \in \cT$, $t' \in \cT^\hf$;
in particular, $\bS_{\cT^\hf}$ is a Serre functor on~$\cT^\homfin$.

Moreover, if~$\cT$ is reflexive, the Serre functor~$\bS_{\cT^\hf}$ of~$\cT^\hf$ 
extends \textup(in the sense of Lemma~\textup{\ref{lem:equivalence-extension}}\textup)
to a unique autoequivalence~$\bS_\cT$ of~$\cT$ and
\begin{equation}
\label{eq:serre-weak}
\Hom_\cT(t, \bS_{\cT}(t')) \cong \Hom_\cT(t', t)^\vee
\end{equation}
whenever~$t$ or~$t'$ is in~$\cT^\hf$.
\end{proposition}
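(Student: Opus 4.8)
The plan is to construct the Serre functor on $\cT^\hf$ directly from the representability packaged into hfd-closedness, then to propagate it to all of $\cT$ using reflexivity together with Lemma~\ref{lem:equivalence-extension}, and finally to deduce the two-sided duality \eqref{eq:serre-weak} from \eqref{eq:serre} by a double-dualization argument.

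For the first step, for $t' \in \cT$ I would consider the right dg-module $M_{t'} \in \bD(\cT)$ given by $M_{t'}(t) \coloneqq \RHom_\cT(t', t)^\vee$; in the notation of Lemma~\ref{lemma:dual-kk} this is $M_{t'} = \duk(\bh_{\cT^\opp, t'})$. When $t' \in \cT^\lhf$ we have $\RHom_\cT(t', t) \in \Db(\kk)$ for every $t$, hence $M_{t'} \in \Dfd{\cT}$, and the assignment $t' \mapsto M_{t'}$ factors as the composite $\cT^\lhf \xrightiso{\ } \Dfd{\cT^\opp}^\opp \xrightiso{\ } \Dfd{\cT}$ of the equivalence induced by $\bh_{\cT^\opp}$ (via \eqref{eq:dfd-hfd-closed}) and the equivalence induced by $\duk$ (Lemma~\ref{lemma:dual-kk} applied to $\cT^\opp$); in particular it is an equivalence. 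Composing it with the inverse of the equivalence $\bh_\cT \colon \cT^\rhf \xrightiso{\ } \Dfd{\cT}$ of \eqref{eq:dfd-hfd-closed} and using the Gorenstein equality $\cT^\lhf = \cT^\rhf = \cT^\hf$, I obtain an autoequivalence $\bS_{\cT^\hf}$ of $\cT^\hf$ together with a functorial isomorphism $\bh_\cT(\bS_{\cT^\hf}(t')) \cong M_{t'}$. Then \eqref{eq:serre} follows: for $t \in \cT$, $t' \in \cT^\hf$,
\[
\Hom_\cT(t, \bS_{\cT^\hf}(t')) \cong \Hom_{\bD(\cT)}(\bh_{\cT, t}, M_{t'}) \cong \rH^0(M_{t'}(t)) = \Hom_\cT(t', t)^\vee,
\]
by full faithfulness of $\bh_\cT$, the Yoneda isomorphism \eqref{eq:Yoneda}, and exactness of dualization on $\kk$-vector spaces, all functorially in $t$ and $t'$; restricting $t$ to $\cT^\hf$ exhibits $\bS_{\cT^\hf}$ as a Serre functor there.

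Next, assuming $\cT$ reflexive (and recalling that a Gorenstein category is hfd-closed), I would apply Lemma~\ref{lem:equivalence-extension} with $\cT_1 = \cT_2 = \cT$ to the autoequivalence $\bS_{\cT^\hf}$ of $\cT^\hf = \cT^\rhf$: it extends to a unique autoequivalence $\bS_\cT$ of $\cT$ with $\bS_\cT\vert_{\cT^\rhf} = \bS_{\cT^\hf}$, which in particular preserves $\cT^\hf$. To prove \eqref{eq:serre-weak}: for $t' \in \cT^\hf$ it is exactly \eqref{eq:serre} (rewriting $\bS_{\cT^\hf}$ as $\bS_\cT$, which is legitimate on $\cT^\hf$); for $t \in \cT^\hf$ and $t' \in \cT$ arbitrary, I would apply \eqref{eq:serre} with the $\cT^\hf$-object taken to be $t$ and the arbitrary object taken to be $\bS_\cT(t')$, getting a functorial isomorphism $\Hom_\cT(\bS_\cT(t'), \bS_\cT(t)) \cong \Hom_\cT(t, \bS_\cT(t'))^\vee$; since $\bS_\cT$ is fully faithful the left-hand side is $\Hom_\cT(t', t)$, so $\Hom_\cT(t', t) \cong \Hom_\cT(t, \bS_\cT(t'))^\vee$, and since $\Hom_\cT(t, \bS_\cT(t'))$ is finite-dimensional ($t \in \cT^\hf = \cT^\lhf$) I may dualize and use $V \cong V^{\vee\vee}$ to conclude $\Hom_\cT(t, \bS_\cT(t')) \cong \Hom_\cT(t', t)^\vee$ functorially.

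The step I expect to demand the most care is the first one: getting the variances and opposite categories to line up so that $t' \mapsto M_{t'}$ is literally a composite of known equivalences --- this is what makes $\bS_{\cT^\hf}$ an autoequivalence and not merely a functor --- and tracking functoriality of \eqref{eq:serre} in both arguments. Once that is in place the extension is a black-box application of Lemma~\ref{lem:equivalence-extension}, and \eqref{eq:serre-weak} is elementary finite-dimensional linear algebra.
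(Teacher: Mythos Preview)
Your proof is correct and follows essentially the same approach as the paper: construct $\bS_{\cT^\hf}$ via representability of the dualized Yoneda module (which is exactly what hfd-closedness provides), then extend using Lemma~\ref{lem:equivalence-extension}. Your packaging of the first step as a composite of three explicit equivalences is slightly slicker than the paper's, which constructs $\bS_{\cT^\hf}$ and $\bS_{\cT^\hf}^{-1}$ separately and then observes they are mutually inverse; and you spell out the case $t \in \cT^\hf$ of~\eqref{eq:serre-weak} in full, whereas the paper simply asserts it follows from~\eqref{eq:serre}.
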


The autoequivalence~$\bS_\cT$ is \emph{not} a Serre functor for~$\cT$ unless~$\cT$ is proper.

\begin{proof}
For any~$t' \in \cT^\hf$ the dg-module~$\RHom_\cT(t', -)^\vee$ over~$\cT$ is homologically finite-dimensional, 
hence it is representable by an object in~$\cT^\hf$. 
We denote the representing object by~$\bS_{\cT^\hf}(t')$; the standard argument 
(see~\cite[Proposition~3.4]{BK})
then proves functoriality of~$\bS_{\cT^\hf}$.
Similarly, the dg-module~$\RHom_\cT(-,t')^\vee$ is homologically finite-dimensional, 
hence it is corepresentable by an object in~$\cT^\hf$ which we denote by~$\bS_{\cT^\hf}^{-1}(t')$
and again, $\bS_{\cT^\hf}^{-1}$ 
is a functor.
It also follows that the functors~$\bS_{\cT^\hf}$ and~$\bS_{\cT^\hf}^{-1}$ are mutually inverse, 
and~$\bS_{\cT^\hf}$ satisfies the Serre duality property~\eqref{eq:serre};
in particular it is a Serre functor for~$\cT^\hf$.

Moreover, applying Lemma~\ref{lem:equivalence-extension} we obtain the unique extension of~$\bS_{\cT^\hf}$ 
to an autoequivalence~$\bS_\cT$ of~$\cT$;
the isomorphism~\eqref{eq:serre-weak} then follows from~\eqref{eq:serre}.
\end{proof}

Under the Gorenstein and reflexivity conditions, the results of Lemma~\ref{lem:sod-rhfd-lhf}, 
Theorem~\ref{thm:lhfd-right-bijection}, and Proposition~\ref{prop:bijectionsubcat-hfd-closed}
can be strengthened.
Recall that~$\Adm(\cT)$ denotes the set of all admissible subcategories in~$\cT$.

\begin{proposition}
\label{prop:Gorenstein-refl-admissible}
Let~$\cT$ be a Gorenstein 
category.
\begin{renumerate}
\item 
\label{it:gor-adm}
If~$\cA \subset \cT$ is admissible, it is Gorenstein.
\item 
\label{it:gor-bijections}
If~$\cT$ is reflexive the operation~$\cA \mapsto \cA \cap \cT^\hf$ defines bijections
\begin{equation*}
\LAdm(\cT) \xrightiso{} \RAdm(\cT^\hf),
\quad 
\RAdm(\cT) \xrightiso{} \LAdm(\cT^\hf),
\quad\text{and}\quad 
\Adm(\cT) \xrightiso{} \Adm(\cT^\hf).
\end{equation*}

\item 
\label{item:admissibility-gorenstein}
If~$\cT$ is reflexive and~$\cA \subset \cT$ is Gorenstein and left or right admissible, it is admissible.
\end{renumerate}
\end{proposition}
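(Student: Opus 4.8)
The plan is to deduce everything from the results already established for hfd-closed and reflexive categories --- Proposition~\ref{prop:bijectionsubcat-hfd-closed}, Theorem~\ref{thm:lhfd-right-bijection}, and Proposition~\ref{prop:serre-functor} --- the only genuinely new ingredient being a classical fact about Serre functors. The crucial simplification coming from the Gorenstein hypothesis is that $\cT^\rhf$ and $\cT^\lhf$ are \emph{literally} the same subcategory $\cT^\hf$, so the two ``one-sided'' statements of Proposition~\ref{prop:bijectionsubcat-hfd-closed} collapse to statements about the single category $\cT^\hf$. Part~\ref{it:gor-adm} is then essentially immediate: an admissible $\cA$ is hfd-closed by Proposition~\ref{prop:bijectionsubcat-hfd-closed}\ref{it:reflexive-hfd-closed-ca-hf}, and Proposition~\ref{prop:bijectionsubcat-hfd-closed}\ref{it:reflexive-hfd-closed-bijection} identifies $\cA^\rhf$ with $\cA\cap\cT^\rhf$ (using left admissibility) and $\cA^\lhf$ with $\cA\cap\cT^\lhf$ (using right admissibility); since $\cT^\rhf=\cT^\lhf$ these coincide, so $\cA$ is Gorenstein.

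For part~\ref{it:gor-bijections} the first two bijections are nothing but those of Proposition~\ref{prop:bijectionsubcat-hfd-closed}\ref{it:reflexive-hfd-closed-bijection},\ref{it:reflexive-hfd-closed-bijection-2} rewritten using $\cT^\rhf=\cT^\lhf=\cT^\hf$. For the third bijection I would first record that $\cT^\hf\simeq\Dfd\cT$ is reflexive (Lemma~\ref{lem:lhfd-lhfrefl}) and $\Dfd{\cT^\hf}\simeq\cT$ by reflexivity of $\cT$, so that Theorem~\ref{thm:lhfd-right-bijection}, applied both to $\cT$ and to $\cT^\hf$, shows that the inverses of the first two bijections, $\RAdm(\cT^\hf)\to\LAdm(\cT)$ and $\LAdm(\cT^\hf)\to\RAdm(\cT)$, are \emph{both} given by $\cA_0\mapsto\Dfd{\cA_0}$. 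The map $\cA\mapsto\cA\cap\cT^\hf$ visibly carries $\Adm(\cT)=\LAdm(\cT)\cap\RAdm(\cT)$ into $\Adm(\cT^\hf)$ and is injective there; for surjectivity, given $\cA_0\in\Adm(\cT^\hf)$ the subcategory $\Dfd{\cA_0}$ is at once the preimage of $\cA_0$ under the first bijection (hence left admissible in $\cT$) and under the second (hence right admissible), so it lies in $\Adm(\cT)$ and maps to $\cA_0$.

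For part~\ref{item:admissibility-gorenstein} I may assume $\cA$ is left admissible, the right admissible case following by passing to $\cT^\opp$, which is again Gorenstein and reflexive with $(\cT^\opp)^\hf=(\cT^\hf)^\opp$. Since $\cA$ is left admissible, Proposition~\ref{prop:bijectionsubcat-hfd-closed}\ref{it:reflexive-hfd-closed-bijection} shows $\cA\cap\cT^\rhf=\cA^\rhf$ is right admissible in $\cT^\rhf=\cT^\hf$, and since $\cA$ is Gorenstein $\cA^\rhf=\cA^\hf$; thus $\cA^\hf=\cA\cap\cT^\hf$ is a right admissible subcategory of $\cT^\hf$. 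Now $\cT^\hf$ has finite-dimensional $\Hom$-spaces and carries a genuine Serre functor $\bS_{\cT^\hf}$ by Proposition~\ref{prop:serre-functor}, and at this point I would invoke the standard fact (Bondal--Kapranov, \cite{BK}) that in a triangulated category with a Serre functor every right admissible (equivalently, left admissible) subcategory is admissible --- concretely, if $i$ denotes the inclusion with right adjoint $i^!$, then $i^!\circ\bS_{\cT^\hf}\circ i$ is a Serre functor on $\cA^\hf$ and $(i^!\circ\bS_{\cT^\hf}\circ i)^{-1}\circ i^!\circ\bS_{\cT^\hf}$ is a left adjoint to $i$. Hence $\cA^\hf$ is admissible, in particular left admissible, in $\cT^\hf$; by part~\ref{it:gor-bijections} there is a right admissible subcategory $\cA'\subset\cT$ with $\cA'\cap\cT^\hf=\cA^\hf$, and by the description of the inverse bijections $\cA'=\Dfd{\cA^\hf}=\cA$, the last equality because $\cA\in\LAdm(\cT)$ satisfies $\cA\cap\cT^\hf=\cA^\hf$. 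Therefore $\cA$ is both left and right admissible, i.e.\ admissible.

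The main obstacle is exactly the transfer step in part~\ref{item:admissibility-gorenstein}: moving the admissibility of $\cA^\hf$ from $\cT^\hf$, where one has an honest Serre functor and honest $\Hom$-finiteness, back up to $\cT$, where the ``Serre functor'' of Proposition~\ref{prop:serre-functor} is only partially defined. It is here that the Gorenstein hypothesis enters essentially --- it is what makes $\cT^\hf$ the common value $\cT^\rhf=\cT^\lhf$ and guarantees its finiteness. Everything else is a matter of combining Proposition~\ref{prop:bijectionsubcat-hfd-closed}, Theorem~\ref{thm:lhfd-right-bijection} and Proposition~\ref{prop:serre-functor} with the equality $\cT^\rhf=\cT^\lhf$.
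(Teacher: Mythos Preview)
Your overall strategy matches the paper's and parts~\ref{it:gor-adm} and~\ref{it:gor-bijections} are essentially correct. For~\ref{it:gor-bijections} you take a somewhat different route than the paper for the third bijection: the paper uses the explicit double-orthogonal formulas~\eqref{eq:admissiblity-bijection-inverse} for the two inverse maps and checks, via a direct Serre-duality computation in~$\cT^\hf$ and~$\cT$, that they agree on~$\Adm(\cT^\hf)$; you instead observe that under the identification~$\Dfd{\cT^\hf}\simeq\cT$ both inverse maps are given by~$\cA_0\mapsto\Dfd{\cA_0}$ (by the proof of Theorem~\ref{thm:lhfd-right-bijection}), which immediately forces them to coincide. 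Your argument is cleaner and avoids the orthogonal bookkeeping, while the paper's has the virtue of making the role of the Serre functor explicit.

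There is, however, a genuine gap in~\ref{item:admissibility-gorenstein}. The statement you attribute to Bondal--Kapranov --- that in a triangulated category with a Serre functor \emph{every} right admissible subcategory is admissible --- is not what~\cite{BK} proves, and your ``concrete'' construction does not establish it: the functor~$i^!\circ\bS_{\cT^\hf}\circ i$ satisfies the Serre duality identity on~$\cA^\hf$, but you have not shown it is an \emph{equivalence}, and writing~$(i^!\circ\bS_{\cT^\hf}\circ i)^{-1}$ presupposes exactly what is to be proved. The correct statement (this is~\cite[Proposition~3.9]{BK}, as the paper cites) requires \emph{both}~$\cT^\hf$ and~$\cA^\hf$ to have Serre functors. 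This is available to you: since~$\cA$ is Gorenstein by hypothesis, Proposition~\ref{prop:serre-functor} (first part, no reflexivity needed) gives a Serre functor on~$\cA^\hf$. Once you insert this observation, your argument goes through --- and then your functor~$i^!\circ\bS_{\cT^\hf}\circ i$ does agree with~$\bS_{\cA^\hf}$ by uniqueness of Serre functors. Note that you invoke the Gorenstein hypothesis on~$\cA$ only to rewrite~$\cA^\rhf$ as~$\cA^\hf$; its essential use is precisely to supply the Serre functor on~$\cA^\hf$.
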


\begin{proof}
\ref{it:gor-adm}
Assume~$\cA \subset \cT$ is admissible. 
Then by~Proposition~\ref{prop:bijectionsubcat-hfd-closed}\ref{it:reflexive-hfd-closed-ca-hf} it is hfd-closed and
by~Proposition~\ref{prop:bijectionsubcat-hfd-closed}\ref{it:reflexive-hfd-closed-bijection} 
we have~$\cA^\rhf = \cA \cap \cT^\hf = \cA^\lhf$,
which means that~$\cA$ is Gorenstein.

\ref{it:gor-bijections}
Since~$\cT^\lhf = \cT^\hf = \cT^\rhf$, 
the first two bijections follow from Proposition~\ref{prop:bijectionsubcat-hfd-closed}\ref{it:reflexive-hfd-closed-bijection}
and Theorem~\ref{thm:lhfd-right-bijection}.
It also follows that the map~$\cA \mapsto \cA \cap \cT^\hf$ 
takes an admissible subcategory in~$\cT$ to an admissible subcategory in~$\cT^\hf$.
It remains to check that the inverse maps defined on~$\LAdm(\cT^\hf)$ and~$\RAdm(\cT^\hf)$ 
by~\eqref{eq:admissiblity-bijection-inverse} coincide on~$\Adm(\cT^\hf)$.

So, let~$\cA_0 \subset \cT^\hf$ be an admissible subcategory, so that we have semiorthogonal decompositions
\begin{equation*}
\cT^\hf = \langle \cA_0^{\perp_{\cT^\hf}}, \cA_0 \rangle,
\qquad 
\cT^\hf = \langle \cA_0, {}^{\perp_{\cT^\hf}}\cA_0 \rangle.
\end{equation*}
Then~\eqref{eq:serre} with~$t,t' \in \cT^\hf$ implies~$\cA_0^{\perp_{\cT^\hf}} = \bS_{\cT^\hf}({}^{\perp_{\cT^\hf}}\cA_0)$.
On the other hand, using~\eqref{eq:serre} again, this time with~$t \in \cT$ and~$t' \in \cT^\hf$, we deduce
\begin{equation*}
{}^{\perp_\cT}(\cA_0^{\perp_{\cT^\hf}}) =
{}^{\perp_\cT}(\bS_{\cT^\hf}({}^{\perp_{\cT^\hf}}\cA_0)) =
({}^{\perp_{\cT^\hf}}\cA_0)^{\perp_\cT}.
\end{equation*}
Comparing with~\eqref{eq:admissiblity-bijection-inverse} we see that the maps 
inverse to~\eqref{eq:admissiblity-bijection-23}
indeed agree under our assumptions.

\ref{item:admissibility-gorenstein}
If~$\cA$ is left admissible, the category~$\cA^\hf = \cA^\rhf = \cA \cap \cT^\rhf$
is right admissible in~$\cT^\hf$ by~Proposition~\ref{prop:bijectionsubcat-hfd-closed}\ref{it:reflexive-hfd-closed-bijection}.
Since both~$\cT^\hf$ and~$\cA^\hf$ have Serre functors by Proposition~\ref{prop:serre-functor},
we conclude from~\cite[Proposition~3.9]{BK} that~$\cA^\hf$ is admissible in~$\cT^\hf$.
Then part~\ref{it:gor-bijections}
implies that~$\cA \subset \cT$ is admissible; 
indeed, part~\ref{it:gor-bijections} claims that there is an admissible subcategory~$\cA_0 \subset \cT$ 
such that~$\cA_0 \cap \cT^\hf = \cA^\hf$,
but then~$\cA_0$ and~$\cA$ are left admissible and~$\cA_0 \cap \cT^\hf = \cA \cap \cT^\hf$, hence~$\cA_0 = \cA$,
so that~$\cA$ is admissible.
The same argument works in the case where~$\cA$ is right admissible.
\end{proof}

The following result generalizes~\cite[Lemma~2.15]{Kalck-Pavic-Shinder}.

\begin{proposition}
\label{prop:AB-adm}
If~$\cT$ is Gorenstein and reflexive
with~$\cT = \langle \cA, \cB \rangle$
and one component is smooth and proper or regular and proper, 
then both components are admissible and Gorenstein.
\end{proposition}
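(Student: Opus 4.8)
The plan is to reduce the statement to the abstract results on Gorenstein categories from \S\ref{ss:gorenstein}. The only real input is the observation that the distinguished component is itself a Gorenstein category; once that is available, Proposition~\ref{prop:Gorenstein-refl-admissible} supplies everything else.

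First I would fix notation. Let $\cA$ denote the component of $\cT = \langle \cA, \cB \rangle$ that is smooth and proper, or regular and proper (it may be the left or the right one). As a component of a semiorthogonal decomposition $\cA$ is one-sidedly admissible in $\cT$ --- left admissible if it is the left component, right admissible if it is the right one. A one-sidedly admissible subcategory of an idempotent complete triangulated category is thick, hence idempotent complete; and $\cT$, being Gorenstein, is hfd-closed and in particular idempotent complete. Therefore $\cA$ is idempotent complete, and Corollary~\ref{cor:sp-hfd} applies to it: being idempotent complete and either smooth and proper or regular and proper, $\cA$ is hfd-closed with $\cA^\rhf = \cA = \cA^\lhf$. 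In particular $\cA$ is a Gorenstein category, with $\cA^\hf = \cA$.

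Now I would invoke Proposition~\ref{prop:Gorenstein-refl-admissible}\ref{item:admissibility-gorenstein}: since $\cT$ is reflexive and Gorenstein and $\cA \subset \cT$ is Gorenstein and left or right admissible, $\cA$ is admissible in $\cT$. In the two-term semiorthogonal decomposition $\cT = \langle \cA, \cB \rangle$ one component is then admissible, and therefore so is the other; in the present setting this can be extracted from Corollary~\ref{cor:reflexive-hfd-closed-ct-hf} (which yields a semiorthogonal decomposition $\cT^\hf = \langle \cA \cap \cT^\hf, \cB \cap \cT^\hf \rangle$), the existence of a Serre functor on $\cT^\hf$ (Proposition~\ref{prop:serre-functor}) together with \cite[Proposition~3.9]{BK}, and the bijections of Proposition~\ref{prop:Gorenstein-refl-admissible}\ref{it:gor-bijections}. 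Finally $\cA$ is Gorenstein by the previous paragraph, and $\cB$, being an admissible subcategory of the Gorenstein category $\cT$, is Gorenstein by Proposition~\ref{prop:Gorenstein-refl-admissible}\ref{it:gor-adm}.

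The step carrying the content is the identification of $\cA$ as a Gorenstein category, i.e. deducing $\cA^\rhf = \cA = \cA^\lhf$ from smoothness or regularity together with properness of the component via Corollary~\ref{cor:sp-hfd}; after that everything is a formal application of the machinery of \S\ref{ss:gorenstein}. The only other point requiring (routine) care is the passage from ``one component admissible'' to ``both components admissible''.
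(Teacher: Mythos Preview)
Your argument is correct in outline and tracks the paper's proof for the first half: you identify the distinguished component as Gorenstein via Corollary~\ref{cor:sp-hfd} and then invoke Proposition~\ref{prop:Gorenstein-refl-admissible}\ref{item:admissibility-gorenstein} to upgrade one-sided admissibility to admissibility. This is exactly what the paper does.

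The difference lies in the passage ``one component admissible $\Rightarrow$ both components admissible''. The paper handles this by a direct construction in~$\cT$: taking~$\cB$ to be the distinguished (right) component, it exhibits an explicit semiorthogonal decomposition $\cT = \langle \bS_{\cT^\hf}(\cB), \cA \rangle$, checking generation by hand via the triangle on $b \to \bS_{\cT^\hf}(\bS_\cB^{-1}(b))$. Your route instead descends to~$\cT^\hf$, uses its Serre functor, and comes back via the bijections of Proposition~\ref{prop:Gorenstein-refl-admissible}\ref{it:gor-bijections}. This also works, but your sketch understates what is needed: the citation of \cite[Proposition~3.9]{BK} only applies to the distinguished component $\cA$ (which has a Serre functor), not to $\cB \cap \cT^\hf$, so it does not directly yield admissibility of the latter. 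What actually closes the argument is the identity ${}^{\perp_{\cT^\hf}}\cA = \bS_{\cT^\hf}^{-1}(\cA^{\perp_{\cT^\hf}})$ in~$\cT^\hf$: since $\cA$ is already admissible in $\cT^\hf$ (via Proposition~\ref{prop:Gorenstein-refl-admissible}\ref{it:gor-bijections}, so \cite[Proposition~3.9]{BK} is in fact not needed), the subcategory $\cA^{\perp_{\cT^\hf}}$ is left admissible, hence so is $\cB \cap \cT^\hf = \bS_{\cT^\hf}^{-1}(\cA^{\perp_{\cT^\hf}})$, and then the bijections lift this to $\cB \subset \cT$. So your list of ingredients is right but the way you combine them should be made explicit; as written, the appeal to \cite[Proposition~3.9]{BK} is a red herring.
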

\begin{proof}
Indeed, assume that~$\cB$ is smooth and proper or regular and proper.
Then it is Gorenstein by Corollary~\ref{cor:sp-hfd},
hence admissible in~$\cT$ by Proposition~\ref{prop:Gorenstein-refl-admissible}\ref{item:admissibility-gorenstein}.
Let us prove that~$\cA$ is also Gorenstein and admissible in~$\cT$.
We have~$\cB \subset \cT^\hf$ because~$\Hom_\cT(\cB,\cA) = 0$ and~$\cB = \cB^\hf$
(again by Corollary~\ref{cor:sp-hfd}).
Therefore, we can apply the Serre functor of~$\cT^\hf$ to objects of~$\cB$.
Let us check that there is a semiorthogonal decomposition
\begin{equation*}
\cT = \langle \bS_{\cT^\hf}(\cB), \cA \rangle.
\end{equation*}
Indeed, semiorthogonality follows from Serre duality~\eqref{eq:serre}. 
On the other hand, combining Serre duality in~$\cT$ and~$\cB = \cB^\hf$ 
with full faithfulness of the embedding~$\cB \hookrightarrow \cT$, 
we obtain isomorphisms
\begin{equation}
\label{eq:hom-bp-ssb}
\Hom_{\cT}(b',\bS_{\cT^\hf}(\bS_\cB^{-1}(b))) \cong 
\Hom_\cT(\bS_\cB^{-1}(b), b')^\vee \cong 
\Hom_\cB(\bS_\cB^{-1}(b), b')^\vee \cong 
\Hom_\cB(b',b) \cong 
\Hom_\cT(b',b)
\end{equation}
for any~$b,b' \in \cB$. 
Taking~$b' = b$ we obtain a canonical morphism~$b \to \bS_{\cT^\hf}(\bS_\cB^{-1}(b))$ in~$\cT$.
Now consider the distinguished triangle 
\begin{equation*}
t \to b \to \bS_{\cT^\hf}(\bS_\cB^{-1}(b))
\end{equation*}
extending it.
Using~\eqref{eq:hom-bp-ssb} we conclude that~$\Hom(b',t) = 0$ for any~$b' \in \cB$,
hence~$t \in \cB^\perp = \cA$,
and the triangle implies that~$\cB \subset \langle \bS_{\cT^\hf}(\cB), \cA \rangle$, 
and hence~$\cT = \langle \bS_{\cT^\hf}(\cB), \cA \rangle$.
Now we see that~$\cA$ is admissible, 
hence Gorenstein by Proposition~\ref{prop:Gorenstein-refl-admissible}\ref{it:gor-adm}.

The case where~$\cA$ is smooth and proper is analogous.
\end{proof}

In the geometric situation the next corollary has been obtained earlier by Kalck and Pavic,
using the category of homologically finite objects defined in~\cite{Orl06}.

\begin{corollary}
If~$\cT$ is a Gorenstein and reflexive category with a semiorthogonal decomposition
\begin{equation*}
\cT = \langle \cE_1, \dots, \cE_m \rangle, 
\end{equation*}
such that~$\cE_i \simeq \Db(\kk)$
then~$\cT = \cT^{\homfin}$
and all subcategories~$\cE_i \subset \cT$
are admissible.
\end{corollary}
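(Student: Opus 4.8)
The plan is to argue by induction on the length $m$ of the decomposition, peeling off the first component at each step via Proposition~\ref{prop:AB-adm}. The input observation is that $\Db(\kk) = \Dp(\kk)$ is smooth and proper over $\kk$ (its diagonal bimodule is the one-dimensional $\kk$, and all its $\Hom$-complexes are perfect), hence each $\cE_i \simeq \Db(\kk)$ is smooth and proper; in particular, by Corollary~\ref{cor:sp-hfd}, each $\cE_i$ is Gorenstein with $\cE_i^\hf = \cE_i$.

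For $m = 1$ there is nothing to do: $\cT = \cE_1$ is smooth and proper, so $\cT^\hf = \cT$ by Corollary~\ref{cor:sp-hfd}, and $\cE_1 = \cT$ is admissible. For $m \ge 2$, I would write $\cT = \langle \cA, \cB \rangle$ with $\cA = \cE_1$ and $\cB = \langle \cE_2, \dots, \cE_m \rangle$. Since $\cA$ is smooth and proper, Proposition~\ref{prop:AB-adm} gives that both $\cA$ and $\cB$ are admissible in $\cT$ and Gorenstein, and Theorem~\ref{thm:lhfd-right-bijection} then shows that the admissible subcategory $\cB$ is reflexive as well. Thus $\cB$ is a Gorenstein reflexive category with a semiorthogonal decomposition $\cB = \langle \cE_2, \dots, \cE_m \rangle$ into copies of $\Db(\kk)$, so the induction hypothesis applies to $\cB$: it gives $\cB^\hf = \cB$ and the admissibility of $\cE_2, \dots, \cE_m$ inside $\cB$. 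Admissibility is transitive (the left and right adjoints of $\cE_i \hookrightarrow \cT$ are obtained by composing those of $\cE_i \hookrightarrow \cB$ with those of $\cB \hookrightarrow \cT$), so each $\cE_i$ is admissible in $\cT$.

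For the equality $\cT = \cT^\hf$ it suffices, since $\cT$ is Gorenstein, to identify $\cT^\lhf$ with $\cT$. As $\cT$ is hfd-closed and its component $\cE_1$ is admissible, Corollary~\ref{cor:reflexive-hfd-closed-ct-hf} gives $\cT^\lhf = \langle \cE_1 \cap \cT^\lhf, \cB \cap \cT^\lhf \rangle$. Applying Proposition~\ref{prop:bijectionsubcat-hfd-closed}\ref{it:reflexive-hfd-closed-bijection} to the (right) admissible subcategories $\cE_1, \cB \subset \cT$ identifies these intersections with $\cE_1^\lhf$ and $\cB^\lhf$, and since $\cE_1$ and $\cB$ are Gorenstein these are $\cE_1^\hf = \cE_1$ and $\cB^\hf = \cB$. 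Therefore $\cT^\lhf = \langle \cE_1, \cB \rangle = \cT$, whence $\cT^\hf = \cT$.

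The only real content is making sure the hypotheses survive the induction: after removing $\cE_1$, the residual category $\cB$ must again be Gorenstein \emph{and} reflexive, which is precisely what Proposition~\ref{prop:AB-adm} and Theorem~\ref{thm:lhfd-right-bijection} deliver; the rest is bookkeeping with the identities of Proposition~\ref{prop:bijectionsubcat-hfd-closed} and Corollary~\ref{cor:reflexive-hfd-closed-ct-hf}. It is worth recording that both hypotheses are essential --- without them one may glue two copies of $\Db(\kk)$ by an infinite-dimensional space, as in Orlov's example of the quiver with infinitely many arrows, producing a category with an exceptional pair that is not even hfd-closed, let alone equal to its own $\cT^\hf$.
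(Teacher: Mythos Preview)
Your proof is correct and follows essentially the same approach as the paper: induct via Proposition~\ref{prop:AB-adm} using that~$\Db(\kk)$ is smooth and proper, then invoke Theorem~\ref{thm:lhfd-right-bijection} for reflexivity of the residual piece. The only cosmetic differences are that the paper peels off the \emph{last} component~$\cE_m$ (setting~$\cA = \langle \cE_1,\dots,\cE_{m-1}\rangle$, $\cB = \cE_m$) rather than the first, and that it concludes~$\cT = \cT^\hf$ by showing directly that each~$\cE_i \subset \cT^\hf$ (via~$\cE_i \cap \cT^\hf = \cE_i$ from Proposition~\ref{prop:bijectionsubcat-hfd-closed}\ref{it:reflexive-hfd-closed-bijection}) rather than through the semiorthogonal decomposition of~$\cT^\lhf$; both variants are equally valid.
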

\begin{proof}
Take~$\cA = \langle \cE_1, \dots, \cE_{m-1} \rangle$ and~$\cB = \cE_m$;
then~$\cB {} \simeq \Db(\kk)$ is smooth and proper, 
hence~$\cA$ and~$\cB$ are admissible and~$\cA$ is Gorenstein by Proposition~\ref{prop:AB-adm}
and reflexive by Theorem~\ref{thm:lhfd-right-bijection}.
Iterating this argument, we conclude that all subcategories~$\cE_i \subset \cT$ are admissible.
Moreover, $\cE_i \simeq \Db(\kk) = (\Db(\kk))^\hf$,
and we conclude from Proposition~\ref{prop:bijectionsubcat-hfd-closed}\ref{it:reflexive-hfd-closed-bijection}
that~$\cE_i \cap \cT^\hf = \cE_i$, hence~$\cE_i \subset \cT^\hf$ for all~$i$,
hence~$\cT = \cT^\hf$.
\end{proof}


\section{Categorical contractions and crepancy}
\label{sec:ccc}

In this section we introduce categorical contractions,
relate homologically finite-dimensional objects in the source and target of a categorical contraction,
and introduce the notion of crepancy.

\subsection{Categorical contractions}
\label{ss:cc}

Various notions of categorical contractions originate in~\cite{Ef20};
the strongest of them is the classical notion of \emph{Verdier localization}.
In this subsection we consider two weaker conditions (see Definition~\ref{def:cc} and Definition~\ref{def:he} below).

Recall that a triangulated subcategory~$\cT' \subset \cT$ is {\sf dense} if any object of~$\cT$ 
is a direct summand of an object of~$\cT'$.
Below we use notation~$\pi_*$ for a functor~\mbox{$\tcT \to \cT$} to emphasize analogy 
with the pushforward functor~$\pi_* \colon \Db(\tX) \to \Db(X)$ for a morphism~$\pi \colon \tX \to X$ of schemes.

\begin{definition}[{\cite[Definition~1.10]{KSabs}}]
\label{def:cc}
A functor~$\pi_* \colon \tcT \to \cT$ is a {\sf categorical contraction} if it is a localization up to direct summands,
i.e., if~$\pi_* \colon \tcT \to \Ima(\pi_*)$ is a Verdier localization and~$\Ima(\pi_*) \subset \cT$ 
is a dense triangulated subcategory.
\end{definition}

\begin{remark}
\label{rem:disclaimer}
In~\cite[Definition~3.7]{Ef20} the same notion is called a localization.
We find this confusing and to avoid possible misunderstanding change the terminology.
\end{remark}

\begin{definition}[{\cite[Definition~3.2]{Ef20}}]
\label{def:he}
A functor~$\pi_* \colon \tcT \to \cT$ is a {\sf homological epimorphism}
if the extension of scalars functor~$\Ind(\pi_*)\colon \Du(\tcT) \to \Du(\cT)$ is a Verdier localization.
\end{definition}

Note that a functor~$\pi_* \colon \tcT \to \cT$ is a Verdier localization or a categorical contraction or a homological epimorphism
if and only if its opposite functor~$\pi_*^\opp \colon \tcT^\opp \to \cT^\opp$ is
(for Verdier localizations and categorical contractions this is obvious 
and for homological epimorphisms this follows from~\cite[Proposition~3.4]{Ef20}).
The relationship between these three notions is the following.

\begin{proposition} 
\label{prop:contractions}
Let $\pi_*\colon \tcT \to \cT$ be a dg-enhanced functor.
\begin{renumerate}
\item
\label{item:localization-cc}
The functor~$\pi_*$ is a Verdier localization if and only if it is a categorical contraction
and the induced map on the Grothendieck groups~$\rK_0(\tcT) \to \rK_0(\cT)$ is surjective.
\item
\label{item:cc-he}
The functor~$\pi_*$ is a categorical contraction if and only if it is a homological epimorphism
and the subcategory~$\Ker(\Ind(\pi_*)) \subset \Du(\tcT)$ is generated by~$\Ker(\pi_*) \subset \tcT$ as localizing subcategory, 
i.e., it is the smallest triangulated subcategory of~$\Du(\tcT)$ containing~$\Ker(\pi_*)$ and closed under direct sums.
\end{renumerate}
\end{proposition}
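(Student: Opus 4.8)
The plan is to establish both equivalences by unwinding the definitions and using standard facts about Verdier localizations, adjunctions, and compactly generated triangulated categories. Throughout, write $L \coloneqq \Ind(\pi_*) \colon \Du(\tcT) \to \Du(\cT)$ for the extension-of-scalars functor and $R \coloneqq \Res(\pi_*)$ for its right adjoint; recall that both are continuous, that $R$ is fully faithful precisely when $L$ is a Verdier localization (i.e.\ $\pi_*$ is a homological epimorphism), and that $\pi_*$ is compatible with the Yoneda embeddings via the diagram~\eqref{eq:ind-pis}, so $L$ restricts to $\pi_*$ on the subcategories of representable (= compact) objects.

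For part~\ref{item:localization-cc}: if $\pi_*$ is a Verdier localization, then it is in particular a localization onto its image and $\Ima(\pi_*)=\cT$ (not merely dense), so $\pi_*$ is a categorical contraction, and surjectivity of $\rK_0(\tcT)\to\rK_0(\cT)$ is immediate since $\pi_*$ is essentially surjective. Conversely, suppose $\pi_*$ is a categorical contraction, so $\pi_*\colon\tcT\to\Ima(\pi_*)$ is a Verdier localization and $\Ima(\pi_*)\subset\cT$ is dense. The composite $\tcT\to\Ima(\pi_*)\hookrightarrow\cT$ being a Verdier localization reduces to showing that a dense subcategory $\cT'\subset\cT$ with $\rK_0(\cT')=\rK_0(\cT)$ (which holds here since $\rK_0(\tcT)\twoheadrightarrow\rK_0(\cT)$ factors through $\rK_0(\Ima\pi_*)$) must equal $\cT$: this is precisely Thomason's classification of dense subcategories, which says dense triangulated subcategories of $\cT$ are in bijection with subgroups of $\rK_0(\cT)$ via $\cT'\mapsto\Ima(\rK_0(\cT')\to\rK_0(\cT))$. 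Hence $\Ima(\pi_*)=\cT$ and $\pi_*$ is a Verdier localization onto $\cT$.

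For part~\ref{item:cc-he}: first suppose $\pi_*$ is a categorical contraction. By part~\ref{item:localization-cc} applied to $\pi_*\colon\tcT\to\Ima(\pi_*)$, this functor is a Verdier localization; I would then show that $L\colon\Du(\tcT)\to\Du(\cT)$ is a Verdier localization by identifying $\Du(\tcT)/\langle\Ker(\pi_*)\rangle_{\mathrm{loc}}$ with $\Du(\Ima(\pi_*))$ and using that $\Ima(\pi_*)\hookrightarrow\cT$ being dense induces an equivalence $\Du(\Ima(\pi_*))\xrightiso{}\Du(\cT)$ (both sides are the derived category generated by the same compact objects up to summands — here Lemma~\ref{lem:dqc-dp} is the relevant tool, together with the fact that a dense inclusion becomes an equivalence after idempotent completion). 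So $\pi_*$ is a homological epimorphism, and moreover $\Ker(L)$ is the localizing subcategory generated by $\Ker(\pi_*)$ by construction. Conversely, assume $\pi_*$ is a homological epimorphism and $\Ker(L)=\langle\Ker(\pi_*)\rangle_{\mathrm{loc}}$. Since $L$ is a Verdier localization of compactly generated categories and its kernel is generated by the compact objects $\Ker(\pi_*)$, a theorem of Neeman--Thomason (the localization theorem for compactly generated categories) shows that $L$ restricts on compact objects to a functor $\Dp(\tcT)\to\Dp(\cT)$ which is a Verdier localization up to direct summands — that is, $\tcT\to\cT$ is a localization onto a dense subcategory. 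Identifying compact objects with $\tcT$ and $\cT$ respectively (using idempotent completeness and $\tcT\simeq\Dp(\tcT)$, $\cT\simeq\Dp(\cT)$), this says exactly that $\pi_*$ is a categorical contraction.

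The main obstacle is the compact-generation bookkeeping in part~\ref{item:cc-he}: one must be careful that the localization theorem for big categories descends to the level of small categories only \emph{up to direct summands} (the Neeman--Thomason subtlety via $\rK_0$), which is why the conclusion is ``categorical contraction'' rather than ``Verdier localization'' — and one needs the hypothesis that $\Ker(L)$ is generated by the small category $\Ker(\pi_*)$ (not merely by some set of compacts) to control the kernel on compact objects. Handling the passage between $\Du(\Ima(\pi_*))$ and $\Du(\cT)$ for a dense, non-full-but-dense inclusion, and matching it cleanly with Lemma~\ref{lem:dqc-dp}, is the delicate point; the rest is formal adjunction and Thomason's dense subcategory classification.
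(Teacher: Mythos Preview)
Your proposal is correct and follows the same approach as the paper. For part~\ref{item:localization-cc} you use Thomason's classification of dense subcategories exactly as the paper does; for part~\ref{item:cc-he} the paper simply cites \cite[Corollary~3.8]{Ef20} (which in turn rests on Neeman's localization theorem~\cite{Neeman1}), whereas you unpack that argument explicitly --- so you are providing the content of the cited reference rather than a different proof.
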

\begin{proof}
\ref{item:localization-cc}
If~$\pi_*$ is a categorical contraction and~$\rK_0(\tcT) \to \rK_0(\cT)$ is surjective,
then~\mbox{$\rK_0(\Ima(\pi_*)) = \rK_0(\cT)$} and by Thomason's theorem 
on classification of dense triangulated subcategories~\cite[Theorem~2.1]{Thomason},
we have the equivalence~$\tcT / \Ker(\pi_*) \simeq \Ima(\pi_*) = \cT$.
The other implication is obvious.

\ref{item:cc-he}
This is~\cite[Corollary~3.8]{Ef20}, which goes back to~\cite{Neeman1}
(recall the difference in terminology emphasized in Remark~\ref{rem:disclaimer}).
\end{proof}

The right and left adjoins~$(\pi_*)^!$ and~$(\pi_*)^*$ of a functor~$\pi_* \colon \tcT \to \cT$
on appropriate categories of homologically finite-dimensional objects 
have been constructed in Corollary~\ref{cor:shriek-adjoint} and Lemma~\ref{lem:star-adjunction}.
To simplify the notation and to keep the geometric analogy, 
we write~$\pi^!$ instead of~$(\pi_*)^!$ for the right adjoint functor~$\Dfd{\cT} \to \Dfd{\tcT}$ 
and similarly we write~$\pi^*$ instead of~$(\pi_*)^*$.
If~$\tcT$ and~$\cT$ are hfd-closed, using the simplifications of Proposition~\ref{prop:adjunctions}
and the above conventions we can rewrite the adjunctions as
\begin{equation}
\label{eq:adj}
\begin{aligned}
\Hom_{\cT}(\pi_*\cF, \cG) &\cong \Hom_{\tcT}(\cF, \pi^!\cG),
&\qquad& 
\text{for any~$\cF \in \tcT$ and~$\cG \in \cT^\rhf$, and}
\\
\Hom_{\cT}(\cG, \pi_*\cF) &\cong \Hom_{\tcT}(\pi^*\cG, \cF).
&& 
\text{for any~$\cF \in \tcT$ and~$\cG \in \cT^\lhf$.}
\end{aligned}
\end{equation}

Imposing further assumptions on~$\pi_*$, we obtain the following result.

\begin{proposition}
\label{prop:hfd-images}
Let~$\pi_* \colon \tcT \to \cT$ be a homological epimorphism.

\begin{enumerate}[label={\textup{(\roman*)}}]
\item
\label{item:ind-pi-shriek-star}
The composition~$\Ind(\pi_*) \circ \pi^! \colon \Dfd{\cT} \to \bD(\cT)$
is isomorphic to the natural embedding.
\item
\label{item:pi-star-pi-shriek-star}
If~$\tcT$ is hfd-closed, then~$\cT$ is also hfd-closed, and the compositions
\begin{equation*}
\pi_* \circ \pi^! \colon \cT^\rhf \to \cT
\qquad\text{and}\qquad
\pi_* \circ \pi^* \colon \cT^\lhf \to \cT
\end{equation*}
are isomorphic to the natural embeddings;
in particular, $\pi^!$ and~$\pi^*$ are fully faithful.
\item
\label{item:hfd-images}
If~$\tcT$ is hfd-closed and~$\pi_*$ is a categorical contraction then the functors
\begin{equation}
\label{eq:hfd-images}
\pi^! \colon \cT^\rhf \to \tcT^\rhf \cap \Ker(\pi_*)^\perp,
\qquad\text{and}\qquad
\pi^* \colon \cT^\lhf \to \tcT^\lhf \cap {}^\perp \Ker(\pi_*)
\end{equation}
are equivalences of categories with inverse functors given by the restrictions of~$\pi_*$.
\end{enumerate}
\end{proposition}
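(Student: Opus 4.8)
The plan is to handle the three parts in sequence, exploiting that a homological epimorphism is exactly a functor whose induced $\Ind$ is a Verdier localization, and that a categorical contraction is in particular such a functor (Proposition~\ref{prop:contractions}\ref{item:cc-he}) with the extra feature that $\Ker(\Ind(\pi_*))$ is the localizing subcategory of $\bD(\tcT)$ generated by $\Ker(\pi_*)$. For part~\ref{item:ind-pi-shriek-star}: since $\Ind(\pi_*)\colon\bD(\tcT)\to\bD(\cT)$ is a Verdier localization admitting the right adjoint $\Res(\pi_*)$, the standard theory of localizations gives that $\Res(\pi_*)$ is fully faithful and that the counit $\Ind(\pi_*)\circ\Res(\pi_*)\to\id_{\bD(\cT)}$ is an isomorphism. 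Restricting this counit along $\Dfd{\cT}\hookrightarrow\bD(\cT)$ and recalling $\pi^!=\Res(\pi_*)\vert_{\Dfd{\cT}}$ (Lemma~\ref{lem:res-shriek} and~\eqref{eq:shriek}) yields $\Ind(\pi_*)\circ\pi^!\cong(\Dfd{\cT}\hookrightarrow\bD(\cT))$; full faithfulness of $\Res(\pi_*)$ also already shows $\pi^!$ is fully faithful, and applying the same to $\pi_*^\opp$ (which is again a homological epimorphism) shows $\pi^*=((\pi_*^\opp)^!)^\opp$ is fully faithful too, which is part of~\ref{item:pi-star-pi-shriek-star}.

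For part~\ref{item:pi-star-pi-shriek-star} I would first prove $\cT$ is hfd-closed. Given $M\in\Dfd{\cT}$ we have $\pi^!M\in\Dfd{\tcT}$, and since $\tcT$ is hfd-closed this module is representable, say $\pi^!M\cong\bh_{\tcT,\tF}$ with $\tF\in\tcT^\rhf$. Then, using part~\ref{item:ind-pi-shriek-star} and the compatibility of $\Ind(\pi_*)$ with Yoneda embeddings (diagram~\eqref{eq:ind-pis}),
\[
M\cong\Ind(\pi_*)(\pi^!M)\cong\Ind(\pi_*)(\bh_{\tcT,\tF})\cong\bh_{\cT,\pi_*\tF},
\]
so $M$ is representable; applying the same to $\pi_*^\opp$ (with $\tcT^\opp$ hfd-closed) gives $\Dfd{\cT^\opp}\subset\cT^\opp$, hence $\cT$ is hfd-closed. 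Taking $M=\bh_{\cT,G}$ with $G\in\cT^\rhf$ in the displayed chain and invoking full faithfulness of the Yoneda embedding gives $\pi_*\pi^!G\cong G$ naturally in $G$, i.e.\ $\pi_*\circ\pi^!\colon\cT^\rhf\to\cT$ is isomorphic to the inclusion; passing to opposite categories (and using~\eqref{eq:star-body}) gives the corresponding statement for $\pi_*\circ\pi^*$.

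For part~\ref{item:hfd-images}, note that a categorical contraction is a homological epimorphism, so parts~\ref{item:ind-pi-shriek-star},~\ref{item:pi-star-pi-shriek-star} and the simplified adjunctions~\eqref{eq:adj} apply. For $G\in\cT^\rhf$ and $K\in\Ker(\pi_*)$ the adjunction gives $\Hom_{\tcT}(K[i],\pi^!G)\cong\Hom_{\cT}(\pi_*K[i],G)=0$ for all $i$, so $\pi^!$ factors through $\tcT^\rhf\cap\Ker(\pi_*)^\perp$. For essential surjectivity, recall that the essential image of the right adjoint $\Res(\pi_*)$ of the localization $\Ind(\pi_*)$ is $\Ker(\Ind(\pi_*))^\perp\subset\bD(\tcT)$; since, for a categorical contraction, $\Ker(\Ind(\pi_*))$ is the localizing subcategory generated by $\Ker(\pi_*)$, and since for any $N$ the class $\{L\mid\Hom(L[i],N)=0\ \forall i\}$ is a localizing subcategory containing $\Ker(\pi_*)$, one gets $\Ker(\Ind(\pi_*))^\perp=\Ker(\pi_*)^\perp$. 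Hence for $\tF\in\tcT^\rhf\cap\Ker(\pi_*)^\perp$ the module $\bh_{\tcT,\tF}$ equals $\Res(\pi_*)(M)$ for some $M\in\bD(\cT)$; then $M(\pi_*t)\cong\RHom_{\tcT}(t,\tF)\in\Db(\kk)$ for all $t\in\tcT$, and since $\Ima(\pi_*)$ is dense in $\cT$ every object of $\cT$ is a summand of some $\pi_*t$, so $M\in\Dfd{\cT}$; as $\cT$ is hfd-closed, $M\cong\bh_{\cT,G}$ with $G\in\cT^\rhf$ and $\pi^!G\cong\tF$. Thus $\pi^!\colon\cT^\rhf\to\tcT^\rhf\cap\Ker(\pi_*)^\perp$ is fully faithful and essentially surjective, hence an equivalence; the displayed identity $\pi_*\circ\pi^!\cong\id_{\cT^\rhf}$ from part~\ref{item:pi-star-pi-shriek-star} then forces the quasi-inverse to be (the restriction of) $\pi_*$, which also shows $\pi_*(\tcT^\rhf\cap\Ker(\pi_*)^\perp)\subset\cT^\rhf$. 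The statement for $\pi^*$ follows by applying all of the above to $\pi_*^\opp$ and passing to opposite categories, using $(\tcT^\opp)^\rhf=(\tcT^\lhf)^\opp$ and that $\Ker(\pi_*^\opp)^\perp$ corresponds to ${}^\perp\Ker(\pi_*)$.

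The main obstacle is precisely the essential-surjectivity step in part~\ref{item:hfd-images}: this is where one must use the full force of ``categorical contraction'' rather than merely ``homological epimorphism'', through the identification $\Ker(\Ind(\pi_*))^\perp=\Ker(\pi_*)^\perp$ (which relies on Proposition~\ref{prop:contractions}\ref{item:cc-he}) together with density of $\Ima(\pi_*)$, the latter being needed to upgrade ``$M$ finite-dimensional on $\Ima(\pi_*)$'' to ``$M\in\Dfd{\cT}$''. Everything else is bookkeeping with the adjunctions $\pi^!\dashv\cdots$, the Yoneda lemma, and passage to opposite categories.
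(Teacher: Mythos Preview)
Your proposal is correct and follows essentially the same route as the paper's proof: part~\ref{item:ind-pi-shriek-star} via full faithfulness of~$\Res(\pi_*)$ and the counit isomorphism~$\Ind(\pi_*)\circ\Res(\pi_*)\cong\id$, part~\ref{item:pi-star-pi-shriek-star} by representing~$\pi^!M$ and pushing forward (the paper phrases this as a chain of inclusions, but the content is identical), and part~\ref{item:hfd-images} via the semiorthogonal decomposition of~$\bD(\tcT)$, the identification~$\Ker(\Ind(\pi_*))^\perp=\Ker(\pi_*)^\perp$, and density of~$\Ima(\pi_*)$. The only cosmetic difference is that you extract full faithfulness of~$\pi^!$ and~$\pi^*$ already in part~\ref{item:ind-pi-shriek-star} directly from~$\Res(\pi_*)$, whereas the paper deduces it in part~\ref{item:pi-star-pi-shriek-star} from the adjunctions~\eqref{eq:adj} once~$\pi_*\pi^!\cong\id$ and~$\pi_*\pi^*\cong\id$ are established.
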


\begin{proof}
\ref{item:ind-pi-shriek-star}
If~$\pi_* \colon \tcT \to \cT$ is a homological epimorphism the functor~$\Res(\pi_*) \colon \Du(\cT) \to \Du(\tcT)$
is fully faithful by~\cite[Proposition~3.4]{Ef20}, and since~$\Ind(\pi_*)$ is left adjoint to~$\Res(\pi_*)$, we obtain
\begin{equation}
\label{eq:ind-res}
\Ind(\pi_*) \circ \Res(\pi_*) \cong \id_{\bD(\cT)}.
\end{equation}
Restricting this isomorphism to~$\Dfd{\cT} \subset \bD(\cT)$ and using~\eqref{eq:shriek}
we obtain the claim.

\ref{item:pi-star-pi-shriek-star}
Assume~$\tcT$ is hfd-closed, hence $\Dfd{\tcT} \subset \tcT$.
It follows from~\ref{item:ind-pi-shriek-star}, Lemma~\ref{lem:res-shriek}, and~\eqref{eq:ind-pis} that
\begin{equation*}
\Dfd{\cT} = 
\Ind(\pi_*)(\Res(\pi_*)(\Dfd{\cT})) \subset 
\Ind(\pi_*)(\Dfd{\tcT}) \subset
\Ind(\pi_*)(\tcT) =
\pi_*(\tcT) 
\subset \cT.
\end{equation*}
A similar computation with the opposite categories
proves that~$\cT$ is hfd-closed.
It also follows that the composition~$\pi_* \circ \pi^!$ on~$\cT^\rhf$ 
is isomorphic to the composition~$\Ind(\pi_*) \circ \pi^!$, i.e., to the natural embedding~$\cT^\rhf \hookrightarrow \cT$,
and the same argument works for~$\pi_* \circ \pi^*$ after passing to the opposite categories.
Finally, the adjunctions~\eqref{eq:adj} between~$\pi^*$, $\pi_*$, and~$\pi^!$ 
imply the full faithfulness of~$\pi^!$ and~$\pi^*$.

\ref{item:hfd-images}
The functors are fully faithful by part~\ref{item:pi-star-pi-shriek-star}
and the fact that their images are contained in the right-hand sides
follows from the adjunctions~\eqref{eq:adj},
so we only need to check essential surjectivity.
The adjunction between~$\Ind(\pi_*)$ and~$\Res(\pi_*)$
combined with~\eqref{eq:ind-res} implies that we have a semiorthogonal decomposition
\begin{equation*}
\bD(\tcT) = \langle \Res(\pi_*)(\bD(\cT)), \Ker(\Ind(\pi_*)) \rangle.
\end{equation*}
Moreover, if~$\pi_*$ is a categorical contraction 
the category~$\Ker(\Ind(\pi_*))$ is generated by~$\Ker(\pi_*)$ by Proposition~\ref{prop:contractions}\ref{item:cc-he}, hence
\begin{equation*}
\Res(\pi_*)(\bD(\cT))
= \Big( \Ker(\Ind(\pi_*)) \Big)^\perp
= \Ker(\pi_*)^\perp,
\end{equation*}
and therefore for any object~$\tM \in \tcT^\rhf \cap \Ker(\pi_*)^\perp$
there is~$M \in \bD(\cT)$ such that~$\tM \cong \Res(\pi_*)(M)$
and we only need to show that~$M$ is right homologically finite-dimensional.
Indeed, 
for any~$\tilde{t} \in \tcT$ the complex
\begin{equation*}
M(\pi_*(\tilde{t})) \cong \Res(\pi_*)(M)(\tilde{t}) \cong \tM(\tilde{t})
\end{equation*}
is finite-dimensional because~$\tM \in \tcT^\rhf$, 
and since every~$t \in \cT$ is a direct summand of~$\pi_*(\tilde{t})$ for some~$\tilde{t} \in \tcT$,
we see that~$M(t)$ is also finite-dimensional.
This shows that~$\tM \in \pi^!(\cT^\rhf)$ and proves the first equality in~\eqref{eq:hfd-images}.
The second equality follows from the first by passing to the opposite categories.
\end{proof}

\subsection{Gorenstein property and crepancy}
\label{ss:ccc}

In this subsection we introduce the notion of crepancy in the context of categorical contractions;
its relation to other definitions
will be explained in Corollary~\ref{cor:crepant-contractions-vs-resolutions}.

\begin{definition}
\label{def:crepancy}
Let~$\pi_* \colon \tcT \to \cT$ be a categorical contraction.
We say that~$\pi_*$ is {\sf crepant} if~$\tcT$ and~$\cT$ are Gorenstein 
and the adjoint functors~$\pi^! \colon \cT^\hf \to \tcT^\hf$ and~$\pi^* \colon \cT^\hf \to \tcT^\hf$
are isomorphic.
\end{definition}

The following two lemmas provide useful criteria for crepancy.

\begin{lemma}
\label{lem:crepancy-criterion}
Let~$\pi_* \colon \tcT \to \cT$ be a categorical contraction. The following conditions are equivalent:
\begin{aenumerate}
\item 
\label{it:gorenstein1}
$\tcT$ and~$\cT$ are Gorenstein and~$\pi_*$ is crepant;
\item 
\label{it:gorenstein2}
$\tcT$ is Gorenstein and~$\tcT^\hf \cap {}^\perp \Ker(\pi_*) = \tcT^\hf \cap \Ker(\pi_*)^\perp$.
\end{aenumerate}
\end{lemma}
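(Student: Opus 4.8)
The plan is to derive the equivalence entirely from Proposition~\ref{prop:hfd-images}, whose hypotheses are met in both directions: a categorical contraction is in particular a homological epimorphism by Proposition~\ref{prop:contractions}\ref{item:cc-he}, and if~$\tcT$ is hfd-closed then so is~$\cT$ by Proposition~\ref{prop:hfd-images}\ref{item:pi-star-pi-shriek-star}. The key input is Proposition~\ref{prop:hfd-images}\ref{item:hfd-images}: $\pi^!$ restricts to an equivalence~$\cT^\rhf \xrightiso{} \tcT^\rhf \cap \Ker(\pi_*)^\perp$ and~$\pi^*$ to an equivalence~$\cT^\lhf \xrightiso{} \tcT^\lhf \cap {}^\perp\Ker(\pi_*)$, with quasi-inverses given by the appropriate restrictions of~$\pi_*$. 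Since the subcategories on the right are strict (an intersection of $\tcT^\rhf$ or $\tcT^\lhf$ with an orthogonal), they coincide with the essential images of~$\pi^!$ and~$\pi^*$.

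For \ref{it:gorenstein1}$\Rightarrow$\ref{it:gorenstein2}, I would assume~$\tcT,\cT$ Gorenstein and~$\pi^!\cong\pi^*$ as functors~$\cT^\hf\to\tcT^\hf$. Then~$\tcT^\rhf=\tcT^\lhf=\tcT^\hf$ and~$\cT^\rhf=\cT^\lhf=\cT^\hf$, so Proposition~\ref{prop:hfd-images}\ref{item:hfd-images} reads $\pi^!(\cT^\hf)=\tcT^\hf\cap\Ker(\pi_*)^\perp$ and $\pi^*(\cT^\hf)=\tcT^\hf\cap{}^\perp\Ker(\pi_*)$. Isomorphic functors have equal essential images, so the two right-hand sides coincide, which is~\ref{it:gorenstein2}.

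For \ref{it:gorenstein2}$\Rightarrow$\ref{it:gorenstein1}, I would assume~$\tcT$ Gorenstein and set~$\cK \coloneqq \tcT^\hf\cap{}^\perp\Ker(\pi_*)=\tcT^\hf\cap\Ker(\pi_*)^\perp$ (using $\tcT^\rhf=\tcT^\lhf=\tcT^\hf$ and the hypothesis). Then~$\cT$ is hfd-closed by Proposition~\ref{prop:hfd-images}\ref{item:pi-star-pi-shriek-star}, and Proposition~\ref{prop:hfd-images}\ref{item:hfd-images} yields equivalences~$\pi^!\colon\cT^\rhf\xrightiso{}\cK$ and~$\pi^*\colon\cT^\lhf\xrightiso{}\cK$, both with quasi-inverse the single functor~$\pi_*\vert_\cK$. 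From this I extract two things. First, the essential image of~$\pi_*\vert_\cK$ equals~$\cT^\rhf$ (as the target of one quasi-inverse) and also~$\cT^\lhf$ (as the target of the other), hence~$\cT^\rhf=\cT^\lhf=\pi_*(\cK)$ and~$\cT$ is Gorenstein with~$\cT^\hf=\pi_*(\cK)$. Second, since~$\pi^!$ and~$\pi^*$ are both quasi-inverse to~$\pi_*\vert_\cK$, the uniqueness-of-inverse computation $\pi^!\cong\pi^!\circ\pi_*\vert_\cK\circ\pi^*\cong\pi^*$ gives~$\pi^!\cong\pi^*$ as functors~$\cT^\hf\to\cK\subset\tcT^\hf$. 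Together with~$\tcT,\cT$ Gorenstein this is exactly crepancy, so~\ref{it:gorenstein1} holds.

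The argument is essentially bookkeeping once Proposition~\ref{prop:hfd-images} is available; the one point I expect to require care — and would write out explicitly — is this last step, namely extracting both the Gorenstein property of~$\cT$ and the isomorphism~$\pi^!\cong\pi^*$ from the bare fact that~$\pi^!$ and~$\pi^*$ share the quasi-inverse~$\pi_*\vert_\cK$, while keeping straight which of~$\cT^\rhf$ or~$\cT^\lhf$ each functor is a priori defined on before the equality $\cT^\rhf = \cT^\lhf$ is established.
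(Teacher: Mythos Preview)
Your proposal is correct and follows essentially the same approach as the paper: both directions are reduced to Proposition~\ref{prop:hfd-images}\ref{item:hfd-images}, reading off the essential images of~$\pi^!$ and~$\pi^*$ for~\ref{it:gorenstein1}$\Rightarrow$\ref{it:gorenstein2}, and for the converse first deducing~$\cT^\rhf=\pi_*(\cK)=\cT^\lhf$ and then identifying~$\pi^!\cong\pi^*$ as the common quasi-inverse of~$\pi_*\vert_\cK$. Your explicit uniqueness-of-inverse computation~$\pi^!\cong\pi^!\circ\pi_*\vert_\cK\circ\pi^*\cong\pi^*$ is a welcome elaboration of what the paper states in one sentence.
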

\begin{proof}
\ref{it:gorenstein1} $\implies$ \ref{it:gorenstein2}
By Proposition~\ref{prop:hfd-images}\ref{item:hfd-images}
the intersections
$\tcT^\hf \cap {}^\perp \Ker(\pi_*)$ and
$\tcT^\hf \cap \Ker(\pi_*)^\perp$
are the essential images of $\pi^*$
and $\pi^!$ respectively, hence they coincide 
by definition of a crepant contraction.

\ref{it:gorenstein2} $\implies$ \ref{it:gorenstein1}
The category~$\tcT$ is hfd-closed because it is Gorenstein, 
hence~$\cT$ is hfd-closed by Proposition~\ref{prop:hfd-images}\ref{item:pi-star-pi-shriek-star}.
Moreover, using Proposition~\ref{prop:hfd-images}\ref{item:hfd-images} and the assumption of~\ref{it:gorenstein2},
we obtain 
\begin{equation*}
\cT^\rhf = \pi_*(\tcT^\hf \cap \Ker(\pi_*)^\perp) = \pi_*(\tcT^\hf \cap {}^\perp\Ker(\pi_*)) = \cT^\lhf,
\end{equation*}
hence~$\cT$ is Gorenstein.
Finally, $\pi^!$ and~$\pi^*$ are isomorphic as they are both isomorphic to the inverse functor
of~$\pi_* \colon \tcT^\hf \cap \Ker(\pi_*)^\perp = \tcT^\hf \cap {}^\perp\Ker(\pi_*) \to \cT^\hf$.
\end{proof}

Recall from Corollary~\ref{cor:closed-proper} that a proper hfd-closed category~$\cT$ is Gorenstein with~$\cT^\hf = \cT$,
and hence it has a Serre functor by Proposition~\ref{prop:serre-functor}.

\begin{lemma}
\label{lemma:Serre-kernel}
Assume\/~$\tcT$ is proper and hfd-closed with a Serre functor~$\bS_\tcT$.
Let~$\pi_* \colon \tcT \to \cT$ be a categorical contraction.
If the subcategory~$\Ker(\pi_*) \subset \tcT$ is {\sf Serre-invariant}, i.e.
\begin{equation*}
\bS_\tcT(\Ker(\pi_*)) = \Ker(\pi_*),
\end{equation*}
then~$\cT$ is Gorenstein and~$\pi_*$ is crepant.
\end{lemma}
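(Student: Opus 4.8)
The plan is to deduce the statement from Lemma~\ref{lem:crepancy-criterion}. Since~$\tcT$ is proper and hfd-closed, Corollary~\ref{cor:closed-proper} shows that~$\tcT$ is Gorenstein with~$\tcT^\hf = \tcT$, and Proposition~\ref{prop:serre-functor} (combined with properness, which forces~$\tcT^\hf = \tcT$) guarantees that~$\bS_\tcT$ is an honest two-sided Serre functor on all of~$\tcT$. Thus, to invoke the implication \ref{it:gorenstein2}~$\Rightarrow$~\ref{it:gorenstein1} of Lemma~\ref{lem:crepancy-criterion}, I only need to verify its condition~\ref{it:gorenstein2}, and since~$\tcT^\hf = \tcT$ this reduces to the equality
\begin{equation*}
{}^\perp \Ker(\pi_*) = \Ker(\pi_*)^\perp
\end{equation*}
of subcategories of~$\tcT$.

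To prove this, I would write~$\cK \coloneqq \Ker(\pi_*) \subset \tcT$ and apply Serre duality in the proper category~$\tcT$: for every~$t \in \tcT$ and~$k \in \cK$ there is a functorial isomorphism~$\Hom_\tcT(k,t) \cong \Hom_\tcT(t,\bS_\tcT(k))^\vee$. Hence~$t \in \cK^\perp$, i.e.\ $\Hom_\tcT(k,t) = 0$ for all~$k \in \cK$, holds if and only if~$\Hom_\tcT(t,\bS_\tcT(k)) = 0$ for all~$k \in \cK$, that is, if and only if~$t \in {}^\perp\big(\bS_\tcT(\cK)\big)$. The Serre-invariance hypothesis~$\bS_\tcT(\cK) = \cK$ then turns this into~$\cK^\perp = {}^\perp\cK$, which is exactly what was needed. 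Feeding this equality into Lemma~\ref{lem:crepancy-criterion} (whose proof also records that~$\cT$ is hfd-closed via Proposition~\ref{prop:hfd-images}\ref{item:pi-star-pi-shriek-star}) yields at once that~$\cT$ is Gorenstein and that~$\pi_*$ is crepant.

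The argument is short once the ingredients are in place, and I do not expect a genuine obstacle; the only points requiring care are bookkeeping ones. First, one must confirm that the Serre functor supplied by Proposition~\ref{prop:serre-functor} on~$\tcT^\hf$ is a bona fide Serre functor on the whole of~$\tcT$ — this is precisely where properness of~$\tcT$ is used, through~$\tcT^\hf = \tcT$. Second, one must remember that Serre duality interchanges left and right orthogonals, so that the single hypothesis~$\bS_\tcT(\Ker(\pi_*)) = \Ker(\pi_*)$ is exactly the condition that makes~${}^\perp\Ker(\pi_*)$ and~$\Ker(\pi_*)^\perp$ coincide inside~$\tcT$.
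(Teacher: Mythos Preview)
Your proof is correct and follows essentially the same approach as the paper: invoke Corollary~\ref{cor:closed-proper} to see~$\tcT$ is Gorenstein with~$\tcT^\hf = \tcT$, use Serre duality together with Serre-invariance of~$\Ker(\pi_*)$ to conclude~${}^\perp\Ker(\pi_*) = \Ker(\pi_*)^\perp$, and then apply Lemma~\ref{lem:crepancy-criterion}. The paper's proof is just a terser version of your argument.
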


\begin{proof}
As we already noticed, $\tcT$ is Gorenstein by Corollary~\ref{cor:closed-proper}.
Moreover, the subcategories~${}^\perp \Ker(\pi_*)$ and~$\Ker(\pi_*)^\perp$ in~$\tcT$ coincide by Serre duality, 
hence the result follows from Lemma~\ref{lem:crepancy-criterion}.
\end{proof}

Finally, we relate crepant categorical contractions to categorical resolutions defined in~\cite{K08}.

\begin{corollary}
\label{cor:crepant-contractions-vs-resolutions}
If\/~$\tcT$ is idempotent complete, smooth and proper and~$\pi_* \colon \tcT \to \cT$ is a categorical contraction then~$(\tcT,\pi^*,\pi_*)$
provides a categorical resolution for~$\cT$.
If, moreover, $\pi_*$ is a crepant contraction, the corresponding resolution is weakly crepant.
\end{corollary}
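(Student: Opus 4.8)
The plan is to check that the data already produced by our earlier results fit the definition of a (weakly crepant) categorical resolution from~\cite{K08}, with the subcategory~$\cT^\lhf \subset \cT$ playing the role of~$\Dperf(X) \subset \Db(X)$.

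First I would unpack the hypothesis on~$\tcT$. Since~$\tcT$ is smooth, proper, and idempotent complete, Corollary~\ref{cor:sp-hfd} shows that it is hfd-closed with~$\tcT^\rhf = \tcT = \tcT^\lhf$, and Corollary~\ref{cor:closed-proper} then shows it is Gorenstein with~$\tcT^\hf = \tcT$. In particular the right and left adjoints~$\pi^! \colon \cT^\rhf \to \tcT^\rhf$ and~$\pi^* \colon \cT^\lhf \to \tcT^\lhf$ of Proposition~\ref{prop:adjunctions} are just functors into~$\tcT$. A categorical contraction is a homological epimorphism by Proposition~\ref{prop:contractions}\ref{item:cc-he}, so Proposition~\ref{prop:hfd-images}\ref{item:pi-star-pi-shriek-star} applies: $\cT$ is hfd-closed, $\pi^*$ (and~$\pi^!$) are fully faithful, the composition~$\pi_* \circ \pi^* \colon \cT^\lhf \to \cT$ is isomorphic to the natural embedding, and~$\pi^*$ is left adjoint to~$\pi_*$ on~$\cT^\lhf$ by the adjunction~\eqref{eq:adj}.

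Next I would observe that this is exactly a categorical resolution in the sense of~\cite{K08}: the triple~$(\tcT,\pi^*,\pi_*)$ consists of a smooth and proper category~$\tcT$, the functor~$\pi_* \colon \tcT \to \cT$, and a left adjoint~$\pi^* \colon \cT^\lhf \to \tcT$ of~$\pi_*$ on~$\cT^\lhf$ with~$\pi_* \circ \pi^*$ isomorphic to the embedding~$\cT^\lhf \hookrightarrow \cT$. In the geometric case~$\cT = \Db(X)$ one has~$\cT^\lhf = \Dperf(X)$ by Proposition~\ref{prop:hfd-geometric}, so this recovers the classical notion. This proves the first claim.

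For the second claim, assume~$\pi_*$ is a crepant contraction. By Definition~\ref{def:crepancy} the category~$\cT$ is then Gorenstein, so that~$\cT^\rhf = \cT^\lhf = \cT^\hf$, and~$\pi^! \colon \cT^\hf \to \tcT^\hf$ is isomorphic to~$\pi^* \colon \cT^\hf \to \tcT^\hf$. Since~$\pi^!$ is right adjoint to~$\pi_*$ on~$\cT^\rhf = \cT^\hf$ by~\eqref{eq:adj}, the functor~$\pi^* \cong \pi^!$ is simultaneously a left and a right adjoint of~$\pi_*$ on~$\cT^\hf = \cT^\lhf$, which is precisely the weak crepancy condition of~\cite{K08}. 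The only genuine work here is the translation of definitions --- identifying~$\cT^\lhf$ with the perfect subcategory used in~\cite{K08} and confirming that ``weakly crepant'' there amounts to ``$\pi^*$ is both adjoints of~$\pi_*$ on~$\cT^\lhf$''; I do not expect any new mathematical difficulty once this dictionary is in place.
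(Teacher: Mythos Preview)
Your proof is correct and follows essentially the same route as the paper: apply the adjunction~\eqref{eq:adj} and Proposition~\ref{prop:hfd-images}\ref{item:pi-star-pi-shriek-star} to obtain the categorical resolution, then invoke Definition~\ref{def:crepancy} to see that~$\pi^* \cong \pi^!$ is biadjoint to~$\pi_*$ in the crepant case. You are in fact slightly more careful than the paper in distinguishing~$\cT^\lhf$ (for the first claim) from~$\cT^\hf$ (only well-defined once~$\cT$ is known to be Gorenstein, in the second claim).
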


\begin{proof}
The functors~$\pi^* \colon \cT^\hf \to \tcT$ and~$\pi_* \colon \tcT \to \cT$ are adjoint by~\eqref{eq:adj}, 
and the composition~$\pi_* \circ \pi^*$ is isomorphic to the natural embedding
by Proposition~\ref{prop:hfd-images}\ref{item:pi-star-pi-shriek-star},
hence~$(\tcT,\pi^*,\pi_*)$ is a categorical resolution for~$\cT$.
If, moreover, $\pi_*$ is crepant, then~$\pi^*$ is biadjoint to~$\pi_*$ by definition of crepancy,
hence the resolution is weakly crepant (see~\cite[Definition~3.4]{K08}).
\end{proof}


\section{Examples}
\label{sec:examples}

In this section we illustrate the techniques developed in the previous sections
on examples of geometric and algebraic origin.

\subsection{Projective schemes}
\label{sec:geometry}

Let~$X$ be a projective scheme over a field~$\kk$.
We use the following notation:
\begin{itemize}
\item 
$\Dqc(X)$ is the unbounded derived category of quasicoherent sheaves,
\item 
$\Db(X) \subset \Dqc(X)$ is the bounded derived category of coherent sheaves, and
\item 
$\Dp(X) \subset \Db(X)$ is the subcategory of perfect complexes.
\end{itemize}
The categories~$\Db(X)$ and~$\Dp(X)$ are essentially small, idempotent complete, and have natural dg-enhancements.
Moreover, the objects of~$\Dp(X)$ form a set of compact generators of~$\Dqc(X)$, hence
\begin{equation}
\label{eq:bd-dp-dqc}
\Dqc(X) \simeq \bD(\Dp(X))
\end{equation} 
by Lemma~\ref{lem:dqc-dp}; we use this identification in Proposition~\ref{prop:hfd-geometric}.

The categories~$\Dp(X)$ and~$\Db(X)$ are self-dual by means of the naive duality
\begin{align*}
\Dp(X)^\opp &\xrightiso{} \Dp(X),
& 
\cF \mapsto \cRHom(\cF,\cO_X),
\\
\intertext{and the Grothendieck duality}
\Db(X)^\opp &\xrightiso{} \Db(X),
& 
\cG \mapsto \cRHom(\cG,\omega_X^\bullet),
\end{align*}
where~$\omega_X^\bullet$ is the dualizing complex.
The composition of these dualities gives an equivalence of categories~$\Dp(X) \xrightiso{} \Dp(X) \otimes \omega_X^\bullet$
defined by~$\cF \mapsto \cF \otimes \omega_X^\bullet$.

The following result is a restatement of some well-known results from the literature, 
using the language of homologically finite-dimensional objects.
For a much more advanced version of the duality between the $\infty$-categories of coherent sheaves and perfect complexes
on perfect derived stacks we refer to~\cite[Theorem~1.1.3, Theorem~1.2.4, Remark~1.2.6]{BZNP}.

\begin{proposition}
\label{prop:hfd-geometric}
Let~$X$ be a projective scheme 
over a perfect field~$\kk$ with dualizing complex~$\omega_X^\bullet$.
\begin{renumerate}
\item
\label{item:dpx-hf}
The category~$\Dp(X)$ is proper and
\begin{equation}\label{eq:DpX-lhfd-rhf}
\Dfd{\Dp(X)} = \Db(X),
\end{equation}
as subcategories of~$\bD(\Dp(X)) = \Dqc(X)$.

\item
\label{item:dbx-hf}
The category~$\Db(X)$ is smooth and hfd-closed with
\begin{equation}
\label{eq:DbX-lhfd-rhf}
(\Db(X))^\lhf = \Dp(X)
\qquad\text{and}\qquad
(\Db(X))^\rhf = \Dp(X) \otimes \omega_X^\bullet,
\end{equation}
as subcategories of~$\Db(X)$.

\item
\label{item:dbx-dpx-reflexivity}
Both~$\Dp(X)$ and~$\Db(X)$ are reflexive. 

\item
\label{item:dbx-gorenstein}
The category~$\Db(X)$ is Gorenstein if and only if~$X$ is Gorenstein.
\end{renumerate}
\end{proposition}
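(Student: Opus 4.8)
The plan is to establish (i)--(iii) first and then deduce (iv) by a formal argument. For (i): properness of $\Dp(X)$ is Serre finiteness. Under the identification $\bD(\Dp(X))\simeq\Dqc(X)$ of~\eqref{eq:bd-dp-dqc} a right dg-module $M$ evaluated at a perfect complex $\cF$ is $\RHom_X(\cF,M)$, so $M$ is homologically finite-dimensional over $\Dp(X)$ exactly when $\RHom_X(\cF,M)$ is bounded with finite-dimensional cohomology for every perfect $\cF$. If $M\in\Db(X)$ and $\cF$ is perfect then $\RHom_X(\cF,M)\simeq\RGamma(X,\cF^\vee\otimes M)$ has this property since $\cF^\vee\otimes M$ is bounded coherent and $X$ is proper over $\kk$; this gives $\Db(X)\subset\Dfd{\Dp(X)}$. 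For the reverse inclusion I would invoke the representability theorem of Bondal--Van den Bergh~\cite{Bondal-vdB} (equivalently, Neeman's identification of $\Db(X)$ with the finite-type cohomological functors on $\Dp(X)$, see~\cite{Neeman3}): an object of $\Dqc(X)$ with the above finiteness property lies in $\Db(X)$.

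For (ii): $\Db(X)$ is a smooth dg-category --- this is where the hypothesis that $\kk$ be perfect is used; see~\cite{Lunts} --- so Lemma~\ref{lemma:sp-hfd}\ref{item:hfd-smooth}, applied to $\Db(X)$ and to $\Db(X)^\opp$, shows $\Db(X)$ is hfd-closed. A perfect $\cF$ lies in $(\Db(X))^\lhf$ by the computation of (i); conversely, testing $\RHom_X(\cF,-)$ against the skyscraper sheaves $\kk(x)$ shows that $\cF\in(\Db(X))^\lhf$ forces $\cF$ to have finite projective dimension at every point of $X$, hence to be perfect, so $(\Db(X))^\lhf=\Dp(X)$. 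For $(\Db(X))^\rhf$ I would use the Grothendieck duality equivalence $\mathbb{D}\coloneqq\cRHom(-,\omega_X^\bullet)\colon\Db(X)^\opp\xrightiso{}\Db(X)$: since $\RHom_X(\cF,\cG)\simeq\RHom_X(\mathbb{D}\cG,\mathbb{D}\cF)$, an object $\cG$ lies in $(\Db(X))^\rhf$ iff $\mathbb{D}\cG\in(\Db(X))^\lhf=\Dp(X)$, i.e. iff $\cG\in\mathbb{D}(\Dp(X))=\{\cF^\vee\otimes\omega_X^\bullet\mid\cF\in\Dp(X)\}=\Dp(X)\otimes\omega_X^\bullet$.

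For (iii): since $\Dp(X)$ is proper, Lemma~\ref{lem:reflexivity-criterion} gives that $\ev_{\Dp(X)}$ and $\coev_{\Dp(X)}$ are fully faithful, so it remains to check that one of them is essentially surjective. Combining $\Dfd{\Dp(X)}=\Db(X)$ from (i) with hfd-closedness and $(\Db(X))^\lhf=\Dp(X)$ from (ii), the identifications~\eqref{eq:dfd-hfd-closed} yield $\Dfd{\Dfd{\Dp(X)}^\opp}=\Dfd{\Db(X)^\opp}=\bh_{\Db(X)^\opp}(\Dp(X)^\opp)$, and unwinding the definitions shows that $\ev_{\Dp(X)}$ is the Yoneda embedding $\cF\mapsto\bh_{\Db(X)^\opp,\cF}$ restricted to $\Dp(X)^\opp\subset\Db(X)^\opp$ (because $\ev_{\Dp(X),\cF}(M)=M(\cF)=\RHom_X(\cF,M)$), hence essentially surjective; so both $\Dp(X)$ and $\Dfd{\Dp(X)}=\Db(X)$ are reflexive by the last assertion of Lemma~\ref{lem:reflexivity-criterion}.

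Finally, (iv) is formal: $\Db(X)$ is hfd-closed by (ii), so by Definition~\ref{def:gorenstein} it is Gorenstein iff $(\Db(X))^\rhf=(\Db(X))^\lhf$ as subcategories of $\Db(X)$, i.e. iff $\Dp(X)\otimes\omega_X^\bullet=\Dp(X)$. If $X$ is Gorenstein then $\omega_X^\bullet$ is Zariski-locally a shift of $\cO_X$, hence perfect with perfect dual, so $-\otimes\omega_X^\bullet$ is an autoequivalence of $\Dp(X)$ and the equality holds; conversely, the equality applied to $\cO_X$ shows that $\omega_X^\bullet$ is perfect, and a Noetherian scheme with a perfect dualizing complex is Gorenstein (locally $\cO_{X,x}\simeq\RHom_{\cO_{X,x}}(\omega_{X,x}^\bullet,\omega_{X,x}^\bullet)\simeq(\omega_{X,x}^\bullet)^\vee\otimes\omega_{X,x}^\bullet$ has finite injective dimension, being a derived tensor product of a complex of finite projective dimension with one of finite injective dimension), so $X$ is Gorenstein. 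The only genuinely non-formal inputs are the representability theorem in (i) and the smoothness of $\Db(\Coh X)$ over a perfect field in (ii) --- I expect the latter to be the real obstacle --- whereas the identification of $(\Db(X))^\lhf$, $(\Db(X))^\rhf$ and the whole of (iii) and (iv) are routine applications of the machinery of \S\S\ref{sec:hfd-objects}--\ref{sec:hfd-gor} together with standard commutative algebra.
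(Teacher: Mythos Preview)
Your proposal is correct and follows essentially the same strategy as the paper's proof: Bondal--Van den Bergh for~(i), Lunts' smoothness of~$\Db(X)$ plus Grothendieck duality for~(ii), the reflexivity criterion Lemma~\ref{lem:reflexivity-criterion} for~(iii), and the reduction to perfectness of~$\omega_X^\bullet$ for~(iv). The only differences are that the paper cites~\cite[Proposition~1.11]{Orl06} for~$(\Db(X))^\lhf=\Dp(X)$ where you give the direct skyscraper argument, and cites~\cite{Ballard,Lunts,Kalck-Pavic-Shinder} for the equivalence ``$\omega_X^\bullet$ perfect $\Leftrightarrow$ $X$ Gorenstein'' where you supply the local injective-dimension computation; your unwinding of~$\ev_{\Dp(X)}$ in~(iii) is exactly the content of diagram~\eqref{eq:hh-ev} specialised to~$\cT=\Dp(X)$.
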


\begin{proof}
\ref{item:dpx-hf}
Properness of~$\Dp(X)$ is obvious, and an identification~$\Dfd{\Dp(X)} = \Db(X)$
follows from a combination of~\eqref{eq:bd-dp-dqc} with~\cite[Theorem~A.1]{Bondal-vdB} and Lemma~\ref{lem:representability}.

\ref{item:dbx-hf} 
The category~$\Db(X)$ is smooth by~\cite[Theorem~6.3]{Lunts}, 
hence by Corollary~\ref{cor:sp-hfd}
it is hfd-closed.
Moreover, $(\Db(X))^\lhf = \Dp(X)$ by~\cite[Proposition~1.11]{Orl06},
and the description of~$(\Db(X))^\rhf$ follows from this by Grothendieck duality.

\ref{item:dbx-dpx-reflexivity}
Follows from Lemma~\ref{lem:reflexivity-criterion} applied to the proper category~$\cT = \Dp(X)$
combined with~\ref{item:dpx-hf} and~\ref{item:dbx-hf}.

\ref{item:dbx-gorenstein}
By~\ref{item:dbx-hf} the Gorenstein condition for~$\Db(X)$
is equivalent to the equality~$\Dp(X) \otimes \omega_X^\bullet = \Dp(X)$, as subcategories of~$\Db(X)$.
Of course, this holds true 
if and only if the dualizing complex~$\omega_X^\bullet$ is perfect,
i.e, if and only if~$X$ is Gorenstein (see~\cite[Lemma 6.6]{Ballard} or~\cite[Lemma~6.25]{Lunts} 
or~\cite[Proof of Lemma~2.14]{Kalck-Pavic-Shinder}).
\end{proof}

Using Proposition~\ref{prop:hfd-geometric} we can interpret constructions from the previous sections geometrically.

\begin{example}
\label{ex:sing}
Recall the definition~\eqref{eq:sing-cat} of the singularity category~$\cT^\sing$.
For~$\cT = \Db(X)$ we see that
\begin{equation}
\label{eq:dbsing-dsg}
(\Db(X))^\sing = 
\Db(X) / (\Db(X))^\lhf = 
\Db(X) / \Dperf(X) = 
\Dsing(X)
\end{equation}
is the classical singularity category of~$X$.
\end{example}

\begin{example}\label{ex:geometric-adj}
Let~$\pi \colon \tX \to X$ be a morphism between projective schemes over a perfect field.
Assume that~$\pi_*\cO_{\tX} \cong \cO_X$.
Then ~$\pi_*\colon \Db(\tX) \to \Db(X)$ is a homological epimorphism by~\cite[Proposition~8.12]{Ef20},
and its two adjoints constructed in Proposition~\ref{prop:hfd-images},
after identifications from Proposition~\ref{prop:hfd-geometric} become the familiar fully faithful embeddings
\begin{equation*}
\pi^*\colon \Dp(X) \to \Dp(\tX) 
\qquad\text{and}\qquad  
\pi^!\colon \Dp(X) \otimes \omega_X^\bullet \to \Dp(\tX) \otimes \omega_{\tX}^\bullet.
\end{equation*}
In particular, if both~$\tX$ and~$X$ are Gorenstein, we have two functors
\begin{equation}\label{eq:adj-geom-gorenstein}
\pi^*, \pi^!\colon \Dp(X) \to \Dp(\tX)
\end{equation}
which differ by the relative dualizing sheaf twist.
Thus, if~$\pi_* \colon \Db(\tX) \to \Db(X)$ is a categorical contraction
(see~\cite[Theorem~8.22]{Ef20} and~\cite[Theorem~5.2]{KSabs} for a sufficient condition),
it follows that~$\pi_*$ is crepant if and only if so is~$\pi$;
moreover, Proposition~\ref{prop:hfd-images}\ref{item:hfd-images} also describes the essential images of~$\pi^!$ and~$\pi^*$ 
in terms of the orthogonals to~$\Ker(\pi_*)$.
\end{example}

\begin{remark}
Similarly to the subtlety described in Remark~\ref{rem:warning-1},
it should be taken into account that the following compositions
\begin{equation*}
\Dqc(\tX) \xrightarrow{\ \pi_*\ } \Dqc(X) \hookrightarrow \bD(\Db(X))
\qquad\text{and}\qquad 
\Dqc(\tX) \hookrightarrow \bD(\Db(\tX)) \xrightarrow{\ \Ind(\pi_*)\ } \bD(\Db(X))
\end{equation*}
(where the embedding~$\Dqc(X) \hookrightarrow \bD(\Db(X))$ is induced by~$\Dp(X) \hookrightarrow \Db(X)$ and~\eqref{eq:bd-dp-dqc},
and analogously for~$\tX$)
do not agree: for instance, if we additionally assume that~$\tX$ is smooth,
the second composition is essentially surjective by~\cite[Proposition~8.12]{Ef20}, 
while the first one is not!
\end{remark}

Combining Proposition~\ref{prop:hfd-geometric} with results of~\S\ref{sec:hfd-gor} we can also relate 
admissible subcategories and semiorthogonal decompositions of~$\Db(X)$ to those of~$\Dp(X)$.
In particular, applying Proposition~\ref{prop:bijectionsubcat-hfd-closed} 
we obtain the following corollary generalizing~\cite[Theorem~A.1]{KKS20}.
A similar result was obtained independently by Bondarko in~\cite[Theorem~3.2.7]{B22}.

\begin{corollary}
\label{ex:DbDperf-decomp}
If~$X$ is a projective scheme over a perfect field with dualizing complex~$\omega_X^\bullet$, 
the operations
\begin{align*}
\cA &\mapsto \cA \cap \Dp(X) 
&
\cA &\mapsto \cA \cap (\Dp(X) \otimes \omega_X^\bullet)
\shortintertext{induce bijections}
\RAdm(\Db(X)) & \cong \LAdm(\Dp(X))
&
\LAdm(\Db(X)) & \cong \RAdm(\Dp(X) \otimes \omega_X^\bullet).
\end{align*}
If, moreover,~$X$ is Gorenstein, each operation defines a bijection~$\Adm(\Db(X)) \cong \Adm(\Dp(X))$.
\end{corollary}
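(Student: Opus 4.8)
The plan is to deduce the statement formally from Proposition~\ref{prop:hfd-geometric} combined with the abstract bijection results of~\S\ref{sec:hfd-gor}; no genuinely new argument is needed. First I would set $\cT = \Db(X)$ and invoke Proposition~\ref{prop:hfd-geometric}\ref{item:dbx-hf},\ref{item:dbx-dpx-reflexivity} to record that $\cT$ is hfd-closed and reflexive with $\cT^\lhf = \Dp(X)$ and $\cT^\rhf = \Dp(X) \otimes \omega_X^\bullet$ as subcategories of $\Db(X)$.

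Next I would feed this into Proposition~\ref{prop:bijectionsubcat-hfd-closed}\ref{it:reflexive-hfd-closed-bijection},\ref{it:reflexive-hfd-closed-bijection-2}: since $\cT$ is hfd-closed and reflexive, the maps $\RAdm(\cT) \to \LAdm(\cT^\lhf)$, $\cA \mapsto \cA \cap \cT^\lhf$, and $\LAdm(\cT) \to \RAdm(\cT^\rhf)$, $\cA \mapsto \cA \cap \cT^\rhf$, are bijections. Substituting the identifications $\cT^\lhf = \Dp(X)$ and $\cT^\rhf = \Dp(X) \otimes \omega_X^\bullet$ gives precisely the first two displayed bijections. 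The one thing to watch is the left/right bookkeeping: because $\Dfd{-}$ inverts the order of semiorthogonal components, $\Dp(X)$, being the \emph{left} homologically finite part of $\Db(X)$, corresponds to $\RAdm(\Db(X))$ and not $\LAdm(\Db(X))$, and symmetrically for $\Dp(X) \otimes \omega_X^\bullet$; I would double-check this against the conventions in the Notation section and the exact form of Proposition~\ref{prop:bijectionsubcat-hfd-closed}.

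For the last assertion, assuming $X$ Gorenstein I would apply Proposition~\ref{prop:hfd-geometric}\ref{item:dbx-gorenstein} to conclude that $\Db(X)$ is a Gorenstein category, so that $\cT^\hf = \cT^\rhf = \cT^\lhf$; concretely the dualizing complex is then perfect, hence tensoring by it is an autoequivalence of $\Db(X)$ restricting to an autoequivalence of $\Dp(X)$, whence $\Dp(X) \otimes \omega_X^\bullet = \Dp(X)$ as subcategories. Then Proposition~\ref{prop:Gorenstein-refl-admissible}\ref{it:gor-bijections} applied to the Gorenstein reflexive category $\cT$ yields the bijection $\Adm(\Db(X)) \cong \Adm(\Dp(X))$, $\cA \mapsto \cA \cap \Dp(X)$, and the equality $\Dp(X) \otimes \omega_X^\bullet = \Dp(X)$ shows that the other operation $\cA \mapsto \cA \cap (\Dp(X) \otimes \omega_X^\bullet)$ is literally the same map on admissible subcategories. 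Apart from the left/right tracking above, there is no real obstacle here: all the substance — smoothness and reflexivity of $\Db(X)$, the computation of its right/left homologically finite parts, and the Gorenstein criterion — is already contained in Proposition~\ref{prop:hfd-geometric}, and the passage to admissible subcategories is exactly what Propositions~\ref{prop:bijectionsubcat-hfd-closed} and~\ref{prop:Gorenstein-refl-admissible} provide.
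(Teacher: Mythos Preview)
Your proposal is correct and follows exactly the route the paper intends: the corollary is stated without a separate proof, being deduced directly from Proposition~\ref{prop:hfd-geometric} (giving hfd-closedness, reflexivity, and the identifications $\cT^\lhf=\Dp(X)$, $\cT^\rhf=\Dp(X)\otimes\omega_X^\bullet$) together with Proposition~\ref{prop:bijectionsubcat-hfd-closed} for the two bijections and Proposition~\ref{prop:Gorenstein-refl-admissible}\ref{it:gor-bijections} for the Gorenstein case. Your left/right bookkeeping is also correct.
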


Similarly, applying Corollary~\ref{cor:indecomposability} we obtain

\begin{corollary}
\label{cor:Db-ind}
If~$X$ is a projective scheme 
over a perfect field,
the category~$\Db(X)$ is indecomposable if and only if~$\Dperf(X)$ is indecomposable.
\end{corollary}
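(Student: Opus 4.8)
The plan is to obtain this as a direct consequence of Corollary~\ref{cor:indecomposability} together with the geometric identifications of Proposition~\ref{prop:hfd-geometric}. The underlying point is that the operation $\cT \mapsto \Dfd{\cT}$ exchanges $\Dperf(X)$ and $\Db(X)$, and that both of these categories are reflexive, so the (semiorthogonal) indecomposability of one is equivalent to that of the other.

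First I would apply Proposition~\ref{prop:hfd-geometric}\ref{item:dbx-dpx-reflexivity} to conclude that $\cT \coloneqq \Dperf(X)$ is reflexive. This is exactly the hypothesis required to invoke Corollary~\ref{cor:indecomposability}, which then asserts that $\Dperf(X)$ is indecomposable if and only if $\Dfd{\Dperf(X)}$ is indecomposable.

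Next I would substitute the identification $\Dfd{\Dperf(X)} \simeq \Db(X)$ from Proposition~\ref{prop:hfd-geometric}\ref{item:dpx-hf}, where both sides are regarded as subcategories of $\bD(\Dperf(X)) \simeq \Dqc(X)$ via~\eqref{eq:bd-dp-dqc}. Since indecomposability is defined through semiorthogonal decompositions and is therefore invariant under an equivalence of dg-enhanced triangulated categories, this turns the previous equivalence of statements into the assertion of the corollary.

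There is essentially no obstacle in this argument: the substantive inputs --- reflexivity of $\Dperf(X)$ and the identification $\Dfd{\Dperf(X)} \simeq \Db(X)$ --- have already been established in Proposition~\ref{prop:hfd-geometric}, and Corollary~\ref{cor:indecomposability} packages the reflexivity machinery of \S\ref{ss:refelxivity}. The only point to keep in mind is the direction of the correspondence: one should apply Corollary~\ref{cor:indecomposability} to $\cT = \Dperf(X)$ (the proper side, which embeds into its category of homologically finite-dimensional objects) rather than to $\Db(X)$, although by symmetry either choice works once one knows that both categories are reflexive.
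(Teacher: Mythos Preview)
Your proposal is correct and matches the paper's approach exactly: the corollary is stated as an immediate consequence of Corollary~\ref{cor:indecomposability}, using the reflexivity of~$\Dperf(X)$ and the identification~$\Dfd{\Dperf(X)} \simeq \Db(X)$ from Proposition~\ref{prop:hfd-geometric}.
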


One can further combine Corollary~\ref{cor:Db-ind} with various results in the literature
establishing the indecomposability of~$\Dp(X)$ (see~\cite{Okawa,Kawatani-Okawa,Spence,LopesMartin-deSalas})
and deduce the indecomposability of~$\Db(X)$.

\begin{corollary}[{\cite[Corollary~2.9 and Remark~2.10]{LopesMartin-deSalas}}]
\label{cor:cm-db-indecomposable}
Let~$X$ be a connected Cohen--Macaulay projective variety over a perfect field.
Assume the base locus of the dualizing sheaf~$\omega_X$ is empty or consists of a finite set of points.
Then~$\Db(X)$ is indecomposable. 
\end{corollary}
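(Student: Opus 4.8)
The strategy is to transfer the question from $\Db(X)$ to the category of perfect complexes and then invoke the known geometric result. First I would apply Corollary~\ref{cor:Db-ind}: since $X$ is a projective scheme over a perfect field, $\Db(X)$ is indecomposable if and only if $\Dp(X)$ is indecomposable. This corollary relies only on the reflexivity of $\Dp(X)$ and the identification $\Dfd{\Dp(X)} \simeq \Db(X)$ from Proposition~\ref{prop:hfd-geometric}; in particular it uses neither regularity nor the Gorenstein property of $X$, which is essential here since $X$ is merely Cohen--Macaulay. Thus it remains to prove that $\Dp(X)$ is indecomposable under the stated hypotheses.

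For this I would quote \cite[Corollary~2.9 and Remark~2.10]{LopesMartin-deSalas}, where exactly this is established: for a connected Cohen--Macaulay projective variety over a perfect field whose dualizing sheaf $\omega_X$ has empty base locus, or base locus consisting of a finite set of points, the category $\Dp(X)$ admits no nontrivial semiorthogonal decomposition. The argument there adapts the method of Kawatani--Okawa from the smooth case: given $\Dp(X) = \langle \cA, \cB \rangle$, one analyzes the interaction of this decomposition with the structure sheaf $\cO_X$ and with twists by $\omega_X$; the Cohen--Macaulay hypothesis ensures that $\omega_X$ is an honest coherent sheaf, so its global sections can be used as in the smooth setting, and the connectedness of $X$ together with the smallness of the base locus of $\omega_X$ forces one of the components to vanish.

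Since Corollary~\ref{cor:Db-ind} makes the passage from $\Dp(X)$ to $\Db(X)$ completely formal, there is essentially no obstacle on the categorical side; the only real content lies in the cited indecomposability statement for $\Dp(X)$. Were one to reprove that statement here, the subtle point would be the behaviour near the finitely many base points of $\omega_X$: perfect complexes on a singular scheme do not contain all skyscraper sheaves, so one cannot argue directly with point objects and must instead work through sections of $\omega_X$ and a local analysis supported on the base locus.
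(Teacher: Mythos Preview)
Your proposal is correct and follows exactly the approach indicated in the paper: the sentence preceding the corollary explains that one combines Corollary~\ref{cor:Db-ind} with indecomposability results for~$\Dp(X)$ from the literature, and this corollary is simply the instance coming from~\cite{LopesMartin-deSalas}. Your additional commentary on the Kawatani--Okawa style argument and the r\^ole of the base locus is more detail than the paper supplies, but the logical structure is identical.
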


For curves we obtain a simple geometric criterion for indecomposability of~$\Db(X)$ and~$\Dperf(X)$.

\begin{corollary}
\label{cor:indecomposability-curves}
Let~$X$ be a connected nodal projective curve over a perfect field.
If $X$ has no smooth rational components, then~$\Db(X)$ is indecomposable.
\end{corollary}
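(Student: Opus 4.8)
The plan is to transfer the question to the category of perfect complexes and then combine the geometry of nodal curves with results already in the literature. By Corollary~\ref{cor:Db-ind}, the category $\Db(X)$ is indecomposable if and only if $\Dperf(X)$ is indecomposable, so it suffices to prove that $\Dperf(X)$ admits no nontrivial semiorthogonal decomposition.

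I would first dispose of the case when $X$ is integral. Here the hypothesis forces $g_a(X)\ge 1$: a smooth integral curve of genus $0$ is excluded, and a singular integral curve has positive arithmetic genus, as the normalization sequence $0\to\cO_X\to\nu_*\cO_{\tX}\to\cQ\to 0$ with $\cQ$ a nonzero skyscraper shows. Since $X$ is nodal it is Gorenstein, so its dualizing complex is a shift of a line bundle $\omega_X$ with $h^0(X,\omega_X)=g_a(X)\ge 1$, and a nonzero section of a line bundle on an integral projective curve vanishes on a finite set, so the base locus of $\omega_X$ is finite. Then Corollary~\ref{cor:cm-db-indecomposable} applies and shows that $\Db(X)$, hence $\Dperf(X)$, is indecomposable.

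For the general, possibly reducible, case I would invoke the indecomposability of $\Dperf(X)$ for a connected projective nodal curve without smooth rational components, which is proved in~\cite{Spence}; together with the reduction above this yields the statement. The mechanism there is to study a hypothetical nontrivial semiorthogonal decomposition of $\Dperf(X)$ through the partial normalizations at the nodes and the gluing (Milnor) squares reconstructing $X$ from its components, and to show that such a decomposition would force one irreducible component of $X$ to be a smooth rational curve split off as a Beilinson-type block --- impossible by hypothesis, since the dual graph of $X$ is connected. This reducible case, that is, excluding decompositions that entangle several components across the nodes, is the technically substantial point and is exactly where the assumption ``no smooth rational components'' is needed; the contribution of the present paper is the clean passage from $\Dperf(X)$ to $\Db(X)$ via reflexivity and Corollary~\ref{cor:Db-ind}, which in particular lets Corollary~\ref{cor:cm-db-indecomposable} handle the integral case with no extra work.
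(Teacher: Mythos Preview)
The paper's proof does not split into cases and does not pass through~$\Dperf(X)$: it uses the adjunction formula~$\omega_X\vert_Y \cong \omega_Y(D)$ for each irreducible component~$Y$ (with~$D$ the divisor of attaching nodes, see~\cite[Lemma~1.12]{Catanese}) to conclude that, in the absence of smooth rational components, $\omega_X$ has positive degree on every component; then by~\cite[Theorem~D]{Catanese} the base locus of~$\omega_X$ is finite, and Corollary~\ref{cor:cm-db-indecomposable} applies directly to~$\Db(X)$. Your integral case is correct but is already a special instance of this uniform argument.

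Your reducible case has a genuine gap. The mechanism you attribute to~\cite{Spence} --- partial normalizations at the nodes, Milnor squares, forcing a Beilinson block on a smooth rational component --- is not what that paper does. Spence's result, like Corollary~\ref{cor:cm-db-indecomposable} and~\cite{LopesMartin-deSalas}, is a base-locus criterion for the dualizing sheaf on Cohen--Macaulay varieties; it does not contain a component-by-component gluing analysis, and it does not prove indecomposability for reducible nodal curves without first controlling the base locus of~$\omega_X$. To apply any of these results in the reducible case you still need the geometric input that~$\omega_X$ has at most finite base locus, which is exactly the adjunction-plus-Catanese step the paper supplies and your proposal omits. Without it, the reducible case is not established.
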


\begin{proof}
Let~$Y \subset X$ be an irreducible component of~$X$.
By the adjunction formula we have~$\omega_X\vert_Y = \omega_Y(D)$,
where~$D$ is the intersection of~$Y$ with~$X \setminus Y$, see, e.g., \cite[Lemma~1.12]{Catanese}.
Since we assume~$X$ has no smooth rational components, 
$\omega_X$ has positive degree on each component, 
hence the base locus of~$\omega_X$ is empty or finite, see~\cite[Theorem~D]{Catanese}.
Thus, Corollary~\ref{cor:cm-db-indecomposable} applies to give the result.
\end{proof}

Note that if~$X$ has a \emph{rational tail}, i.e., $X = \P^1 \cup X'$, where~$\P^1$ and~$X'$ intersect transversely at one point, 
then~$\Db(X)$ has a nontrivial semiorthogonal decomposition by~\cite[Proposition~6.15]{KSabs}.
On a contrary, if~$X$ is a connected semistable curve, a minor modification of the argument of Corollary~\ref{cor:cm-db-indecomposable}
also proves indecomposability of~$\Dp(X)$ and~$\Db(X)$.


\subsection{Proper connective DG-algebras}
\label{sec:algebra}

Let~$A$ be a {\sf proper connective} dg-algebra over a field~$\kk$, i.e., a dg-algebra such that
\begin{equation}
\label{eq:dg-algebra}
\sum \dim_\kk \rH^i(A) < \infty
\qquad\text{and}\qquad 
\rH^i(A) = 0\quad\text{for~$i > 0$},
\end{equation} 
where~$\rH^i(A)$ is the cohomology of~$A$ in degree~$i$.
We use the following notation:
\begin{itemize}
\item 
$\Du(A)$ is the derived category of all right dg-modules over~$A$,
\item 
$\Db(A) \subset \Du(A)$ is its subcategory of dg-modules with finite-dimensional total cohomology, and
\item 
$\Dp(A) \subset \Db(A)$ is the subcategory of perfect dg-modules.
\end{itemize}
The category~$\bD(A)$ is cocomplete, hence idempotent complete.
The categories~$\Db(A)$ and~$\Dp(A)$ are closed in~$\bD(A)$ with respect to taking direct summands, hence also idempotent complete.
Moreover, they are essentially small and have natural dg-enhancements.
Finally, similarly to the geometric case, the objects of~$\Dp(A)$ 
form a set of compact generators of~$\bD(A)$, hence
\begin{equation}
\label{eq:bd-dp-bda}
\bD(A) \simeq \bD(\Dp(A)),
\end{equation} 
again by Lemma~\ref{lem:dqc-dp}.

There are natural equivalences
\begin{align}
\notag
\Dp(A)^\opp &\xrightiso{} \Dp(A^\opp),
& 
M &\mapsto \RHom_A(M, A),
\\
\intertext{given by the duality over~$A$, and}
\label{eq:duality-k}
\Db(A)^\opp &\xrightiso{} \Db(A^\opp),
& 
N &\mapsto N^\vee,
\end{align}
given by the duality over~$\kk$, respectively 
(so that~\eqref{eq:duality-k} can be understood as a special case of~\eqref{eq:duk}).

Below we will also use the fact that the category~$\bD(A)$ is endowed with a t-structure, where
\begin{equation}
\label{eq:tstr}
\bD(A)^{\le 0} = \{ M \mid \rH^i(M) = 0\ \text{for~$i > 0$}\},
\qquad 
\bD(A)^{\ge 0} = \{ M \mid \rH^i(M) = 0\ \text{for~$i < 0$}\},
\end{equation} 
see~\cite[Theorem~1.3]{HoshinoKatoMiyachi}.
It obviously induces a t-structure on~$\Db(A)$, defined in the same way.

Recall that~$\thick(S) \subset \cT$ denotes the thick subcategory of~$\cT$ 
generated by a set of objects~$S \subset \cT$.
Note that~$\Dp(A) = \thick(A)$, where~$A$ in the right side is understood as the free right $A$-module.

Following~\cite[Assumption~0.1(3)]{Jin} we say that a dg-algebra~$A$ is {\sf Gorenstein} if
\begin{equation*}
\thick(A^\vee) = \thick(A),
\end{equation*}
as subcategories of~$\Du(A)$ (hence both are equal to~$\Dp(A)$),
where~$A^\vee$ is the image in~$\Db(A)$ of the free left $A$-module~$A$ under the equivalence~\eqref{eq:duality-k}.

\begin{proposition}
\label{prop:hfd-algebraic}
Let~$A$ be a proper connective dg-algebra over a perfect field.
\begin{renumerate}
\item
\label{item:dpa-hf}
The category~$\Dp(A)$ is proper and
\begin{equation}
\label{eq:DpA-lhfd-rhf}
\Dfd{\Dp(A)} = \Db(A)
\end{equation}
as subcategories of~$\bD(\Dp(A)) = \bD(A)$.

\item
\label{item:dba-hf}
The category~$\Db(A)$ is smooth and hfd-closed with
\begin{equation}
\label{eq:DbA-lhfd-rhf}
(\Db(A))^\lhf = \thick(A) = \Dp(A),
\qquad\text{and}\qquad
(\Db(A))^\rhf = \thick(A^\vee),
\end{equation}
as subcategories of~$\Db(A)$.

\item
\label{item:dba-dpa-reflexivity}
Both~$\Dp(A)$ and~$\Db(A)$ are reflexive. 

\item
\label{item:dba-gorenstein}
The category~$\Db(A)$ is Gorenstein if and only if~$A$ is Gorenstein.
\end{renumerate}
\end{proposition}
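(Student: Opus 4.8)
The plan is to mirror the proof of Proposition~\ref{prop:hfd-geometric} almost verbatim, with $\thick(A)=\Dp(A)$ and $\thick(A^\vee)$ playing the roles of $\Dp(X)$ and $\Dp(X)\otimes\omega_X^\bullet$. For part~\ref{item:dpa-hf}, properness of $\Dp(A)$ is immediate from~\eqref{eq:dg-algebra}: since $\RHom_A(A,A)\simeq A$ has finite-dimensional total cohomology and $\Dp(A)=\thick(A)$, a dévissage gives $\RHom_A(M,N)\in\Dp(\kk)$ for all $M,N\in\Dp(A)$. For the identification $\Dfd{\Dp(A)}=\Db(A)$, I would use~\eqref{eq:bd-dp-bda} to view a right dg-module $M$ over $\Dp(A)$ as an object of $\bD(A)$ with $M(P)\simeq\RHom_A(P,M)$ for $P\in\Dp(A)$; taking $P=A$ identifies $M(A)$ with $M$ as a complex of $\kk$-vector spaces, so homological finite-dimensionality of $M$ over $\Dp(A)$ forces $M\in\Db(A)$, and the converse follows by dévissage from $A\in\thick(A)$. (As in the geometric case one may instead invoke~\cite[Theorem~A.1]{Bondal-vdB} and Lemma~\ref{lem:representability}, but since $\Dp(A)$ has the single generator $A$ this is not needed.)

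Part~\ref{item:dba-hf} is the heart of the matter. Smoothness of $\Db(A)$ is the algebraic counterpart of Lunts'~\cite[Theorem~6.3]{Lunts}; the plan is to deduce it via the identification $\Db(A)\simeq\Dp(B)$, where $B$ is the dg-endomorphism algebra of a classical generator of $\Db(A)$ built from the finitely many simple $\rH^0(A)$-modules pulled back to $A$ (here perfectness of $\kk$ is used so that $\rH^0(A)/\rad\rH^0(A)$ is separable), together with the fact that such a $B$ is smooth because $A$ is proper (the smooth--proper side of Koszul duality). Granting this, Corollary~\ref{cor:sp-hfd} gives that $\Db(A)$ is hfd-closed. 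For $(\Db(A))^\lhf=\thick(A)=\Dp(A)$: the inclusion $\thick(A)\subseteq(\Db(A))^\lhf$ is dévissage from properness of $A$, and the reverse inclusion is the algebraic analogue of Orlov's~\cite[Proposition~1.11]{Orl06} --- if $\RHom_A(M,N)\in\Db(\kk)$ for all $N\in\Db(A)$, then testing against the simple modules and using the t-structure~\eqref{eq:tstr} forces $M$ to be perfect. Finally, $(\Db(A))^\rhf=\thick(A^\vee)$ is obtained by applying the $\lhf$-statement to $A^\opp$ (again proper connective over the perfect field $\kk$) and transporting it through the $\kk$-duality equivalence~\eqref{eq:duality-k}: under the induced contravariant equivalence $\Db(A)\to\Db(A^\opp)$ the subcategory $(\Db(A))^\rhf$ corresponds to $(\Db(A^\opp))^\lhf=\thick(A^\opp)$, and the free right $A^\opp$-module corresponds to $A^\vee$.

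Parts~\ref{item:dba-dpa-reflexivity} and~\ref{item:dba-gorenstein} are then formal. For~\ref{item:dba-dpa-reflexivity}, I would apply Lemma~\ref{lem:reflexivity-criterion} to the proper category $\cT=\Dp(A)$: by~\ref{item:dpa-hf} we have $\Dfd{\Dp(A)}=\Db(A)$, by~\ref{item:dba-hf} this category is hfd-closed, so the argument of Lemma~\ref{lem:sp-reflexive} via diagram~\eqref{eq:hh-ev} shows $\coev_{\Dp(A)}$ is an equivalence; hence $\Dp(A)$ is reflexive, and so is $\Db(A)$ by Lemma~\ref{lem:lhfd-lhfrefl}. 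For~\ref{item:dba-gorenstein}, by~\ref{item:dba-hf} the category $\Db(A)$ is Gorenstein iff $(\Db(A))^\lhf=(\Db(A))^\rhf$, i.e.\ iff $\thick(A)=\thick(A^\vee)$ as subcategories of $\bD(A)$, which is precisely the definition of $A$ being Gorenstein in the sense of~\cite{Jin}. The main obstacle is therefore concentrated entirely in part~\ref{item:dba-hf} --- establishing smoothness of $\Db(A)$ and the characterization $(\Db(A))^\lhf=\Dp(A)$ --- while everything else is a routine transcription of the geometric argument.
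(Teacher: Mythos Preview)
Your treatment of parts~\ref{item:dpa-hf}, \ref{item:dba-dpa-reflexivity}, and~\ref{item:dba-gorenstein} matches the paper's essentially verbatim. The divergence is entirely in part~\ref{item:dba-hf}, where the paper takes a different and more concrete route than the one you sketch.

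For smoothness of~$\Db(A)$, the paper does \emph{not} invoke Koszul duality. Instead it first replaces~$A$ by a quasi-isomorphic finite-dimensional connective dg-algebra via~\cite[Corollary~3.12]{RS19}, then passes to the Auslander resolution~$\tA$ (from~\cite[\S5]{KL} or~\cite[\S2.3]{O20}), which is smooth and proper by construction. The key point is that the functor~$\pi_*\colon \Db(\tA)\to\Db(A)$, being t-exact for the t-structures~\eqref{eq:tstr}, is a Verdier localization, so smoothness of~$\Db(A)$ follows from smoothness of~$\Db(\tA)$ by Efimov's descent result~\cite[Corollary~2.9, Theorem~2.4(1)]{Ef20}. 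Your proposed route --- identifying~$\Db(A)\simeq\Dp(B)$ for~$B=\REnd_A(\bigoplus S_i)$ and asserting that~$B$ is smooth ``by the smooth--proper side of Koszul duality'' --- points in a plausible direction, but the assertion is not a citable theorem in this generality: $A$ need not be augmented over~$\kk$, there are several simples, and the standard Koszul duality theorems do not apply without further reduction. Making this precise would likely require work comparable to the Auslander-resolution argument, so as written this step is a genuine gap rather than an alternative proof.

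For the inclusion~$\Db(A)^\lhf\subset\Dp(A)$, the paper again routes through the Auslander resolution: it shows that~$\pi^*$ reflects compactness (a short direct-sum argument using that~$\pi_*$ is continuous and essentially surjective), and then uses~$\pi^*(\Db(A)^\lhf)\subset\Db(\tA)^\lhf=\Dp(\tA)$ to conclude. Your proposed argument --- test against simples and use the t-structure to build a finite semi-free resolution --- is a valid and arguably more elementary alternative, and would stand on its own once smoothness (hence hfd-closedness) is established by other means. The identification of~$(\Db(A))^\rhf$ via the~$\kk$-duality~\eqref{eq:duality-k} applied to~$A^\opp$ is exactly what the paper does.
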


\begin{proof}
\ref{item:dpa-hf}
Properness of~$\Dp(A)$ follows from properness of~$A$.
Assume~$M \in \bD(\Dp(A)) = \bD(A)$ is homologically finite-dimensional.
Then
\begin{equation*}
\rH^\bullet(M) = \rH^\bullet(\RHom_{A}(A,M))
\end{equation*}
is finite-dimensional, hence~$M \in \Db(A)$, and therefore~$\Dfd{\Dp(A)} \subset \Db(A)$.
The converse inclusion also follows from the above equality because~$\Dp(A) 
= \thick(A)$.

\ref{item:dba-hf} 
Note that~$A$ is quasiisomorphic to a finite-dimensional connective dg-algebra by~\cite[Corollary~3.12]{RS19},
so we may assume~$A$ is finite-dimensional.
Let~$\tA$ be the \emph{Auslander resolution} of~$A$ 
associated to the radical~$\rad(A)$ and an integer~$N$ such that~$\rad(A)^N = 0$
(see~\cite[\S5]{KL} or~\cite[\S2.3]{O20});
note that by construction~$\tA$ is also a finite-dimensional connective dg-algebra,
and there is a distinguished (closed) 
idempotent element~$\be \in \tA$ such that
\begin{equation*}
\be \cdot \tA \cdot \be = A.
\end{equation*}
Consequently, there is a pair of continuous adjoint functors
\begin{align*}
\pi^* &\colon \bD(A) \to \bD(\tA),
&
M &\mapsto M \otimes_A (\be \cdot \tA),
\\
\pi_* & \colon \bD(\tA) \to \bD(A),
&
\tM & \mapsto \RHom_{\tA}(\be \cdot \tA, \tM) = \tM \cdot \be
\end{align*}
such that~$\pi_* \circ \pi^* \cong \id_{\bD(A)}$.
Note the inclusion
\begin{equation}
\label{eq:pis-db-ta}
\pi_*(\Db(\tA)) \subset \Db(A)
\end{equation} 
which is obvious from the above.
Note also that the category~$\Db(\tA)$ is smooth and proper (\cite[Theorem~5.20]{KL} or~\cite[Theorem~2.19(5)]{O20})), 
hence (using part~\ref{item:dpa-hf} and Lemma~\ref{lemma:sp-hfd}) we obtain
\begin{equation*}
\Db(\tA) = \Dfd{\Dp(\tA)} = \Dp(\tA).
\end{equation*}

Now, using the fact that~$\pi_*$ is t-exact for the natural t-structures~\eqref{eq:tstr} on~$\Db(\tA)$ and~$\Db(A)$,
we deduce that~$\pi_* \colon \Db(\tA) \to \Db(A)$ is a Verdier localization,
see the argument of~\cite[Corollary~A.13]{KL} or~\cite[Lemma~2.32]{PS18}.
Moreover, using~\cite[Corollary~2.9 and Theorem~2.4(1) (see also Remark~2.7)]{Ef20} 
we see that smoothness of~$\Db(\tA)$ implies that~$\Db(A)$ is smooth.
Therefore~$\Db(A)$ is hfd-closed by Corollary~\ref{cor:sp-hfd}.

Now we show that for~$M \in \bD(A)$ the object~$\pi^*(M)$ is compact if and only if~$M$ is compact.
Indeed, the isomorphism~$\pi_* \circ \pi^* \cong \id_{\bD(A)}$ implies that~$\pi_*$ is essentially surjective,
hence for any collection of dg-modules~$N_i \in \bD(A)$ we can write~$N_i = \pi_*(\tN_i)$ for appropriate~$\tN_i \in \bD(\tA)$, 
so if~$\pi^*(M)$ is compact, using the adjunction and continuity of~$\pi_*$ we obtain
\begin{multline*}
\Hom(M, \oplus N_i) = 
\Hom(M, \oplus \pi_*(\tN_i)) \cong 
\Hom(M, \pi_*(\oplus \tN_i)) \cong 
\Hom(\pi^*(M), \oplus \tN_i) \\ \cong 
\oplus \Hom(\pi^*(M), \tN_i) \cong 
\oplus \Hom(M, \pi_*(\tN_i)) = 
\oplus \Hom(M, N_i),
\end{multline*}
hence~$M$ is compact, and a similar computation proves the converse implication.

Further, since the compact objects in~$\bD(A)$ and~$\bD(\tA)$ are perfect dg-modules,
the above observation implies that~\mbox{$\pi^*(M) \in \Dp(\tA)$} if and only if~$M \in \Dp(A)$.
On the other hand, 
we have
\begin{equation*}
\pi^*(\Db(A)^\lhf) \subset \Db(\tA)^\lhf = \Db(\tA) = \Dp(\tA),
\end{equation*}
where the inclusion follows from the adjunction of~$\pi^*$ and~$\pi_*$ and~\eqref{eq:pis-db-ta}.
Thus we have~$\Db(A)^\lhf \subset \Dp(A)$.
The opposite inclusion~$\Dp(A) \subset \Db(A)^\lhf$ follows from the argument of part~\ref{item:dpa-hf}.

Finally, the description of the subcategory~$\Db(A)^\rhf \subset \Db(A)$ now follows from the equivalence~\eqref{eq:duality-k} 
and the equality~$\Dp(A^\opp) = \thick(A)$ obtained by the above argument applied to the opposite algebra.

\ref{item:dba-dpa-reflexivity}
Follows from Lemma~\ref{lem:reflexivity-criterion}
applied to the proper category~$\cT = \Dp(A)$
combined with~\ref{item:dpa-hf} and~\ref{item:dba-hf}.

\ref{item:dba-gorenstein}
Follows immediately from Definition~\ref{def:gorenstein} and~\ref{item:dba-hf}.
\end{proof}

Here is a simple example of a proper connective dg-algebra that played the 
key in role in~\cite{KSabs}.

\begin{example}
\label{ex:Ap}
Consider the dg-algebra~$\sA_p = \kk[\eps]/(\eps^2)$, where~$\deg(\eps) = -p$ with~$p \ge 0$ and~$\rd(\eps) = 0$.
It is Gorenstein since~$\sA_p^\vee \cong \sA_p[-p]$ as~$\sA_p$-modules.
Furthermore, the category~$\Db(\sA_p)^\hf = \Dp(\sA_p)$ 
was shown in~\cite[Proposition~2.2]{KSabs} to be equivalent to the category~$\Db(\sB_{p+1})$
for the dg-algebra~$\sB_{p+1} = \kk[\uptheta]$, where~$\deg(\uptheta) = p + 1$ and~$\rd(\uptheta) = 0$.
It was also observed (\cite[Remark~6.8]{KSabs}) that
the singularity category~$\Db(\sA_p)^\sing = \Db(\sA_p) / \Dp(\sA_p)$ 
is equivalent to the category of~$\ZZ/(p+1)\ZZ$-graded vector spaces;
in particular, it is idempotent complete.
This observation was crucial in~\cite{KSabs} for establishing obstructions 
for the existence of categorical absorption of singularities.
\end{example}

The following observation combines the geometric and algebraic examples.

\begin{corollary}
\label{cor:bij-X-A}
Let~$X$ be a projective Gorenstein scheme over a perfect field.
An admissible subcategory~$\cA \subset \Db(X)$ is equivalent to~$\Db(A)$ for a proper connective dg-algebra~$A$
if and only if the category~$\cA \cap \Dp(X)$ is equivalent to~$\Dp(A)$.
\end{corollary}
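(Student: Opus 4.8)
The plan is to use the established machinery and the characterization of the operations~$\Upsilon^\pf$, $\Upsilon_{\io_*}$, $\Upsilon_{\io^*}$ from the results of~\S\ref{sec:hfd-gor} together with Proposition~\ref{prop:hfd-geometric}. Since~$X$ is a projective Gorenstein scheme, $\Db(X)$ is Gorenstein and reflexive (Proposition~\ref{prop:hfd-geometric}\ref{item:dbx-gorenstein},\ref{item:dbx-dpx-reflexivity}), $\Dp(X) = \Db(X)^\hf$, and the map~$\Upsilon^\pf \colon \Adm(\Db(X)) \to \Adm(\Dp(X))$, $\cA \mapsto \cA \cap \Dp(X)$, is a bijection by Corollary~\ref{ex:DbDperf-decomp} (or Proposition~\ref{prop:Gorenstein-refl-admissible}\ref{it:gor-bijections}). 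The key point is that under this bijection, for~$\cA \in \Adm(\Db(X))$ with~$\cA_0 \coloneqq \Upsilon^\pf(\cA) = \cA \cap \Dp(X)$, the category~$\cA$ is \emph{recovered} from~$\cA_0$ via~$\Dfd{\cA_0} \simeq \cA$; indeed, by Proposition~\ref{prop:bijectionsubcat-hfd-closed}\ref{it:reflexive-hfd-closed-ca-hf} an admissible~$\cA \subset \Db(X)$ is hfd-closed, and $\Dfd{\cA} \simeq \cA$ is not what we want --- rather, $\cA_0 = \cA^\hf$ and so $\cA$ is an hfd-closed reflexive category with $\Dfd{\cA_0} = \Dfd{\cA^\hf} \simeq \cA$ by reflexivity of~$\cA$ (which holds by Theorem~\ref{thm:lhfd-right-bijection} since~$\cA \subset \Db(X)$ is admissible and~$\Db(X)$ is reflexive).

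\medskip

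\textbf{Proof of the ``only if'' direction.} Suppose~$\cA \simeq \Db(A)$ for a proper connective dg-algebra~$A$. By Proposition~\ref{prop:hfd-algebraic}\ref{item:dpa-hf},\ref{item:dba-hf}, we have~$\Db(A)^\hf = \Db(A)^\lhf = \Dp(A)$ and~$\Dfd{\Dp(A)} = \Db(A)$. Any equivalence~$\cA \xrightiso{} \Db(A)$ induces an equivalence~$\cA^\hf \xrightiso{} \Db(A)^\hf = \Dp(A)$ (using that~$(-)^\hf$ is intrinsic, cf.\ the remark before Lemma~\ref{lem:equivalence-extension}, and that~$\cA$ is Gorenstein by Proposition~\ref{prop:Gorenstein-refl-admissible}\ref{it:gor-adm}). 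By Proposition~\ref{prop:bijectionsubcat-hfd-closed}\ref{it:reflexive-hfd-closed-bijection} (applied to the Gorenstein hfd-closed category~$\Db(X)$), we have~$\cA^\hf = \cA \cap \Dp(X) = \Upsilon^\pf(\cA)$. Hence~$\Upsilon^\pf(\cA) \simeq \Dp(A)$.

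\medskip

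\textbf{Proof of the ``if'' direction.} Suppose~$\cA_0 \coloneqq \Upsilon^\pf(\cA) = \cA \cap \Dp(X) \simeq \Dp(A)$ for a proper connective dg-algebra~$A$. Since~$\cA \subset \Db(X)$ is admissible and~$\Db(X)$ is Gorenstein and reflexive, $\cA$ is Gorenstein (Proposition~\ref{prop:Gorenstein-refl-admissible}\ref{it:gor-adm}) and reflexive (Theorem~\ref{thm:lhfd-right-bijection}), and~$\cA^\hf = \cA \cap \Dp(X) = \cA_0$ by Proposition~\ref{prop:bijectionsubcat-hfd-closed}\ref{it:reflexive-hfd-closed-bijection}. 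By reflexivity of~$\cA$ and the identification~$\cA^\hf = \cA^\rhf = \Dfd{\cA}$ of subcategories, we have~$\cA \simeq \Dfd{\Dfd{\cA}} = \Dfd{\cA^\hf} = \Dfd{\cA_0} \simeq \Dfd{\Dp(A)} = \Db(A)$, where the penultimate equivalence uses the assumed equivalence~$\cA_0 \simeq \Dp(A)$ and functoriality of~$\Dfd{-}$ (Corollary~\ref{cor:pseudofunctor}), and the last is Proposition~\ref{prop:hfd-algebraic}\ref{item:dpa-hf}. Hence~$\cA \simeq \Db(A)$.

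\medskip

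\textbf{Main obstacle.} The delicate point is making sure the identifications~$\cA^\hf = \cA \cap \Dp(X)$ and~$\Dfd{\cA^\hf} \simeq \cA$ are set up correctly, i.e.\ that the abstractly-defined subcategory~$\cA^\rhf \subset \cA$ (computed inside~$\cA$ using~$\RHom_\cA$) agrees with the intersection~$\cA \cap \Db(X)^\rhf = \cA \cap \Dp(X)$ computed inside~$\Db(X)$. This is precisely the content of the last sentence of Proposition~\ref{prop:bijectionsubcat-hfd-closed}\ref{it:reflexive-hfd-closed-bijection}, valid because~$\Db(X)$ is hfd-closed and~$\cA$ is admissible, so that~$\RHom_\cA(-,-)$-finiteness over~$\cA$ is equivalent to~$\RHom_{\Db(X)}(-,-)$-finiteness over all of~$\Db(X)$ (using the projection functor). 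Once this is in place, both directions are immediate from Propositions~\ref{prop:hfd-geometric} and~\ref{prop:hfd-algebraic}, the reflexivity Theorem~\ref{thm:lhfd-right-bijection}, and the functoriality of~$\Dfd{-}$.
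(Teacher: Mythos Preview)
Your proof is correct and follows essentially the same approach as the paper: both directions hinge on the identifications~$\cA^\hf = \cA \cap \Dp(X)$ (Proposition~\ref{prop:bijectionsubcat-hfd-closed}\ref{it:reflexive-hfd-closed-bijection}) and~$\cA \simeq \Dfd{\cA^\hf}$ (reflexivity of~$\cA$, Theorem~\ref{thm:lhfd-right-bijection}), combined with~$\Db(A)^\lhf = \Dp(A)$ and~$\Dfd{\Dp(A)} = \Db(A)$ from Proposition~\ref{prop:hfd-algebraic}. The paper additionally remarks that in the ``only if'' direction the algebra~$A$ is forced to be Gorenstein (since~$\cA$ is), which you use implicitly when writing~$\Db(A)^\hf$; your opening mention of~$\Upsilon_{\io_*}$,~$\Upsilon_{\io^*}$ is extraneous here, but otherwise the arguments match.
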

\begin{proof}
By Proposition~\ref{prop:hfd-geometric} the category~$\Db(X)$ is Gorenstein and reflexive with~$(\Db(X))^\hf = \Dp(X)$.

If~$\cA \subset \Db(X)$ is admissible, it is Gorenstein by Proposition~\ref{prop:Gorenstein-refl-admissible}\ref{it:gor-adm},
so if~$\cA \simeq \Db(A)$ for a proper connective dg-algebra~$A$
then~$A$ is Gorenstein by Proposition~\ref{prop:hfd-algebraic}\ref{item:dba-gorenstein}.
Moreover,
\begin{equation*}
\cA \cap \Dp(X) = \cA \cap (\Db(X))^\hf = \cA^\hf
\end{equation*}
by Proposition~\ref{prop:bijectionsubcat-hfd-closed}\ref{it:reflexive-hfd-closed-bijection},
hence by Proposition~\ref{prop:hfd-algebraic}\ref{item:dba-hf} this is equivalent to~$\Dp(A)$.

Conversely assume that~$\cA$ is admissible and~$\cA \cap \Dp(X) \simeq \Dperf(A)$.
By Proposition~\ref{prop:bijectionsubcat-hfd-closed}\ref{it:reflexive-hfd-closed-bijection} 
the map~$\cA \mapsto \cA \cap \Dp(X)$ coincides with the map of Lemma~\ref{lem:sod-rhfd-lhf},
and by Theorem~\ref{thm:lhfd-right-bijection} the inverse map 
takes an admissible subcategory~$\cA_0 \subset \Dp(X)$ to~$\Dfd{\cA_0} \subset \Db(X)$.
Therefore,
\begin{equation*}
\cA \simeq \Dfd{\cA \cap \Dp(X)} \simeq \Dfd{\Dp(A)} \simeq \Db(A)
\end{equation*}
where the last equivalence is Proposition~\ref{prop:hfd-algebraic}\ref{item:dpa-hf}.
\end{proof}

In the remaining part of the section we generalize Example~\ref{ex:Ap} by showing that 
for any Gorenstein proper connective dg-algebra~$A$
the singularity category of~$\Db(A)$ defined by~\eqref{eq:sing-cat}, i.e.,
\begin{equation*}
\Dsing(A) := \Db(A)^\sing = \Db(A) / \Db(A)^\lhf = \Db(A) / \Dp(A)
\end{equation*}
is idempotent complete.

Recall that an additive category is called a {\sf Krull--Schmidt category}
if every object admits a finite direct sum decomposition into objects with local endomorphism rings, see~\cite[\S4]{Krause-KS}.

\begin{proposition}\label{prop:Gorenstein-Dsing}
If~$A$ is a Gorenstein proper connective dg-algebra over a perfect field
then the singularity category~\mbox{$\Dsing(A)$} is Krull--Schmidt.
Moreover, $\Dsing(A)$ is idempotent complete.
\end{proposition}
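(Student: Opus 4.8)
The plan is to realize $\Dsing(A)$ as the stable category of a Frobenius exact category with finite-dimensional morphism spaces, and then invoke the fact that stable categories of such categories are Krull--Schmidt; idempotent completeness is then automatic. As a preliminary reduction, by~\cite[Corollary~3.12]{RS19} we may assume that $A$ is finite-dimensional --- this changes neither $\Db(A)$, nor $\Dp(A)$, nor $\Dsing(A)$, nor the Gorenstein property --- exactly as in the proof of Proposition~\ref{prop:hfd-algebraic}\ref{item:dba-hf}.

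The key finiteness input is that $\Db(A)$ is \emph{$\Hom$-finite}: $\Hom_{\Db(A)}(M,N) = \rH^0(\RHom_A(M,N))$ is finite-dimensional over $\kk$ for all $M,N \in \Db(A)$. Indeed, since $A$ is connective with finite-dimensional cohomology, every $M \in \Db(A)$ admits a semifree resolution that is bounded above and has finitely many free generators in each degree; computing $\RHom_A(M,N)$ with such a resolution and using that $N$ is bounded, one checks that for each $n$ only finitely many generators contribute to $\RHom_A(M,N)$ in degree $n$, so each cohomology group of $\RHom_A(M,N)$ is finite-dimensional. Note that this does \emph{not} say that $\Db(A)$ is proper as a dg-category: the complex $\RHom_A(M,N)$ need not be bounded, which is exactly the source of a nontrivial singularity category. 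Consequently every full subcategory of $\Db(A)$ is $\Hom$-finite, and --- since $\Db(A)$ is idempotent complete --- every full subcategory closed under direct summands is moreover idempotent complete.

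The Gorenstein hypothesis now enters through the work of Jin~\cite{Jin}, the dg-analogue of Buchweitz's theorem: since $A$ is Gorenstein, there is a Frobenius exact category $\CM(A)$ of Cohen--Macaulay dg-$A$-modules, realized as a full subcategory of $\Db(A)$ closed under direct summands, together with a triangulated equivalence $\underline{\CM}(A) \simeq \Dsing(A)$ between its stable category and the singularity category~\eqref{eq:sing-cat}. By the previous paragraph $\CM(A)$ is $\Hom$-finite and idempotent complete, hence Krull--Schmidt by~\cite[\S4]{Krause-KS} (a $\Hom$-finite, idempotent complete $\kk$-linear additive category is Krull--Schmidt). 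Finally, the stable category of a Krull--Schmidt Frobenius exact category is again Krull--Schmidt: a decomposition in $\CM(A)$ descends, the projective-injective summands become zero in $\underline{\CM}(A)$, each remaining indecomposable summand $X$ keeps a local endomorphism ring because the ideal of morphisms factoring through projectives is a \emph{proper} ideal of the local ring $\End_{\CM(A)}(X)$ and hence lies in its radical, and idempotents split since they lift along the quotient by this ideal (an ideal contained in the radical of the semiperfect endomorphism rings of $\CM(A)$). Transporting along $\underline{\CM}(A) \simeq \Dsing(A)$, we conclude that $\Dsing(A)$ is Krull--Schmidt, and in particular idempotent complete.

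I expect the genuine content --- and the main obstacle --- to be the middle step: transporting the classical Buchweitz picture to the dg-setting, i.e.\ producing the Frobenius category $\CM(A)$ and the equivalence $\underline{\CM}(A) \simeq \Dsing(A)$, which is precisely where the Gorenstein assumption on $A$ is used and which we import from~\cite{Jin}; once this is available, the Krull--Schmidt conclusion is a formal consequence of the $\Hom$-finiteness of $\Db(A)$. A more self-contained alternative would use the dg-Auslander resolution $\pi_*\colon \Db(\tA) \to \Db(A)$ of the proof of Proposition~\ref{prop:hfd-algebraic}\ref{item:dba-hf}, with $\Db(\tA)$ smooth and proper, and study the Verdier quotient $\Dsing(A) \simeq \Db(\tA)/\pi_*^{-1}(\Dp(A))$ using crepancy (Lemma~\ref{lemma:Serre-kernel}, Lemma~\ref{lem:crepancy-criterion}) and the Serre functor $\bS_{\Db(A)}$ of Proposition~\ref{prop:serre-functor}; but controlling the kernel of this quotient precisely enough to force idempotent completeness seems more delicate than the Frobenius-category route, so I would present the argument above.
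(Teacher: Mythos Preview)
Your proof is correct and follows essentially the same route as the paper: both arguments establish Hom-finiteness of~$\Db(A)$ (the paper cites~\cite[Proposition~1.5]{Jin} directly, while you reduce to finite-dimensional~$A$ and argue via semifree resolutions), invoke Jin's equivalence~$\underline{\CM}(A)\simeq\Dsing(A)$, deduce that~$\CM(A)$ is Krull--Schmidt as a summand-closed subcategory of~$\Db(A)$, and then pass to the stable category. Your treatment of the last step is slightly more detailed than the paper's one-line ``quotients of local rings are local,'' but the substance is identical.
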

\begin{proof}
First, we prove that~$\Db(A)$ is a Krull--Schmidt category.
Indeed, as we already mentioned, the category~$\Db(A)$ is idempotent complete,
hence by~\cite[Corollary~4.4 and Proposition~4.1]{Krause-KS} 
it is enough to check that the category of finitely-generated projective~$\End(M)$-modules is Krull--Schmidt for every~$M \in \Db(A)$.
But the ring~$\End(M)$ is finite-dimensional by~\cite[Proposition~1.5]{Jin},
hence~\cite[Section~5]{Krause-KS} applies and proves the Krull--Schmidt property.

To prove the Krull--Schmidt property for~$\Dsing(A)$ we use~\cite[Theorem~0.3(4)]{Jin}, i.e., an equivalence
\begin{equation*}
\Dsing(A) \simeq \underline{\CM}(A),
\end{equation*}
where~$\underline{\CM}(A)$ is the stable category of Cohen--Macaulay dg-modules over~$A$.
Recall from~\cite{Jin} that a dg-module~$M$ over~$A$ is {\sf Cohen--Macaulay} if
\begin{equation*}
M \in \Db(A)^{\le 0} 
\qquad\text{and}\qquad  
\RHom_{A}(M, A) \in \Db(A^\opp)^{\le 0},
\end{equation*}
(where we use the t-structure~\eqref{eq:tstr})
and that the stable category~$\underline{\CM}(A)$ of Cohen--Macaulay modules 
is defined from the category~$\CM(A)$ of Cohen--Macaulay dg-modules
by quotienting out the set of morphisms in~$\CM(A)$ which factor through a finite direct sum of shifts of~$A$.

Since~$\CM(A)$ is closed under direct summands taken in the Krull--Schmidt category~$\Db(A)$, 
the category~$\CM(A)$ is Krull--Schmidt category as well.
Since quotients of local rings are local,
the quotient category~$\underline{\CM}(A)$ is also a Krull--Schmidt category,
hence so is~$\Dsing(A)$.

Finally, every Krull--Schmidt category is idempotent complete by~\cite[Corollary~4.4]{Krause-KS}.
\end{proof}


\section{Bijections between sets of admissible subcategories}
\label{sec:bijection}

The goal of this section is to prove Theorem~\ref{thm:bijection-subcat-deform} from the Introduction.

We use notation introduced in~\S\ref{sec:geometry}.
Furthermore, for a triangulated category~$\cT$, 
let~$\Sub(\cT)$ denote the set of all strict (that is, closed under isomorphism),
but not necessarily triangulated subcategories of~$\cT$.

\begin{definition}
Given two triangulated categories $\cT$, $\cT'$
we say that a map~$\Upsilon \colon \Sub(\cT) \to \Sub(\cT')$
\begin{itemize}
\item 
{\sf preserves all semiorthogonal decompositions} if~$\Upsilon(0) = 0$ 
and for any semiorthogonal decomposition~$\cT = \langle \cA, \cB \rangle$
we have a semiorthogonal decomposition~$\cT' = \langle \Upsilon(\cA), \Upsilon(\cB) \rangle$;
\item 
{\sf preserves semiorthogonal decompositions with an admissible component}
if the same holds whenever~$\cA$ or~$\cB$ is admissible.
\end{itemize}
\end{definition}

Note that if~$\Upsilon$ preserves all semiorthogonal decompositions 
then of course if preserves semiorthogonal decompositions with an admissible component.

\begin{example}
\label{ex:ups-hf}
If~$\cT$ is a Gorenstein category, Corollary~\ref{cor:reflexive-hfd-closed-ct-hf} implies that the map
\begin{equation*}
\Upsilon^\hf \colon \Sub(\cT) \to \Sub(\cT^\hf),
\qquad
\cA \mapsto \cA \cap \cT^\hf
\end{equation*}
preserves semiorthogonal decompositions with an admissible component.
\end{example}

The following property of maps preserving semiorthogonal decompositions is easy.

\begin{lemma}
\label{lem:sod-adm}
Consider a map $\Upsilon \colon \Sub(\cT) \to \Sub(\cT')$.
If~$\Upsilon$ preserves semiorthogonal decompositions with an admissible component then
\begin{equation*}
\Upsilon(\cT) = \cT'
\qquad\text{and}\qquad
\Upsilon(\Adm(\cT)) \subset \Adm(\cT').
\end{equation*}
\end{lemma}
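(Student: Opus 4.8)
The plan is to deduce both statements from the defining property of $\Upsilon$ applied to the most degenerate semiorthogonal decompositions of $\cT$.

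For the equality $\Upsilon(\cT)=\cT'$, I would use the trivial semiorthogonal decomposition $\cT=\langle\cT,0\rangle$. Its second component $0$ is admissible, so by hypothesis $\Upsilon(0)=0$ and $\cT'=\langle\Upsilon(\cT),0\rangle$ is a semiorthogonal decomposition. But a semiorthogonal decomposition $\cT'=\langle\cX,0\rangle$ forces $\cX=\cT'$, since every object of $\cT'$ fits into a gluing triangle with one term in $0$ and the other in $\cX$, hence is isomorphic to an object of the full subcategory $\cX$. Thus $\Upsilon(\cT)=\cT'$.

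For the inclusion $\Upsilon(\Adm(\cT))\subset\Adm(\cT')$, take $\cA\in\Adm(\cT)$. Being left admissible, $\cA$ is the left component of the semiorthogonal decomposition $\cT=\langle\cA,{}^\perp\cA\rangle$; being right admissible, it is the right component of $\cT=\langle\cA^\perp,\cA\rangle$. In both of these the component $\cA$ is admissible, so the hypothesis yields semiorthogonal decompositions
\begin{equation*}
\cT'=\langle\Upsilon(\cA),\,\Upsilon({}^\perp\cA)\rangle
\qquad\text{and}\qquad
\cT'=\langle\Upsilon(\cA^\perp),\,\Upsilon(\cA)\rangle .
\end{equation*}
The first presents $\Upsilon(\cA)$ as the left component of a semiorthogonal decomposition of $\cT'$, hence as a left admissible subcategory; the second presents it as the right component, hence as a right admissible subcategory. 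Therefore $\Upsilon(\cA)\in\LAdm(\cT')\cap\RAdm(\cT')=\Adm(\cT')$, as required.

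The argument is entirely formal; the only thing that needs a little care is the bookkeeping of the left/right-admissibility conventions — matching the slot occupied by $\cA$ (left versus right component) in each of its two semiorthogonal decompositions, and then the slot occupied by $\Upsilon(\cA)$ in the images, to the correct one-sided admissibility of $\Upsilon(\cA)$ in $\cT'$. I do not foresee any real obstacle.
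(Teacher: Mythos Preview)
Your proof is correct and follows essentially the same approach as the paper: apply $\Upsilon$ to the trivial decomposition with a zero component (the paper uses $\cT = \langle 0, \cT \rangle$, you use $\cT = \langle \cT, 0 \rangle$, which is immaterial) to get $\Upsilon(\cT) = \cT'$, and then apply $\Upsilon$ to the two decompositions $\cT = \langle \cA, {}^\perp\cA \rangle$ and $\cT = \langle \cA^\perp, \cA \rangle$ to obtain both one-sided admissibilities of $\Upsilon(\cA)$.
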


\begin{proof}
The equality follows by applying~$\Upsilon$ to the semiorthogonal decomposition~$\cT = \langle 0, \cT \rangle$;
and the inclusion follows by applying~$\Upsilon$ 
to semiorthogonal decompositions~$\cT = \langle \cA, {}^\perp\cA \rangle$ and~$\cT = \langle \cA^\perp, \cA \rangle$.
\end{proof}

The following geometric example will be important for the proof of the theorem.

\begin{lemma}
\label{lem:ups-pf-Bijection}
If~$X$ is a projective Gorenstein scheme over a perfect field, the map
\begin{equation}
\label{eq:ups-pf-adm}
\Upsilon^\pf \colon \Sub(\Db(X)) \to \Sub(\Dp(X)),
\qquad
\cA \mapsto \cA \cap \Dp(X)
\end{equation}
preserves semiorthogonal decompositions with an admissible component
and induces a bijection
\begin{equation*}
\Adm(\Db(X)) \xrightiso{} \Adm(\Dp(X)).
\end{equation*}
\end{lemma}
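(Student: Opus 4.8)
The statement to prove is Lemma~\ref{lem:ups-pf-Bijection}: for $X$ projective Gorenstein over a perfect field, the map $\Upsilon^\pf\colon \cA \mapsto \cA \cap \Dp(X)$ sends $\Sub(\Db(X))$ to $\Sub(\Dp(X))$, preserves semiorthogonal decompositions with an admissible component, and restricts to a bijection $\Adm(\Db(X)) \xrightiso{} \Adm(\Dp(X))$.

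\textbf{Plan.} The plan is to recognize $\Upsilon^\pf$ as a special case of the abstract operation $\Upsilon^\hf$ of Example~\ref{ex:ups-hf} and then quote the results of \S\ref{sec:hfd-gor} through the geometric dictionary of Proposition~\ref{prop:hfd-geometric}. The first step is to check that $\Db(X)$ is a Gorenstein reflexive category with $(\Db(X))^\hf = \Dp(X)$. Reflexivity and the Gorenstein property (the latter using that $X$ is Gorenstein) are Proposition~\ref{prop:hfd-geometric}\ref{item:dbx-dpx-reflexivity} and~\ref{item:dbx-gorenstein}; and the identification $(\Db(X))^\hf = \Dp(X)$ follows from Proposition~\ref{prop:hfd-geometric}\ref{item:dbx-hf}, since there $(\Db(X))^\lhf = \Dp(X)$ and $(\Db(X))^\rhf = \Dp(X) \otimes \omega_X^\bullet$, and these two subcategories coincide exactly when $\omega_X^\bullet$ is perfect, i.e.\ when $X$ is Gorenstein. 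With this in hand, the map~\eqref{eq:ups-pf-adm} is literally $\Upsilon^\hf \colon \cA \mapsto \cA \cap (\Db(X))^\hf$; in particular it is well defined as a map $\Sub(\Db(X)) \to \Sub(\Dp(X))$, since the intersection of a strict subcategory with $\Dp(X)$ is again strict.

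Granting this identification, the first assertion --- that $\Upsilon^\pf$ preserves semiorthogonal decompositions with an admissible component --- is exactly Example~\ref{ex:ups-hf} (itself a restatement of Corollary~\ref{cor:reflexive-hfd-closed-ct-hf}) applied to the Gorenstein category $\cT = \Db(X)$, with $\cT^\hf = \Dp(X)$; one also checks trivially that $\Upsilon^\pf(0) = 0 \cap \Dp(X) = 0$.

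For the bijection on admissible subcategories I would invoke Proposition~\ref{prop:Gorenstein-refl-admissible}\ref{it:gor-bijections}: since $\Db(X)$ is both Gorenstein and reflexive, that proposition gives that $\cA \mapsto \cA \cap (\Db(X))^\hf = \cA \cap \Dp(X)$ is a bijection $\Adm(\Db(X)) \xrightiso{} \Adm(\Dp(X))$, which is precisely $\Upsilon^\pf$ restricted to admissible subcategories. I do not expect any genuine obstacle: the lemma is a direct specialization of the machinery of \S\ref{sec:hfd-gor} to the geometric setting, the only nontrivial ingredient being the computation of $(\Db(X))^\hf$, $(\Db(X))^\lhf$, $(\Db(X))^\rhf$ packaged in Proposition~\ref{prop:hfd-geometric}.
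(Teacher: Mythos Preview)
Your proposal is correct and follows essentially the same approach as the paper's own proof: identify~$\Db(X)$ as a reflexive Gorenstein category with~$\Db(X)^\hf = \Dp(X)$ via Proposition~\ref{prop:hfd-geometric}, so that~$\Upsilon^\pf = \Upsilon^\hf$, then invoke Example~\ref{ex:ups-hf} for preservation of semiorthogonal decompositions with an admissible component and Proposition~\ref{prop:Gorenstein-refl-admissible}\ref{it:gor-bijections} for the bijection on admissible subcategories.
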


\begin{proof}
By Proposition~\ref{prop:hfd-geometric} the category~$\cT = \Db(X)$ is reflexive and Gorenstein, and~$\Db(X)^\hf = \Dp(X)$.
Therefore, $\Upsilon^\pf = \Upsilon^\hf$
preserves semiorthogonal decompositions with an admissible component by Example~\ref{ex:ups-hf}
and induces a bijection on admissible subcategories by Proposition~\ref{prop:Gorenstein-refl-admissible}.
\end{proof}

The following construction gives another example of a map preserving semiorthogonal decompositions.
Recall that~$\thick(S)$ denotes the thick subcategory generated by~$S$.
For an exact functor~$\Phi\colon \cT \to \cT'$ we define the map~$\Upsilon_\Phi\colon \Sub(\cT) \to \Sub(\cT')$ by
\begin{equation}
\label{def:Gamma-Phi}
\Upsilon_\Phi(\cA) \coloneqq \thick(\Phi(\cA)).
\end{equation}
We say that a functor~$\Phi\colon \cT \to \cT'$ {\sf has dense image} if~$\cT' = \thick(\Phi(\cT))$.

\begin{lemma}
\label{lem:Gamma-adjoints}
Let~$\Phi\colon \cT \to \cT'$ be an exact functor with dense image.
If either
\begin{aenumerate}
\item 
\label{item:ra}
$\Phi$ has a right adjoint $\Phi^!$ and~$\Cone(F \to \Phi^!(\Phi(F))) \cong F[n]$
for each~$F \in \cT$ and some~$n \in \ZZ$, or
\item
\label{item:la}
$\Phi$ has a left adjoint $\Phi^*$ and~$\Cone(\Phi^*(\Phi(F)) \to F) \cong F[n]$
for each~$F \in \cT$ and some~$n \in \ZZ$,
\end{aenumerate}
then~$\Upsilon_\Phi$ preserves all semiorthogonal decompositions.
\end{lemma}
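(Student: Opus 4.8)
The goal is to show that, under hypothesis \ref{item:ra} (the case \ref{item:la} being dual, obtained by passing to opposite categories), the map $\Upsilon_\Phi$ sends a semiorthogonal decomposition $\cT = \langle \cA, \cB \rangle$ to a semiorthogonal decomposition $\cT' = \langle \thick(\Phi(\cA)), \thick(\Phi(\cB)) \rangle$, and also sends the zero subcategory to zero (the latter being obvious). The two things to check are semiorthogonality of the two thick subcategories and generation of $\cT'$ by them.

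\textbf{Step 1: Generation.} Since $\Phi$ has dense image, $\cT' = \thick(\Phi(\cT))$. Because $\cT = \langle \cA, \cB\rangle$, every object of $\cT$ sits in a triangle with one vertex in $\cA$ and one in $\cB$, so $\Phi(\cT)$ lies in the triangulated subcategory generated by $\Phi(\cA)$ and $\Phi(\cB)$; taking thick closures gives $\cT' = \thick(\Phi(\cA) \cup \Phi(\cB)) = \thick\big(\thick(\Phi(\cA)) \cup \thick(\Phi(\cB))\big)$. So $\thick(\Phi(\cA))$ and $\thick(\Phi(\cB))$ generate $\cT'$ as a thick subcategory; combined with Step 2 this upgrades to a semiorthogonal decomposition.

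\textbf{Step 2: Semiorthogonality.} I must show $\Hom_{\cT'}(\thick(\Phi(\cB)), \thick(\Phi(\cA))) = 0$, i.e. $\cB$ is the ``left'' piece and $\cA$ the ``right'' piece after applying $\Phi$ — wait, the statement asserts $\cT' = \langle \Upsilon(\cA), \Upsilon(\cB)\rangle$, so I need $\Hom_{\cT'}(\Phi(\cB), \Phi(\cA)) = 0$, i.e. $\Hom_{\cT'}(\Phi(B), \Phi(A)) = 0$ for all $A \in \cA$, $B \in \cB$; since $\Hom$-vanishing against a set is inherited by the thick subcategory it generates in each variable, it suffices to check this on objects. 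Here is where hypothesis \ref{item:ra} enters: for $B \in \cB$ and $A \in \cA$,
\[
\Hom_{\cT'}(\Phi(B), \Phi(A)) \cong \Hom_{\cT}(B, \Phi^!(\Phi(A)))
\]
by adjunction. Now $\Cone(A \to \Phi^!(\Phi(A))) \cong A[n]$, so there is a triangle $A \to \Phi^!(\Phi(A)) \to A[n] \to A[1]$, which exhibits $\Phi^!(\Phi(A))$ as an extension of $A[n]$ by $A$; hence $\Phi^!(\Phi(A)) \in \cA$ because $\cA$ is a triangulated subcategory closed under extensions (being a semiorthogonal component, it is in particular a triangulated subcategory of $\cT$). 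Therefore $\Hom_{\cT}(B, \Phi^!(\Phi(A))) = 0$ by the semiorthogonality $\Hom_\cT(\cB,\cA) = 0$ in the given decomposition. This gives the required vanishing; together with Step 1, $\cT' = \langle \thick(\Phi(\cA)), \thick(\Phi(\cB))\rangle$ is a semiorthogonal decomposition.

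\textbf{Step 3: The dual case and conclusion.} For case \ref{item:la}, one applies case \ref{item:ra} to the opposite functor $\Phi^\opp \colon \cT^\opp \to (\cT')^\opp$: a left adjoint of $\Phi$ becomes a right adjoint of $\Phi^\opp$, the cone condition $\Cone(\Phi^*\Phi(F) \to F) \cong F[n]$ transforms into the condition in \ref{item:ra} for $\Phi^\opp$ (up to a shift), density of the image is self-opposite, and a semiorthogonal decomposition $\cT = \langle \cA,\cB\rangle$ becomes $\cT^\opp = \langle \cB^\opp, \cA^\opp\rangle$; unwinding the conclusion of \ref{item:ra} for $\Phi^\opp$ yields exactly the desired statement for $\Phi$. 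The main (and only real) subtlety is the observation in Step 2 that the mapping-cone hypothesis forces $\Phi^!(\Phi(A))$ to lie in $\cA$; once that is in place everything is a routine manipulation of adjunctions and the defining properties of semiorthogonal decompositions. I would also remark that the cone hypothesis is exactly what holds geometrically for $\io_*$ when $X \subset \cX$ is a Cartier divisor linearly equivalent to zero (via the triangle $\io_*\io^* F \otimes \cO(-X)[\,\cdot\,] \to F \to \io_*\io^* F$), which is how this lemma will be applied in the proof of Theorem~\ref{thm:bijection-subcat-deform}.
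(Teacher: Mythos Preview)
Your proof is correct and follows essentially the same route as the paper: use adjunction to rewrite $\Hom_{\cT'}(\Phi(B),\Phi(A))$ as $\Hom_{\cT}(B,\Phi^!\Phi(A))$, observe from the triangle $A \to \Phi^!\Phi(A) \to A[n]$ that $\Phi^!\Phi(A) \in \cA$, and conclude semiorthogonality; generation comes from density of the image. The paper's proof is terser (it does not spell out why generation as a thick subcategory together with semiorthogonality yields a semiorthogonal decomposition, nor the passage to thick closures in each $\Hom$-variable), and it simply says case~\ref{item:la} is ``analogous'' rather than invoking opposite categories, but the substance is identical.
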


\begin{proof}
We only prove the lemma under assumption~\ref{item:ra}, the case of assumption~\ref{item:la} being analogous. 
Given~$\cT = \langle \cA, \cB \rangle$, 
for any objects~$A \in \cA$, $B \in \cB$ we have
\begin{equation*}
\Hom(\Phi(B),\Phi(A)) \cong \Hom(B,\Phi^!(\Phi(A)))
\end{equation*}
by adjunction, while~\ref{item:ra} gives a distinguished triangle
\begin{equation*}
A \to \Phi^!(\Phi(A)) \to A[n]
\end{equation*}
for some~$n \in \ZZ$.
It follows that the subcategories~$\Upsilon_\Phi(\cA) = \thick(\Phi(\cA))$ and~$\Upsilon_\Phi(\cB) = \thick(\Phi(\cB))$ 
in~$\cT'$ are semiorthogonal. 
Moreover, since~$\Phi$ has dense image, these subcategories generate~$\cT'$,
hence we have a semiorthogonal decomposition~$\cT' = \langle \Upsilon_\Phi(\cA), \Upsilon_\Phi(\cB) \rangle$.
\end{proof}

\begin{remark}
If~$\Phi$ is dg-enhanced and both~$\Phi^*$ and~$\Phi^!$ exist,
there are exact triangles of functors
\begin{equation*}
\bT_{\Phi^!,\Phi} \to \id \to \Phi^! \circ \Phi
\qquad\text{and}\qquad 
\Phi^* \circ \Phi \to \id \to \bT_{\Phi^*,\Phi},
\end{equation*}
where~$\bT_{\Phi^!,\Phi}$ and~$\bT_{\Phi^*,\Phi}$ are the so-called twist functors of~$\Phi$.
So, if~$\Phi$ is a spherical functor such that the corresponding spherical twists of~$\cT$ are shifts,
then~$\Upsilon_\Phi$ preserves semiorthogonal decompositions.
\end{remark}

Now we consider the situation of Theorem~\ref{thm:bijection-subcat-deform}.
Let~$\io \colon X \hookrightarrow \cX$ be the embedding of a projective Gorenstein variety~$X$ over a perfect field
into a smooth quasiprojective variety~$\cX$ as a Cartier divisor 
linearly equivalent to zero, i.e.,
\begin{equation}
\label{eq:normal-bundle-triv}
\cO_\cX(X) \cong \cO_\cX.    
\end{equation}
Recall that~$\Db_X(\cX)$ denotes the full subcategory of~$\Db(\cX)$ of objects 
set-theoretically supported on~$X$. 
Consider the adjoint functors
\begin{equation}
\label{eq:io-functors}
\io_*\colon \Db(X) \to \Db_X(\cX)
\qquad\text{and}\qquad 
\io^*\colon \Db_X(\cX) \to \Dp(X),
\end{equation}
(note that~$\Db_X(\cX) \subset \Db(\cX) = \Dp(\cX)$ since~$\cX$ is smooth, hence the image of~$\io^*$ is contained in~$\Dp(X)$).

\begin{lemma}
\label{lem:dense-images}
If~\eqref{eq:normal-bundle-triv} holds, the functors~\eqref{eq:io-functors} have dense images.
\end{lemma}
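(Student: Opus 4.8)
The plan is to treat the two functors separately, using the standard Koszul/triangle machinery attached to the Cartier divisor $X \subset \cX$ and the triviality $\cO_\cX(X) \cong \cO_\cX$ from~\eqref{eq:normal-bundle-triv}. For $\io^* \colon \Db_X(\cX) \to \Dp(X)$ the key point is that the category $\Db_X(\cX)$ is \emph{generated} (as a thick subcategory) by objects of the form $\io_*\cF$ for $\cF \in \Db(X)$: any object $\cE \in \Db_X(\cX)$ is set-theoretically supported on $X$, hence (by a devissage along the filtration by powers of the ideal sheaf $\cI_X$, which is locally principal since $X$ is Cartier) $\cE$ lies in the thick subcategory generated by pushforwards of coherent sheaves from the various infinitesimal neighbourhoods $X_n$ of $X$; but each $X_n$ is itself cut out inside $\cX$ by the $n$-th power of a local equation, so $\io_{n,*}\Db(X_n) \subset \thick(\io_* \Db(X))$, using again that $\cO_\cX(X)$ is trivial to write down the relevant extensions. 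Once we know $\Db_X(\cX) = \thick(\io_* \Db(X))$, density of $\io^*$ follows because $\io^*\io_* \cong \id \oplus [\,\cdot\,\otimes \cN_{X/\cX}^\vee[1]\,] \cong \id \oplus [\,\cdot\,[1]\,]$ by the standard computation for a Cartier divisor with trivial normal bundle, so $\cF$ is a summand of $\io^*(\io_*\cF)$ for every $\cF$, whence $\Dp(X) = \thick(\io^* \io_* \Db(X)) \subset \thick(\io^* \Db_X(\cX)) \subset \Dp(X)$.

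For $\io_* \colon \Db(X) \to \Db_X(\cX)$ the same devissage argument does the job directly: we must show every $\cE \in \Db_X(\cX)$ lies in $\thick(\io_* \Db(X))$. Writing $s \colon \cO_\cX \to \cO_\cX(X) \cong \cO_\cX$ for the section cutting out $X$, multiplication by $s$ gives, for any $\cE$ supported on $X$, a nilpotent endomorphism (after restricting to a neighbourhood where $X$ is cut out by a single equation, $s$ acts nilpotently on the cohomology sheaves of $\cE$ since they are supported on $X$); filtering $\cE$ by the images of powers of $s$ reduces us to objects annihilated by $s$, i.e.\ objects pushed forward from $X$ itself, and these are by definition in the image of $\io_*$. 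Thus $\Db_X(\cX) = \thick(\io_* \Db(X))$, which is exactly density of $\io_*$; and it also re-proves the generation statement used in the previous paragraph.

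The main obstacle, and the step needing the most care, is the devissage: one must argue that an arbitrary object of $\Db_X(\cX)$ — not just a sheaf, and not just one annihilated by $\cI_X$ — is built from pushforwards along $\io$ using finitely many cones and summands. The clean way is to induct on the length of the filtration of $\cE$ by $\cI_X^k \cdot \cE$ (equivalently, by the nilpotency order of multiplication by a local equation of $X$ acting on the total object); the subquotients are complexes on $\cX$ annihilated by $\cI_X$, hence of the form $\io_* \cG$ with $\cG \in \Db(X)$, and the filtration is finite because the cohomology sheaves of $\cE$ are coherent and supported on $X$. Triviality of $\cO_\cX(X)$ is not strictly needed for this half but streamlines the bookkeeping (it makes the successive extension classes live in $\Ext$-groups without twists), and it \emph{is} used essentially in the computation $\io^*\io_* \cong \id \oplus \id[1]$ that gives density of $\io^*$.
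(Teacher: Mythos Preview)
Your argument for~$\io_*$ is fine and essentially identical to the paper's: filter an object of~$\Db_X(\cX)$ by powers of the ideal sheaf (equivalently, by multiplication by a local equation of~$X$) so that the graded pieces are pushed forward from~$X$.

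Your argument for~$\io^*$, however, has a genuine error. You assert that~$\io^*\io_* \cong \id \oplus \id[1]$ as functors on~$\Db(X)$, i.e., that the triangle~$\cF[1] \to \io^*\io_*\cF \to \cF$ splits for every~$\cF \in \Db(X)$. This is false when~$X$ is singular: since~$\cX$ is smooth we have~$\io_*\cF \in \Db(\cX) = \Dp(\cX)$, hence~$\io^*\io_*\cF \in \Dp(X)$; so if~$\cF$ were a direct summand of~$\io^*\io_*\cF$ it would be perfect. Taking~$\cF \in \Db(X)\setminus\Dp(X)$ (e.g., the skyscraper at a singular point) gives an immediate contradiction. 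In other words, your argument would force~$\Db(X) = \Dp(X)$, which is exactly what fails in the singular case the lemma is meant to cover.

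The paper fixes this by only claiming the splitting for objects of the form~$\io^*G$ with~$G \in \Db(\cX)$: one computes
\[
\io^*\io_*\io^*G \cong \io^*\Cone(G(-X)\to G) \cong \Cone(\io^*G \xrightarrow{\ 0\ } \io^*G) \cong \io^*G \oplus \io^*G[1],
\]
where the map becomes zero precisely because it is multiplication by the equation of~$X$ and one is restricting to~$X$. This shows~$\io^*G \in \thick(\io^*\io_*\Db(X))$ for all~$G \in \Db(\cX)$; taking~$G = \cL^{\otimes k}$ for an ample~$\cL$ one obtains a set of classical generators of~$\Dp(X)$, finishing the proof. Your strategy can be repaired along exactly these lines: replace ``$\cF$ is a summand of~$\io^*\io_*\cF$ for every~$\cF$'' by ``$\io^*G$ is a summand of~$\io^*\io_*\io^*G$ for every~$G \in \Db(\cX)$'' and then invoke generators of~$\Dp(X)$ of the form~$\io^*G$.
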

\begin{proof}
To show that~$\io_*$ has dense image it suffices to note that every object~$F \in \Db_X(\cX)$
has cohomology sheaves set-theoretically supported on~$X$, 
and these sheaves admit a filtration by sheaves which are scheme-theoretically supported on~$X$ 
(thus we see in this case that the image of~$\io_*$ already generates~$\Db_X(\cX)$ as triangulated category
and adding direct summands is not needed).

To show that~$\io^*$ has dense image it suffices to check that the composition~$\io^* \circ \io_*$ has dense image. 
We first show that for every~$G \in \Db(\cX)$ the object~$\io^*(G)$ lies in~$\thick(\io^*(\io_*(\Db(X))))$.
Indeed, the projection formula and the standard resolution 
\begin{equation}
\label{eq:koszul}
0 \to \cO_\cX(-X) \to \cO_\cX \to \io_*\cO_X \to 0
\end{equation}
imply that
\begin{equation*}
\io^* \io_* \io^*(G) \cong 
\io^*(G \otimes \io_*\cO_X) \cong
\io^*\Cone(G \otimes \cO_\cX(-X) \to G) \cong
\Cone(\io^*G \xrightarrow{\ 0\ } \io^*G) \cong
\io^*(G) \oplus \io^*(G)[1].
\end{equation*}
Here the map in the first cone is given by the equation of~$X \subset \cX$,
hence its restriction to~$X$ (the map in the second cone) is zero,
and we have used~\eqref{eq:normal-bundle-triv}.
This proves that~$\io^*(G)$ is a direct summand of~$\io^* \io_* \io^*(G)$,
hence belongs to the category~$\thick(\io^*(\io_*(\Db(X))))$.

Applying the result above to~$G = \cL^{\otimes k}$, with~$\cL$ an ample line bundle on~$\cX$ and~$k \in \Z$, 
we see that~$\thick(\io^*(\io_*(\Db(X))))$ contains a classical generator of~$\Db(X)$, see~\cite[Theorem~4]{Or09}),
so that~$\io^* \circ \io_*$ has dense image.
\end{proof}

Now we are ready to prove the theorem.

\begin{proof}[Proof of Theorem~\textup{\ref{thm:bijection-subcat-deform}}]
We prove the theorem in a sequence of steps.

\smallskip

{\bf Step 1. Construction of the diagram~\eqref{eq:upsilon-diagram}.}
The top arrow in the diagram is the map~\eqref{eq:ups-pf-adm} defined in Lemma~\ref{lem:ups-pf-Bijection}.
To define the other two arrows we use Lemma~\ref{lem:Gamma-adjoints};
indeed, the functors~$\io_*$ and~$\io^*$ have dense images by Lemma~\ref{lem:dense-images},
and since~$X \subset \cX$ is a Cartier divisor linearly equivalent to zero
we have distinguished triangles
\begin{equation}
\label{eq:rrr-triangle}
\io^*\io_*F \to F \to F[2]
\qquad\text{and}\qquad 
G \to \io_*\io^* G \to G[1]
\end{equation}
for any~$F \in \Db(X)$ and~$G \in \Db_X(\cX)$.
Therefore, the conditions of Lemma~\ref{lem:Gamma-adjoints} are satisfied for the functors~$\io_*$ and~$\io^*$,
hence the functors~$\Upsilon_{\io_*}$ and~$\Upsilon_{\io^*}$ preserve semiorthogonal decompositions,
and hence induce maps of the sets of admissible subcategories by Lemma~\ref{lem:sod-adm}.

\smallskip

{\bf Step 2. Commutativity of the diagram~\eqref{eq:upsilon-diagram}.}
Let $\cA \in \Adm(\Db(X))$
and consider the semiorthogonal decomposition~$\Db(X) = \langle \cA, \cB \rangle$.
Applying the maps in the diagram we obtain
two semiorthogonal decompositions
\begin{equation}\label{eq:two-decomp}
\Dperf(X) = \langle \cA \cap \Dperf(X), \cB \cap \Dperf(X) \rangle 
\qquad\text{and}\qquad 
\Dperf(X) = \langle
\Upsilon_{\io^*}(\Upsilon_{\io_*}(\cA)),
\Upsilon_{\io^*}(\Upsilon_{\io_*}(\cB))
\rangle.
\end{equation}
We will check that they coincide.

On the one hand, the first 
triangle in~\eqref{eq:rrr-triangle} 
implies that~$\io^*(\io_*(\cA)) \subset \cA$ and~$\io^*(\io_*(\cB)) \subset \cB$,
and since~$\cA$ and~$\cB$ are
idempotent complete, it follows that~$\Upsilon_{\io^*}(\Upsilon_{\io_*}(\cA)) \subset \cA$
and~$\Upsilon_{\io^*}(\Upsilon_{\io_*}(\cB)) \subset \cB$.
On the other hand, $\Upsilon_{\io^*}(\Upsilon_{\io_*}(\cA))$, 
$\Upsilon_{\io^*}(\Upsilon_{\io_*}(\cB)) \subset \Dp(X)$
by definition of~$\Upsilon_{\io^*}$ and~$\Upsilon_{\io_*}$.
Combining these inclusions, we deduce that
\begin{equation*}
\Upsilon_{\io^*}(\Upsilon_{\io_*}(\cA)) \subset \cA \cap \Dperf(X) = \Upsilon^\pf(\cA),
\qquad 
\Upsilon_{\io^*}(\Upsilon_{\io_*}(\cB)) \subset \cB \cap \Dperf(X) = \Upsilon^\pf(\cB).
\end{equation*}
Thus, the components of the second decomposition in~\eqref{eq:two-decomp}
are contained in the components of the first.
Therefore, the decompositions coincide and in particular 
we see that
\begin{equation*}
\Upsilon_{\io^*}(\Upsilon_{\io_*}(\cA)) = \cA \cap \Dperf(X),
\end{equation*}
hence the diagram commutes.

\smallskip

{\bf Step 3. Partial injectivity of~$\Upsilon_{\io^*}$.}
Assume we have an inclusion~$\cA_1 \subset \cA_2$ of admissible subcategories in~$\Db_X(\cX)$.
In this step we check that~$\Upsilon_{\io^*}(\cA_1) = \Upsilon_{\io^*}(\cA_2)$ implies~$\cA_1 = \cA_2$.

Since~$\cA_1$ is admissible 
in~$\Db_X(\cX)$ it is also admissible in~$\cA_2$
and it suffices to show that~${}^\perp\cA_1 \cap \cA_2 = 0$. 
Take any~$F \in {}^\perp\cA_1 \cap \cA_2$.
Since~$\Upsilon_{\io^*}$ preserves semiorthogonal decompositions, 
the inclusion~$F \in {}^\perp\cA_1$ implies~$\io^*F \in {}^\perp(\Upsilon_{\io^*}(\cA_1))$.
On the other hand, the inclusion~$F \in \cA_2$ implies~$\io^*F \in \Upsilon_{\io^*}(\cA_2) = \Upsilon_{\io^*}(\cA_1)$.
Combining these, we conclude~$\io^*F = 0$, and since~$F$ is set-theoretically supported on~$X$, in fact~$F = 0$.

\smallskip

{\bf Step 4. Surjectivity of~$\Upsilon_{\io_*}$.}
Take any $\cA \in \Adm(\Db_X(\cX))$
and consider the semiorthogonal decomposition~$\Db_X(\cX) = \langle \cA, \cB \rangle$.
Applying the map~$\Upsilon_{\io^*}$ we obtain
\begin{equation}
\label{eq:dpx-first}
\Dp(X) = \langle \Upsilon_{\io^*}(\cA), \Upsilon_{\io^*}(\cB) \rangle,
\end{equation}
with admissible subcategory~$\Upsilon_{\io^*}(\cA) \subset \Dp(X)$.
Since~$\Upsilon^\pf$ is bijective by Lemma~\ref{lem:ups-pf-Bijection}
there is an admissible subcategory~$\tcA \subset \Db(X)$ such that
\begin{equation}
\label{eq:ups-pf-tca}
\Upsilon^\pf(\tcA) = 
\Upsilon_{\io^*}(\cA).
\end{equation}
We claim that~$\Upsilon_{\io_*}(\tcA) \subset \cA$.
Indeed, extending~$\tcA$ to a semiorthogonal decomposition~$\Db(X) = \langle \tcA, \tcB \rangle$ 
and applying the map~$\Upsilon^\pf$ we obtain (using Lemma~\ref{lem:ups-pf-Bijection}) a semiorthogonal decomposition
\begin{equation}
\label{eq:dpx-second}
\Dp(X) = \langle \Upsilon^\pf(\tcA), \Upsilon^\pf(\tcB) \rangle,
\end{equation}
and since its first component coincides with the first component in~\eqref{eq:dpx-first}, 
the second components coincide as well, i.e., $\tcB \cap \Dp(X) = \Upsilon^\pf(\tcB) = \Upsilon_{\io^*}(\cB)$.
In particular, $\io^*(\cB) \subset \tcB$, hence~$\Hom(\io^*(\cB), \tcA) = 0$, 
and therefore by adjunction~$\Hom(\cB, \io_*(\tcA)) = 0$,
which implies the required inclusion~$\Upsilon_{\io_*}(\tcA) \subset \cB^\perp = \cA$.

We finally note that both~$\Upsilon_{\io_*}(\tcA)$ and~$\cA$ are admissible subcategories in~$\Db_X(\cX)$ 
(the first because~$\tcA$ is admissible and~$\Upsilon_{\io_*}$ preserves semiorthogonal decompositions,
the second by assumption)
and
\begin{equation*}
\Upsilon_{\io^*}(\cA) = \Upsilon^\pf(\tcA) = \Upsilon_{\io^*}(\Upsilon_{\io_*}(\tcA)),
\end{equation*}
where the first is~\eqref{eq:ups-pf-tca} and the second equality holds by Step~2.
Since~$\Upsilon_{\io_*}(\tcA) \subset \cA$, partial injectivity of~$\Upsilon_{\io^*}$ proved in Step~3 
implies~$\Upsilon_{\io_*}(\tcA) = \cA$,
which proves the surjectivity of the map~$\Upsilon_{\io_*}$.

\smallskip

{\bf Conclusion.}
First, $\Upsilon^\pf$ is a bijection by Lemma~\ref{lem:ups-pf-Bijection}.
The commutativity of the diagram~\eqref{eq:upsilon-diagram} (proved in Step~2) 
implies that~$\Upsilon_{\io_*}$ is injective. 
As we checked in Step~4 that it is surjective, we conclude that~$\Upsilon_{\io_*}$ is bijective.
Finally, commutativity of the diagram and bijectivity of~$\Upsilon^\pf$ and~$\Upsilon_{\io_*}$
imply bijectivity of~$\Upsilon_{\io^*}$.
The fact that~$\Upsilon_{\io^*}$ and~$\Upsilon_{\io_*}$
preserve semiorthogonal decompositions has been explained in Step~1,
and the fact that~$\Upsilon^\pf$ preserves semiorthogonal decompositions with an admissible component
is explained in Lemma~\ref{lem:ups-pf-Bijection}.
\end{proof}

We have the following particular case of Theorem~\ref{thm:bijection-subcat-deform},
relevant for applications in~\cite{KSabs}.

\begin{corollary}
\label{cor:smoothing}
If~$f \colon \cX \to B$ is a flat projective morphism from a smooth quasiprojective variety over a perfect field 
to a smooth curve, $o \in B$ is a point, and~$X = \cX_o$ is the central fiber, 
there is a commutative diagram of bijective maps~\eqref{eq:upsilon-diagram}
preserving semiorthogonal decompositions with an admissible component.
\end{corollary}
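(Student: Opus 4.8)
The plan is to reduce Corollary~\ref{cor:smoothing} to Theorem~\ref{thm:bijection-subcat-deform} after replacing $B$ by a small neighbourhood of $o$. The only hypothesis of the theorem that is not transparent in the present setup is that the Cartier divisor $X\subset\cX$ be linearly equivalent to zero: here $\cO_\cX(X)=f^*\cO_B(o)$, and $\cO_B(o)$ need not be trivial on all of $B$. So the first step is to pass to an affine open $B'\subseteq B$ containing $o$ on which it becomes trivial. This is possible because $B$ is a smooth curve: pick a local uniformiser $t$ at $o$ on some affine open $V\ni o$; the zero locus of $t$ on $V$ consists of $o$ and finitely many other points $p_1,\dots,p_k$, so on $B'\coloneqq V\setminus\{p_1,\dots,p_k\}$ the function $t$ cuts out the point $o$ scheme-theoretically, hence $\cO_{B'}(o)\cong\cO_{B'}$. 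Setting $\cX'\coloneqq f^{-1}(B')$ and $f'\coloneqq f\vert_{\cX'}$, we get a flat projective morphism $f'\colon\cX'\to B'$ from a smooth quasiprojective variety, with the same central fibre $X=\cX'_o$, and now $\cO_{\cX'}(X)=(f')^*\cO_{B'}(o)\cong\cO_{\cX'}$.

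Next I would verify the remaining hypotheses of Theorem~\ref{thm:bijection-subcat-deform} for the embedding $\io'\colon X\hookrightarrow\cX'$: the central fibre $X=(f')^*(o)$ is an effective Cartier divisor on $\cX'$; it is projective over the residue field $\kappa(o)$, a finite and hence perfect extension of the base field; and it is Gorenstein, because an effective Cartier divisor in a regular (a fortiori Gorenstein) scheme is Gorenstein, its local rings being quotients $\cO_{\cX',x}/(g)$ of regular local rings by a non-zero-divisor. Therefore Theorem~\ref{thm:bijection-subcat-deform} applies to $\io'$ and produces the commutative diagram~\eqref{eq:upsilon-diagram} of bijections — preserving semiorthogonal decompositions with an admissible component — relating $\Adm(\Db(X))$, $\Adm(\Dp(X))$, and $\Adm(\Db_X(\cX'))$.

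It remains to replace $\cX'$ by $\cX$. Let $j\colon\cX'\hookrightarrow\cX$ be the open immersion; since $X$ is closed in $\cX$ and contained in $\cX'$, every object of $\Db_X(\cX)$ is set-theoretically supported on $X\subseteq\cX'$, so restriction $j^*$ is an equivalence $\Db_X(\cX)\xrightarrow{\,\sim\,}\Db_X(\cX')$, with quasi-inverse the (derived) pushforward. From $\io=j\circ\io'$ one gets $j^*\circ\io_*\cong\io'_*$ and $\io^*\cong(\io')^*\circ j^*$, so this equivalence identifies $\Adm(\Db_X(\cX))$ with $\Adm(\Db_X(\cX'))$, intertwining $\Upsilon_{\io_*}$ with $\Upsilon_{\io'_*}$ and $\Upsilon_{\io^*}$ with $\Upsilon_{(\io')^*}$, while the map $\Upsilon^\pf$ is unaffected. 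Transporting the diagram of the previous step along this identification yields precisely the diagram~\eqref{eq:upsilon-diagram} for $(X,\cX)$, with all the asserted properties.

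The argument is essentially a matter of checking hypotheses; the only point that calls for a little attention is the first step — arranging $\cO_B(o)$ to be trivial after shrinking — together with the observation that $\Db_X(\cX)$ is insensitive to shrinking $\cX$ around $X$, which is what lets the conclusion refer to the original $\cX$. (Alternatively one could avoid shrinking altogether by checking the distinguished triangles~\eqref{eq:rrr-triangle} directly in the present situation, using that $f^*\cO_B(o)$ restricts trivially to every infinitesimal neighbourhood of $X$, but the reduction via $B'$ is cleaner.) I do not expect a serious obstacle.
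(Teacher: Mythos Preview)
Your proposal is correct and follows essentially the same approach as the paper: shrink~$B$ so that~$o$ becomes linearly equivalent to zero, then apply Theorem~\ref{thm:bijection-subcat-deform}. You supply more detail than the paper's one-line proof --- verifying explicitly that~$X$ is Gorenstein and explaining why~$\Db_X(\cX)$ is insensitive to shrinking~$\cX$ around~$X$ --- but the underlying idea is identical.
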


\begin{proof}
It is enough to note that shrinking~$B$ we may assume that the point~$o$ is linearly equivalent to zero, 
hence the fiber~$X = \cX_o$ is also linearly equivalent to zero, and then apply Theorem~\ref{thm:bijection-subcat-deform}.
\end{proof}

\begin{remark}
One can interpret Theorem~\ref{thm:bijection-subcat-deform} in terms of deformation theory of admissible
subcategories in~$\Dp(X)$.
Namely, Theorem~\ref{thm:bijection-subcat-deform} says that
every admissible subcategory in~$\Dp(X)$
has a unique extension to the formal neighbourhood of $X \subset \cX$.
This partially generalizes \cite[Theorem 7.1]{BOR} to  families
where the central fiber is not smooth.
\end{remark}

To finish the paper we explain the relation of Theorem~\ref{thm:bijection-subcat-deform}
(or rather Corollary~\ref{cor:smoothing}) 
to the deformation absorption property introduced in~\cite{KSabs}.
Recall that~$\rP \in \Db(X)$ is a $\Pinfty{2}$-object if~$\Ext^*(\rP, \rP) \simeq \bk[\uptheta]$
with~$\deg(\uptheta) = 2$, see~\cite[Definition~2.6, Remark~2.7]{KSabs},
and the simplest example of such an object is the simple module~$\kk_\sA$ in~$\Db(\sA_1)$,
where~$\sA_1 = \kk[\eps]/(\eps^2)$, $\deg(\eps) = -1$, is the dg-algebra from Example~\ref{ex:Ap}.
In fact, the subcategory in~$\Db(X)$ generated by any $\Pinfty{2}$-object is equivalent to~$\Db(\sA_1)$,
see~\cite[Lemma~2.10]{KSabs}.
In~\cite[Theorem~1.8]{KSabs} we proved that any $\Pinfty{2}$-object on the special fiber of a smoothing
gives an exceptional object on the total space.
The next result shows that this correspondence is a bijection.
We formulate it in terms of subcategories generated by these objects.

\begin{corollary}
\label{cor:bijection-pinfty}
Assume the situation of Theorem~\textup{\ref{thm:bijection-subcat-deform}}.
Then the maps~$\Upsilon_{\io_*}$, $\Upsilon_{\io^*}$ and~$\Upsilon^\pf$ define bijections between the following sets:
\begin{aenumerate}
\item 
\label{it:cp}
admissible subcategories~$\cP \subset \Db(X)$ such that~$\cP \simeq \Db(\sA_1)$;
\item 
\label{it:ce}
admissible subcategories~$\cE \subset \Db_X(\cX)$ such that~$\cE \simeq \Db(\kk)$; 
\item 
\label{it:cm}
admissible subcategories~$\cM \subset \Dp(X)$ such that~$\cM \simeq \Dp(\sA_1)$.
\end{aenumerate}
\end{corollary}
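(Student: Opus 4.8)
The plan is to derive everything from Corollary~\ref{cor:bij-X-A}, from two explicit computations identifying $\io_*\rP$ and $\io^*E$ through their endomorphism dg-algebras, and from a formal two-out-of-three argument for the three bijective maps of Theorem~\ref{thm:bijection-subcat-deform}, which we already know restrict to bijections between all admissible subcategories and satisfy $\Upsilon^\pf = \Upsilon_{\io^*}\circ\Upsilon_{\io_*}$.

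First I would treat $\Upsilon^\pf$. Any $\cP$ in set~\ref{it:cp}, being equivalent to $\Db(\sA_1) = \thick(\kk_\sA)$, is $\thick(\rP)$ for a $\Pinfty{2}$-object $\rP$ (transport $\kk_\sA$; cf.~\cite[Lemma~2.10]{KSabs}). Since $\sA_1$ is a proper connective dg-algebra and $X$ is projective Gorenstein, Corollary~\ref{cor:bij-X-A} applies with $A=\sA_1$ and gives $\cP\simeq\Db(\sA_1)\iff \Upsilon^\pf(\cP)=\cP\cap\Dp(X)\simeq\Dp(\sA_1)$; combined with Lemma~\ref{lem:ups-pf-Bijection} this already makes $\Upsilon^\pf$ a bijection between~\ref{it:cp} and~\ref{it:cm}.

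Next I would show $\Upsilon_{\io_*}$ sends~\ref{it:cp} into~\ref{it:ce} and $\Upsilon_{\io^*}$ sends~\ref{it:ce} into~\ref{it:cm}. For the first: $\Upsilon_{\io_*}(\cP)=\thick(\io_*\rP)$ by exactness of $\io_*$, so it suffices to prove $\io_*\rP$ is exceptional in $\Db_X(\cX)$. By adjunction $\RHom_\cX(\io_*\rP,\io_*\rP)\cong\RHom_X(\rP,\io^!\io_*\rP)$, and since $X\subset\cX$ has trivial normal bundle $\io^!=\io^*[-1]$; together with the triangle $\io^*\io_*\rP\to\rP\to\rP[2]$ of~\eqref{eq:rrr-triangle} this identifies $\RHom_\cX(\io_*\rP,\io_*\rP)$ with the fiber of the post-composition map $\RHom_X(\rP,\rP)\to\RHom_X(\rP,\rP[2])$ by the class $\delta_\rP\in\Ext^2_X(\rP,\rP)$ appearing in that triangle, shifted by $[-1]$. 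The class $\delta_\rP$ is nonzero: otherwise $\io^*\io_*\rP\cong\rP\oplus\rP[1]$ would be perfect (as $\io_*\rP$ is perfect on the smooth $\cX$ and $\LL\io^*$ preserves perfection), forcing $\rP$ perfect and contradicting $\Ext^\bullet_X(\rP,\rP)\cong\kk[\uptheta]$. As $\Ext^2_X(\rP,\rP)$ is one-dimensional, $\delta_\rP$ is a nonzero multiple of $\uptheta$, and multiplication by $\uptheta$ on $\RHom_X(\rP,\rP)\cong\kk[\uptheta]$ is injective with one-dimensional cokernel; keeping track of shifts, the fiber is exactly $\kk$ in degree $0$, so $\io_*\rP$ is exceptional and $\Upsilon_{\io_*}(\cP)=\langle\io_*\rP\rangle\simeq\Db(\kk)$, which is admissible since $\Upsilon_{\io_*}$ preserves admissibility. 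For the second inclusion, any $\cE$ in~\ref{it:ce} is $\thick(E)$ for $E\in\Db_X(\cX)$ exceptional, with $\Upsilon_{\io^*}(\cE)=\thick(\io^*E)$; the projection formula (or the triangle $E\to\io_*\io^*E\to E[1]$ of~\eqref{eq:rrr-triangle}, whose connecting map is a nilpotent scalar hence zero) gives $\io_*\io^*E\cong E\oplus E[1]$, so $\RHom_X(\io^*E,\io^*E)\cong\RHom_\cX(E,E)\oplus\RHom_\cX(E,E)[1]\cong\kk\oplus\kk[1]$. A dg-algebra with this cohomology is intrinsically formal for degree reasons, hence quasi-isomorphic to $\sA_1=\kk[\eps]/(\eps^2)$ with $\deg\eps=-1$; thus $\Upsilon_{\io^*}(\cE)=\thick(\io^*E)\simeq\Dp(\sA_1)$, and it is admissible. (Here $\io^*E\neq0$ since $E$ is a nonzero object supported on $X$, as in the proof of Theorem~\ref{thm:bijection-subcat-deform}.)

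Finally I would conclude formally: we now know $\Upsilon_{\io_*}(\text{\ref{it:cp}})\subseteq\text{\ref{it:ce}}$, $\Upsilon_{\io^*}(\text{\ref{it:ce}})\subseteq\text{\ref{it:cm}}$, and $\Upsilon^\pf=\Upsilon_{\io^*}\circ\Upsilon_{\io_*}$ restricts to a bijection $\text{\ref{it:cp}}\to\text{\ref{it:cm}}$, so $\text{\ref{it:cm}}=\Upsilon^\pf(\text{\ref{it:cp}})\subseteq\Upsilon_{\io^*}(\Upsilon_{\io_*}(\text{\ref{it:cp}}))\subseteq\Upsilon_{\io^*}(\text{\ref{it:ce}})\subseteq\text{\ref{it:cm}}$ forces all these inclusions to be equalities; since $\Upsilon_{\io^*}$ is injective on admissible subcategories, $\Upsilon_{\io_*}(\text{\ref{it:cp}})=\text{\ref{it:ce}}$ as well, and the three restricted maps are bijections because the ambient maps are. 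The main obstacle is the pair of endomorphism computations in the previous paragraph — in particular getting the shifts right so that the fiber is exactly $\kk$ rather than, say, $\kk\oplus\kk[\pm2]$, and establishing $\delta_\rP\neq0$; everything after that is bookkeeping.
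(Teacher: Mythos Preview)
Your argument is correct and follows the same architecture as the paper's proof: establish that $\Upsilon_{\io_*}$ and $\Upsilon_{\io^*}$ land in the right sets, invoke Corollary~\ref{cor:bij-X-A} for $\Upsilon^\pf$, and then bootstrap via commutativity of~\eqref{eq:upsilon-diagram} and injectivity. The only substantive difference is that where the paper cites~\cite[Theorem~1.8]{KSabs} to conclude $\io_*\rP$ is exceptional, you reprove this directly via the fiber-of-multiplication-by-$\uptheta$ computation (which is fine, and your nonvanishing argument for $\delta_\rP$ via perfection is clean); one small wording issue is that ``nilpotent scalar hence zero'' for the connecting map of $E\to\io_*\io^*E\to E[1]$ is circular as stated---the point is rather that a \emph{nonzero} scalar would force $\io_*\io^*E=0$ and hence $\io^*E=0$, which you do note at the end.
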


\begin{proof}
Let~$\Sub_\cP(\Db(X))$, $\Sub_\cE(\Db_X(\cX))$, and~$\Sub_\cM(\Dp(X))$ 
be the sets of subcategories described in~\ref{it:cp}, \ref{it:ce}, and~\ref{it:cm}, respectively.
First, we check that
\begin{equation*}
\Upsilon_{\io_*}(\Sub_\cP(\Db(X)) \subset \Sub_\cE(\Db_X(\cX))
\qquad\text{and}\qquad 
\Upsilon_{\io^*}(\Sub_\cE(\Db_X(\cX)) \subset \Sub_\cM(\Dp(X)).
\end{equation*}

If~$\cP \subset \Db(X)$, $\cP \simeq \Db(\sA_1)$, 
the object~$\rP \in \cP$ corresponding to the simple~$\sA_1$-module~$\kk_\sA$ is a $\Pinfty2$-object,
see~\cite[Lemma~2.4, Definition~2.6, and Remark~2.7]{KSabs}.
Then~$\rE \coloneqq \io_*\rP$ is an exceptional object supported on~$X$ by~\cite[Theorem~1.8]{KSabs}; 
therefore the admissible subcategory
\begin{equation*}
\Upsilon_{\io_*}(\cP) = 
\thick(\io_*\cP) = \thick(\io_*\rP) = \thick(\rE) = \langle \rE \rangle
\end{equation*}
is equivalent to~$\Db(\kk)$.

If~$\cE \subset \Db_X(\cX)$, $\cE \simeq \Db(\kk)$, it is generated by an exceptional object~$\rE \in \Db_X(\cX)$.
Using~\eqref{eq:koszul} we obtain
\begin{equation*}
\io_*\io^*\rE \cong
\rE \otimes \io_*\cO_X \cong
\Cone(\rE(-X) \to \rE) \cong
\Cone(\rE \to \rE),
\end{equation*}
where the map in the right side is induced by the equation of~$X \subset \cX$.
Therefore
\begin{equation*}
\Ext^\bullet(\io^*\rE, \io^*\rE) \cong
\Ext^\bullet(\rE, \io_*\io^*\rE) \cong
\Ext^\bullet(\rE, \Cone(\rE \to \rE)) \cong 
\Cone(\kk \to \kk).
\end{equation*}
If the morphism in the right-hand side is non-trivial, we obtain~$\Ext^\bullet(\io^*\rE, \io^*\rE) = 0$, 
hence~$\io^*E = 0$, which is absurd because~$\rE$ is set-theoretically supported on~$X$.
Therefore, the morphism is zero, hence~$\Ext^\bullet(\io^*\rE, \io^*\rE) \cong \kk \oplus \kk[1] \cong \sA_1$.
Since the dg-algebra~$\sA_1$ is intrinsically formal (see, e.g., \cite[Lemma~2.1]{KSabs}),
it follows that
\begin{equation*}
\Upsilon_{\io^*}(\cE) = 
\thick(\io^*\cE) = \thick(\io^*\rE) \simeq 
\Dp(\sA_1).
\end{equation*}

Thus, $\Upsilon_{\io_*}$ and~$\Upsilon_{\io^*}$ induce maps between our sets,
and these maps are injective by Theorem~\ref{thm:bijection-subcat-deform}.
Moreover, commutativity of the diagram~\eqref{eq:upsilon-diagram} implies 
that their composition is the map
\begin{equation}
\label{eq:upf}
\Upsilon^\pf \colon \Sub_\cP(\Db(X)) \to \Sub_\cM(\Dp(X)),
\end{equation}
which is also injective. 
Applying Corollary~\ref{cor:bij-X-A} we see that it is bijective.

Furthermore, surjectivity of~\eqref{eq:upf} and commutativity of the diagram~\eqref{eq:upsilon-diagram}
imply surjectivity of the map~$\Upsilon_{\io^*} \colon \Sub_\cE(\Db_X(\cX)) \to \Sub_\cM(\Dp(X))$,
and since this map is also injective, it is bijective.
Finally, commutativity of~\eqref{eq:upsilon-diagram} 
implies that~$\Upsilon_{\io_*} \colon \Sub_\cP(\Db(X)) \to \Sub_\cE(\Db_X(\cX))$ is also bijective.
\end{proof}

The following consequence of the above bijection is particularly interesting.

\begin{corollary}
In the situation of Corollary~\textup{\ref{cor:bijection-pinfty}} 
any exceptional object in the category~$\Db_X(\cX)$ 
is scheme-theoretically supported on~$X$.
\end{corollary}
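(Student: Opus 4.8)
The plan is to reduce the claim to Corollary~\ref{cor:bijection-pinfty}: its bijection shows that \emph{every} admissible subcategory $\cE \subset \Db_X(\cX)$ equivalent to $\Db(\kk)$ is of the form $\thick(\io_*\rP) = \langle\io_*\rP\rangle$ for a $\Pinfty{2}$-object $\rP \in \Db(X)$, and $\io_*\rP$ is manifestly scheme-theoretically supported on $X$. So it suffices to check that an arbitrary exceptional object $\rE$ of $\Db_X(\cX)$ generates such a subcategory, and then to compare the two exceptional generators.

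First I would record the basic properties of $\cT \coloneqq \Db_X(\cX)$. It is a full dg-enhanced subcategory of $\Db(\cX) = \Dp(\cX)$ (the latter equality because $\cX$ is smooth), closed under direct summands, hence idempotent complete; and it is proper, since for $F,G \in \cT$ the complex $\RHom_\cX(F,G) = \RGamma(\cX,\cRHom(F,G))$ has coherent cohomology with support contained in $\supp F \cap \supp G \subseteq X$, which is proper over $\kk$, so $\RHom_\cX(F,G) \in \Db(\kk)$. Now let $\rE \in \cT$ be exceptional, i.e.\ $\RHom_\cT(\rE,\rE) = \kk$. Then $\langle\rE\rangle = \thick(\rE) \subset \cT$ consists of finite direct sums of shifts of $\rE$ and is equivalent to $\Db(\kk)$, and, since $\cT$ is proper, the dg-functors $M \mapsto \RHom_\cT(\rE,M) \otimes_\kk \rE$ and $M \mapsto \RHom_\cT(M,\rE)^\vee \otimes_\kk \rE$ land in $\langle\rE\rangle$ (the $\Hom$-complexes being perfect over $\kk$) and are respectively right and left adjoint to the embedding $\langle\rE\rangle \hookrightarrow \cT$. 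Hence $\langle\rE\rangle$ is an admissible subcategory of $\Db_X(\cX)$ equivalent to $\Db(\kk)$. (Alternatively one may note that $\cT$ carries the Serre functor $(-)\otimes\omega_\cX[\dim\cX]$ and invoke~\cite[Proposition~3.9]{BK}.)

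With this in hand, $\langle\rE\rangle$ lies in the set~\ref{it:ce} appearing in Corollary~\ref{cor:bijection-pinfty}, so by surjectivity of $\Upsilon_{\io_*}$ there is an admissible $\cP \subset \Db(X)$ with $\cP \simeq \Db(\sA_1)$ and $\langle\rE\rangle = \Upsilon_{\io_*}(\cP) = \thick(\io_*\cP)$. Taking $\rP \in \cP$ to be the object corresponding to the simple $\sA_1$-module, $\rP$ is a $\Pinfty{2}$-object with $\cP = \thick(\rP)$, so $\langle\rE\rangle = \thick(\io_*\rP) = \langle\io_*\rP\rangle$, and $\io_*\rP$ is exceptional by~\cite[Theorem~1.8]{KSabs}. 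Since $\io_*\rP$ is an indecomposable object of $\langle\rE\rangle = \operatorname{add}\{\rE[m]\}_{m\in\ZZ}$, we get $\io_*\rP \cong \rE[k]$ for some $k \in \ZZ$, hence $\rE \cong \io_*(\rP[-k])$ with $\rP[-k] \in \Db(X)$; that is, $\rE$ is scheme-theoretically supported on $X$. The only step requiring genuine content, rather than bookkeeping on top of Corollary~\ref{cor:bijection-pinfty}, is the verification that $\langle\rE\rangle$ is admissible, which rests on the properness of $\Db_X(\cX)$ and the explicit description of the two adjoints; I expect that to be the only real point to check.
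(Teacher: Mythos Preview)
Your proof is correct and follows essentially the same route as the paper's one-sentence argument, which simply invokes Corollary~\ref{cor:bijection-pinfty} to conclude $\rE \cong \io_*\rP$ for a $\Pinfty{2}$-object $\rP$. You have made explicit the two steps the paper leaves implicit: first that $\langle \rE \rangle \subset \Db_X(\cX)$ is admissible (via properness of $\Db_X(\cX)$), so that Corollary~\ref{cor:bijection-pinfty} applies to it; and second that the two exceptional generators of $\langle \rE \rangle$ must agree up to shift.
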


\begin{proof}
If~$\rE$ is an exceptional object in~$\Db_X(\cX)$,
Corollary~\textup{\ref{cor:bijection-pinfty}} proves that~\mbox{$\rE \cong \io_*\rP$}, 
where~$\rP$ is a $\Pinfty2$-object in~$\Db(X)$;
in particular, $\rE$ is scheme-theoretically supported on~$X$.
\end{proof}


\end{document}